\newtheorem{theorem}{Theorem}[section]
\newtheorem{lemma}[theorem]{Lemma}
\newtheorem{proposition}[theorem]{Proposition}
\newtheorem{corollary}[theorem]{Corollary}
\theoremstyle{definition}
\newtheorem{definition}[theorem]{Definition}
\theoremstyle{remark}
\newtheorem{remark}[theorem]{Remark}
\newtheorem{assum}{Assumption}
\newenvironment{assump}[2][]
  {\begin{assum}[#1]}
  {\end{assum}}
\newcommand{\tr}{\text{tr}}
\newcommand{\ld}{\lambda}
\newcommand{\Ld}{\Lambda}
\newcommand{\al}{\alpha}
\newcommand{\veps}{\varepsilon}
\newcommand{\vp}{\varphi}
\newcommand{\bC}{\mathbf{C}}
\newcommand{\Op}{\textsc{Op}}
\newcommand{\bh}{\mathfrak{h}}
\newcommand{\fk}{\mathfrak{f}}
\DeclareMathOperator{\Id}{Id}
\DeclareMathOperator{\Int}{Int}
\DeclareMathOperator{\Isom}{Isom}
\DeclareMathOperator{\Ima}{Im}
\DeclareMathOperator{\loc}{loc}
\DeclareMathOperator{\adj}{Adj}
\DeclareMathOperator{\supp}{\mathrm{supp}}
\DeclareMathOperator{\lin}{\mathrm{lin}}
\DeclareMathOperator{\Nlin}{\mathrm{Nlin}}
\DeclareMathOperator{\phg}{\mathrm{phg}}
\providecommand{\norm}[1]{\lVert#1\rVert}
\newcommand{\Norm}[1]{\left\lVert#1\right\rVert}
\providecommand{\inn}[1]{\langle#1\rangle}
\providecommand{\Int}[1]{\text{Int}(#1)}
\newcommand{\mc}{\mathcal}
\newcommand{\A}{\mc{A}}
\newcommand{\B}{\mc{B}}
\newcommand{\D}{\mathcal{D}}
\newcommand{\F}{\mc{F}}
\newcommand{\G}{\mc{G}}
\renewcommand{\L}{\mc{L}}
\renewcommand{\H}{\mathcal{H}}
\newcommand{\I}{\mc{I}}
\newcommand{\J}{\mc{J}}
\newcommand{\K}{\mc{K}}
\newcommand{\M}{\mc{M}}
\renewcommand{\O}{\mathcal{O}}
\renewcommand{\P}{\mathcal{P}}
\newcommand{\Q}{\mathcal{Q}}
\newcommand{\T}{\mc{T}}
\newcommand{\Y}{\mc{Y}}
\newcommand{\R}{\mathbb{R}}
\newcommand{\N}{\mathbb{N}}
\newcommand{\Z}{\mathbb{Z}}
\begin{document}

\title[Unique continuation and stabilization for NLS under the GCC]{Unique continuation and stabilization for nonlinear Schrödinger equations under the\\ Geometric Control Condition}
\author[]{Cristóbal Loyola}
\date{\today}
\subjclass[2020]{35A20, 35B60, 93B05, 93B07, 35Q55}%
\keywords{propagation of analyticity, unique continuation, controllability, stabilization, nonlinear Schrödinger equation}
\address{Sorbonne Université, Université Paris Cité, CNRS, Laboratoire Jacques-Louis Lions, LJLL, F-75005 Paris, France}
\email{cristobal.loyola@sorbonne-universite.fr}

\allowdisplaybreaks
\numberwithin{equation}{section}

\begin{abstract}
    In this article we prove global propagation of analyticity in finite time for solutions of semilinear Schrödinger equations with analytic nonlinearity from a region $\omega$ where the Geometric Control Condition holds. Our approach refines a recent technique introduced by Laurent and the author, which combines control theory techniques and Galerkin approximation, to propagate analyticity in time from a zone where observability holds. As a main consequence, we obtain unique continuation for subcritical semilinear Schrödinger equations on compact manifolds of dimension $2$ and $3$ when the solution is assumed to vanish on $\omega$. Furthermore, semiglobal control and stabilization follow only under the Geometric Control Condition on the observation zone. In particular, this answers in the affirmative an open question of Dehman, Gérard, and Lebeau from $2006$ for the nonlinear case.
\end{abstract}

\maketitle
\tableofcontents

\section{Introduction}\label{sec:intro}

The aim of this article is to study how, for a certain class of evolution PDEs, properties observed from a subset $\omega\subset \M$ over the time $(0, T)$ are propagated to the whole solution on $(0, T)\times \M$. Here, $\M$ can be, for instance, a compact Riemannian manifold without boundary. More precisely, we will study the following two properties:
\begin{enumerate}
    \item \textbf{Propagation of analyticity}: if the solution is analytic in time on $(0,T)\times \omega$, is the full solution  analytic in time on $(0,T)\times\M$?
    \item \textbf{Unique continuation}: if the solution is zero on $[0, T]\times \omega$, is the solution identically zero on $[0, T]\times\M$?
\end{enumerate}
We refine the method introduced in \cite{LL24} by broadening the class of admissible nonlinearities, thus extending its scope to a wider class of conservative equations. This method relies on observability estimates, a Galerkin procedure, and the interaction of low and high frequencies through the nonlinearity. We provide an abstract result, detailed in \cref{s:abstrNLSIntro}, that allows us to propagate analyticity in time from the observation to the full solution. We present the main applications to nonlinear Schrödinger equations, where, notably, we obtain global unique continuation under the Geometric Control Condition and give applications in control theory, see \cref{sec:introNLS}. We believe that the abstract method could be applied to several other systems and is amenable to generalizations.

To motivate the abstract result, we begin by describing the case of nonlinear Schrödinger equations.

\subsection{Main results on nonlinear Schrödinger equation}\label{sec:introNLS} Let $(\M, g)$ be a compact boundaryless smooth connected Riemannian manifold of dimension $d$. Let $\Delta_g$ be the Laplace-Beltrami operator on $\M$ associated to the metric $g$. In this section we consider the semilinear Schrödinger equation
\begin{align}\label{eq:NLS}
        \left\{\begin{array}{cc}
            i\partial_tu+\Delta_g u=f(u)     &~~ \text{ in } (0, T)\times\M,  \\
            u(0)=u_0,    & 
        \end{array}\right.
\end{align}
where $u: [0, T]\times\M\to\mathbb{C}$ and the nonlinearity $f: \mathbb{C}\to \mathbb{C}$ is \emph{real analytic} with $f(0)=0$. We assume that $u_0\in H^s(\M)$, where:
\begin{enumerate}[label=(\Alph*),ref=(\Alph*)]
    \item\label{NLS:reg:1} If there is no further assumption, we set $s>d/2$, which ensures that $H^{s}\hookrightarrow L^\infty$.
    \item\label{NLS:reg:2} Let $1<d\leq 3$ and assume $f$ to be of polynomial type, particularly $f(u)=P'(|u|^2)u$ where:
    \begin{enumerate}[label=(\Alph{enumi}-\arabic*),ref=(\Alph{enumi}-\arabic*)]
        \item\label{NLS:reg:2.1} if $d=2$ then $P$ is a polynomial function with real coefficients, satisfying $P(0)=0$ and the defocusing assumption $P'(r)\xrightarrow[r\to+\infty]{}+\infty$;
        \item\label{NLS:reg:2.2} if $d=3$, then $P'(r)=\al r+\beta$ with $\al>0$, $\beta\geq 0$, corresponding to the cubic nonlinearity.
    \end{enumerate}
    In any of these cases, we choose $s=1$ as the level of regularity.
\end{enumerate}

We will assume from now onward that the observation set $\omega\subset \M$ is open and non-empty. Moreover, we will impose that $\omega$ satisfies the Geometric Control Condition:

\begin{assump}{GCC}\label{NLS:assumGCC}
    There exists $T_0>0$ such that every geodesic of $\M$ traveling at speed $1$ meets $\omega$ in time $t\in (0, T_0)$.
\end{assump}

Our first main result is the following, corresponding to case \ref{NLS:reg:1}.

\begin{theorem}\label{thm:NLS-analytic}
Let $d\in \N$ and $s>d/2$. Let $u\in C^0([0, T], H^{s}(\M))$ be a solution of \eqref{eq:NLS}. Assume that the above setting holds and assume moreover that:
    \begin{enumerate}
        \item $\omega$ satisfies the \ref{NLS:assumGCC},
        \item $t\in (0, T)\mapsto \chi u(t,\cdot)\in H^{s}(\M)$ is analytic for any cutoff function $\chi\in C_c^\infty(\M)$ whose support is contained in $\omega$.
    \end{enumerate}
    Then $t\mapsto u(t, \cdot)$ is analytic from $(0, T)$ into $H^2(\M)$.    
\end{theorem}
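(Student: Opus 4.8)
The plan is to follow the strategy of \cite{LL24}: reduce analyticity in time to a quantitative control-theoretic estimate and propagate it through the Galerkin scheme, using the conservative structure of \eqref{eq:NLS} and the interaction of low and high frequencies through the analytic nonlinearity. First I would set up the spectral decomposition: let $(e_k)_{k\ge 0}$ be an orthonormal eigenbasis of $-\Delta_g$ with eigenvalues $\mu_k\to+\infty$, and let $\Pi_N$ denote the spectral projector onto frequencies $\le N$. The hypothesis that $t\mapsto \chi u(t)$ is analytic into $H^s$ for every $\chi\in C_c^\infty(\omega)$ means that near each $t_0\in(0,T)$ there are radius $\rho>0$ and constant $C$ with $\norm{\partial_t^n(\chi u)(t)}_{H^s}\le C\rho^{-n} n!$; equivalently the whole family of time-derivatives $v_n:=\partial_t^n u$, which solve linear Schrödinger equations with source given by $\partial_t^n$ of the nonlinearity, are uniformly controlled on $\omega$. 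The core is then an observability-type inequality: since $\omega$ satisfies the \ref{NLS:assumGCC}, the linear Schrödinger group on $\M$ is observable from $\omega$ in any time, which yields (via the Hilbert Uniqueness Method / duality, and a propagation-of-regularity argument for the source terms) a bound of the form $\norm{v_n(t)}_{H^{s-\epsilon}}\lesssim \norm{\chi v_n}_{L^2((t_0-\delta,t_0+\delta),H^{s})}+\text{(commutator and nonlinear remainders)}$ on a slightly smaller time interval.

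The heart of the argument is to absorb the nonlinear remainders and close an induction on $n$ producing geometric-in-$n$, factorial-in-$n$ bounds for $\norm{v_n(t)}$. Since $f$ is real analytic, $\partial_t^n f(u)$ expands via Faà di Bruno into a sum over partitions of products $f^{(j)}(u)\prod v_{n_i}$ with $\sum n_i=n$; the number of terms and the coefficients are controlled by $n!$-type bounds coming from the analyticity of $f$, so that if $\norm{v_m(t)}_{H^s}\le A\,\rho^{-m} m!$ for all $m<n$ on the relevant interval, one gets $\norm{\partial_t^n f(u)(t)}_{H^{s-\epsilon}}\le B\,\tilde\rho^{-n} n!$ using the Banach-algebra property $H^s\hookrightarrow L^\infty$ in case \ref{NLS:reg:1} and a Moser-type estimate. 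Feeding this into the observability inequality, and iterating while shrinking the time interval and the radius by a fixed factor at each of finitely many steps needed to cover $(0,T)$, closes the induction and shows $t\mapsto u(t)$ is analytic into $H^{s-\epsilon}$, hence into $H^2$ after finitely many bootstraps using the equation (gaining two derivatives in space from each time derivative). The role of the Galerkin truncation $\Pi_N$ is to make all these manipulations rigorous: one first proves the estimates for the projected dynamics with constants uniform in $N$, controlling the high-frequency tail $(\Id-\Pi_N)u$ by the smoothing/observability on $\omega$ together with the a priori bound $u\in C^0([0,T],H^s)$, then passes to the limit.

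The main obstacle, and where the refinement over \cite{LL24} is needed, is handling a \emph{general} real-analytic $f$ rather than a polynomial: the Faà di Bruno sum is genuinely infinite in the order of $f^{(j)}$, so one must control $\sum_j \norm{f^{(j)}(u)}_{L^\infty} R^j / j!$ uniformly, which requires a uniform-in-$t$ bound on $\norm{u(t)}_{L^\infty}$ (available from $H^s\hookrightarrow L^\infty$) and a quantitative radius of analyticity for $f$ on the relevant disk; combining this with the multilinear bookkeeping so that the final constant is still geometric in $n$ is the delicate combinatorial point. A secondary difficulty is that the observability inequality for the linear Schrödinger equation with a source is not lossless — one must carefully track the $\epsilon$-loss of derivatives and ensure it does not accumulate over the (finite) iteration, which is why the conclusion is stated at the level $H^2$ rather than $H^s$; the conservation of the $H^1$-type energy and mass for \eqref{eq:NLS} is what keeps the a priori norms from blowing up along the way.
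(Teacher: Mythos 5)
There is a genuine gap at the heart of your scheme, namely the observability inequality you want to apply to $v_n=\partial_t^n u$. By Faà di Bruno, the source $\partial_t^n f(u)$ contains the top-order term $Df(u)v_n$ (the partition with a single block), which is \emph{not} controlled by the induction hypothesis on $v_m$, $m<n$. You therefore have only two options. Either you treat $Df(u)v_n$ as a remainder to be absorbed into the left-hand side, which fails: the constant in Lebeau's observability inequality under the \ref{NLS:assumGCC} is not small and cannot be made small by shrinking the time interval (it degenerates as $T\to 0$), so the term $\norm{Df(u)v_n}_{L^1H^s}\lesssim T\,\norm{v_n}_{L^\infty H^s}$ cannot be absorbed uniformly in $n$. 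Or you incorporate $Df(u)$ into the operator and invoke observability for the Schrödinger equation with a potential that is merely bounded in time with $H^s$ spatial regularity; but under the \ref{NLS:assumGCC} alone this is precisely the missing ingredient (the unique continuation property \ref{NLS:UCP} assumed in \cite{DGL06,Lau10:NLS3d}) whose removal is the whole point of the theorem — the standard compactness–uniqueness argument for such an observability reduces to a unique continuation statement for Schrödinger with rough potentials that is not available, so your argument is circular. A secondary problem is bookkeeping: your induction on $n$ must run on a fixed interval with a fixed radius, so the $\epsilon$-losses and interval shrinkings you invoke "at each step" would accumulate over infinitely many $n$, not finitely many.

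The paper's proof avoids factorial bounds on time derivatives altogether. After a propagation-of-regularity step (elliptic regularity on $\omega$ plus \cref{thm:propregularity-highreg}) that yields the compactness of the trajectory needed later (Aubin–Lions), one sets $z=(1-\chi)u$, which has vanishing observation $\bC z=0$, and fits it into the abstract \cref{thm:mainabs-analytic} with $\fk=-i(1-\chi)f$, $\bh_1=\chi u$, $\bh_2=[\Delta_g,\chi]u$ analytic in time by hypothesis. The key observability input is only needed for data in $\Q_n X^\sigma$ with $n$ large, uniformly in the potential $\Q_n Df(v)$ (\cref{prop:weakobsSchr}): there the microlocal defect measure argument closes \emph{without} any unique continuation step, because spectral localization at frequencies $\geq\lambda_n\to\infty$ kills the compact residual term. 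Analyticity then comes from representing $\Q_n u=\mc{R}(\P_n u,\bh_1,\bh_2)$ via a holomorphic reconstruction operator (fixed point plus the uniform contraction principle) and solving an ODE with holomorphic right-hand side for the low-frequency part, rather than from derivative-by-derivative estimates. If you want to salvage your approach, you would have to restrict the linearized observability to high frequencies and reconstruct the low frequencies separately — which is essentially the paper's mechanism.
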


The main consequence of this propagation of analyticity in time is the following unique continuation property for solutions of \eqref{eq:NLS}, notably, in the $H^1$-subcritical cases detailed in \ref{NLS:reg:2}. First, in dimension $d=2$, for solutions with finite Strichartz norm in the sense of \cite[Theorem 2]{BGT04} (see \cref{s3:wp} below for precisions), the structure of the nonlinearity dictates the type of equilibrium we will obtain.

\begin{theorem}\label{thm:UCP-NLSdim2}
    Let $d=2$. Assume that $\omega$ satisfies the \ref{NLS:assumGCC} and that $f$ is given as in \ref{NLS:reg:2.2}. If one solution $u\in C([0, T], H^1(\M))$ of \eqref{eq:NLS} with finite Strichartz norms satisfies $\partial_tu=0$ in $(0, T)\times\omega$, then $\partial_t u=0$ in $(0, T)\times \M$ and $u$ is an equilibrium point of \eqref{eq:NLS}, that is, solution of
    \begin{align}\label{thm:eq:UCP-NLSdim2}
        -\Delta_g u+P'(|u|^2)u=0,\ ~~ x\in\M.
    \end{align}
    Moreover, if there exists $C>0$ such that $P'(r)\geq C$ for $r\geq 0$, then $u=0$ in $(0, T)\times\M$.
\end{theorem}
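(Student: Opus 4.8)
The plan is to deduce this unique continuation result from the propagation of analyticity in time, \cref{thm:NLS-analytic}. The key observation is that the hypothesis $\partial_t u = 0$ in $(0,T)\times\omega$ means, in particular, that $t\mapsto \chi u(t,\cdot)$ is \emph{constant}, hence real analytic, for every cutoff $\chi\in C_c^\infty(\M)$ supported in $\omega$. So the second hypothesis of \cref{thm:NLS-analytic} is satisfied. A subtlety is that \cref{thm:NLS-analytic} is stated for $s>d/2$, whereas here $d=2$, $s=1$, so $s=d/2$ is the borderline Sobolev exponent; I would first record that under assumption \ref{NLS:reg:2.1}/\ref{NLS:reg:2.2} and the finite Strichartz norm hypothesis, the $H^1$ solution has the regularity and the approximation properties needed to run the abstract machinery (this should be exactly the content of the well-posedness discussion referred to as \cref{s3:wp}; I would cite it and the corresponding version of the propagation theorem adapted to the $H^1$-subcritical setting). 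Granting this, \cref{thm:NLS-analytic} (in its $H^1$ form) gives that $t\mapsto u(t,\cdot)$ is analytic from $(0,T)$ into $H^2(\M)$ — or at least into $H^1(\M)$ — and in particular $t\mapsto \partial_t u(t,\cdot)$ extends analytically.

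Next I would upgrade "analytic in time" plus "vanishing of $\partial_t u$ on the open set $(0,T)\times\omega$" to "$\partial_t u\equiv 0$ on $(0,T)\times\M$". Let $v = \partial_t u$. Differentiating \eqref{eq:NLS} in time, $v$ solves the linear Schrödinger equation $i\partial_t v + \Delta_g v = f'(u)\,v + \overline{f'_{\bar z}}\cdots$ — more precisely the linearized operator $Df(u)$ applied to $v$, with coefficients that are continuous (indeed, by the first step, analytic) in $t$ with values in a suitable space. Since $t\mapsto v(t,\cdot)$ is analytic with values in $H^1$ and vanishes on the nonempty open time-interval-times-$\omega$ set, analyticity in $t$ alone does not immediately give vanishing in $x$; I need a spatial unique continuation argument. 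The clean way: fix $t_0\in(0,T)$; then $v(t_0,\cdot)$ vanishes on $\omega$ and solves the elliptic-type relation obtained from the equation, and one invokes unique continuation for Schrödinger-type operators (the elliptic unique continuation for $-\Delta_g + $ potential, since at fixed time $i\partial_t v$ is a known analytic-in-$t$ function) to conclude $v(t_0,\cdot)\equiv 0$ on $\M$. Alternatively, and more in the spirit of the paper, one propagates the analyticity in time to get that $v$ is analytic jointly, then uses that $i\partial_t v+\Delta_g v$ is determined, and a Carleman/Holmgren-type argument across $\partial\omega$ finishes it. Either way one obtains $\partial_t u = 0$ on all of $(0,T)\times\M$.

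Once $\partial_t u\equiv 0$, the equation \eqref{eq:NLS} immediately reads $\Delta_g u = f(u) = P'(|u|^2)u$, i.e. $u=u(x)$ is a stationary solution satisfying \eqref{thm:eq:UCP-NLSdim2}, which is the first conclusion. For the final assertion, suppose $P'(r)\geq C>0$ for all $r\geq 0$. Pair \eqref{thm:eq:UCP-NLSdim2} with $\bar u$ and integrate over $\M$: since $\M$ is boundaryless, integration by parts gives
\[
\int_\M |\nabla_g u|^2\,dx + \int_\M P'(|u|^2)\,|u|^2\,dx = 0.
\]
Both terms are nonnegative, and the second is $\geq C\int_\M |u|^2\,dx$, forcing $u\equiv 0$ on $\M$, hence on $(0,T)\times\M$.

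**Main obstacle.** The delicate point is the borderline regularity: \cref{thm:NLS-analytic} is proved for $s>d/2$, and in dimension $2$ with $s=1$ we sit exactly at $s=d/2$, so $H^1\not\hookrightarrow L^\infty$. Making the Galerkin/observability scheme of \cref{thm:NLS-analytic} run in this energy-critical-for-$L^\infty$ situation requires the Strichartz-space well-posedness theory of \cite{BGT04} and the $H^1$-subcritical structure (defocusing polynomial in $d=2$, cubic in $d=3$) to control the nonlinear interaction of low and high frequencies; this is where essentially all the analytic work sits, and it is presumably handled in \cref{s3:wp} and the $H^1$-version of the abstract theorem. The remaining steps — time-differentiation, spatial unique continuation at a fixed time, and the energy identity — are comparatively standard.
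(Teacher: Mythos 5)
Your overall outline (propagate analyticity in time, linearize in $t$, do unique continuation for the linearized equation, then the energy identity when $P'\geq C$) is the paper's outline, and your final two steps (stationarity and the integration by parts) match the paper exactly. However, the two nontrivial points are handled incorrectly or left to wishful citation. First, the borderline regularity $s=1=d/2$: you defer to an ``$H^1$-version of the abstract theorem'' supposedly contained in the well-posedness discussion, but no such version exists and none is needed. The paper's resolution is different and more elementary: since $\partial_t u=0$ on $(0,T)\times\omega$, the solution satisfies the \emph{elliptic} equation $\Delta_g u=P'(|u|^2)u$ on $(0,T)\times\omega$, so elliptic regularity and bootstrap give extra smoothness locally on $\omega$; then the propagation-of-regularity result under the GCC in the Strichartz framework (\cref{prop:propregularityYT}, from Dehman--G\'erard--Lebeau) upgrades $u$ globally to $C^0([0,T],H^{s}(\M))$ for some fixed $s\in(1,2]$ (in fact to $C^\infty((0,T)\times\M)$). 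Only after this lift does one apply \cref{thm:NLS-analytic} as stated, with $s>d/2$. Your plan, as written, would require reproving the abstract Galerkin/observability scheme at critical regularity, which is a substantial (and unnecessary) undertaking.

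Second, the unique continuation step for $z=\partial_t u$ is where your argument genuinely fails. Your ``clean way'' -- fix $t_0$ and apply elliptic unique continuation to $z(t_0,\cdot)$ -- does not work: at fixed time, $z(t_0)$ satisfies $\Delta_g z(t_0)-Df(u(t_0))z(t_0)=-i\partial_t z(t_0)$, and the source term $\partial_t z(t_0)=\partial_t^2u(t_0)$ has no reason to vanish (or to be controlled by $z(t_0)$) outside $\omega$, so there is no elliptic UCP across $\partial\omega$ to invoke. Your alternative (joint analyticity plus a Holmgren/Carleman argument) is also unavailable: analyticity is only in $t$, not in $x$, and the introduction of the paper explains precisely that Holmgren does not apply and that pseudoconvexity-based Carleman estimates would impose geometric conditions stronger than the GCC. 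The whole point of establishing time-analyticity of $u$ (hence of the potential $Df(u)$) is to put the linearized equation $i\partial_t z+\Delta_g z=Df(u)z$, $z=0$ on $(0,T)\times\omega$, within the scope of the Tataru--Robbiano--Zuily--H\"ormander theorem (\cref{thm:UCP-linearschrodinger}), which gives $z\equiv0$ from any nonempty open $\omega$ with no geometric condition. Without naming and using that theorem, the passage from ``$\partial_t u=0$ on $\omega$'' to ``$\partial_t u\equiv0$ on $\M$'' remains unproved in your proposal.
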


In dimension $d=3$, let $(\M, g)$ be any of the following manifolds:
\begin{itemize}
    \item $\mathbb{T}^3$ or the irrational torus $\R^3/(\theta_1 \Z \times \theta_2 \Z \times \theta_3\Z)$ with $\theta_i \in \R$,
    \item $S^3$ or $S^2\times S^1$.
\end{itemize}
The analysis of the NLS in the subcritical case uses Bourgain spaces, see \cref{s3:wp} for precisions. We get the following unique continuation property.

\begin{theorem}\label{thm:UCP-NLSdim3}
    Let $d=3$ and let $(\M, g)$ be any of the manifolds described above. Assume that $\omega$ satisfies the \ref{NLS:assumGCC} and that $f$ is given as in \ref{NLS:reg:2.1}. Let $1/2<b\leq 1$ and $u\in X_T^{1, b}$ be a solution of \eqref{eq:NLS} which satisfies $\partial_t u=0$ in $(0, T)\times\omega$. Then $u=0$ in $(0, T)\times \M$.
\end{theorem}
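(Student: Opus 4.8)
The plan is to reduce \cref{thm:UCP-NLSdim3} to \cref{thm:NLS-analytic} applied to a well-chosen auxiliary solution, and then to exploit the analyticity together with the condition $\partial_t u=0$ on $\omega$ to run a static unique continuation argument. First I would observe that, since $u$ solves \eqref{eq:NLS} and $\partial_t u = 0$ in $(0,T)\times\omega$, differentiating the equation in time shows that $v := \partial_t u$ formally satisfies a linear Schrödinger equation $i\partial_t v + \Delta_g v = f'(u)v + \overline{f'(u)}\bar v$ (the real-linearization of the analytic nonlinearity, since $f$ is real analytic in $u,\bar u$) with $v = 0$ in $(0,T)\times\omega$. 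The difficulty is that $u \in X_T^{1,b}$ is merely $H^1$-regular, so before one can talk about analyticity one must upgrade regularity: I would use the subcritical well-posedness theory in Bourgain spaces (as set up in \cref{s3:wp}) together with the fact that $u$ is, on the complement of a neighbourhood of $\{\partial_t u=0\}$, governed by a conservative flow, to propagate enough smoothness. Concretely, the right strategy is probably not to differentiate but to directly show that $u$ itself is analytic in time on $(0,T)\times\omega$ — this is immediate because on $\omega$ we have $u(t,x) = u(0,x)$ is \emph{constant} in $t$, hence trivially real-analytic in $t$ — and then invoke \cref{thm:NLS-analytic}.

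The second step is therefore: check the hypotheses of \cref{thm:NLS-analytic}. The GCC hypothesis is assumed. The regularity hypothesis $u \in C^0([0,T],H^s)$ with $s>d/2 = 3/2$ is the one that needs work, since a priori $u$ only lies in $X_T^{1,b} \subset C([0,T],H^1)$. Here I would argue that, since $\chi u$ is time-independent (equal to $\chi u_0$) for $\chi$ supported in $\omega$, one has strong local regularity of $u_0$ on $\omega$, and then bootstrap using the Bourgain-space local smoothing / the structure of the equation: on $(0,T)\times\omega$, $i\partial_t u + \Delta_g u = f(u)$ reduces to $\Delta_g u_0 = f(u_0)$ on $\omega$, an elliptic equation, giving $u_0 \in C^\infty$ locally on $\omega$; then propagate this regularity in $t$ off of $\omega$ using the conservation-law structure and the subcritical well-posedness, obtaining $u \in C^0([0,T],H^2)$ say, which suffices since $2 > 3/2$. (In fact one likely needs to be more careful and run the propagation-of-regularity argument from \cite{LL24} or an analog of it, which is precisely why the abstract machinery exists; I would cite the relevant propagation statement from \cref{s:abstrNLSIntro} rather than redo it.) Once \cref{thm:NLS-analytic} applies, we conclude $t \mapsto u(t,\cdot)$ is analytic from $(0,T)$ into $H^2(\M)$.

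The third and final step is the static unique continuation. By analyticity in $t$, the Taylor expansion of $u$ at any $t_0\in(0,T)$ converges in $H^2(\M)$; since $\partial_t u = 0$ on $\omega$ for all $t\in(0,T)$, all time-derivatives $\partial_t^k u(t_0,\cdot)$ vanish on $\omega$ for $k\ge 1$. But $\partial_t^k u$ satisfies, by iterating the equation, an elliptic-type identity: from $i\partial_t u = -\Delta_g u + f(u)$ we get $\partial_t u = i\Delta_g u - if(u)$, and hence $\partial_t u$ is determined by $u$; plugging the vanishing of $\partial_t u$ on $\omega$ back in, $-\Delta_g u + f(u) = 0$ on $\omega$ — but we want it everywhere. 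The key point: $w := \partial_t u$ solves the linear Schrödinger equation $i\partial_t w + \Delta_g w = f'(u)w + \overline{f'(u)}\,\bar w$ with potential coefficients $f'(u), \overline{f'(u)} \in C^0([0,T],H^2)\hookrightarrow L^\infty$ (valid since $H^2 \hookrightarrow L^\infty$ in $d=3$), and $w$ vanishes on $(0,T)\times\omega$. Now apply the \emph{linear} unique continuation theorem for Schrödinger equations with $L^\infty$ (or $L^\infty_{t,x}$, even $L^p$) potentials — e.g. the Carleman-estimate-based results available once one has this regularity, or the results on compact manifolds under GCC — to conclude $w = \partial_t u \equiv 0$ on $(0,T)\times\M$. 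Then $u(t,x) = u(0,x) =: u_\infty(x)$ is stationary, solving $-\Delta_g u_\infty + P'(|u_\infty|^2)u_\infty = 0$ on $\M$. The main obstacle is genuinely the regularity bootstrap in the second step: in the subcritical Bourgain-space framework one does not automatically have $H^s$ with $s>3/2$, and showing that the vanishing of $u$ on $\omega$ forces enough smoothness to invoke \cref{thm:NLS-analytic} requires carefully combining elliptic regularity on $\omega$, finite speed of "information" along the GCC geodesics, and the propagation-of-regularity part of the abstract method. The final elliptic/Carleman unique continuation step, by contrast, is standard once the analyticity and regularity are in hand — indeed, for the stationary equation $-\Delta_g u_\infty + P'(|u_\infty|^2)u_\infty=0$ with $u_\infty$ vanishing on the open set $\omega$, the classical unique continuation for second-order elliptic equations (Aronszajn) directly gives $u_\infty \equiv 0$ on connected $\M$, hence $u\equiv 0$ on $(0,T)\times\M$.
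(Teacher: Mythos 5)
Your overall skeleton (elliptic regularity on $\omega$, propagation of regularity to reach $s>3/2$, then \cref{thm:NLS-analytic}, then a linear unique continuation step for $z=\partial_t u$, then analysis of the stationary equation) matches the paper's proof, and the regularity bootstrap you worry about is indeed handled by citing \cref{prop:propregularityXsb}. But your linear unique continuation step contains a genuine gap: you propose to conclude $\partial_t u\equiv 0$ by invoking ``the linear unique continuation theorem for Schr\"odinger equations with $L^\infty$ potentials \dots or the results on compact manifolds under GCC.'' No such theorem exists under the sole \ref{NLS:assumGCC}: unique continuation for $i\partial_t w+\Delta_g w=V_1w+V_2\bar w$ with merely bounded potentials from a GCC set is exactly the open problem (the \ref{NLS:UCP} assumption of \cite{DGL06,Lau10:NLS3d}) that this paper is written to remove; Carleman-based results require (weak) pseudoconvexity, which is strictly stronger than GCC. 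The correct tool is the Tataru--Robbiano--Zuily--H\"ormander theorem, \cref{thm:UCP-linearschrodinger}, which works from \emph{any} nonempty open set but demands that the potential $f'(u)$ be \emph{analytic in time} --- and this is precisely what \cref{thm:NLS-analytic} delivers and why it is proved. You establish the time-analyticity of $u$ but then fail to use it where it is indispensable, treating the UCP step as a routine $L^\infty$-potential result; as written, that step rests on an unavailable theorem.

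There is also an error at the end: you argue that the stationary limit $u_\infty$ ``vanishes on the open set $\omega$'' and apply Aronszajn-type elliptic unique continuation. The hypothesis only gives $\partial_t u=0$ on $(0,T)\times\omega$, not $u=0$ there, so no vanishing of $u_\infty$ on $\omega$ is available. Once $\partial_t u\equiv0$ on all of $(0,T)\times\M$, the conclusion instead follows from the defocusing sign structure of the cubic nonlinearity in $d=3$: multiplying $-\Delta_g u+P'(|u|^2)u=0$ by $\overline{u}$ and integrating by parts yields a sum of nonnegative terms (with $\al>0$, $\beta\geq 0$) equal to zero, hence $u\equiv 0$ directly --- no elliptic unique continuation is needed, and none would apply without knowing $u$ itself vanishes somewhere.
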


\begin{remark}
    More generally, it is most likely true that the above unique continuation result holds under the more general bilinear estimates \cref{assumBilWP} on $(\M, g)$. Indeed, although not written explicetly in the hypotheses of the corresponding results in \cite{Lau10:NLS3d}, the microlocal propagation machinery applies under such an assumption, which are precisely what we need to treat the low regularity framework. In particular, any of the aforementioned manifolds with $d=3$ satisfy \cref{assumBilWP}.
\end{remark}

Our unique continuation result allows us to obtain some results in non-compact domains. For instance, we can obtain a result for $(\R^2, g)$ equipped with a smooth bounded metric $g$, see \cref{S3:UCP:unbounded} below for precisions.

\begin{proposition}\label{prop:UCP-unbounded}
    Let $\M=\R^2$ be equipped with a smooth metric $g$ bounded above and below by positive constants and whose derivatives are bounded. Let $f$ be as in \ref{NLS:reg:2.1}. Suppose that there exists $R>0$ such that $\R^2\setminus B(0, R)\subset \omega$. Let $u\in C^0([0, T], H^1(\R^2))$ be a solution with finite Strichartz norms\footnote{That is, it belongs to the space that ensures the wellposedness of the equation in the sense of \cite[Section 3, Remark 5]{BGT04}.} of
    \begin{align}
    \left\{\begin{array}{cl}
        i\partial_t u+\Delta_g u=P'(|u|^2)u     &~~ (0, T)\times \R^2,  \\
        \partial_t u=0     &~~ (0, T)\times\omega, 
    \end{array}\right.
    \end{align}
    If $\omega$ satisfies the \ref{NLS:assumGCC}, then the same conclusions of \cref{thm:UCP-NLSdim2} hold.
\end{proposition}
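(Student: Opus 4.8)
The plan is to reduce the unbounded problem on $(\R^2,g)$ to the compact case treated in \cref{thm:UCP-NLSdim2}. The key observation is that, by hypothesis, the observation set $\omega$ contains the exterior of a ball $B(0,R)$, so the solution $u$ vanishes in time (i.e.\ $\partial_t u = 0$) on all of $\R^2\setminus B(0,R)$. This means that the ``interesting'' part of $u$ is confined to the compact set $\overline{B(0,R)}$, and one expects to be able to glue in a flat (or otherwise convenient) metric outside a slightly larger ball and thereby transfer the problem to a compact manifold without boundary.

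Concretely, I would proceed as follows. First, fix $R' > R$ and consider the manifold $\M_0$ obtained by taking $\overline{B(0,R')}\subset \R^2$ and capping it off smoothly — e.g.\ embed $B(0,R')$ isometrically into a large flat torus $\mathbb{T}^2$, or double the disk, obtaining a compact boundaryless surface $(\M_0, g_0)$ that agrees with $(\R^2,g)$ on $B(0,R')$. Since $g$ is bounded above and below with bounded derivatives, this extension can be done so that $g_0$ is a genuine smooth Riemannian metric. Define $\omega_0 := \omega \cap \M_0$, which contains the annular region $B(0,R')\setminus \overline{B(0,R)}$ together with the cap. Next I would check that $\omega_0$ satisfies the \ref{NLS:assumGCC} on $\M_0$: any geodesic of $\M_0$ either stays inside $B(0,R)$ — impossible for all time, since a geodesic segment of length exceeding $2R'$ must exit $B(0,R')$ and hence has passed through the annulus $B(0,R')\setminus\overline{B(0,R)}\subset\omega_0$ — or it enters the region where $g_0\neq g$, which we have arranged to lie inside $\omega_0$. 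Thus in uniformly bounded time every geodesic meets $\omega_0$.

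Then one transplants the solution: since $\partial_t u = 0$ on $(0,T)\times(\R^2\setminus B(0,R))$, the restriction $u_0 := u|_{(0,T)\times \M_0}$ is well-defined and, because $g_0 = g$ on $B(0,R')$ and $u$ is (up to the equation) steady outside $B(0,R)$, $u_0$ solves $i\partial_t u_0 + \Delta_{g_0} u_0 = P'(|u_0|^2)u_0$ on $(0,T)\times\M_0$ — here the only subtlety is near $\partial B(0,R')$ and on the cap, where one uses that $u$ is $t$-independent and that $g_0$ was chosen to agree with a flat reference metric so that $u$ restricted there still solves the (now elliptic, in fact trivial in $t$) equation; more carefully, one may want to first modify $u$ outside $B(0,R)$ to be genuinely constant-in-space or zero so that it extends cleanly, exploiting that $P'$ is the given nonlinearity. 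Having obtained a solution $u_0 \in C^0([0,T], H^1(\M_0))$ with finite Strichartz norms (inherited from $u$ via the finite-speed-of-propagation / local nature of Strichartz estimates, which hold on compact surfaces by \cite{BGT04}) satisfying $\partial_t u_0 = 0$ on $(0,T)\times\omega_0$, \cref{thm:UCP-NLSdim2} applies and gives $\partial_t u_0 = 0$ on $(0,T)\times\M_0$, hence $\partial_t u = 0$ on $(0,T)\times B(0,R)$, and combined with the hypothesis on $\omega$ this yields $\partial_t u = 0$ on all of $(0,T)\times\R^2$; the equilibrium equation \eqref{thm:eq:UCP-NLSdim2} and, under the coercivity hypothesis $P'(r)\geq C$, the vanishing $u\equiv 0$ then follow exactly as in \cref{thm:UCP-NLSdim2}.

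The main obstacle I anticipate is the gluing step — making sure the extended metric $g_0$ is smooth and that the transplanted function genuinely solves the NLS on the compact model, including across the interface $\partial B(0,R')$ and on the added cap. The cleanest route is probably to note that outside $B(0,R)$ the relation $\partial_t u = 0$ forces $u$ to solve the stationary equation $-\Delta_g u + P'(|u|^2)u = 0$ there, an elliptic equation to which one can apply interior elliptic regularity, and then to arrange the extension of $g$ and the definition of $u_0$ in the annulus $B(0,R')\setminus \overline{B(0,R)}$ so that compatibility is automatic; alternatively, since $\partial_t u=0$ on the overlap one loses nothing by simply choosing $R'$ so that the relevant geometric control and solution-transplantation arguments only ever see the region $B(0,R')$ where $g_0=g$, and treating the cap purely as inert geometry inside $\omega_0$. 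A secondary technical point is verifying that the notion of finite Strichartz norms and the associated well-posedness theory of \cite{BGT04} transfer to the compact model, but this is standard for compact surfaces and poses no real difficulty.
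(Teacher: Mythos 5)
Your overall strategy (glue a convenient metric outside a large ball and pass to a compact model, e.g. a torus with fundamental domain containing $B(0,R')$) is the same geometric reduction the paper uses, but the crucial step — producing on the compact model an object to which \cref{thm:UCP-NLSdim2} applies as a black box — has a genuine gap. The restriction $u|_{B(0,R')}$ is not a function on the capped/periodized surface, and there is no way in general to extend it across the cap as a solution of the \emph{same} NLS: outside $B(0,R)$ the solution is stationary and solves $-\Delta_g u+P'(|u|^2)u=0$, and extending it over the cap would amount to solving an overdetermined nonlinear elliptic Cauchy problem (matching both Dirichlet and Neumann data across the interface). Your fallback — "modify $u$ outside $B(0,R)$ to be genuinely constant-in-space or zero" — destroys the equation: if $\chi$ is the cutoff, then $z=\chi u$ satisfies $i\partial_t z+\Delta_g z=\chi f(u)+[\Delta_g,\chi]u$, i.e.\ an NLS with a \emph{cutoff nonlinearity} and a commutator source term, not the equation in \cref{thm:UCP-NLSdim2}. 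This is exactly why the paper does not invoke \cref{thm:UCP-NLSdim2} on the compact model: it periodizes $z=\chi u$, keeps the source terms $\bh_1=(1-\chi)\widetilde\chi u$ and $\bh_2=[\Delta_g,\chi]u$ (which are time-independent, hence trivially analytic in $t$, and spatially regular by elliptic regularity), and applies the abstract \cref{thm:mainabs-analytic} with $\fk=-i\chi f$, followed by propagation of regularity and the Tataru--Robbiano--Zuily--H\"ormander unique continuation as in \cref{thm:UCP-NLSdim3}. Without an argument of this type (or some other way to handle the cutoff terms), the reduction to the compact-surface theorem does not go through.

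A secondary, smaller problem is your verification of the \ref{NLS:assumGCC} on the compact model: the claim that "a geodesic segment of length exceeding $2R'$ must exit $B(0,R')$" is false for a general metric $g$ bounded above and below (there may be trapped geodesics inside $B(0,R)$); the correct argument must use the hypothesis that $\omega$ satisfies the \ref{NLS:assumGCC} for $(\R^2,g)$, noting that geodesics of the model metric starting in $T_{R_1}\setminus\omega_1$ coincide with $g$-geodesics as long as they stay in the region where the glued metric equals $g$, and hence meet the observation region in uniformly bounded time, while all other geodesics start inside it. Finally, the transfer of "finite Strichartz norms" to the modified equation on the compact model also needs a word (the paper controls $\chi f(z^P+\bh_1^P)$ in $L^2([0,T],H^1)$ using the Strichartz bounds of $u$), but this is minor compared to the main gap above.
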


\subsubsection{Control and stabilization contributions} The control and stabilization of nonlinear Schrödinger equations has been shown to be a difficult but highly interesting field of research. We mention, for instance, the works of Rosier and Zhang \cite{RZ09, RZ10} on local control results, and the survey of Laurent \cite{Lau14} for a thorough summary of results up to 2014. Here, we will focus on global results.

Unique continuation often plays a key role in obtaining (semi)global results of this kind. For instance, to stabilize and control the wave equation in the subcritical case, Dehman, Lebeau, and Zuazua \cite{DLZ03} employed a method that combines two main ingredients to obtain the required observability inequality: the geometric control condition and a unique continuation property. Even though they worked on a specific geometry, their strategy is quite general and it has been successfully applied to study the controllability properties of other systems. Notably, for the control and stabilization of the Schrödinger equation on compact surfaces, Dehman, Gérard, and Lebeau \cite{DGL06} made crucial use of the Strichartz estimates due to Burq, Gérard, and Tzvetkov \cite{BGT04} which allowed them to develop microlocal propagation methods adapted to the equation. Their results hold under the following two assumptions on the pair $(\M, \omega)$:
\begin{itemize}
    \item The observation set $\omega\subset\M$ satisfies the \ref{NLS:assumGCC}.
    \item For every $T>0$, the only solution lying in the space $C([0, T], H^1(\M))$ to the system
    \begin{align}\label{NLS:UCP}\tag{UCP}
        \left\{\begin{array}{cl}
            i\partial_t u+\Delta_g u+b_1(t, x)u+b_2(t, x)\overline{u}=0     & (t, x)\in (0, T)\times\M,  \\
            u=0     & (t, x)\in (0, T)\times\omega, 
        \end{array}\right.
    \end{align}
    where $b_1$ and $b_2$ belong to $L^\infty([0, T], L^p(\M))$ for some $p>0$ large enough, is the trivial one $u=0$.
\end{itemize}
The second assumption is a unique continuation property with potentials, referred to as \ref{NLS:UCP} in what follows. Under analogous assumptions, Laurent \cite{Lau10:NLS1d, Lau10:NLS3d} extended their analysis in dimension one and in dimension three by adapting their framework to Bourgain spaces, respectively. It is worth mentioning that whenever $d\geq 2$, in these works the unique continuation property \ref{NLS:UCP} is an assumption, and it is proved to be compatible with the \ref{NLS:assumGCC} in some specific geometries. Here, we obtain the unique continuation property for the nonlinear equation as a consequence of the \ref{NLS:assumGCC}, thereby allowing us to drop the unique continuation assumption \ref{NLS:UCP} from their works and to establish that the \ref{NLS:assumGCC} is a sufficient condition for stabilization and control of these models. Notably, we answer to the positive to the open question stated in \cite[Remark 3]{DGL06} and by \cite{Lau10:NLS3d} in the nonlinear case.

Let $(\M, g)$ be any of the following manifolds:
\begin{itemize}
    \item if $d=2$ then $(\M, g)$ is a compact Riemannian surface,
    \item $\mathbb{T}^3$ or the irrational torus $\R^3/(\theta_1 \Z \times \theta_2 \Z \times \theta_3\Z)$ with $\theta_i \in \R$,
    \item $S^3$ or $S^2\times S^1$.
\end{itemize}
Let $\mc{X}_d$ denote the right functional space inherited from the wellposedness framework. Namely, for $d=2$, $\mc{X}_2$ ensures finite Strichartz norm and for $d=3$, $\mc{X}_3$ corresponds to the Bourgain space $X_T^{1, b}$ with $b\in (1/2, 1]$. This allows us to handle the subcriticality on each situation. To not over complicate the statements below, we refer to \cref{s3:wp} for more precisions.

In what follows, we will relate $\omega$ to a cutoff function $a\in C^\infty(\M, \R)$ such that $\omega=\{x\in \M\ |\ a(x)\neq 0\}$.

\begin{theorem}\label{thm:stab}
    Let $(\M, g)$ be any of the manifolds described above and $f$ be as in \ref{NLS:reg:2} according to the dimension of the manifold. Let $\omega\subset\M$ satisfy the \ref{NLS:assumGCC}. Then for every $R_0>0$, there exist two constants $C>0$ and $\gamma>0$ such that if $\norm{u_0}_{H^1}\leq R_0$ then
    \begin{align*}
        \norm{u(t)}_{H^1}\leq Ce^{-\gamma t}\norm{u_0}_{H^1},\ t>0,
    \end{align*}
    holds for every $u\in\mc{X}_d$ solution of the damped system
    \begin{align*}
        \left\{\begin{array}{cc}
            i\partial_t u+\Delta_g u-a(x)(1-\Delta_g)^{-1}a(x)\partial_t u=P'(|u|^2)u  &~~ (t, x)\in [0, \infty)\times\M,  \\
            u(0)=u_0.     & 
        \end{array}\right.
    \end{align*}
\end{theorem}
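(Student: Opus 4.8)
The plan is to follow the by-now classical Dehman–Lebeau–Zuazua/Dehman–Gérard–Lebeau scheme, combining the linear observability coming from the \ref{NLS:assumGCC} with the nonlinear unique continuation property obtained above (\cref{thm:UCP-NLSdim2}, \cref{thm:UCP-NLSdim3}), and then running a contradiction/compactness argument to get the nonlinear observability estimate that is equivalent to exponential decay. First, I would record the basic energy identity for the damped equation: testing against $\partial_t u$ and taking real parts, one finds
\begin{align*}
    \frac{d}{dt}\left(\norm{\nabla u(t)}_{L^2}^2 + 2\int_\M \tilde P(|u|^2)\,dx\right) = -2\,\norm{(1-\Delta_g)^{-1/2}a(x)\partial_t u}_{L^2}^2 \le 0,
\end{align*}
where $\tilde P$ is a primitive of $P'$, so that the natural energy is nonincreasing and, by the defocusing/coercivity assumptions in \ref{NLS:reg:2}, equivalent to $\norm{u(t)}_{H^1}^2$ on bounded sets. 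Consequently exponential decay is equivalent to an observability estimate of the form $E(u_0)\le C\int_0^T \norm{(1-\Delta_g)^{-1/2}a\,\partial_t u}_{L^2}^2\,dt$ for all data with $\norm{u_0}_{H^1}\le R_0$, uniformly, which by the semigroup property it suffices to prove for one fixed $T>0$ (taken $\ge T_0$ from the \ref{NLS:assumGCC}).

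Second, I would reduce to a \emph{linear-type} observability for $v=\partial_t u$. Differentiating the equation in time, $v$ solves a linear Schrödinger equation with the damping term and with potentials built from $f'(u)$, $f'(\bar u)$ acting on $v$ and $\bar v$; these potentials lie in the right $L^\infty_t L^p_x$ class by the wellposedness theory and the Strichartz/Bourgain bounds (this is exactly where $\mc{X}_d$ and subcriticality enter). One then wants: for data of bounded energy, $\norm{v}_{\text{appropriate space}} \le C\int_0^T\norm{(1-\Delta_g)^{-1/2}a v}_{L^2}^2\,dt$. Argue by contradiction: suppose it fails along a sequence $u_n$ of solutions with $E(u_{0,n})\le R_0$ and $\int_0^T\norm{(1-\Delta_g)^{-1/2}a\,\partial_t u_n}^2\to 0$. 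Extract weak limits; one must propagate strong convergence using the linearizability/propagation-of-compactness results of \cite{DGL06} and \cite{Lau10:NLS3d} (microlocal defect measures, invariance along the bicharacteristic flow, and the \ref{NLS:assumGCC} to conclude the measure vanishes), obtaining a limit solution $u_\infty$ with $\partial_t u_\infty = 0$ on $(0,T)\times\omega$. If the limit is nonzero, it contradicts our nonlinear unique continuation theorem; if it is zero, then after renormalizing by the (vanishing) energy one produces a nonzero solution of a \emph{linear} Schrödinger equation with $L^\infty_t L^p_x$ potentials vanishing on $(0,T)\times\omega$ and bounded energy, which contradicts the linear observability implied by the \ref{NLS:assumGCC} together with the linear \ref{NLS:UCP} — and here crucially the linear \ref{NLS:UCP} for these potential classes is known on precisely the listed manifolds, or can be deduced from the already-established nonlinear statements applied to the linearized flow.

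Third, with the observability in hand I would close the loop: the observability estimate for $\partial_t u$, combined with the energy dissipation identity and a standard Gronwall/iteration over successive time windows of length $T$, yields $E(u(T))\le (1-\delta)E(u_0)$ for some $\delta=\delta(R_0)\in(0,1)$, hence $E(u(nT))\le (1-\delta)^n E(u_0)$, and interpolating with the a priori bound on $[0,T]$ gives $\norm{u(t)}_{H^1}\le Ce^{-\gamma t}\norm{u_0}_{H^1}$ with $\gamma = -\tfrac{1}{T}\log(1-\delta)>0$. The main obstacle is the second step: making the compactness/linearization argument rigorous in the low-regularity space $\mc{X}_d$ (finite Strichartz norm for $d=2$, Bourgain space $X_T^{1,b}$ for $d=3$) so that the weak limits are genuine solutions, that the microlocal defect measure argument applies verbatim, and that the dichotomy "$u_\infty\neq 0$" versus "$u_\infty = 0$, renormalize" really lands in the hypotheses of \cref{thm:UCP-NLSdim2}/\cref{thm:UCP-NLSdim3} respectively — i.e. verifying that the limiting objects have finite Strichartz/Bourgain norms and satisfy $\partial_t u = 0$ on $(0,T)\times\omega$ in the required sense. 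Everything else (the energy identity, the equivalence of energy and $H^1$ on bounded sets, the iteration) is routine once that step is granted.
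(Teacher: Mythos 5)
Your route is the intended one, and in fact the paper's own proof of this theorem is nothing more than a citation: the exponential decay is exactly \cite[Theorem 1]{DGL06} for $d=2$ and \cite[Theorem 2]{Lau10:NLS3d} for $d=3$, which were proved conditionally on the \ref{NLS:assumGCC} plus a unique continuation hypothesis, and the only new input here is that \cref{thm:UCP-NLSdim2} and \cref{thm:UCP-NLSdim3} now supply that hypothesis under the \ref{NLS:assumGCC} alone. Your sketch essentially re-derives the interior of those cited theorems (energy dissipation identity, reduction of decay to an observability estimate, contradiction/compactness via microlocal defect measures and linearizability), which is consistent with, but not required by, the paper's two-line argument.

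One claim in your second step is wrong, and it sits exactly at the point you flag as the main obstacle. In the branch where the weak limit vanishes and you renormalize, you invoke ``the linear \ref{NLS:UCP} for these potential classes is known on precisely the listed manifolds.'' It is not: unique continuation for Schr\"odinger with rough $L^\infty_t L^p_x$ potentials under the sole \ref{NLS:assumGCC} is precisely the open assumption this paper is designed to avoid (Laurent verified it in $d=3$ only for special observation zones via pseudoconvex Carleman weights), and the nonlinear \cref{thm:UCP-NLSdim2}/\cref{thm:UCP-NLSdim3} do not apply to an arbitrary linearized equation with potentials, only to solutions of the nonlinear equation itself. What actually closes that branch in \cite{DGL06} and \cite{Lau10:NLS3d} is that, after dividing by the vanishing energy, the superlinear part of the nonlinearity disappears in the limit (up to a constant zeroth-order term that can be gauged away), so the limit solves the free linear Schr\"odinger equation and Lebeau's observability under the \ref{NLS:assumGCC} suffices; the nonlinear unique continuation is needed only in the other branch, where the limit is a nonzero solution of the nonlinear equation with $\partial_t u=0$ on $(0,T)\times\omega$, and there \cref{thm:UCP-NLSdim2}/\cref{thm:UCP-NLSdim3} apply because the limit inherits the finite Strichartz/Bourgain bounds from the wellposedness theory. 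With that correction your scheme coincides with the proofs behind the cited theorems.
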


\cref{thm:stab} is a direct combination of \cite[Theorem 1]{DGL06} and \cref{thm:UCP-NLSdim2} for $d=2$, and \cite[Theorem 2]{Lau10:NLS3d} and \cref{thm:UCP-NLSdim3} for $d=3$. By time reversibility, stabilization and local control around $0$ allows us to obtain the following semiglobal exact controllability result.

\begin{theorem}\label{thm:stabNLS}
    Let $(\M, g)$ be any of the manifolds described above and $f$ be as in \ref{NLS:reg:2} according to the dimension $d$ of the manifold. Let $\omega\subset\M$ satisfy the \ref{NLS:assumGCC}. Then for every $R_0>0$, there exist $T>0$ and $C>0$ such that if for every $u_0$, $u_1\in H^1(\M)$ with $\norm{u_0}_{H^1}\leq R_0$ and $\norm{u_1}_{H^1(\M)}\leq R_0$, then there exists a control $g\in C^0([0, T], H^1(\M))$ with $\norm{g}_{L^\infty([0, T], H^1(\M))}\leq C$ supported in $[0, T]\times\overline\omega$ such that the unique solution $u\in \mc{X}_d\subset C^0([0, T], H^1(\M))$ of the system
    \begin{align*}
        \left\{\begin{array}{cc}
            i\partial_t u+\Delta_g u=P'(|u|^2)u+g  &~~ (t, x)\in [0, T]\times\M,  \\
            u(0)=u_0,     & 
        \end{array}\right.
    \end{align*}
    satisfies $u(T,\cdot)=u_1$.
\end{theorem}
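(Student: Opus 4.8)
The strategy is the classical one, going back to Dehman--Lebeau--Zuazua and Dehman--Gérard--Lebeau: combine the exponential stabilization from \cref{thm:stab} with a local exact controllability result near the origin, using time reversibility of the equation. First I would invoke the local controllability statement that is part of the cited frameworks (\cite[Theorem 1]{DGL06} for $d=2$ and \cite[Theorem 2]{Lau10:NLS3d} for $d=3$): there exist $\delta>0$ and $T_1>0$ such that any two data in the ball $\{\norm{v}_{H^1}\le\delta\}$ can be joined in time $T_1$ by a solution of the controlled NLS with a control supported in $[0,T_1]\times\overline\omega$ whose size is controlled by the data. This is a perturbative statement around $u=0$, obtained by a fixed-point argument using the linear controllability of the Schrödinger group under the \ref{NLS:assumGCC} together with the subcritical well-posedness in $\mc{X}_d$; I would not reprove it.

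The second ingredient is \cref{thm:stab}: with the damping term $a(x)(1-\Delta_g)^{-1}a(x)\partial_t u$, for data of size $\le R_0$ the solution satisfies $\norm{u(t)}_{H^1}\le C e^{-\gamma t}\norm{u_0}_{H^1}$. Crucially, the damped equation is of the form $i\partial_t u+\Delta_g u = P'(|u|^2)u + g_{\mathrm{damp}}$ with $g_{\mathrm{damp}} = a(x)(1-\Delta_g)^{-1}a(x)\partial_t u$ supported (in $x$) in $\overline\omega$; so a trajectory of the damped system is simultaneously a trajectory of the controlled system with an admissible control, and one checks that $g_{\mathrm{damp}}\in C^0([0,\infty),H^1)$ with the stated bound once the regularity of $u$ in $\mc{X}_d$ is used to control $\partial_t u$ in $H^{-1}$, hence $(1-\Delta_g)^{-1}a\partial_t u$ in $H^1$. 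Then choose the time $T_0^\ast$ (depending on $R_0$, via $C,\gamma$) so large that $C e^{-\gamma T_0^\ast} R_0 \le \delta/2$; starting from any $\norm{u_0}_{H^1}\le R_0$, run the damped equation on $[0,T_0^\ast]$ to steer $u_0$ into the ball of radius $\delta/2$, then apply local controllability on $[T_0^\ast,T_0^\ast+T_1]$ to reach $0$ exactly. This produces a control steering any $u_0$ with $\norm{u_0}_{H^1}\le R_0$ to $0$ in time $T_0^\ast+T_1$, with control norm bounded in terms of $R_0$. By time reversibility of NLS (the map $u(t,x)\mapsto \overline{u(T-t,x)}$ sends solutions to solutions, adjusting the nonlinearity, which is preserved since $P'(|u|^2)u$ is odd and phase-covariant), the same argument run backwards steers $0$ to any target $u_1$ with $\norm{u_1}_{H^1}\le R_0$ in the same time. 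Concatenating the two half-trajectories at the state $0$ and setting $T=2(T_0^\ast+T_1)$ yields a control $g$, supported in $[0,T]\times\overline\omega$, driving $u_0$ to $u_1$; the $L^\infty_tH^1_x$ bound $\norm{g}\le C$ follows from the bounds at each stage, with $C$ depending only on $R_0$. Uniqueness of the solution in $\mc{X}_d$ is part of the well-posedness theory recalled in \cref{s3:wp}.

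The main obstacle, and the only place requiring care beyond bookkeeping, is ensuring that the concatenation of the three pieces (damped decay, local control to $0$, reversed local control from $0$) produces a solution that genuinely lies in $\mc{X}_d$ on the whole interval $[0,T]$, not merely piecewise: one must check that the $\mc{X}_d$-norm (finite Strichartz norm for $d=2$, Bourgain $X_T^{1,b}$-norm for $d=3$) is stable under gluing at a fixed time, which is standard for Strichartz spaces but for Bourgain spaces requires a cutoff-in-time argument and continuity of the $X^{1,b}$-norm in the interval endpoint — a routine but non-trivial point already handled in \cite{Lau10:NLS3d}. A secondary point is that the local controllability time $T_1$ and threshold $\delta$ must be chosen uniformly — this is automatic since they depend only on the geometry $(\M,\omega,g)$ and the fixed nonlinearity, not on $R_0$, so the dependence of $T=2(T_0^\ast+T_1)$ on $R_0$ enters only through the stabilization time $T_0^\ast \sim \gamma^{-1}\log(2CR_0/\delta)$.
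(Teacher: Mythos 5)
Your proposal is correct and follows essentially the same route the paper takes: the paper derives \cref{thm:stabNLS} exactly as you do, by combining the exponential stabilization of \cref{thm:stab} (whose damped trajectory is itself an admissible controlled trajectory) with local exact controllability near $0$ from \cite{DGL06} and \cite{Lau10:NLS3d}, and then concatenating via time reversibility, the functional-setting issues (Strichartz/Bourgain gluing) being handled in those references. No gaps to report.
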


\subsubsection{Literature on unique continuation} As previously described, a main motivation coming from control theory is to prove a statement of the form:
\begin{center}
    $\omega$ satisfies the \ref{NLS:assumGCC}\ $\Longrightarrow$\ Unique continuation property \ref{NLS:UCP} holds.
\end{center}
This is a key step to establish sufficient geometric conditions on the observation zone for the controllability and stabilization results to hold. A classical strategy to prove a unique continuation property for \eqref{eq:NLS} from the observation $\partial_t u=0$ in $(0, T)\times \omega$ is to take time derivative $z=\partial_t u$, which leads to establish \ref{NLS:UCP} with low-regularity potentials involving $V=f'(u)$. Below we review some already known results on unique continuation and explain why this result appears to be difficult to obtain with the current available techniques.

A first approach to unique continuation might be to employ the general theory of Hörmander \cite{Hor63:LPDE1, Hor85ALPDOIII}, where the potential $V$ involving $f'(u)$ has at most the same regularity as $u$. The main feature of Hörmander's theory is that it gives a \emph{geometric} condition on the hypersurface $S=\{\Psi=0\}$, the pseudoconvexity, sufficient for the local unique continuation across $S$. Regarding the regularity of the coefficients, as it is based on Carleman estimates, it is well-suited for potentials with rough regularity. Obtaining a global unique continuation result then requires the propagation of local unique continuation across a well-chosen family of hypersurfaces $S$ verifying the \emph{pseudoconvexity} assumption. However, this leads to a global geometric assumption which is known to be stronger than the \ref{NLS:assumGCC}, see Miller \cite{Mil03}. Furthermore, we point out that Hörmander's theorem is empty in our situation the pseudoconvexity assumptions are never satisfied for the Schrödinger operator. However, Lascar and Zuily \cite{LZ82} showed that Hörmander's theorem hold in the anisotropic case by appropriate modification of the symbol classes and Poisson bracket, by taking into account the anisotropic (or quasi-homogeneous) nature of the Schrödinger operator. We mention the works \cite{Deh84, Isa93, Tat97} for further results in this direction. As expected, the pseudoconvexity assumption is a strong local geometric condition and it naturally leads to a strong global geometric assumption on the observation set $\omega$ in its global version. When it comes to global Carleman estimates with applications to control or inverse problems, we mention \cite{BP02, TX07, MOR08, Lau10:NLS3d}. In particular, a weak pseudoconvexity condition has been proved to be sufficient in \cite{MOR08} for a flat metric. Notably, Laurent \cite{Lau10:NLS3d} employed this pseudoconvexity condition to obtain unique continuation one some compact manifolds of dimension $3$ from some particular observation zones satisfying the \ref{NLS:assumGCC}. Nevertheless, this approach forces to check the (weak)pseudoconvexity condition on each situation we consider, which is mostly impossible in general.

If we want to go beyond the pseudoconvexity assumption, another available unique continuation result in the above is the John-Holmgren theorem. Although it gives better geometric assumptions than weak pseudoconvexity, there are some strong drawbacks to this result. On the one hand, it requires analyticity on all the coefficients of the differential operator under consideration. On the other hand, it is not stable under $C^\infty$ lower order perturbations and a counterexample of Métevier \cite{M:93} showed that a nonlinear version of this theorem does not hold in general.

Regarding the propagation of analyticity for nonlinear equations, several results date back to the 1980s and 1990s. Alinhac-M\'etivier proved in \cite{AM:84,AM:84b} that if $u$ is a regular enough solution of a general nonlinear PDE, the analyticity of $u$ propagates along any hypersurface for which the real characteristics of the linearized operator cross the hypersurface transversally. Subsequently, there has been an intense activity to understand what kind of singularities propagate for nonlinear systems, of which waves  particular cases. It was found that the situation is quite complicated since microlocal analytic singularities do not remain confined to bicharacteristics as in the linear case, but can give rise to nonlinear interactions. For more details, see Godin \cite{G:86} and G\'erard \cite{G:88}. 

In our geometric context, to obtain a global result from local propagation of singularities, the \ref{NLS:assumGCC} would force to propagate from hypersurfaces of the form $S=\{\psi=0\}$ with $\psi(t,x)=\psi(x)$, in which case, the Schödinger operator is never hyperbolic with respect to $S$. Thus the propagation results from \cite{AM:84,AM:84b} do not seem to apply, despite the existence of bicharacteristics transverse to $S$.

The problem is better understood in the $C^\infty$ or $H^s$ setting. 
The first works of propagation of singularities for Schrödinger equations go back to Lascar \cite{Las77} and Boutet de Monvel \cite{BdM75} where they introduce parabolic wavefront set which propagates along the geodesics at fixed time. To study propagation and reflection of singularities on $\R_+^d$, Szeftel \cite{Sze05} develops a paradifferential calculus well-suited to the nonlinear Schrödinger equation. Global results under the \ref{NLS:assumGCC} are known to hold on compact manifolds in the subcritical case via Strichartz estimates and adapted bootstrap arguments \cite{DGL06}. By contrast, adapting such bootstrap arguments in the analytic category seems to be way more complicated: even for $f=0$, local propagation of the analytic wavefront set for Schrödinger relies on microlocal methods and exhibits delicate behavior near glancing and in the presence of a boundary due to infinite speed of propagation, see Robbiano–Zuily \cite{RZ99} and Martinez–Nakamura–Sordoni \cite{MNS10}. It is uncertain if a global propagation of analytic regularity on manifolds, especially with boundary, can be obtained under the same assumptions using the analysis for the linear case.

So far, we have seen that for local unique continuation, there is an interplay in between the geometric restrictions and the regularity of the coefficients of the differential operator. In this direction, in a series of remarkable works by Robbiano \cite{Robbiano:91}, Tataru \cite{Tat95, Tat99}, Robbiano and Zuily \cite{RZ:98} and Hörmander \cite{Hormander:92, Hor:97}, a general unique continuation result that interpolates in between Holmgren and Hörmander's theorem was obtained. In the particular case of the Schrödinger operator $P=i\partial_t+\Delta_g+V$, it states that local unique continuation holds assuming regular enough metric $g$, that the potential depends analytically in the time variable $t$ only and that the hypersurface $S=\{\Psi=0\}$ is non-characteristic to $P$. That is, if $p_2(t, x, \xi_t, \xi_x)=-\sum_{j, k} g^{jk}(x)\xi_{x_j}\xi_{x_k}$ denotes the principal symbol of $P$, where $\xi_t$ is the dual variable to $t$ and $\xi_{x_j}$ is the dual variable to $x_j$, the non-characteristic assumption translates into
\begin{align*}
    p_2(t_0, x_0, d\Psi(t_0, \xi_0))\neq 0\ \Longleftrightarrow\ (\nabla_x\Psi)(t_0, \xi_0)\neq 0.
\end{align*}
This hypothesis is optimal: it only excludes the surfaces that are tangent to $\{t=t_0\}$, for which unique continuation is, in general, not verified. We refer to \cite[Section 5]{FLL24:X} for details on the counterexample. This leads to a global unique continuation result from $(0, T)\times\omega$ for any $T>0$ and any nonempty open $\omega\subset \M$, as long as the potential $V$ depends analytically in the time variable $t$; see \cref{thm:UCP-linearschrodinger} below. Regarding further results in this direction, T'joën \cite{Tjo00} has proved a quasi-homogeneous variant of the Tataru-Robbiano-Zuily-Hörmander theorem. Recently, the analyticity assumption was relaxed to the $2$-Gevrey class by Filippas, Laurent and Léautaud \cite{FLL24} by exploiting the anisotropy of the Schrödinger operator.

Thus, going back to our main motivation, to apply this result in our situation we would need to prove that $V=f'(u)$ is analytic in time, leading to prove the same for $u$. From the point of view of regularity, asking for analyticity is indeed a quite strong hypothesis and a solution $u$ to the nonlinear Schrödinger equation \eqref{eq:NLS}, a priori, has no reason to be analytic in time. This observation underpins the importance of our result \cref{thm:NLS-analytic}, which in particular allows us to obtain unique continuation for analytic nonlinearities and under the sole \ref{NLS:assumGCC} as geometric condition.

\subsection{Abstract frequency-based reconstruction}\label{s:abstrNLSIntro}

Let $T>0$. In this section, we consider the following nonlinear observability system
\begin{align}\label{NLS:eq:nleq-obs}
\left\{\begin{array}{cl}
\partial_t u=Au+\fk(u+\bh_1)+\bh_2&\ \text{ in } [0, T],\\
\bC u(t)=0&\ t\in [0, T],
\end{array}\right.
\end{align}
on a suitable real Hilbert space $X$, where $A$ is a skew-adjoint operator on $X$, $\fk$ is a mapping from $X$ into itself, $\bh_1$ and $\bh_2$ are some applications from $[0, T]$ into $X$, and $\bC$ is a linear bounded observation operator in $X$. 

In what follows, we will make several assumptions that will be enforced towards our main result. The first assumption dictates the class of PDEs we will be working with.

\begin{assump}{1}
\label{NLS:assumA}$A$ is a skew-adjoint operator with domain $D(A)$ on a real separable Hilbert space $X$, so that $A^*A=-A^2$ has a compact resolvent. 
\end{assump}
We will now list some consequences of such an assumption. That $A^*A=-A^2$ is non-negative self-adjoint, allows us to define the Hilbert space $X^{\sigma}=D((A^*A)^{\sigma/2})\hookrightarrow X$ for any $\sigma\in\R$. 

By the spectral theorem, and since $A^*A$ has a compact resolvent, the spectrum of $A^*A$ is real and discrete, allowing us to construct an orthonormal basis of eigenvectors of $A^*A$ in $X$, denoted by $(e_j)_{j\in\N}$ and associated to the nonnegative eigenvalues $(\lambda_j)_{j\in \N}$ (ranged increasingly) with $\lambda_j \longrightarrow +\infty$ as $j\to+\infty$. We introduce the high-frequency projectors $\Q_n$ on the space $\overline{\text{span}\{e_j\}_{j\geq n}}$ and then we set the low-frequency projection $\P_n=I-\Q_n$. Note that $A$ commutes with $\P_n$ and $A\P_n$ is a bounded operator of $X^{\sigma}$ into itself with norm less than $\inn{\ld_n}$.

The parameter $\sigma$ will be fixed from now on. We will use the notation $\P_n X^{\sigma}$ or $\Q_n X^{\sigma}$ for that related image of the Hilbert space endowed with the topology of $X^{\sigma}$.

 Let us introduce some notation. For a given real Banach space $Y$, we denote the ball centered at $0$ of radius $M$ by
\begin{align*}
    \mathbb{B}_M(Y):=\{y\in Y\ |\ \norm{y}_Y\leq M\}.
\end{align*}
 For a given non-empty interval $I\subset \R$, we denote the ball of radius $R$ of $C^0(I, Y)$ by
\begin{align*}
    \B_{R}^{I}(Y)=\{y\in C^0(I, Y)\ |\ \norm{y}_{C^0(I, Y)}\leq R\},
\end{align*}
equipped with the $L^\infty([0, T], Y)$-norm. Building upon this notation, given a non-empty set $\A\subset Y$ we introduce
\begin{align*}
    \B_{R}^{I}(Y, \A)=\{y\in \B_R^I(Y)\ |\ y(t)\in \A\text{ for every }t\in [0, T]\}.
\end{align*}
Furthermore, we introduce the canonical complexification $Y_\mathbb{C}$, defined as the set of elements $y_1+iy_2$, $y_j\in Y$, see \cite[Section 2]{BS:71-polynomials}. We then introduce following notation for the \emph{cylinder} on $Y_\mathbb{C}$,
\begin{align*}
    \mathbb{B}_{M, \delta}(Y)=\{y\in Y_\mathbb{C}\ |\ \norm{\Re(y)}_Y\leq M\ \text{and}\ \norm{\Im(y)}_Y\leq \delta\},
\end{align*}
and similarly on $C^0(I, Y_\mathbb{C})$,
\begin{align*}
    \B_{R, \delta}^{I}(Y):=\{z\in C^0(I, Y_\mathbb{C})\ |\ \forall t\in I,\ \norm{\Re(z(t))}_{Y}\leq R,\ \norm{\Im(z(t))}_{Y}\leq \delta\}.
\end{align*}
The latter space is equipped with the natural $L^\infty([0, T], Y_\mathbb{C})$-norm.

We will make the following assumption on the nonlinearity.

\begin{assump}{2}\label{NLS:assumFholom}
    The nonlinear map $\fk: X^\sigma\to X^\sigma$ belongs to $C^2(X^\sigma)$ and is bounded on the bounded sets of $X^\sigma$. Moreover, its differential map $D\fk: x\in X^\sigma\mapsto D\fk(x)\in \L(X^\sigma)$ is bounded and Lipschitz-continuous on the ball $\mathbb{B}_{4R_0}(X^\sigma)$, for some $R_0>0$.
    
    Furthermore, there exists $\delta>0$ so that $\fk$ can be extended holomorphically from $\mathbb{B}_{4R_0, 2\delta}(X_\mathbb{C}^\sigma)$ into $X_\mathbb{C}^\sigma$. This means that $\fk$ is holomorphic in the interior of this ball and continuous up to the boundary. Moreover, for some $L>0$, the following inequalities hold for any $z$, $z'\in \mathbb{B}_{4R_0, 2\delta}(X^\sigma)$:
    \begin{align}\label{ineq:assumFholom}
        \norm{\fk(z)}_{X_\mathbb{C}^\sigma}\leq L,\ \norm{\fk(z)-\fk(z')}_{X_\mathbb{C}^\sigma}\leq L\norm{z-z'}_{X_\mathbb{C}^\sigma}\ \text{ and }\ \norm{D\fk(z)-D\fk(z')}_{\mc{L}(X_\mathbb{C}^\sigma)}\leq L\norm{z-z'}_{X_\mathbb{C}^\sigma}.
    \end{align}
\end{assump}

We make an observability assumption for solutions of the linear system
\begin{align}\label{NLS:eq:ode-hf-intro}
    \left\{\begin{array}{cc}
        \partial_t w=(A+\Q_n D\fk(v))w, & t\in (0, T], \\
        w(0)=w_0\in \Q_n X^\sigma,& 
    \end{array}\right.
\end{align}
which is uniform with respect to the parameter $n$ and the input $v\in C^0([0, T], X^\sigma)$. Denote by $S_n(v)$ the evolution operator associated to \eqref{NLS:eq:ode-hf-intro} (see \cref{lem:sv-properties} below for a more precise definition).

\begin{assump}{3}
\label{NLS:assumC} Let $R_0>0$, $T>0$ and $n_0\in \N$ be given and let $\mathbb{V}$ be a non-empty subset of $\B_{3R_0}^{[0, T]}(X^\sigma)$. Let $\bC\in\L(X^\sigma)$ be an observation operator. We assume that $t\mapsto S_n(v)(t, 0)$ is observable on $[0, T]$ for any $v\in \mathbb{V}$ and $n\geq n_0$. Moreover, there exists a constant $\mathfrak{C}_{obs}>0$ such that, for any $n\geq n_0$, for all $v\in \mathbb{V}$
\begin{align}\label{assum:observabstract}
    \norm{w_0}_{X^\sigma}^2\leq \mathfrak{C}_{\text{obs}}^2\int_0^T \norm{\bC S_n(v)(t, 0)w_0}_{X^\sigma}^2dt,\quad \forall w_0\in \Q_nX^\sigma.
\end{align}
\end{assump}

The main result is the following.

\begin{theorem}\label{thm:mainabs-analytic}
    Let $T>0$, $R_0>0$, $R_1>0$ and $n_0\in \N$. Let $T^{*}>T$. Let $\A$ be a compact subset of $X^\sigma$ and let $\mc{K}$ be a compact subset of $\B_{R_0}^{[0, T^*]}(X^\sigma)$. Let us further assume that both $\A$ and $\mc{K}$ are stable under the projector $\P_n$. Let $\bh_1\in \B_{R_0}^{[0, T^*]}(X^\sigma, \A)\cap\K$ and $\bh_2\in \B_{R_1}^{[0, T^*]}(X^\sigma, \A)$ be such that they both admit some extension $C^0([0, T^*]+i[-\mu, \mu], X_\mathbb{C}^\sigma)$, respectively, with $\mu>0$, so that the application
    \begin{align*}
        \left\{\begin{array}{rcl}
    (0,T^{*})+i(-\mu,\mu) & \longrightarrow&X^{\sigma}_{\mathbb{C}}\\
      z&\longmapsto& \bh_1(z)
        \end{array}\right.
    \end{align*}
    is holomorphic. We assume the same for $\bh_2$. We assume moreover that $\Re \bh_1(z)\in \mathbb{B}_{R_0}(X^{\sigma})$ for any $z\in [0,T^{*}]+i[-\mu,\mu]$.
    
    Assume that Assumptions \ref{NLS:assumA}, \ref{NLS:assumFholom} (with $R_{0}$) and \ref{NLS:assumC} (with $R_0$, $T$, $n_0$ and $\mathbb{V}=\B_{3R_0}^{[0, T]}(X^\sigma, \A+\A)$) hold.
    Then, any solution $u\in \mc{K}\subset C^{0}([0,T^{*}],X^{\sigma})$ satisfying $u(t)\in \A$ for $t\in [0, T^*]$ and
    \begin{align}\label{NLS:eq:UCabstT*intro}
        \left\{\begin{array}{cr}
             \partial_t u=Au+\fk(u+\bh_1)+\bh_2& \textnormal{ on } [0,T^{*}], \\
             \bC u(t)=0 &\textnormal{ for }t\in [0,T^{*}],
        \end{array}\right.
    \end{align}
    is real analytic in $t$ in $(0,T^*)$ with value in $X^{\sigma}$.
\end{theorem}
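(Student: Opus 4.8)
My plan is to show that $t\mapsto u(t)$ extends to a holomorphic $X^{\sigma}_{\mathbb{C}}$-valued function on a complex strip $(0,T^{*})+i(-\rho,\rho)$ of some fixed half-width $\rho>0$; this is exactly the asserted real-analyticity. First I would reduce this to producing such an extension on a fixed subinterval. Since $\bh_1,\bh_2$ are holomorphic on all of $[0,T^{*}]+i[-\mu,\mu]$, since $\bC u\equiv0$ on $[0,T^{*}]$, and since the compacts $\A,\K$ and Assumptions \ref{NLS:assumA}--\ref{NLS:assumC} are invariant under time translation, one may run the argument on the translates $u(t_0+\cdot)$, $t_0\in[0,T^{*}-T]$ — each satisfying the same hypotheses on $[0,T]$, the extra room $T^{*}>T$ being used here — and cover $(0,T^{*})$. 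So it suffices to obtain analyticity on $(0,T)$, and I henceforth work on $[0,T]$.

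The engine is a Galerkin scheme driven by the observability estimate. Fix $n\ge n_0$ and write $u=v_n+w_n$ with $v_n=\P_n u$ (valued in the finite-dimensional $\P_n X^{\sigma}$) and $w_n=\Q_n u$. As $\A,\K$ are $\P_n$-stable and $u(t)\in\A\cap\mathbb{B}_{R_0}(X^{\sigma})$, one checks $v:=v_n+\bh_1\in\mathbb{V}=\B_{3R_0}^{[0,T]}(X^{\sigma},\A+\A)$, so Assumption \ref{NLS:assumC} supplies the uniform observability of the high-frequency flow $S_n(v)$. Taylor-expanding $\fk$ about $v$ (using the Lipschitz bound on $D\fk$ from Assumption \ref{NLS:assumFholom}), the equation for $w_n$ becomes
\[
  \partial_t w_n=\bigl(A+\Q_nD\fk(v)\bigr)w_n+\Q_n\fk(v)+\Q_n\bh_2+\Q_n r_n,\qquad\norm{r_n}_{X^{\sigma}}\le\tfrac{L}{2}\norm{w_n}_{X^{\sigma}}^{2}.
\]
Expanding $w_n$ by Duhamel against $S_n(v)$, inserting the identity $\bC w_n=-\bC v_n$ (which is precisely $\bC u=0$), and plugging into \eqref{assum:observabstract} yields a reconstruction of the high-frequency part,
\[
  \sup_{[0,T]}\norm{w_n}_{X^{\sigma}}\le C\Bigl(\norm{\bC v_n}_{L^2([0,T],X^{\sigma})}+\sup_{[0,T]}\norm{\Q_n\fk(v)}_{X^{\sigma}}+\sup_{[0,T]}\norm{\Q_n\bh_2}_{X^{\sigma}}+\sup_{[0,T]}\norm{w_n}_{X^{\sigma}}^{2}\Bigr),
\]
with $C$ depending only on $\mathfrak{C}_{\mathrm{obs}},T,L,R_0$ — crucially not on $n$ — since $S_n(v)$ is a uniformly bounded propagator on $X^{\sigma}$ ($A$ skew-adjoint, $\Q_nD\fk(v)$ uniformly bounded). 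Phrased through the bounded left inverse furnished by observability, this expresses $w_n$ itself, via an operator of norm $\le\mathfrak{C}_{\mathrm{obs}}$, in terms of $\bC v_n$ and the source terms.

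The low-frequency part $v_n$ solves the finite-dimensional ODE $\partial_t v_n=Av_n+\P_n\fk(v_n+w_n+\bh_1)+\P_n\bh_2$. Since $\bh_1,\bh_2$ extend holomorphically in time and $\fk$ extends holomorphically on the cylinder $\mathbb{B}_{4R_0,2\delta}(X^{\sigma}_{\mathbb{C}})$, this ODE — once the reconstructed high-frequency term is inserted — admits a holomorphic extension $V_n(z)$. The crux, and the step I expect to be the hardest, is that a priori this extension lives only on a strip of half-width $\approx\norm{A\P_n}^{-1}\approx\inn{\ld_n}^{-1}$, which degenerates as $n\to\infty$: the flow $e^{zA}$ is unbounded for $\Im z\neq0$, so the equation alone gives no uniform strip. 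The way out is a continuity/bootstrap argument on a strip of fixed half-width $\rho$ (depending only on $R_0,R_1,L,\delta,\mu,T,\mathfrak{C}_{\mathrm{obs}}$; in particular $\rho\le\mu$ and $\rho$ chosen small enough that $\Im\bh_1$ stays below $\delta/2$ there): assuming a priori that on the strip $\Re V_n$ stays small and $\Im V_n$ below $\delta/2$ — so that, together with the extended high-frequency term, the argument of $\fk$ remains inside its cylinder of holomorphy — one re-derives strictly better bounds, whence by continuity $V_n$ cannot leave the region and the fixed strip is admissible. The re-derivation uses the Duhamel representation together with the uniform reconstruction above to absorb the high-frequency feedback into the ODE, and this is exactly where the uniform-in-$n$ constant $\mathfrak{C}_{\mathrm{obs}}$ does the essential work — no other hypothesis being at once uniform in $n$ and geometric.

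This produces a family $(V_n)_{n\ge n_0}$ of $X^{\sigma}_{\mathbb{C}}$-valued holomorphic maps on the fixed strip $(0,T)+i(-\rho,\rho)$, uniformly bounded, with $V_n|_{\R}=\P_n u\to u$ uniformly on $[0,T]$ (the range of $u$ is compact in $X^{\sigma}$, so $\Q_n\to0$ uniformly there). A Vitali/three-lines argument — $z\mapsto\norm{V_n(z)-V_m(z)}_{X^{\sigma}_{\mathbb{C}}}$ is subharmonic and small on the real segment, hence small on compact subsets of the strip — shows $(V_n)$ is uniformly Cauchy on compacts, with holomorphic limit $U$ satisfying $U|_{\R}=u$; thus $u$ is real analytic in $t$ into $X^{\sigma}$ on $(0,T)$, and then on $(0,T^{*})$ by the first reduction. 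Two further points occupy the remaining work and are the source of the refinement over \cite{LL24}: $\fk$ being holomorphic only on the bounded cylinder $\mathbb{B}_{4R_0,2\delta}$ (rather than on all of $X^{\sigma}_{\mathbb{C}}$) forces one to keep $\norm{\Im V_n}$ strictly below $\delta$ throughout, so the admissible $\rho$ is also constrained by $\delta$ and $\mu$; and the quadratic remainders $r_n$ must be treated as genuine errors, which is possible because $\norm{r_n}_{X^{\sigma}}=O(\norm{w_n}_{X^{\sigma}}^{2})$ and $\sup_t\norm{\Q_n u(t)}_{X^{\sigma}}\to0$, so the extra factor is absorbed for $n$ large. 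Throughout, linearity and boundedness of $\bC$ on $X^{\sigma}$ are used to turn $\bC u=0$ into $\bC w_n=-\bC v_n$ and to bound the observed parts of the Duhamel terms.
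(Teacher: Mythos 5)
Your overall scaffolding (low/high frequency splitting, reconstruction of $\Q_n u$ from $\bC \P_n u$ via the uniform observability, holomorphy of $\fk$ on the cylinder) matches the paper, but the core analytic step is not viable as you describe it. You propose to continue the low-frequency ODE to complex time $z$ and to obtain, by a bootstrap, a holomorphic extension $V_n$ on a strip of half-width $\rho$ \emph{independent of} $n$, so that you can pass to the limit by Vitali. The obstruction you yourself flag is not removable by the bootstrap: the generator $\P_n A$ is skew-adjoint with spectrum reaching size $\inn{\ld_n}$, so $e^{z\P_n A}$ has norm of order $e^{\inn{\ld_n}|\Im z|}$, and this blow-up occurs already for the \emph{linear, source-free} part of the Duhamel formula. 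Neither the smallness of $\Q_n u$, nor the quadratic remainders, nor the constant $\mathfrak{C}_{\mathrm{obs}}$ can absorb it, because the observability inequality \eqref{assum:observabstract} is a statement about the evolution at \emph{real} times only and gives no control in the imaginary time direction. Worse, your complexified ODE needs the high-frequency feedback $W_n(z)$ at complex times, and there is no way to define it from the equation: on $\Q_n X^\sigma$ the frequencies are unbounded, so $e^{zA}$ is not even a bounded operator for $\Im z\neq 0$. So the family $(V_n)$ with a uniform strip, which your Vitali argument requires, is exactly what cannot be produced this way.

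The paper's proof avoids both difficulties by never complexifying the evolution time and never taking $n\to\infty$. One fixes a single sufficiently large $n$ for which the reconstruction identity $\Q_n u=\mc{R}(\P_n u,\bh_1,\bh_2)$ holds \emph{exactly} (Theorem \ref{thm:holom-split}), with $\mc{R}$ holomorphic in its \emph{functional} input $v\in C^0([0,T],\P_n X^\sigma)$ (complexified in value), not in a complexified time variable. Analyticity in $t$ is then extracted by complexifying the time-\emph{shift} parameter: one studies $s\mapsto v^s=\P_n u(\cdot+s+\nu)$ as the solution of an ODE in the Banach space $C^0([0,T],\P_n X^\sigma)$ whose right-hand side $\P_n A\,\widetilde v+\P_n\fk(\widetilde v+\mc{R}(\widetilde v,\bh_1^s,\bh_2^s)+\bh_1^s)+\P_n\bh_2^s$ is holomorphic in $(s,\widetilde v)$, because $\bh_1^z,\bh_2^z$ are holomorphic in $z$ and $\mc{R}$ is holomorphic in $\widetilde v$. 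The Cauchy--Lipschitz theorem with holomorphic dependence then gives analyticity of $s\mapsto v^s$ on a radius $\rho'$ that may depend on $n$ --- harmlessly, since $n$ is fixed --- and $u^s=v^s+\mc{R}(v^s,\bh_1^s,\bh_2^s)$ inherits it; evaluating at a point recovers analyticity of $t\mapsto u(t)$. If you want to salvage your write-up, you need to replace the complex-time continuation and the $n\to\infty$/Vitali step by this fixed-$n$, shifted-parameter argument (or supply some genuinely new mechanism giving an $n$-uniform strip, which your current bootstrap does not).
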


This result is a next natural step of the technique introduced by Laurent and the author in \cite{LL24}, which can be seen as a sort of finite-time adaptation of an abstract result due to Hale and Raugel \cite{HR03} in the context of dynamical systems. Here, we relax the compactness assumption on $\fk$ by introducing the uniform high-frequency observability Assumption \ref{NLS:assumC}. This assumption permits us to face the lack of compactness of the nonlinearity by considering the linearization around low frequencies and translating a big part of the analysis into this linear problem. In turn, the compactness hypotheses are now put onto the solution itself. 

We thus follow the same strategy of proof as in \cite{LL24}, but we perform a finer analysis regarding the behaviour of the nonlinearity and the linearized systems involved. In practice, this allow us to consider more general analytic nonlinearities whereas the compactness properties will depend on the equation under study, mostly obtained through smoothing effects or propagation of regularity. 

\subsection{Outline of the article} \cref{sec:NLSabstract-construction} is devoted to the proof of the abstract \cref{thm:mainabs-analytic}. \cref{sec:nls_uc} contains the applications to the nonlinear Schrödinger equation. It contains the verification that the abstract \cref{thm:mainabs-analytic} can be applied in the settings aforementioned for a sufficiently high regularity index $\sigma$. It also contains some propagation of regularity arguments that allow to reach this regularity $\sigma$ starting from the energy space. In \cref{A:eveq}, we gathered some analysis results, including complex analysis in Banach spaces. In \cref{A:pseudodiff} we recall some basic facts about pseudodifferential operators.

\subsubsection*{Notation} Given Banach spaces $X$ and $Y$, we denote by $\mc{L}(X, Y)$ the Banach space of all bounded linear operators from $X$ to $Y$. Sometimes to clarify the difference in between real and complex structures, if $X$ and $Y$ are complex Banach spaces, we will denote by $\mc{L}_\mathbb{C}(X, Y)$ the Banach space of bounded linear operators which are $\mathbb{C}$-linear with the inherited complex structure. Given two quantities $A$ and $B$, we will sometimes write $A\lesssim B$ to say that there exists a constant $C>0$, independent of $A$ and $B$ but possibly depending on other parameters, such that $A\leq CB$.

\subsection{Acknowledgment} I would like to warmly thank Camille Laurent for carefully reading an earlier version of this article and suggesting countless improvements, as well as for the helpful discussions, encouragement, and patient guidance.

This project has received funding from the European Union's Horizon 2020 research and innovation programme under the Marie Sk\l{}odowska-Curie grant agreement No 945332.

\section{Abstract analytic reconstruction}\label{sec:NLSabstract-construction}

\subsection{Sketch of the strategy} The aim of this section is to prove \cref{thm:mainabs-analytic}. For the reader's convenience, we will briefly outline its proof, based on a generalized Galerkin decomposition introduced in Hale-Raugel \cite{HR03} and Laurent and the author \cite{LL24}. 

In this section we will employ the notations introduced in \cref{s:abstrNLSIntro}. Let $T>0$ and $\sigma\geq 0$ be fixed parameters from now onward, unless specified otherwise. Let $u=u(t)$ be a mild solution of \eqref{NLS:eq:UCabstT*intro} in $C^0([0, T], X^\sigma)$ and suppose $\bh_1=0$, $\bh_2=0$ for simplicity. Recall that, from Assumption \ref{NLS:assumA}, we have the low and high-frequency projections $\P_n$ and $\Q_n=I-\P_n$, which allows us to consider the splitting
\begin{align*}
    u(t)=\P_n u(t)+\Q_nu(t)=v(t)+w(t),
\end{align*}
where $\big(v, w\big)$ solves the following system
\begin{align*}
    \left\{\begin{array}{rl}
         \partial_t v(t)&=Av(t)+\P_n \fk(v+w),  \\
         \partial_t w(t)&=Aw(t)+\Q_n \fk(v+w), \\
         \bC w(t)&=-\bC v(t).
    \end{array}\right.
\end{align*}
By Duhamel's formula, the high-frequency component $w$ can be written as
\begin{align*}
    w(t)=e^{tA}w(0)+\int_0^t e^{A(t-s)}\Q_n \fk\big(v(s)+w(s)\big)ds.
\end{align*}
The observation condition $\bC w=-\bC v$ suggests that given $v$, we can reconstruct $w$ by solving the corresponding nonlinear observability system. Since here we aim to relax compactness assumptions on the nonlinearity $\fk$, we cannot directly setup a fixed point argument as in \cite{LL24}. To face this lack of compactness on $\fk$, we will consider a linearization of the component $w$ along the component $v$ of the solution. Let us introduce $\H: X^\sigma\times X^\sigma\to X^\sigma$ defined by
\begin{align*}
    \H(v, w):=\int_0^1 [D\fk(v+\tau w)-D\fk(v)]wd\tau.
\end{align*}
Formally, by writing $\fk(v+w)=D\fk(v)w+\fk(v)+\H(w, v)$, we are led to study the nonlinear observability problem at high-frequency
\begin{align*}
    \left\{\begin{array}{l}
         \partial_t w(t)=\big(A+\Q_nD\fk(v)\big)w+\Q_n\big(\fk(v)+\H(v, w)\big), \\
         \bC w(t)=-\bC v(t).
    \end{array}\right.
\end{align*}
Due to the observability condition $\bC w=-\bC v$, we can regard the \emph{low-frequency} part as an input for the \emph{high-frequency system}. In order to set up the high-frequency reconstruction, the first part of Assumption \ref{NLS:assumFholom} will allow us to justify that Duhamel's formula holds, including the potential $\Q_nD\fk(v)$ in the linear part of the equation. Then, further enforcing the uniform high-frequency observability Assumption \ref{NLS:assumC}, we will argue that a \emph{linear} reconstruction is possible. If we momentarily forget about the source term, this says that we can reconstruct $w(0)$ in terms of the observation of $w$, which is $\bC w=-\bC v$. This linearization will then allow us to tackle the nonlinear observability problem by means of an adequate fixed point argument at high-frequency for inputs belonging to a compact set. By using Duhamel's formula, this will yield a reconstruction operator $\mc{R}$, defined in some appropriate spaces, such that $\Q_n u=\mc{R}(\P_n u)$ for a large enough frequency threshold $n\in \N$.

Then, the solution $u$ can be represented as $u(t)=v(t)+\mc{R}(v)(t)$, where $v$ solves
\begin{align}\label{intro:eq-low-split}
    \partial_t v=Av+\P_n \fk(v+\mc{R}(v)).
\end{align}
To demonstrate that $t\in (0, T)\mapsto u(t)\in X^\sigma$ is analytic, the second part of Assumption \ref{NLS:assumFholom}, namely, that $\fk$ admits a holomorphic extension, and a compactness assumption on $u$ are essential. This will be achieved by establishing that $t\mapsto v(t)$ and $v\mapsto \mc{R}(v)$ are both analytic maps. If instead, we consider \eqref{intro:eq-low-split} as a differential equation on the space Banach space $C^0([0, T], \P_n X^\sigma)$, classical ODEs theory imply that $t\mapsto v(t)$ is as smooth as $\fk$, and therefore analytic. The uniform contraction principle further ensures that $\mc{R}$ depends analytically on $v$, from which the result follows.

In what follows, we develop these ideas towards the proof of the main theorem. Although most intermediate results will treat $v$ as a generic input, in the end we will consider a low-frequency input plus a parameter which is not necessarily low-frequency. For the sake of convenience, we keep this notation.

\subsection{Preliminaries} Let us make the following intermediary assumption on the nonlinearity.

\begin{assump}{2a}\label{assumF}
    The nonlinear map $\fk: X^\sigma\to X^\sigma$ belongs to $C^2(X^\sigma)$ and is bounded on the bounded sets of $X^\sigma$. Moreover, its differential map $D\fk: x\in X^\sigma\mapsto D\fk(x)\in \L(X^\sigma)$ is bounded and Lipschitz-continuous on the ball $\mathbb{B}_{4R_0}(X^\sigma)$, for some $R_0>0$.
\end{assump}

Let $s\in [0, T)$ and $n\in \N$. For $v\in C^0([s, T], X^\sigma)$ given, we consider the linear equation
\begin{align}\label{eq:ode-hf-1}
    \left\{\begin{array}{cc}
        \partial_t w=(A+\Q_n D\fk(v))w, & t\in (s, T], \\
        w(s)=w_s.& 
    \end{array}\right.
\end{align}
Our first task is to justify that, for any $w(s)\in \Q_n X^\sigma$, \eqref{eq:ode-hf-1} has a unique mild solution characterized by Duhamel's formula
\begin{align}\label{eq:duhamel-hf-1}
    w(t):=e^{(t-s)A}w_s+\int_{s}^t e^{(t-\tau)A}\Q_nD\fk(v(\tau))w(\tau)d\tau.
\end{align}
This will lead us to introduce a map $S_n: v\mapsto S_n(v)$, where $S_n(v)$ is the evolution operator, in the sense of \cite[Chapter 5, Definition 5.3]{Pazy82}, characterized by formula \eqref{eq:duhamel-hf-1} above as the unique mild solution of \eqref{eq:ode-hf-1} with $S_n(v)(\cdot, s)w_s=w(\cdot)$. We make this precise in the following lemma.

\begin{lemma}\label{lem:sv-properties}
    Let $R_0>0$ and $T>0$. Let $n\in \N$. Under Assumptions \ref{NLS:assumA} and \ref{assumF} (with $R_0$), for each $v\in C^0([s, T], X^\sigma)$, the map $(t, s)\mapsto S_n(v)(t, s)$ is a linear evolution operator for $0\leq s\leq t\leq T$ and Duhamel's formula \eqref{eq:duhamel-hf-1} holds. Moreover, for $s\in [0, T)$, the map
    \begin{align}\label{eq:sv-lip}
        S_n: v\in \B_{3R_0}^{[s, T]}(X^\sigma)\longmapsto S_n(v)(\cdot, s)\in \L(\Q_nX^\sigma, C^0([s, T], \Q_n X^\sigma)),
    \end{align}
    is uniformly bounded and Lipschitz-continuous of constant $C>0$ (independent of $n\in \N$): for any $v_1$, $v_2\in \B_{3R_0}^{[s, T]}(X^\sigma)$ it holds
    \begin{align}\label{ineq:sv-lip}
        \norm{S(v_1)(\cdot, s)-S(v_2)(\cdot, s)}_{\L(\Q_n X^\sigma, C^0([s, T], \Q_n X^\sigma))}\leq C\norm{v_1-v_2}_{C^0([s, T], X^\sigma)}.
    \end{align}
\end{lemma}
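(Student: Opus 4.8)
The plan is to treat \eqref{eq:ode-hf-1} as a linear ODE on the Banach space $\Q_n X^\sigma$ with a time-dependent bounded generator, and use the Picard iteration / Duhamel fixed-point scheme to get existence, uniqueness, the evolution property, and the Lipschitz dependence on $v$ all at once. First I would observe that under Assumption \ref{NLS:assumA} the operator $A$ restricted to $\Q_n X^\sigma$ generates a $C^0$-group of isometries (since $A$ is skew-adjoint and commutes with the spectral projectors $\Q_n$), and that $\|e^{tA}\|_{\L(X^\sigma)}=1$ for all $t$. Next, under Assumption \ref{assumF}, for any $v\in\B_{3R_0}^{[s,T]}(X^\sigma)$ the map $\tau\mapsto \Q_n D\fk(v(\tau))$ is continuous from $[s,T]$ into $\L(X^\sigma)$ and uniformly bounded by a constant $M:=\sup_{\|x\|_{X^\sigma}\le 3R_0}\|D\fk(x)\|_{\L(X^\sigma)}<\infty$ (note $3R_0\le 4R_0$, so we are inside the ball where the bound of Assumption \ref{assumF} applies). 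Then the right-hand side of \eqref{eq:duhamel-hf-1} defines an affine map $\Phi_{v,w_s}$ on $C^0([s,T],\Q_nX^\sigma)$ whose linear part has operator norm bounded by $M(t-s)$ on $C^0([s,t],\cdot)$; a standard argument (either a weighted sup-norm $e^{-\lambda(t-s)}$, or iterating so that the $k$-th iterate contracts with constant $(M T)^k/k!$) shows $\Phi_{v,w_s}$ has a unique fixed point $w\in C^0([s,T],\Q_nX^\sigma)$. This is the mild solution; setting $S_n(v)(t,s)w_s:=w(t)$ defines a bounded linear operator with $\|S_n(v)(t,s)\|_{\L(\Q_nX^\sigma)}\le e^{M(t-s)}\le e^{MT}$ by Grönwall applied to $\|w(t)\|_{X^\sigma}$, uniformly in $n$.

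For the evolution property, I would check the two defining identities of \cite[Ch.~5, Def.~5.3]{Pazy82}: $S_n(v)(s,s)=I$ is immediate from \eqref{eq:duhamel-hf-1}, and $S_n(v)(t,r)=S_n(v)(t,s)S_n(v)(s,r)$ for $r\le s\le t$ follows by noting that both sides, as functions of $t$, solve the same Duhamel equation \eqref{eq:duhamel-hf-1} on $[s,T]$ with the same initial datum $S_n(v)(s,r)w_r$ at time $s$, hence coincide by the uniqueness just established; strong continuity of $(t,s)\mapsto S_n(v)(t,s)$ is inherited from the continuity of the fixed point in its data. For Duhamel's formula itself, there is nothing further to prove: \eqref{eq:duhamel-hf-1} is precisely the fixed-point equation defining $w$.

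For the Lipschitz estimate \eqref{ineq:sv-lip}, given $v_1,v_2\in\B_{3R_0}^{[s,T]}(X^\sigma)$ and $w_s\in\Q_nX^\sigma$, write $w_i=S_n(v_i)(\cdot,s)w_s$ and subtract the two Duhamel formulas:
\begin{align*}
    w_1(t)-w_2(t)=\int_s^t e^{(t-\tau)A}\Q_n\big[D\fk(v_1(\tau))-D\fk(v_2(\tau))\big]w_1(\tau)\,d\tau+\int_s^t e^{(t-\tau)A}\Q_n D\fk(v_2(\tau))\big[w_1(\tau)-w_2(\tau)\big]\,d\tau.
\end{align*}
Using $\|e^{(t-\tau)A}\|=1$, $\|\Q_n\|=1$, the Lipschitz bound on $D\fk$ from Assumption \ref{assumF} (constant, say, $L_0$ on $\mathbb{B}_{4R_0}(X^\sigma)$), the uniform bound $\|w_1(\tau)\|_{X^\sigma}\le e^{MT}\|w_s\|_{X^\sigma}$, and the bound $\|D\fk(v_2(\tau))\|\le M$, we get
\begin{align*}
    \|w_1(t)-w_2(t)\|_{X^\sigma}\le L_0 e^{MT}T\|w_s\|_{X^\sigma}\|v_1-v_2\|_{C^0([s,T],X^\sigma)}+M\int_s^t\|w_1(\tau)-w_2(\tau)\|_{X^\sigma}\,d\tau,
\end{align*}
and Grönwall's inequality gives $\|w_1-w_2\|_{C^0([s,T],X^\sigma)}\le C\|w_s\|_{X^\sigma}\|v_1-v_2\|_{C^0([s,T],X^\sigma)}$ with $C=L_0 T e^{2MT}$, independent of $n$. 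Dividing by $\|w_s\|_{X^\sigma}$ and taking the supremum over $w_s$ yields \eqref{ineq:sv-lip}; uniform boundedness of the map \eqref{eq:sv-lip} is the $\|S_n(v)(\cdot,s)\|\le e^{MT}$ bound already obtained. I do not expect any genuine obstacle here: the only points requiring a little care are keeping all constants ($M$, $L_0$, the exponential factors) independent of $n$ — which is automatic because $\|e^{tA}\|=\|\Q_n\|=1$ and the bounds on $\fk$ from Assumption \ref{assumF} do not see $n$ — and making sure the $C^0$-in-time (rather than merely measurable) regularity of $v$ is used so that $\tau\mapsto D\fk(v(\tau))$ is norm-continuous, which is what legitimizes the Pazy evolution-operator framework.
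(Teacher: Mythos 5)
Your proposal is correct and follows essentially the same route as the paper: a Picard/Duhamel fixed-point construction (with the factorial contraction of the iterates), Gr\"onwall for the uniform bound, and subtraction of the two Duhamel formulas plus Gr\"onwall for the Lipschitz estimate \eqref{ineq:sv-lip}, with all constants independent of $n$ because $\norm{e^{tA}}=\norm{\Q_n}=1$ and Assumption \ref{assumF} is blind to $n$. The only differences are cosmetic (which solution carries the $D\fk(v_1)-D\fk(v_2)$ term in the difference identity, and your explicit verification of the evolution-operator composition law via uniqueness, which the paper leaves implicit).
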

\begin{proof}
    Let $\F$ be the map given by
    \begin{align}\label{eq:Fduhamel}
        (\F w)(t)=e^{(t-s)A}w_s+\int_{s}^t e^{(t-\tau)A}\Q_nD\fk(v(\tau))w(\tau)d\tau,\ t\in [s, T]. 
    \end{align}
    From Assumption \ref{assumF} and $v\in C^0([s, T], X^\sigma)$, the map $\F$ is well defined from $C^0([s, T], \Q_nX^\sigma)$ into itself. Let $M$ be the uniform bound of the map $v\mapsto D\fk(v)$ on $\mathbb{B}_{4R_0}(X^\sigma)$. We now claim that for every $t\in [s, T]$ it holds
    \begin{align}\label{ineq:Fkbound}
        \norm{\F^k w-\F^k \widetilde{w}}_{C^0([s, t], \Q_nX^\sigma)}\leq \dfrac{(CM(t-s))^k}{k!}\norm{w-\widetilde{w}}_{C^0([s, t], \Q_nX^\sigma)},
    \end{align}
    for each $k\in \N$. From the Duhamel formula, we get that
    \begin{align*}
        \norm{\F w-\F \widetilde{w}}_{C^0([s, t], \Q_nX^\sigma)}\leq CM(t-s)\norm{w-\widetilde{w}}_{C^0([s, t], \Q_nX^\sigma)}.
    \end{align*}
    Assume that \eqref{ineq:Fkbound} is true for some $k\in\N$. Then, by writing $\F^{k+1} w-\F^{k+1} \widetilde{w}=\F(\F^k w)-\F(\F^k\widetilde{w})$, using Duhamel's formula \eqref{eq:Fduhamel} again
    \begin{align*}
        \norm{\F^{k+1} w-\F^{k+1} \widetilde{w}}_{C^0([s, t], \Q_nX^\sigma)}&\leq CM\int_s^t \norm{\F^k w-\F^k\widetilde{w}}_{C([s, \tau], \Q_n X^\sigma)}d\tau\\
        &\leq \dfrac{(CM)^{k+1}}{k!}\int_s^t (\tau-s)^k\norm{w-\widetilde{w}}_{C^0([s, t], \Q_nX^\sigma)}d\tau\\
        &\leq \dfrac{(CM(t-s))^{k+1}}{(k+1)!}\norm{w-\widetilde{w}}_{C^0([s, t], \Q_nX^\sigma)}.
    \end{align*}
    The claim follows by induction.
    
    Choosing $k\in \N$ such that $\tfrac{(CM(T-s))^k}{k!}<1$, by a generalization of the contraction principle, $\F$ admits a unique fixed point $w\in C^0([s, T], \Q_nX^\sigma)$ which satisfies \eqref{eq:duhamel-hf-1}. A classical Gronwall's lemma argument (used below) shows uniqueness of the solutions. This way, we introduce the map $S_n$ as precised in \eqref{eq:sv-lip} which maps each $v\in C^0([s, T], X^\sigma)$ into the evolution operator $S_n(v)$ characterized by \eqref{eq:duhamel-hf-1}.
    
    Let $w_s\in \Q_n X^\sigma$. By Duhamel's formula \eqref{eq:duhamel-hf-1}, for $s\leq t\leq T$ we have the estimate
    \begin{align*}
        \norm{w(t)}_{X^\sigma}\leq \norm{e^{(t-s)A}}_{\mc{L}(X^\sigma)}\norm{w_s}_{X^\sigma}+\int_{s}^t \norm{e^{(t-\tau)A}}_{\mc{L}(X^\sigma)}\norm{\Q_n D\fk(v(\tau))}_{\mc{L}(X^\sigma)}\norm{w(\tau)}_{X^\sigma}d\tau
    \end{align*}
    and so Gronwall's lemma implies
    \begin{align*}
        \norm{S_n(v)(t, s)w_s}_{X^\sigma}\leq C\norm{w_s}_{X^\sigma}\exp\left(C{\int_{s}^t \norm{D\fk(v(\tau))}_{\mc{L}(X^\sigma)}d\tau}\right).
    \end{align*}
    By Assumption \ref{assumF}, there exists $M=M(R_0)>0$ so that $\sup_{s\in [0, T]}\norm{D\fk(v(s))}_{\mc{L}(X^\sigma)}\leq M$, uniformly in $v\in \B_{3R_0}^{[0, T]}(X^\sigma)$. We then obtain
    \begin{align*}
        \norm{S_n(v)(\cdot, s)w_s}_{C^0([s, T], \Q_nX^\sigma)}\leq C\exp(CTM)\norm{w_s}_{X^\sigma},
    \end{align*}
    uniformly for $v\in \B_{3R_0}^{[s, T]}(X^\sigma)$. The linearity of the map $w_s\mapsto S_n(v)(\cdot, s)w_s$ implies that \eqref{eq:sv-lip} is bounded.
    
    Let $v_1$, $v_2\in \B_{3R_0}^{[s, T]}(X^\sigma)$ and set $w^i(t)=S(v_i, t, s)w_s$ for $w_s\in \Q_n X^\sigma$ for $i=1, 2$. In the same fashion as before, we employ Duhamel's formula to write
    \begin{align*}
        w^1(t)-w^2(t)&=(S(v_1)(t, s)-S(v_2)(t, s))w_s\\
        &\begin{multlined}=\int_{s}^t e^{(t-\tau)A}\Q_n[D\fk(v_1(\tau))-D\fk(v_2(\tau))]w^2(\tau)d\tau\\+\int_{s}^t e^{(t-\tau)A}\Q_nD\fk(v_1(\tau))(w^1(\tau)-w^2(\tau))d\tau.\end{multlined}
    \end{align*}
    Thus, by taking $X^\sigma$-norm and using the Lipschitz continuity of $D\fk$ given by Assumption \ref{assumF}, we get
    \begin{align*}
        \norm{w^1(t)-w^2(t)}_{X^\sigma}&\leq \begin{multlined}[t] C\int_{s}^t\big( \norm{D\fk(v_1(\tau))-D\fk(v_2(\tau))}_{\L(X^\sigma)}\norm{w^2(\tau)}_{X^\sigma}dt\\+\norm{D\fk(v_1(\tau))}_{\L(X^\sigma)}\norm{w^1(\tau)-w^2(\tau)}_{X^\sigma}\big)d\tau \end{multlined}\\
        &\leq \begin{multlined}[t]C(T-s)M\norm{w_s}_{X^\sigma}\norm{v_1-v_2}_{C^0([s, T], X^\sigma)}\\+C\int_{s}^t \norm{D\fk(v_1(\tau))}_{\L(X^\sigma)}\norm{w^1(\tau)-w^2(\tau)}_{X^\sigma}d\tau.\end{multlined}
    \end{align*}
    Gronwall's inequality implies that for any $w_s\in\Q_n X^\sigma$
    \begin{align*}
        \norm{(S(v_1)(t, s)-S(v_2)(t, s))w_s}_{X^\sigma}\leq C(T-s)M e^{C(T-s)M}\norm{w_s}_{X^\sigma}\norm{v_1-v_2}_{C^0([s, T], X^\sigma)}.
    \end{align*}
    Using that the previous bound is independent of $t\in [s, T]$, we deduce estimate \eqref{ineq:sv-lip} with $C$ depending on $M$, $R_0$ and $T-s$. 
\end{proof}

\begin{remark}
    The previous lemma shows that for any $T>s\geq 0$ and $v\in \B_{3R_0}^{[s, T]}(X^\sigma)$, \eqref{eq:ode-hf-1} admits a unique mild solution $w:=S_n(v)(\cdot, s)w_s\in C^0([s, T], \Q_n X^\sigma)$.
\end{remark}

Our task now is to justify that Duhamel's formula holds for the equation
\begin{align}\label{eq:ode-hf-2}
    \left\{\begin{array}{cc}
        \partial_t w(t)=(A+\Q_n D\fk(v(t)))w(t)+\Q_nh(t), & t\in (s, T], \\
        w(s)=w_s,& 
    \end{array}\right.
\end{align}
when $h\in L^1([s, T], X^\sigma)$ is an appropriate source term and $w_s\in \Q_n X^\sigma$.
\begin{remark}
    From now on, every time we refer to equations of the form \eqref{eq:ode-hf-2}, we will understand them in the sense of mild solutions (i.e. whose solutions are defined through Duhamel's formula). Indeed, since $v\in C^0([s, T], X^\sigma)$, even for regular data $w_s\in D(A)\cap \Q_n X^\sigma$ and right-hand side $h\in C^0([s, T], X^\sigma)$, we cannot ensure that \eqref{eq:ode-hf-2} admits a classical solution.
\end{remark}

\begin{lemma}\label{lem:duhamel-h}
    Let $n\in \N$. Let $v\in C^0([s, T], X^\sigma)$. If $w_s\in \Q_n X^\sigma$ and $h\in L^1([s, T], X^\sigma)$, then $w\in C^0([s, T], \Q_nX^\sigma)$ is a mild solution of \eqref{eq:ode-hf-2} if and only if
     \begin{align}\label{lem:eq:duhamel-rhs}
         w(t)=S_n(v)(t, s)w_s+\int_{s}^t S_n(v)(t, \tau)\Q_n h(\tau)d\tau,\ t\in [s, T].
     \end{align}
     Moreover, if $v\in \B_{3R_0}^{[0, T]}(X^\sigma)$, there exists $C>0$ (independent of $n$) such that
     \begin{align}\label{lem:eq:rhs-est}
         \norm{w(t)}_{C^0([s, T], X^\sigma)}\leq C\big(\norm{w_s}_{X^\sigma}+\norm{\Q_n h}_{L^1([s, T], X^\sigma)}\big).
     \end{align}
\end{lemma}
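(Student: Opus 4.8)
The plan is to treat this as a standard variation-of-constants (Duhamel) argument for a linear evolution equation whose ``free'' part already has an evolution operator $S_n(v)$, supplied by \cref{lem:sv-properties}. First I would address the equivalence: given the mild-solution formulation of \eqref{eq:ode-hf-2}, the only reasonable definition of a mild solution must already be \eqref{lem:eq:duhamel-rhs} (as the Remark preceding the statement indicates, we \emph{define} solutions of \eqref{eq:ode-hf-2} through Duhamel's formula). So the real content of the ``if and only if'' is an internal consistency check: I would show that the function $w$ defined by the right-hand side of \eqref{lem:eq:duhamel-rhs} coincides with the mild solution of the integral equation
\begin{align*}
    w(t)=e^{(t-s)A}w_s+\int_s^t e^{(t-\tau)A}\Q_n\big(D\fk(v(\tau))w(\tau)+h(\tau)\big)\,d\tau,
\end{align*}
and conversely. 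This is done by substituting the definition of $S_n(v)$ via \eqref{eq:duhamel-hf-1} into \eqref{lem:eq:duhamel-rhs}, exchanging the order of integration in the resulting double integral (Fubini, justified since everything is continuous in $\tau$ except $h\in L^1$, and $e^{\tau A}$ is a bounded semigroup), and using the semigroup property $e^{(t-\tau)A}e^{(\tau-r)A}=e^{(t-r)A}$ together with the evolution-operator identity $S_n(v)(t,\tau)S_n(v)(\tau,r)=S_n(v)(t,r)$ to recollect the terms; the cocycle property of $S_n(v)$ is exactly \cref{lem:sv-properties}. Conversely, any $w\in C^0([s,T],\Q_nX^\sigma)$ solving the Duhamel integral equation is shown to satisfy \eqref{lem:eq:duhamel-rhs} by the same computation read backwards, or alternatively by a uniqueness argument (Gronwall) since both sides solve the same fixed-point equation with a contractive iterate, exactly as in the proof of \cref{lem:sv-properties}.

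For the second part, the bound \eqref{lem:eq:rhs-est}, I would simply take $X^\sigma$-norms in \eqref{lem:eq:duhamel-rhs} and use the uniform boundedness of $S_n(v)$ from \cref{lem:sv-properties}: there is a constant $C_0=C_0(R_0,T)>0$, independent of $n$, with $\norm{S_n(v)(t,\tau)}_{\L(\Q_nX^\sigma)}\leq C_0$ for all $0\leq \tau\leq t\leq T$ and all $v\in\B_{3R_0}^{[0,T]}(X^\sigma)$. Then
\begin{align*}
    \norm{w(t)}_{X^\sigma}\leq C_0\norm{w_s}_{X^\sigma}+C_0\int_s^t\norm{\Q_n h(\tau)}_{X^\sigma}\,d\tau\leq C_0\big(\norm{w_s}_{X^\sigma}+\norm{\Q_n h}_{L^1([s,T],X^\sigma)}\big),
\end{align*}
and taking the supremum over $t\in[s,T]$ gives \eqref{lem:eq:rhs-est} with $C=C_0$. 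The only slightly delicate point worth stating explicitly is that $\tau\mapsto S_n(v)(t,\tau)\Q_n h(\tau)$ is Bochner-integrable on $[s,t]$: this follows since $\tau\mapsto S_n(v)(t,\tau)$ is strongly continuous (another consequence of \cref{lem:sv-properties}, or of the Duhamel characterization) and $h\in L^1$, so the product is measurable with integrable norm.

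The main obstacle, such as it is, is bookkeeping rather than analysis: carrying out the Fubini exchange and the semigroup/cocycle recollection cleanly, and making sure the argument is not circular given that $S_n(v)$ is itself defined through a Duhamel formula. I would organize this by first recording the identity
\begin{align*}
    \int_s^t S_n(v)(t,\tau)\Q_n h(\tau)\,d\tau=\int_s^t e^{(t-\tau)A}\Q_n h(\tau)\,d\tau+\int_s^t e^{(t-\tau)A}\Q_n D\fk(v(\tau))\!\left(\int_\tau^t \big[\cdots\big]\right)d\tau,
\end{align*}
which is obtained purely from \eqref{eq:duhamel-hf-1} applied with initial time $\tau$, and then checking that $w$ given by \eqref{lem:eq:duhamel-rhs} reproduces the integrand of the target Duhamel equation. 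Since $\Q_n X^\sigma$ need not be finite-dimensional, one cannot invoke finite-dimensional ODE theory directly, but all manipulations are justified by the boundedness of $e^{tA}$ on $X^\sigma$, the boundedness of $\Q_n$, and Assumption~\ref{assumF} giving $D\fk(v(\tau))\in\L(X^\sigma)$ with uniformly bounded norm along $v\in\B_{3R_0}^{[0,T]}(X^\sigma)$.
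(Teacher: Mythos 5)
Your plan follows essentially the same route as the paper's proof: substitute the Duhamel characterization \eqref{eq:duhamel-hf-1} of $S_n(v)(\cdot,\tau)$ into \eqref{lem:eq:duhamel-rhs}, exchange the order of integration by Fubini to recover the mild-solution formulation of \eqref{eq:ode-hf-2}, get the converse by uniqueness of mild solutions, and deduce \eqref{lem:eq:rhs-est} from the uniform bound on $S_n(v)$ given by \cref{lem:sv-properties}. The only slip is in your schematic identity, where after the Fubini exchange the inner integral should run over $[s,\tau]$ rather than $[\tau,t]$; this is harmless bookkeeping, and note that neither the semigroup law nor the cocycle identity is actually needed beyond the integral characterization of $S_n(v)$ itself.
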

\begin{proof}
    Let $w$ be given by \eqref{lem:eq:duhamel-rhs}. We will develop the right-hand side of evolutionary formula \eqref{lem:eq:duhamel-rhs} to arrive to the classical Duhamel formulation of \eqref{eq:ode-hf-2}, proving that it is indeed the mild solution we are looking for.
    
    By definition, the evolution operator $S(v)(t, s)$ is a mild solution of the the homogeneous problem
    \begin{align*}
        \left\{\begin{array}{lc}
            \dfrac{d}{dt} \Psi(t)=\big(A+\Q_n D\fk(v(t))\big)\Psi(t),    &  \\
            \Psi(s)=I,     & 
        \end{array}\right.
    \end{align*}
    that is, it is characterized by
    \begin{align}\label{eq:operatorSv}
        S_n(v)(t, s)=e^{(t-s)A}+\int_{s}^t e^{(t-\tau)A}\Q_nD\fk(v(\tau))S_n(v)(\tau, s)d\tau
    \end{align}
    as a linear operator in $\mc{L}(X^\sigma)$. We can thus write
    \begin{align}\label{eq:wduhamelSv1}
        w(t)&=e^{(t-s)A}w_s+\int_{s}^t e^{(t-\tau)A}\Q_nD\fk(v(\tau))S_n(v)(\tau, s)w_sd\tau+\int_{s}^t S_n(v)(t, \tau)\Q_nh(\tau)d\tau.
    \end{align}    
    On the one hand, focusing on the forcing term above, by using formula \eqref{eq:operatorSv} we get
    \begin{multline*}
        \int_{s}^t S_n(v)(t, \tau)\Q_nh(\tau)d\tau=\int_s^t e^{(t-\tau)A}\Q_n h(\tau)d\tau\\+\int_s^t \int_{\tau}^t e^{(t-\eta)A}\Q_nD\fk(v(\eta))S_n(v)(\eta, \tau)\Q_n h(\tau)d\eta d\tau.
    \end{multline*}
    On the other hand, by using once again that $w$ is of the form \eqref{lem:eq:duhamel-rhs}, we see that
    \begin{multline*}
        \int_{s}^t e^{(t-\tau)A}\Q_nD\fk(v(\tau))w(\tau)d\tau=\int_s^t e^{(t-\tau)A}\Q_nD\fk(v(\tau))S_n(v)(\tau, s)w_sd\tau\\
        +\int_s^t e^{(t-\tau)A}\Q_nD\fk(v(\tau))\left(\int_s^\tau S_n(v)(\tau, \xi)\Q_nh(\xi)d\xi\right)d\tau.
    \end{multline*}
    Up to relabeling variables, by Fubini's theorem we see that the double integrals in the two last identities are equal. By putting them together we obtain
    \begin{multline*}
        \int_s^t e^{(t-\tau)A}\Q_nD\fk(v(\tau))S_n(v)(\tau, s)w_sds+\int_{s}^t S_n(v)(t, \tau)\Q_nh(\tau)d\tau\\
        =\int_{s}^t e^{(t-\tau)A}\Q_nD\fk(v(\tau))w(\tau)d\tau+\int_s^t e^{(t-\tau)A}\Q_n h(\tau)d\tau.
    \end{multline*}
    Plugging the above identity into \eqref{eq:wduhamelSv1}, we obtain that 
    \begin{align*}
        w(t)=e^{(t-s)A}w_s+\int_{s}^t e^{(t-\tau)A}\big(\Q_nD\fk(v(\tau))w(\tau)+\Q_nh(\tau)\big)d\tau,
    \end{align*}
    proving that $w$ is a mild solution of \eqref{eq:ode-hf-2}.

    Conversely, let $w$ be a mild solution of \eqref{eq:ode-hf-2} and let $\widetilde{w}$ be given by
    \begin{align*}
        \widetilde{w}(t):=S_n(v)(t, s)w_s+\int_{s}^t S_n(v)(t, \tau)\Q_n h(\tau)d\tau,\ t\in [s, T].
    \end{align*}
    By the above argument, we see that $\widetilde{w}$ is a mild solution of \eqref{eq:ode-hf-2} with initial data $w_s$ and source term $h$. Thus $\widetilde{w}=w$ by uniqueness of mild solutions, proving that both formulations are equivalent.
    
    Classically, estimate \eqref{lem:eq:rhs-est} follows from Duhamel's formula \eqref{lem:eq:duhamel-rhs} and the uniform bound of $S_n$ given by \cref{lem:sv-properties}.
\end{proof}

\begin{remark}
    After \cref{lem:sv-properties}, observe that the constant $C$ in \eqref{lem:eq:rhs-est} does not depend on $n\in \N$.
\end{remark}

Building up on Assumption \ref{assumF}, we will further assume that there exists $\delta>0$ so that $\fk$ can be extended holomorphically to $\mathbb{B}_{4R_0, 2\delta}(X_\mathbb{C}^\sigma)$ into $X_\mathbb{C}^\sigma$. This means that $\fk$ is holomorphic in the interior of this ball and continuous up to the boundary. Moreover, for some $L>0$, the following inequalities hold for any $z$, $z'\in \mathbb{B}_{4R_0, 2\delta}(X^\sigma)$:
    \begin{align*}
        \norm{\fk(z)}_{X_\mathbb{C}^\sigma}\leq L,\ \norm{\fk(z)-\fk(z')}_{X_\mathbb{C}^\sigma}\leq L\norm{z-z'}_{X_\mathbb{C}^\sigma}\ \text{ and }\ \norm{D\fk(z)-D\fk(z')}_{\mc{L}(X_\mathbb{C}^\sigma)}\leq L\norm{z-z'}_{X_\mathbb{C}^\sigma}.
    \end{align*}
This is precisely Assumption \ref{NLS:assumFholom}.

We have the following lemma for the validity of a holomorphic extension of the map $v\mapsto S_n(v)$.

\begin{lemma}\label{lem:sv-holomorphic}
    Let $T>0$ and $R_0>0$. Under Assumptions \ref{NLS:assumA} and \ref{NLS:assumFholom} (with $R_0$), there exists $\delta_0>0$ such that for every $n\in \N$ the map $S_n$ defined by \eqref{eq:sv-lip} can be holomorphically extended as
    \begin{align}\label{eq:Sv-extension}
        S_n^{\mathbb{C}}: v\in \B_{3R_0, \delta_0}^{[s, T]}(X^\sigma)\longmapsto S_n(v)\in \mc{L}_\mathbb{C}(\Q_nX_\mathbb{C}^\sigma, C^0([s, T], \Q_n X_\mathbb{C}^\sigma)).
    \end{align}
    Furthermore, such extension is bounded and Lipschitz-continuous of constant $C>0$ (independent of $n\in\N$): for any $v_1$, $v_2\in \B_{3R_0,\delta_0}^{[s, T]}(X^\sigma)$
    \begin{align}\label{ineq:Sv-extension-lip}
        \norm{S_n^{\mathbb{C}}(v_1)(\cdot, s)-S_n^{\mathbb{C}}(v_2)(\cdot, s)}_{\L(\Q_n X_\mathbb{C}^\sigma, C^0([s, T], \Q_n X_\mathbb{C}^\sigma))}\leq C\norm{v_1-v_2}_{C^0([s, T], X_\mathbb{C}^\sigma)}.
    \end{align}
\end{lemma}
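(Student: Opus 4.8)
The plan is to obtain the holomorphic extension of $S_n$ by repeating the fixed-point construction of \cref{lem:sv-properties} but now on the complexified space, exploiting that the complexified potential $z \mapsto \Q_n D\fk^{\mathbb{C}}(z)$ is holomorphic and bounded by Assumption \ref{NLS:assumFholom}. First I would fix $\delta_0 \in (0, 2\delta)$ small enough (to be pinned down below) so that for $v \in \B_{3R_0,\delta_0}^{[s,T]}(X^\sigma)$ one has $v(t) \in \mathbb{B}_{4R_0, 2\delta}(X_\mathbb{C}^\sigma)$ for all $t$; this is where the $4R_0$ versus $3R_0$ slack in the assumptions is used, and it also guarantees the last inequality in \eqref{ineq:assumFholom} applies along the whole input path. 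Then I would define, for $v$ in this complex cylinder, the complexified Duhamel map
\begin{align*}
    (\F^{\mathbb{C}} w)(t) = e^{(t-s)A}w_s + \int_s^t e^{(t-\tau)A}\Q_n D\fk^{\mathbb{C}}(v(\tau)) w(\tau)\, d\tau,
\end{align*}
acting on $C^0([s,T], \Q_n X_\mathbb{C}^\sigma)$. Exactly the iteration estimate \eqref{ineq:Fkbound} goes through verbatim — with $M$ now the uniform bound $L$ on $\|D\fk^{\mathbb{C}}\|$ from \eqref{ineq:assumFholom}, and $e^{tA}$ still unitary hence $\|e^{tA}\|_{\mc{L}(X_\mathbb{C}^\sigma)} \le C$ — so $\F^{\mathbb{C}}$ has a unique fixed point $S_n^{\mathbb{C}}(v)(\cdot,s)w_s$. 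This defines $S_n^{\mathbb{C}}(v) \in \mc{L}_\mathbb{C}(\Q_n X_\mathbb{C}^\sigma, C^0([s,T], \Q_n X_\mathbb{C}^\sigma))$, and it restricts to $S_n(v)$ on real inputs by uniqueness of the fixed point.

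Next I would establish the Lipschitz bound \eqref{ineq:Sv-extension-lip}: this is a word-for-word copy of the last part of the proof of \cref{lem:sv-properties}, subtracting the two Duhamel formulas, using the Lipschitz estimate on $D\fk^{\mathbb{C}}$ from \eqref{ineq:assumFholom} for the inhomogeneous term and Gronwall for the homogeneous term, with all constants independent of $n$ because $\|\Q_n e^{tA}\|_{\mc{L}(X_\mathbb{C}^\sigma)} \le C$ uniformly. The uniform boundedness is the $v_2 = 0$ specialization together with $S_n^{\mathbb{C}}(0)(t,s) = e^{(t-s)A}$.

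The only genuinely new point — and the step I expect to be the main (if mild) obstacle — is \emph{holomorphy} of $v \mapsto S_n^{\mathbb{C}}(v)$, as opposed to mere Lipschitz continuity. I would argue this by the standard route: holomorphy in Banach spaces is equivalent to Gateaux-holomorphy plus local boundedness (see the complex-analytic background recalled in \cref{A:eveq}), and local boundedness is already in hand. For Gateaux-holomorphy one observes that $w \mapsto (I - \mc{K}_v)^{-1}$, where $\mc{K}_v w(t) := \int_s^t e^{(t-\tau)A}\Q_n D\fk^{\mathbb{C}}(v(\tau)) w(\tau)\, d\tau$, expresses $S_n^{\mathbb{C}}(v)$ through a Neumann series that converges uniformly on the cylinder (by the same factorial estimate \eqref{ineq:Fkbound}, which shows the spectral radius of $\mc{K}_v$ is zero); since $v \mapsto \mc{K}_v$ is $\mathbb{C}$-affine-holomorphic into $\mc{L}(C^0([s,T],\Q_n X_\mathbb{C}^\sigma))$ — it is $\mathbb{C}$-linear in $D\fk^{\mathbb{C}}(v(\cdot))$, which is itself holomorphic in $v$ because $\fk$ is and composition/integration preserve holomorphy — each term of the Neumann series is holomorphic in $v$, and a locally uniform limit of holomorphic maps is holomorphic. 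Alternatively, and perhaps more cleanly, one checks directly that the Gateaux derivative $D_v S_n^{\mathbb{C}}(v)[\dot v]$ exists and is given by the expected variational formula (itself the solution of a linear Duhamel equation with source $\Q_n (D^2\fk^{\mathbb{C}}(v)[\dot v]) S_n^{\mathbb{C}}(v)$), and that this derivative is $\mathbb{C}$-linear in $\dot v$; combined with local boundedness, this yields holomorphy. Either way, no delicate estimate beyond those already established in \cref{lem:sv-properties,lem:duhamel-h} is required — the content is purely the transfer of the real construction to the complexification plus the soft fact that uniform limits of holomorphic maps are holomorphic.
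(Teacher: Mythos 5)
Your construction of the extension (Step 1) is exactly the paper's: complexify $e^{tA}$, $\Q_n$ and $D\fk$, rerun the Volterra fixed point with the factorial estimate, and recycle the Gronwall argument of \cref{lem:sv-properties} for the uniform bound and the Lipschitz estimate \eqref{ineq:Sv-extension-lip}. For the holomorphy step your two variants differ in flavor from the paper: the paper proves Fr\'echet complex differentiability directly, exhibiting the variational formula $dS_n(v)h\,\xi_s=\int_s^t S_n(v)(t,\tau)\Q_n D^2\fk(v(\tau))[h(\tau),\xi^v(\tau)]\,d\tau$ and controlling the remainder quadratically via Taylor expansion and Cauchy estimates for the holomorphic $\fk$; your ``alternative'' route is essentially this, in the softer form G\^ateaux-holomorphy plus continuity, which is legitimate by \cref{appendix:thm:equiv-holom}. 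Your primary route — writing $S_n^{\mathbb{C}}(v)(\cdot,s)w_s=(I-\mc{K}_v)^{-1}\bigl(e^{(\cdot-s)A}w_s\bigr)$ and using that the Neumann series converges locally uniformly (spectral radius zero from the Volterra factorial bound) together with the Weierstrass-type fact that locally uniform limits of holomorphic maps are holomorphic — is a genuinely different and somewhat cleaner argument, in that it never needs $D^2\fk$ or an explicit derivative formula; its only nontrivial ingredient is that the superposition map $v\mapsto D\fk^{\mathbb{C}}(v(\cdot))$ is holomorphic from the cylinder into $C^0([s,T],\mc{L}_\mathbb{C}(X_\mathbb{C}^\sigma))$, which is exactly the kind of uniform Cauchy-estimate statement the paper proves in its Step 2 (estimates \eqref{prop:proof:frechet-1tfixed}--\eqref{prop:proof:frechet-1}), so you should spell that out rather than just invoke ``composition preserves holomorphy''. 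What the paper's route buys is the explicit formula for $dS_n(v)$, which it reuses later (e.g.\ in the proof of \cref{prop:hf-unique-sol}); what yours buys is economy. One small slip: your boundedness argument asserts $S_n^{\mathbb{C}}(0)(t,s)=e^{(t-s)A}$, which is false unless $D\fk(0)=0$ — with $v=0$ the generator is $A+\Q_nD\fk(0)$; this is harmless, since $\|S_n^{\mathbb{C}}(0)\|$ is still bounded by $Ce^{CTL}$ via Gronwall (or simply quote the Gronwall bound directly for every $v$, as the paper does), but the equality as stated should be removed. Also keep the slack explicit in your choice of $\delta_0$ (e.g.\ $\delta_0\leq\delta$, as the paper takes): you need room around each point of the cylinder inside $\mathbb{B}_{4R_0,2\delta}(X_\mathbb{C}^\sigma)$ for the Cauchy estimates underlying either holomorphy argument.
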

\begin{proof}
    First, we show the existence of the complex extension as specified in \eqref{eq:Sv-extension} and then we will prove that this map is indeed holomorphic.
    \medskip
    \paragraph{\emph{Step 1. Complex extension.}}\ The first step is to prove that Duhamel's formula still makes sense in the complex setting, which will give us the desired complex extension. 
        
    First of all, for each $t\in [s, T]$ the semigroup map $e^{tA}$, as well as the projector $\Q_n$, are linear maps in $\mc{L}(X^\sigma)$, so they admit holomorphic extensions as linear maps in the complexification $X_\mathbb{C}^\sigma$, namely, these extensions belong to $\mc{L}_\mathbb{C}(X_\mathbb{C}^\sigma)$; see \cite[Theorem 3]{BS:71-polynomials}. Secondly, from Assumption \ref{NLS:assumFholom}, $\fk$ admits a holomorphic extension from $\mathbb{B}_{4R_0, 2\delta}(X_\mathbb{C}^\sigma)$ into $X_\mathbb{C}^\sigma$ and thus its complex differential $D\fk$, defined as 
    \begin{align*}
        D\fk(x)k:=\delta \fk(x; k)=\lim_{\mathbb{C}\ni s\to 0} \dfrac{1}{s}\big(\fk(x+sk)-\fk(x)\big).
    \end{align*}
    maps $\mathbb{B}_{4R_0, 2\delta}(X_\mathbb{C}^\sigma)$ into $\mc{L}_\mathbb{C}(X_\mathbb{C}^\sigma)$. We denote all of these extensions by the same letter as its non-complexified versions. 
    
    Observe that whenever $\delta_0\leq\delta$, for any $v\in \B_{3R_0, \delta_0}^{[s, T]}(X^\sigma)$, the linear map $\Q_n D\fk(v)$ is well-defined and belongs to $\mc{L}_{\mathbb{C}}(\Q_n X_\mathbb{C}^\sigma, C^0([s, T], X_\mathbb{C}^\sigma))$. We thus have a well-defined map
    \begin{align}\label{eq:Svextensionmap}
        S_n^{\mathbb{C}}: v\in \B_{3R_0, \delta_0}^{[s, T]}(X^\sigma)\mapsto S_n^{\mathbb{C}}(v)(\cdot, s)\in \mc{L}_{\mathbb{C}}(\Q_nX_\mathbb{C}^\sigma, C^0([s, T], \Q_n X_\mathbb{C}^\sigma))
    \end{align}
    characterized by    
    \begin{align}\label{eq:Svduhamel-hf-1}
        S_n^{\mathbb{C}}(v)(t, s)w_s:=e^{(t-s)A}w_s+\int_{s}^t e^{(t-\tau)A}\Q_nD\fk(v(\tau))S_n^{\mathbb{C}}(v)(\tau, s)w_sd\tau.
    \end{align}
    Indeed, by the discussion at the beginning, the right-hand side above makes sense and moreover, Assumption \ref{NLS:assumFholom} allows us to perform a similar fixed-point argument as in \cref{lem:sv-properties}, from which we get the Duhamel's formula characterization and the regularity claim. Furthermore, the same Gronwall-type argument allows us to establish the Lipschitz-continuity \eqref{ineq:Sv-extension-lip} and boundedness of $S_n^{\mathbb{C}}$, uniform in $n\in \N$. This map is characterized by the corresponding Duhamel's formula \eqref{eq:Svduhamel-hf-1} and it give us the mapping properties as stated in \eqref{eq:Svextensionmap}, that is, $S_n^{\mathbb{C}}(v)$ is $\mathbb{C}$-linear for each fixed $v$. Therefore it provides us with the desired complex extension, which we will denote by just $S_n$ from now on.
     
    \medskip 
    \paragraph{\emph{Step 2. Holomorphicity of the extension.}}\ We now prove that the complex extension $S_n$ defined above is holomorphic by showing that it is complex differentiable; see \cref{appendix:thm:equiv-holom}. Let us consider $t\in [s, T]\mapsto \xi^q(t):=\xi(t; q)\in \Q_n X_\mathbb{C}^\sigma$, mild solution of
    \begin{align*}
    \left\{\begin{array}{cc}
        \dfrac{d\xi^q(t)}{dt}=(A+\Q_nD\fk(q(t)))\xi^q(t), &~ t\in (s, T],\\
        \xi^q(s)=\xi_s,
        \end{array}\right.
    \end{align*}
    with $\xi_s\in  \Q_n X_\mathbb{C}^\sigma$ and $q\in C^0([s, T], X_\mathbb{C}^\sigma)$. By the previous discussion, we have $\xi^q\in C^0([s, T], \Q_n X_\mathbb{C}^\sigma)$ and it is characterized by
    \begin{align*}
        \xi^q(t)=S(q)(t, s)\xi_s=e^{tA}\xi_s+\int_{s}^t e^{(t-\tau)A}\Q_nD\fk(q(\tau))\xi^q(\tau)d\tau.
    \end{align*}
    We claim that the differential of $S$ at $v\in \Int{\B_{3R_0, \delta_0}^{[s, T]}(X^\sigma)}$ is 
    \begin{align*}
        dS_n(v): h\in C^0([s, T], X_\mathbb{C}^\sigma)\longmapsto dS_n(v) h\in \L(\Q_n X_\mathbb{C}^\sigma, C^0([s, T], \Q_n X_\mathbb{C}^\sigma)),
    \end{align*}
    where $(dS_n(v)h)(\cdot)\xi_s:=z(\cdot; h)\in C^0([0, T], \Q_n X_\mathbb{C}^\sigma)$ is characterized by
    \begin{align*}
        z(t; h)=\int_{s}^t S_n(v)(t, \tau)\Q_nD^2\fk(v(\tau))[h(\tau),\xi^v(\tau)]d\tau,
    \end{align*}
    with $\xi^v\in C^0([s, T], \Q_nX_\mathbb{C}^\sigma)$ as above with $\xi_s$ fixed and $D^2\fk(x): X_\mathbb{C}^\sigma\times X_\mathbb{C}^\sigma\to X_\mathbb{C}^\sigma$ being the bilinear map defined as
    \begin{align*}
        D^2\fk(x)[h,k]:=\lim_{\mathbb{C}\ni s\to 0} \dfrac{1}{s}\big(D\fk(x+sh)k-D\fk(x)k\big).
    \end{align*}
    To this end, let us consider the map
    \begin{align*}
        t\in [s, T]\mapsto \D(t)=\xi(t; v+h)-\xi(t; v)-z(t; h)\in \Q_nX_\mathbb{C}^\sigma.
    \end{align*}
    Using Duhamel's formula and some algebraic manipulation, we see that $\D$ is a mild solution of
    \begin{align*}
        \left\{\begin{array}{lc}
            \dfrac{d\D(t)}{dt}=\big(A+\Q_n D\fk(v)\big)\D(t)+\Q_n\big(D\fk(v+h)-D\fk(v)\big)\xi^{v+h}-\Q_nD^2\fk(v)[h,\xi^v],     &  \\
            \D(s)=0 & 
        \end{array}\right.
    \end{align*}
    that is, $\D$ is characterized by
    \begin{align*}
        \D(t)=\int_s^t S_n(v)(t, \tau)\Q_n[(D\fk(v+h)-D\fk(v)-D^2\fk(v))[h, \xi^{v+h}]+D^2\fk(v)[h,\xi^{v+h}-\xi^v]d\tau,
    \end{align*}
    for $t\in [s, T]$ and $\xi_s\in \Q_n X_\mathbb{C}^\sigma$. Thus, we have the bound
    \begin{multline*}
        \norm{\D}_{C^0([s, T], X_\mathbb{C}^\sigma)}\leq C\big(\norm{(D\fk(v+h)-D\fk(v)-D^2\fk(v))[h, \xi^{v+h}]}_{C^0([s, T], X_\mathbb{C}^\sigma)}\\+\norm{D^2\fk(v)[h,\xi^{v+h}-\xi^v]}_{C^0([s, T], X_\mathbb{C}^\sigma)}\big)
    \end{multline*}
    with $C=C(R_0, T-s, \delta)>0$. Fix $v\in \text{Int}(\B_{3R_0, \delta_0}^{[s, T]}(X^\sigma))$ and $\ell>0$ so that $B_{C^0([s,T],X_{\mathbb{C}}^\sigma))}(v,\ell)\subset \B_{3R_0, \delta_0}^{[s, T]}(X^\sigma))$. For all $h\in C^0([s, T], X_\mathbb{C}^\sigma)$ with $\norm{h}_{C^0([s, T], X_\mathbb{C}^\sigma)}\leq \ell/4$ and $t\in [s,T]$, we have $v(t)+h(t) \in \mathbb{B}_{R_{0},\delta_0}(X^{\sigma})$ (recall $\delta_0\leq \delta$). Being $\fk$ holomorphic, using a Taylor expansion (see \cref{appendix:prop:cauchy-form}) along with Cauchy estimates (see \cref{appendix:prop:cauchy-est}), we get the following bound, uniform in $t\in [s,T]$,
    \begin{multline}\label{prop:proof:frechet-1tfixed}
        \norm{\fk(v(t)+h(t))-\fk(v(t))-D\fk(v(t))h(t)}_{ X_\mathbb{C}^{\sigma}}\\
        \leq \dfrac{4L\norm{h(t)}_{X_\mathbb{C}^\sigma}^2}{\ell(\ell-2\norm{h(t)}_{X_\mathbb{C}^\sigma})}   \leq \dfrac{4L\norm{h}_{C^0([s, T], X_\mathbb{C}^\sigma)}^2}{\ell(\ell-2\norm{h}_{C^0([s, T], X_\mathbb{C}^\sigma)})}\leq \dfrac{8L\norm{h}_{C^0([s, T], X_\mathbb{C}^\sigma)}^2}{\ell^2}.
    \end{multline}
    The differential of $\fk$ can be easily extended from $C^0([s, T], X_\mathbb{C}^\sigma)$ into $C^0([s, T], X_\mathbb{C}^{\sigma})$ and remains $\mathbb{C}$-linear. Since, the previous estimate is uniform in $t\in [s,T]$, we can write
    \begin{align}\label{prop:proof:frechet-1}
        \norm{\fk(v+h)-\fk(v)-D\fk(v)h}_{C^0([s, T], X_\mathbb{C}^{\sigma})}\leq \dfrac{8L\norm{h}_{C^0([s, T], X_\mathbb{C}^\sigma)}^2}{\ell^2}.
    \end{align}
    This shows that $\fk$ is a holomorphic map from $C^0([s, T], X_\mathbb{C}^\sigma)$ into itself. In particular, $D\fk: C^0([s, T], X_\mathbb{C}^\sigma)\to \mc{L}_{\mathbb{C}}(C^0([s, T], X_\mathbb{C}^\sigma))$ is holomorphic as well and therefore, with a similar reasoning as above, we arrive to
    \begin{align*}
        \norm{D\fk(v+h)-D\fk(v)-D^2\fk(v)h}_{\mc{L}(C^0([s, T], X_\mathbb{C}^\sigma))}\leq \dfrac{8L}{\ell^2}\norm{h}_{C^0([s, T], X_\mathbb{C}^\sigma)}^2.
    \end{align*}
    On the other hand, note that $D^2\fk$ is well-defined and continuous as a map from $C^0([s, T], X_\mathbb{C}^\sigma)$ into\footnote{$\mc{L}^k (E, F)$ is defined as the space of $k$-linear forms from $E\times \ldots \times E$ into $F$.} $\mc{L}_{\mathbb{C}}^2(C^0([s, T], X_\mathbb{C}^\sigma), C^0([s, T], X_\mathbb{C}^\sigma))$. Therefore, by the Lispchitz-continuity of $v\mapsto S_n(v)$, we get
    \begin{align*}
        \norm{D^2\fk(v)[h,\xi^{v+h}-\xi^v]}_{C^0([s, T], X_\mathbb{C}^\sigma)}
        &\leq \norm{D^2\fk(v)}_{\mc{L}^2(C^0([s, T], X_\mathbb{C}^\sigma))}\norm{h}_{C^0([s, T], X_\mathbb{C}^\sigma)}\norm{\xi^{v+h}-\xi^v}_{C^0([s, T], X_\mathbb{C}^\sigma)}\\
        &\leq C\norm{h}_{C^0([s, T], X_\mathbb{C}^\sigma)}\norm{\xi^{v+h}-\xi^v}_{\mc{L}(\Q_n X_\mathbb{C}^\sigma, C^0([s, T], X_\mathbb{C}^\sigma))}\norm{\xi_s}_{X_\mathbb{C}^\sigma}\\
        &\leq C \norm{h}_{C^0([s, T], X_\mathbb{C}^\sigma)}^2\norm{\xi_s}_{X_\mathbb{C}^\sigma}.
    \end{align*}
    Here, $C>0$ is a constant given by the local boundedness of $D^2\fk$ around $v$ (see \cite[Proposition 8.6]{Muj86}). Gathering the above inequalities, we obtain, uniformly in a ball centered at $v$ of radius $\ell/4$,
    \begin{align*}
        \norm{\D}_{C^0([s, T], X_\mathbb{C}^\sigma)}\leq C\norm{h}_{C^0([s, T], X_\mathbb{C}^\sigma)}^2\norm{\xi_s}_{X_\mathbb{C}^\sigma}.
    \end{align*}
    In terms of the operator $S_n$, the previous inequality reads as
    \begin{align*}
        \norm{S_n(v+h)(\cdot, 0)\xi_s-S_n(v)(\cdot, 0)\xi_s-(dS_n(v)h)\xi_s}_{C^0([s, T], \Q_n X_\mathbb{C}^\sigma)}\leq C\norm{h}_{C^0([s, T], X_\mathbb{C}^\sigma)}^2\norm{\xi_s}_{X_\mathbb{C}^\sigma}.
    \end{align*}
    By linearity of $S_n(v)$ and $dS_n(v)h$, we finally get
    \begin{align*}
        \norm{S_n(v+h)(\cdot, 0)-S_n(v)(\cdot, 0)-dS_n(v)h}_{\L_{\mathbb{C}}(\Q_nX_\mathbb{C}^\sigma, C^0([s, T], \Q_n X_\mathbb{C}^\sigma))}\leq C\norm{h}_{C^0([s, T], X_\mathbb{C}^\sigma)}^2.
    \end{align*}
    The latter inequality shows that $S$ is complex differentiable, hence holomorphic. 
\end{proof}

\subsection{High-frequency linear observability problem} From now on, we assume Assumption \ref{NLS:assumC} holds: for $R_0>0$ and $n_0\in \N$ given, for an observation operator $\bC\in \mc{L}(X^\sigma)$ and for some non-empty set $\mathbb{V}\subseteq \B_{3R_0}^{[0, T]}(X^\sigma)$, there exists a constant $\mathfrak{C}_{obs}>0$ such that for any $n\geq n_0$ and all $v\in\mathbb{V}$, it holds
\begin{align}\label{assum:observabstract1}
    \norm{w_0}_{X^\sigma}^2\leq \mathfrak{C}_{\text{obs}}^2\int_0^T \norm{\bC S_n(v)(t, 0)w_0}_{X^\sigma}^2dt,\quad \forall w_0\in \Q_nX^\sigma.
\end{align}

For all $n\geq n_0$ and for any $v\in \B_{3R_0}^{[0, T]}(X^\sigma)$, let $\O_{n, v}\in \mc{L}(\Q_nX^\sigma, L^2([0, T], X^\sigma))$, defined by $\mc{O}_{n, v}:=\bC S_n(v)(\cdot, 0)$, be the observation operator of linear solutions at high-frequency with potential. Indeed, from \cref{lem:sv-properties}, we know that for $w_0\in \Q_n X^\sigma$, the map $t\in [0, T]\mapsto S_n(v)(t, 0)w_0\in \Q_n X^\sigma$ is continuous and in particular $\bC S_n(v)(\cdot, 0)w_0$ belongs to $L^2([0, T], X^\sigma)$.

From Assumption \ref{NLS:assumC}, observability inequality \eqref{assum:observabstract1} implies that for any $v\in \mathbb{V}$, the operator $\O_{n, v}$ is injective and it has closed range. This allows us to define $\Pi_{n, v}$ as the orthogonal projection\footnote{According to the natural scalar product in $L^2([0, T], X^\sigma)$.} onto its image $\Ima(\mc{O}_{n, v})\subset L^2([0, T], X^\sigma)$. From now on, we equip $\Y_{n, v}:=\Ima(\O_{n, v})$ with the induced topology from $L^2([0, T], X^\sigma)$ which makes it a Banach space. By the observability inequality \eqref{assum:observabstract1}, we know that $\O_{n, v}: \Q_n X^\sigma\to \Y_{n, v}$ is a bijection, ensuring that $\Y_{n, v}$ is closed and that a bounded reconstruction operator $\O_{n, v}^{-1}: \Y_{n, v}\to \Q_n X^\sigma$ exists.

By applying the observability inequality \eqref{assum:observabstract1}, we get for any $y\in \Y_{n, v} \subset L^2([0, T], X^\sigma)$,
\begin{align}\label{boundOn-1}
    \norm{\O_{n, v}^{-1}y}_{X^\sigma}&\leq \mathfrak{C}_{\text{obs}}\norm{\O_{n, v}\O_{n, v}^{-1}y}_{L^2([0, T], X^\sigma)}=\mathfrak{C}_{\text{obs}}\norm{y}_{L^2([0, T], X^\sigma)},
\end{align}
and this estimate is uniform in $v\in \mathbb{V}$ and $n\geq n_0$. In what follows, we give two main consequences of Assumption \ref{NLS:assumC}, which will be key to set up the reconstruction formula and study its regularity with respect to the parameter $v$. First, we introduce the Gramian-dependent operator
\begin{align*}
    \left\{\begin{array}{rcl}
        \G_n:  \B_{3R_0}^{[0, T]}(X^\sigma) &\longrightarrow & \mc{L}(\Q_n X^\sigma)  \\
            v &\longmapsto & \O_{n, v}^*\O_{n, v}. 
    \end{array}\right.
\end{align*}
Note that such operator is intrinsically related to the uniform observability Assumption \ref{NLS:assumC}, since
\begin{align*}
    \int_0^T \norm{\bC S_n(v)w_0}_{X^\sigma}^2dt=\inn{\O_{n, v}^*\O_{n, v}w_0, w_0}_{X^\sigma},\ \forall w_0\in \Q_n X^\sigma,
\end{align*}
where the adjoint is taken with respect to the real structure of $X^\sigma$. Hence, under Assumption \ref{NLS:assumC}, we can introduce the operator
\begin{align*}
    \left\{\begin{array}{rcl}
        \G_n^\dagger:  \mathbb{V}\subseteq\B_{3R_0}^{[0, T]}(X^\sigma) &\longrightarrow & \mc{L}(\Q_n X^\sigma)  \\
            v &\longmapsto & (\O_{n, v}^*\O_{n, v})^{-1}. 
    \end{array}\right.
\end{align*}
The next lemma establishes some regularity properties of this map.

\begin{lemma}\label{lem:inv-gramian-regularity}
    Let $T>0$, $R_0>0$, $n_0\in \N$ and $\mathbb{V}\subseteq \B_{3R_0}^{[0, T]}(X^\sigma)$. Under Assumptions \ref{NLS:assumA}, \ref{assumF} (with $R_0$) and \ref{NLS:assumC} (with $T$, $R_0$, $n_0$ and $\mathbb{V}$), for any $n\geq n_0$, the map $\G_n^\dagger: \mathbb{V}\subseteq \B_{3R_0}^{[0, T]}(X^\sigma)\to \mc{L}(\Q_n X^\sigma)$ is well-defined, bounded uniformly by $\mathfrak{C}_{obs}^2$ and Lipschitz-continuous with
    \begin{align*}
        \norm{\G_n^\dagger(v_1)-\G_n^\dagger(v_2)}_{\mc{L}(\Q_n X^\sigma)}\leq L\norm{v_1-v_2}_{C^0([0, T], X^\sigma)}
    \end{align*}
    where the Lipschitz constant $L$ is independent of $n\geq n_0$. Moreover, if Assumption \ref{NLS:assumFholom} is enforced and $\mc{V}_{K}\subset\mathbb{V}$ is a compact set in $C^0([0, T], X^\sigma)$, there exists $\eta>0$ such that for any $n\geq n_0$, the map $\G_n^\dagger$ restricted to $\mc{V}_{K}$ admits a holomorphic extension as
    \begin{align*}
        \G_n^\dagger: \mc{V}_{K}+\B_{\eta, \eta}^{[0, T]}(X^\sigma)\to \mc{L}_\mathbb{C}(\Q_n X_\mathbb{C}^\sigma)
    \end{align*}
    which is Lipschitz-continuous and uniformly bounded with respect to $n\geq n_0$.
\end{lemma}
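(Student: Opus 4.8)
The plan is to leverage the structure $\G_n^\dagger(v)=(\O_{n,v}^*\O_{n,v})^{-1}$ together with the Lipschitz and holomorphic properties of $v\mapsto S_n(v)$ established in \cref{lem:sv-properties} and \cref{lem:sv-holomorphic}. First I would verify that $\G_n^\dagger$ is well-defined on $\mathbb{V}$: Assumption \ref{NLS:assumC} gives the observability inequality \eqref{assum:observabstract1}, which says precisely that $\inn{\G_n(v)w_0,w_0}_{X^\sigma}\geq \mathfrak{C}_{obs}^{-2}\norm{w_0}_{X^\sigma}^2$ for all $w_0\in \Q_nX^\sigma$; since $\G_n(v)=\O_{n,v}^*\O_{n,v}$ is self-adjoint and nonnegative on $\Q_nX^\sigma$, this coercivity forces $\G_n(v)$ to be boundedly invertible with $\norm{\G_n^\dagger(v)}_{\mc{L}(\Q_nX^\sigma)}\leq \mathfrak{C}_{obs}^2$, uniformly in $v\in\mathbb{V}$ and $n\geq n_0$.

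For the Lipschitz estimate, the key identity is the resolvent-type formula
\begin{align*}
    \G_n^\dagger(v_1)-\G_n^\dagger(v_2)=\G_n^\dagger(v_1)\big(\G_n(v_2)-\G_n(v_1)\big)\G_n^\dagger(v_2).
\end{align*}
Using the uniform bound $\norm{\G_n^\dagger(v_i)}\leq \mathfrak{C}_{obs}^2$, it then suffices to estimate $\norm{\G_n(v_1)-\G_n(v_2)}_{\mc{L}(\Q_nX^\sigma)}$. Writing $\G_n(v_i)=\O_{n,v_i}^*\O_{n,v_i}$ and adding and subtracting a cross term, one reduces to bounding $\norm{\O_{n,v_1}-\O_{n,v_2}}_{\mc{L}(\Q_nX^\sigma,L^2([0,T],X^\sigma))}$ together with the uniform bound on $\norm{\O_{n,v_i}}$ coming from \eqref{eq:sv-lip}; but $\O_{n,v}=\bC S_n(v)(\cdot,0)$, so $\norm{\O_{n,v_1}-\O_{n,v_2}}\leq \norm{\bC}_{\mc{L}(X^\sigma)}\cdot\sqrt{T}\cdot\norm{S_n(v_1)(\cdot,0)-S_n(v_2)(\cdot,0)}_{\mc{L}(\Q_nX^\sigma,C^0([0,T],\Q_nX^\sigma))}$, and the last factor is controlled by $C\norm{v_1-v_2}_{C^0([0,T],X^\sigma)}$ via \eqref{ineq:sv-lip}, with $C$ independent of $n$. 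Chaining these gives the claimed Lipschitz bound with an $n$-independent constant $L$ (which depends only on $\mathfrak{C}_{obs}$, $\norm{\bC}$, $T$, $R_0$).

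For the holomorphic extension, I would first extend $\O_{n,v}$ to $\O_{n,v}^{\mathbb{C}}:=\bC S_n^{\mathbb{C}}(v)(\cdot,0)$ for $v$ in a complex neighborhood $\mc{V}_K+\B_{\delta_0,\delta_0}^{[0,T]}(X^\sigma)$, using \cref{lem:sv-holomorphic}, so that $v\mapsto \O_{n,v}^{\mathbb{C}}\in\mc{L}_\mathbb{C}(\Q_nX_\mathbb{C}^\sigma,L^2([0,T],X_\mathbb{C}^\sigma))$ is holomorphic and Lipschitz, uniformly in $n$. Composing, $v\mapsto \G_n^{\mathbb{C}}(v):=(\O_{n,v}^{\mathbb{C}})^*\O_{n,v}^{\mathbb{C}}$ is holomorphic into $\mc{L}_\mathbb{C}(\Q_nX_\mathbb{C}^\sigma)$ — note $(\O_{n,v}^{\mathbb{C}})^*$ is the complexified (not the Hilbert) adjoint, so holomorphy in $v$ is preserved. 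The main obstacle, and the step I would handle most carefully, is that invertibility of $\G_n^{\mathbb{C}}(v)$ is a priori not guaranteed on the complexified domain, since the coercivity \eqref{assum:observabstract1} is a real-variable statement and has no direct complex analogue. Here I would use a perturbation argument: by compactness of $\mc{V}_K$ and the uniform-in-$n$ Lipschitz bound on $\G_n^{\mathbb{C}}$, there is $\eta\in(0,\delta_0]$, independent of $n$, such that for $v=v_0+\zeta$ with $v_0\in\mc{V}_K$ and $\zeta\in\B_{\eta,\eta}^{[0,T]}(X^\sigma)$ we have $\norm{\G_n^{\mathbb{C}}(v)-\G_n(v_0)}_{\mc{L}(\Q_nX_\mathbb{C}^\sigma)}\leq \tfrac12\mathfrak{C}_{obs}^{-2}$; since $\G_n(v_0)^{-1}=\G_n^\dagger(v_0)$ has norm $\leq\mathfrak{C}_{obs}^2$, a Neumann series shows $\G_n^{\mathbb{C}}(v)$ is invertible with $\norm{(\G_n^{\mathbb{C}}(v))^{-1}}\leq 2\mathfrak{C}_{obs}^2$, uniformly in $n\geq n_0$. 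Finally, $v\mapsto(\G_n^{\mathbb{C}}(v))^{-1}$ is holomorphic (inversion is holomorphic on the open set of invertible operators, and the Neumann series converges locally uniformly), giving the desired extension, and its Lipschitz continuity follows from the same resolvent identity as in the real case together with the uniform bounds just obtained.
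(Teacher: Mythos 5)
Your proposal is correct and follows essentially the same route as the paper: coercivity from \eqref{assum:observabstract1} for the uniform bound, the resolvent identity combined with the Lipschitz estimate \eqref{ineq:sv-lip} for Lipschitz continuity, and a Neumann-series perturbation around points of the compact set $\mc{V}_K$ for the holomorphic extension. In particular, your observation that one must use the holomorphic (complexified) extension of the adjoint map rather than the Hermitian adjoint of $\O_{n,v}^{\mathbb{C}}$ is exactly the point the paper handles via $\widetilde{\adj}$ in \cref{app:lem:adjext}.
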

\begin{proof}
    To slightly simplify notation, let us introduce
    \begin{align*}
        Y=\mc{L}(\Q_nX^\sigma, L^2([0, T], X^\sigma))\ \text{ and }\ \widetilde{Y}= \mc{L}(L^2([0, T], X^\sigma), \Q_n X^\sigma),
    \end{align*}
    where both of them are endowed with the natural operator norm. We also introduce the complexified versions $Y_\mathbb{C}$ and $\widetilde{Y}_\mathbb{C}$, obtained by replacing $X^\sigma$ by $X_\mathbb{C}^\sigma$ into the above spaces, respectively, and considering the corresponding space of $\mathbb{C}$-linear bounded operators.

    \medskip 
    \paragraph{\emph{Step 1. Boundedness.}}\  
    Let $v\in \mathbb{V}$. Since $\O_{n, v}$ is a bounded linear operator from $\Q_n X^\sigma$ into $L^2([0, T],  X^\sigma)$, so is $\O_{n, v}^*$ from $L^2([0, T], X^\sigma)$ into $\Q_n X^\sigma$; see \cite[Remark 16]{Brez11}. Under Assumption \ref{NLS:assumC}, observability inequality \eqref{assum:observabstract1} implies that $\O_{n, v}^*\O_{n, v}\in \mc{L}(\Q_nX^\sigma)$ is a bijection with bounded inverse, that is, $(\O_{n, v}^*\O_{n, v})^{-1}\in \mc{L}(\Q_nX^\sigma)$. We then have a well-defined map
    \begin{align*}
        \G_n^\dagger: v\in\mathbb{V}\subseteq\B_{3R_0}^{[0, T]}(X^\sigma)\mapsto \mathfrak{I}\G_n(v)=(\O_{n, v}^*\O_{n, v})^{-1}\in \mc{L}(\Q_n X^\sigma),
    \end{align*}
    where $\mathfrak{I}: \Isom(\Q_nX^\sigma)\to \Isom(\Q_nX^\sigma)$ is the map $\mathfrak{I}(L)=L^{-1}\in\Isom(\Q_nX^\sigma)$, with $\Isom(\Q_nX^\sigma)$ being the (open) subset of invertible operators in $\mc{L}(\Q_nX^\sigma)$.
    
    The boundedness of $\G_n^\dagger$ follows directly from Assumption \ref{NLS:assumC}. Indeed, as a consequence of the observability inequality \eqref{assum:observabstract1} we can write    
    \begin{align*}
        \dfrac{1}{\mathfrak{C}_{obs}^2}\norm{w_0}_{X^\sigma}^2\leq \int_0^T \norm{\O_{n, v}w_0}_{X^\sigma}^2dt=\inn{\O_{n, v}^*\O_{n, v}w_0, w_0}_{X^\sigma}\leq \norm{\O_{n, v}^*\O_{n, v}w_0}_{X^\sigma}\norm{w_0}_{X^\sigma},
    \end{align*}
    from which we obtain
    \begin{align}\label{ineq:bound-ovov}
        \norm{(\O_{n, v}^*\O_{n, v})^{-1}}_{\mc{L}(\Q_n X^\sigma)}=\sup\{\norm{\O_{n, v}^*\O_{n, v}w_0}_{\Q_n X^\sigma}^{-1}\ |\ w_0\in \Q_n X^\sigma,\ \norm{w_0}=1\}\leq \mathfrak{C}_{obs}^2,
    \end{align}
    and the latter constant is uniform on $v\in \mathbb{V}$ and $n\geq n_0$.
    
    \medskip 
    \paragraph{\emph{Step 2. Lipschitz-continuity.}}\
    To verify the Lipschitz continuity of $\G_n^\dagger$, we will show that it can be expressed as the composition of several Lipschitz-continuous maps. 
    
    In view of \cref{lem:sv-properties}, by composition with linear maps, we have that both maps
    \begin{align*}
    \begin{array}{l}
        v\in \B_{3R_0}^{[0, T]}(X^\sigma)\longmapsto \O_{n, v}=\bC S_n(v)\in Y,\\
        v\in \B_{3R_0}^{[0, T]}(X^\sigma)\longmapsto \O_{n, v}^*=(\bC S_n(v))^*\in \widetilde{Y}
    \end{array}
    \end{align*}
    are Lipschitz-continuous. Let us consider the following (nonlinear) operators:
    \begin{itemize}
        \item Let $\mathfrak{N}_n: v\mapsto (\O_{n, v}^*, \O_{n, v})$ be defined from $\B_{3R_0}^{[0, T]}(X^\sigma)$ into $\widetilde{Y}\times Y$. By \cref{lem:sv-properties} and composition of linear operators, we see that $\mathfrak{N}$ is bounded and Lipschitz-continuous map in the $C^0([0, T], X^\sigma)$-norm, uniformly in $n\in\N$. 
        \item The bilinear continuous form $\mathfrak{B}: (S, T)\mapsto ST$, as a map from $\widetilde{Y}\times Y$ into $\mc{L}(\Q_n X^\sigma)$. By direct computation, it is Lipschitz-continuous on bounded sets of $\widetilde{Y}\times Y$, with constant depending on the size of the set.
        \item Let $\mathfrak{I}$ be the map $\mathfrak{I}(L)=L^{-1}$ introduced above. Note that for any $L_1$, $L_2\in \Isom(\Q_n X^\sigma)$ we can write $L_2^{-1}-L_1^{-1}=L_1^{-1}(L_1-L_2)L_2^{-1}$. This means that $\mathfrak{I}$ is Lipschitz-continuous on subsets of $\Isom(\Q_n X^\sigma)$ on which linear maps have bounded inverse.

        Let $\mc{V}_n=\mathfrak{B}\mathfrak{N}_n(\mathbb{V})$, which is a non-empty subset of $\Isom(\Q_n X^\sigma)$. Note that any operator in $\mc{V}_n$ is of the form $\O_{n, v}^*\O_{n, v}$ with $v\in\mathbb{V}$ and whose inverse is uniformly bounded with respect to $v$ and $n\geq n_0$ by $\mathfrak{C}_{obs}^{2}$, see \eqref{ineq:bound-ovov}. Therefore, $\mathfrak{I}$ is Lipschitz-continuous in $\mc{V}_n$ with the $\mc{L}(\Q_nX^\sigma)$-topology with constant independent of $v\in \mathbb{V}$ and $n\geq n_0$.
    \end{itemize}
    Hence, we can write $\G_n^\dagger=\mathfrak{I}\mathfrak{B}\mathfrak{N}_n$ and it is well-defined from $\mathbb{V}\subset \B_{3R_0}^{[0, T]}(X^\sigma)$ into $\Isom(\Q_n X^\sigma)\subset\mc{L}(\Q_n X^\sigma)$ for $n\geq n_0$. Moreover, by composition, it is Lipschitz-continuous in the $C^0([0, T], X^\sigma)$ topology, uniformly in $n\geq n_0$.

    \medskip 

    \paragraph{\emph{Step 3. Holomorphic extension.}} From \cref{lem:sv-holomorphic}, we have a well-defined holomorphic extension
    \begin{align*}
        v\in \B_{3R_0, \delta_0}^{[0, T]}(X^\sigma)\longmapsto \O_{n, v}=\bC S_n(v)\in Y_\mathbb{C},
    \end{align*}
    where we dropped the upper script of $S_n^{\mathbb{C}}$ for simplicity and we keep denoting by $\bC$ the complexification of the linear observability map on $\mc{L}_\mathbb{C}(\Q_n X_\mathbb{C}^\sigma)$. Also, by \cref{app:lem:adjext} we have a well-defined holomorphic extension of the adjoint map
    \begin{align*}
        v\in \B_{3R_0, \delta_0}^{[0, T]}(X^\sigma)\longmapsto \widetilde{\adj}(\O_{n, v})\in \widetilde{Y}_\mathbb{C},
    \end{align*}
    where $\widetilde{\adj}$ denotes the $\mathbb{C}$-linear holomorphic extension of the map $\adj$ which sends a linear bounded operator into its adjoint. Let us then consider the following maps:
    \begin{itemize}
        \item Let $\mathfrak{N}_n: v\mapsto \big(\widetilde{\adj}(\O_{n, v}), \O_{n, v}\big)$ be defined from $\B_{3R_0, \delta_0}^{[0, T]}(X^\sigma)$ into $\widetilde{Y}_\mathbb{C}\times Y_\mathbb{C}$. By the previous arguments, we see that $\mathfrak{N}$ is bounded and holomorphic.
        \item The bilinear continuous form $\mathfrak{B}: (S, T)\mapsto ST$, as a map from $\widetilde{Y}_\mathbb{C}\times Y_\mathbb{C}$ into $\mc{L}_\mathbb{C}(\Q_n X_\mathbb{C}^\sigma)$. In particular, it is holomorphic, given that it is linear on each component \cref{appendix:thm:holom-sev-var}.
        \item Let $\mathfrak{I}: \Isom(\Q_nX_\mathbb{C}^\sigma)\to \Isom(\Q_nX_\mathbb{C}^\sigma)$ be the map $\mathfrak{I}(L)=L^{-1}\in\Isom(\Q_nX_\mathbb{C}^\sigma)$.
    \end{itemize}
    Since $\G_n:=\mathfrak{B}\mathfrak{N}_n$, by composition of holomorphic maps, we can consider the its holomorphic extension, which we keep denoting by the same letter,
    \begin{align*}
        \G_n: v\in\B_{3R_0, \delta_0}^{[0, T]}(X^\sigma)\mapsto \widetilde{\adj}(\O_{n, v})\O_{n, v}\in \mc{L}_\mathbb{C}(\Q_n X_\mathbb{C}^\sigma).
    \end{align*}
    Let us fix $\veps>0$ such that $\veps\mathfrak{C}_{obs}^2\leq \tfrac{1}{2}$. By continuity of the extension, for each $v_0\in\mc{V}_{K}$, there exists $\delta>0$ such that for $v\in v_0+\B_{\delta, \delta}^{[0, T]}(X^\sigma)$ we have $\norm{\G_n(v)-\G_n(v_0)}_{\mc{L}(X_\mathbb{C}^\sigma)}<\veps$ and, by a Neumann series argument, $\G_n(v)^{-1}$ is well-defined and belongs to $\mc{L}_\mathbb{C}(\Q_n X_\mathbb{C}^\sigma)$. Moreover, by \cref{app:lem:bij2} the extended map $\mathfrak{I}: \Isom(X_\mathbb{C}^\sigma)\to\Isom(X_\mathbb{C}^\sigma)$ is holomorphic on any of these $\veps$-neighborhoods around $\G(v_0)$ with $v_0\in\mc{V}_{K}$. Since these neighborhoods form an open cover of the compact set $\mc{V}_{K}+i0\subset \B_{3R_0, \delta_0}^{[0, T]}(X^\sigma)$, there exists $\eta>0$ such that $\G^\dagger$ admits a holomorphic extension as
    \begin{align*}
        \G_n^\dagger: v\in \mc{V}_{K}+\B_{\eta, \eta}^{[0, T]}(X^\sigma)\mapsto \mathfrak{I}\G_n(v)\in\mc{L}_\mathbb{C}(\Q_n X_\mathbb{C}^\sigma).
    \end{align*}
    Indeed, at each $v\in \mc{V}_{K}+\B_{\eta, \eta}^{[0, T]}(X^\sigma)$ we can find a suitable neighborhood on which the Neumann series expansion for $\G_n^\dagger$ is valid and thus the conclusion follows by composition of the holomorphic maps $\mathfrak{I}$ and $\G_n$. Moreover, by a Neumann series argument, for each $v\in \mc{V}_{K}+\B_{\eta, \eta}^{[0, T]}(X^\sigma)$,
    \begin{align*}
        \norm{\G_n^\dagger(v)}_{\mc{L}_{\mathbb{C}}(X_\mathbb{C}^\sigma)}&\leq \dfrac{1}{1-\norm{\G_n^\dagger(v_0)}_{\mc{L}(X^\sigma)}\norm{\G_n(v)-\G_n(v_0)}_{\mc{L}_\mathbb{C}(X_\mathbb{C}^\sigma)}}\norm{\G_n^\dagger(v_0)}_{\mc{L}(X^\sigma)}\\
        &\leq 2\mathfrak{C}_{obs}^2,
    \end{align*}
    where we used that $\norm{\G_n^\dagger(v_0)}_{\mc{L}(X^\sigma)}\leq \mathfrak{C}_{obs}^2$. This allows us to prove that $\G_n^\dagger$ is Lipschitz-continuous by following the same reasoning as in the real case.
\end{proof}

As a second consequence, we establish an explicit formula for the orthogonal projector $\Pi_{n, v}$.

\begin{lemma}\label{lem:proj}
    Let $T>0$, $R_0>0$, $n_0\in \N$ and $\mathbb{V}\subseteq \B_{3R_0}^{[0, T]}(X^\sigma)$. Under Assumptions \ref{NLS:assumA}, \ref{assumF} (with $R_0$) and \ref{NLS:assumC} (with $T$, $R_0$, $n_0$ and $\mathbb{V}$), for all $n\geq n_0$ and for any $v\in \mathbb{V}$, the orthogonal projector $\Pi_{n, v}: L^2([0, T], X^\sigma)\to L^2([0, T], X^\sigma)$ on $\Y_{n, v}=\Ima(\O_{n, v})$ is well-defined and $\Pi_{n, v}=\O_{n, v}(\O_{n, v}^*\O_{n, v})^{-1}\O_{n, v}^*$.
\end{lemma}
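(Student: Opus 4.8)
The plan is to verify by a direct computation that the bounded operator
$P := \O_{n, v}\,(\O_{n, v}^*\O_{n, v})^{-1}\,\O_{n, v}^*$ on $L^2([0,T],X^\sigma)$ coincides with $\Pi_{n,v}$, using the abstract characterization that a bounded operator on a Hilbert space is the orthogonal projection onto a closed subspace $F$ if and only if it is idempotent, self-adjoint, and has range exactly $F$. All adjoints are understood with respect to the real Hilbert structure of $X^\sigma$ and of $L^2([0,T],X^\sigma)$.

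First I would recall the prerequisites already in place. By Assumption \ref{NLS:assumC}, the observability inequality \eqref{assum:observabstract1} holds for $v\in\mathbb{V}$, so $\O_{n,v}$ is injective with closed range $\Y_{n,v}=\Ima(\O_{n,v})$; hence the orthogonal projection onto the closed subspace $\Y_{n,v}$ of the Hilbert space $L^2([0,T],X^\sigma)$ is well-defined, which settles the first assertion. Moreover, \cref{lem:inv-gramian-regularity} guarantees that $\O_{n,v}^*\O_{n,v}\in\mc{L}(\Q_nX^\sigma)$ is invertible with bounded inverse, so $P$ is a well-defined bounded operator on $L^2([0,T],X^\sigma)$.

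Then I would check the three properties. Idempotence: $P^2=\O_{n,v}(\O_{n,v}^*\O_{n,v})^{-1}(\O_{n,v}^*\O_{n,v})(\O_{n,v}^*\O_{n,v})^{-1}\O_{n,v}^*=P$, the middle factor collapsing. Self-adjointness: since $\O_{n,v}^*\O_{n,v}$ is self-adjoint and positive, so is $(\O_{n,v}^*\O_{n,v})^{-1}$, whence $P^*=\O_{n,v}\big((\O_{n,v}^*\O_{n,v})^{-1}\big)^*\O_{n,v}^*=P$. Range: trivially $\Ima(P)\subseteq\Ima(\O_{n,v})=\Y_{n,v}$, and conversely for any $w_0\in\Q_nX^\sigma$ one has $P\,\O_{n,v}w_0=\O_{n,v}(\O_{n,v}^*\O_{n,v})^{-1}(\O_{n,v}^*\O_{n,v})w_0=\O_{n,v}w_0$, so $\Y_{n,v}\subseteq\Ima(P)$, i.e. $\Ima(P)=\Y_{n,v}$. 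Since a bounded self-adjoint idempotent operator is the orthogonal projection onto its range, $P=\Pi_{n,v}$, as claimed.

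I do not expect a genuine obstacle here: this is the standard identity for the pseudoinverse of an injective operator with closed range. The only points deserving care are keeping track that all adjoints are those of the real Hilbert spaces involved (so the complexifications used elsewhere play no role), and making sure that the invertibility of $\O_{n,v}^*\O_{n,v}$ — which is exactly what Assumption \ref{NLS:assumC} buys via \cref{lem:inv-gramian-regularity} — is invoked before writing down the formula; both are already recorded in the text preceding the lemma.
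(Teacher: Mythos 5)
Your proposal is correct and follows essentially the same route as the paper: both verify that $\O_{n,v}(\O_{n,v}^*\O_{n,v})^{-1}\O_{n,v}^*$ is idempotent, self-adjoint, and has range exactly $\Y_{n,v}$ (the paper phrases the range step as "identity on $\Y_{n,v}$ plus image contained in $\Y_{n,v}$"), and then conclude by the characterization/uniqueness of the orthogonal projection onto a closed subspace.
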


\begin{proof}    
    Let us consider $\widetilde{\Pi}_{n, v}=\O_{n, v}(\O_{n, v}^*\O_{n, v})^{-1}\O_{n, v}^*$. From \cref{lem:inv-gramian-regularity} this operator is well-defined as a map from $L^2([0, T], X^\sigma)$ into itself. We first note that it is the identity on $\mc{Y}_{n, v}$: if $y\in \mc{Y}_{n, v}$ then it can be written as $y=\O_{n, v}\zeta$ for some $\zeta\in L^2([0, T], X^\sigma)$ and therefore
    \begin{align*}
        \widetilde{\Pi}_{n, v}y=\Pi_{n, v}\O_{n, v}\zeta=\O_{n, v}(\O_{n, v}^*\O_{n, v})^{-1}(\O_{n, v}^*\O_{n, v})\zeta=\O_{n, v}\zeta=y.
    \end{align*}
    Additionally, by definition we note that $\mathrm{Im}(\widetilde{\Pi}_{n, v})\subset \Y_{n, v}$. Now, observe that $\widetilde{\Pi}_{n, v}$ is an orthogonal projection in $L^2([0, T], X^\sigma)$. Indeed, observe that
    \begin{align*}
        \widetilde{\Pi}_{n, v}^2=\O_{n, v}(\O_{n, v}^*\O_{n, v})^{-1}\O_{n, v}^*\circ \O_{n, v}(\O_{n, v}^*\O_{n, v})^{-1}\O_{n, v}^*=\O_{n, v}(\O_{n, v}^*\O_{n, v})^{-1}\O_{n, v}^*=\widetilde{\Pi}_{n, v}.
    \end{align*}
    By recalling that the operations taking adjoint and inverse commute, we arrive to
    \begin{align*}
        \inn{\widetilde{\Pi}_{n, v}\xi, \zeta}_{L^2([0, T], X^\sigma)}&=\inn{\O_{n, v}(\O_{n, v}^*\O_{n, v})^{-1}\O_{n, v}^*\xi, \zeta}_{L^2([0, T], X^\sigma)}\\
        &=\inn{\xi, \O_{n, v}(\O_{n, v}^*\O_{n, v})^{-1}\O_{n, v}^*\zeta}_{L^2([0, T], X^\sigma)}\\
        &=\inn{\xi, \widetilde{\Pi}_{n, v}\zeta}_{L^2([0, T], X^\sigma)}.
    \end{align*}
    By construction, $\widetilde{\Pi}_{n, v}$ is a map from $L^2([0, T], X^\sigma)$ into  itself whose image is $\Ima(\O_{n, v})=\Y_{n, v}$. Since $\Y_{n, v}$ is a linear subspace of $L^2([0, T], X^\sigma)$, by uniqueness of the orthogonal projection we conclude that $\Pi_{n, v}=\widetilde{\Pi}_{n, v}$.
\end{proof}

\begin{remark}
    For notational purposes, when needed we will also write $\Pi_{n, v}$ as the operator with value on $\Y_{n, v}$. That way, for instance, the operator $\O_{n, v}^{-1}\Pi_{n, v}$ is well-defined for any $v\in \mathbb{V}$.
\end{remark}

\subsubsection{Linear reconstruction} To ease notation, we consider the operator $\I_v(t): \xi\mapsto \int_0^t S_n(v)(t, s)\xi(s)ds$ and denote $\I_v(\cdot)$ when the operator is seen with value in $C^{0}([0,T],Y)$ for a suitable Banach space.

The above discussion will enable us to solve an \emph{observability Cauchy problem}, which is the content of the following Lemma.

\begin{lemma}\label{lem:CauchyObs}
    Let $R_0>0$, $T>0$, $n_0\in \N$ and $\mathbb{V}\subseteq\B_{3R_0}^{[0, T]}(X^\sigma)$. Under Assumptions \ref{NLS:assumA}, \ref{assumF} (with $R_0>0$) and \ref{NLS:assumC} (with $T$, $R_0$, $n_0$ and $\mathbb{V}$), there exists $C>0$ so that for any $n\geq n_0$, $v\in \mathbb{V}$, $g\in L^{2}([0,T],X^{\sigma})$ and $h\in L^{1}([0,T],X^{\sigma})$, there exists a unique $w\in C^{0}([0,T],\Q_{n}X^{\sigma})$ solution of 
    \begin{align}\label{eq:solprojectlin}
        \left\{\begin{array}{rl}
            \partial_t w&=(A+\Q_nD\fk(v))w+\Q_nh,  \\
       \Pi_{n, v}\bC w    &=\Pi_{n, v}g. 
        \end{array}\right.
    \end{align}
    It satisfies $w(0)=w_0:=\O_{n, v}^{-1}\Pi_{n, v}\left[g-\bC\I_v(\cdot)\Q_n h\right]$ and is given by $w(t)=S_n(v)(t,0)w_{0}+\I_v(t)\Q_nh$. We denote by $w:=\F_n(v)(g, h)$ the associated solution operator. Moreover, we have the estimate
    \begin{align}
    \label{ineq:estimFL}
    \norm{\F_{n}(v)(g, h)}_{C^{0}([0,T],\Q_{n}X^{\sigma})}\leq C \norm{\Pi_{n, v}g}_{L^{2}([0,T],X^{\sigma})}+C\norm{\Q_{n} h}_{L^{1}([0,T],X^{\sigma})}
    \end{align}
    uniformly in $v\in \mathbb{V}$ and $n\geq n_0$, and the map 
    \begin{align}\label{map:v-FL}
        v\in \mathbb{V}\mapsto \F_{n}(v)\in \mc{L}(L^2([0, T], X^\sigma)\times L^1([0, T], X^\sigma), C([0, T], \Q_n X^\sigma))
    \end{align}
    is Lipschitz-continuous and uniformly bounded with respect to $n\geq n_0$.
\end{lemma}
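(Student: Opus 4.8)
\textbf{Proof plan for \cref{lem:CauchyObs}.} The strategy is to reduce the observability Cauchy problem to the linear reconstruction already prepared by \cref{lem:sv-properties,lem:duhamel-h,lem:inv-gramian-regularity,lem:proj}, and then transport the regularity in $v$ from those lemmas through a finite composition of Lipschitz (resp.\ bounded) maps. First I would rewrite \eqref{eq:solprojectlin}: by \cref{lem:duhamel-h}, any mild solution of the PDE in \eqref{eq:solprojectlin} is necessarily of the form $w(t)=S_n(v)(t,0)w_0+\I_v(t)\Q_nh$ for some $w_0\in\Q_nX^\sigma$, and conversely every such $w$ solves the PDE. Applying $\bC$ and then $\Pi_{n,v}$, the second line of \eqref{eq:solprojectlin} becomes $\Pi_{n,v}\bC S_n(v)(\cdot,0)w_0=\Pi_{n,v}\bigl[g-\bC\I_v(\cdot)\Q_nh\bigr]$, i.e.\ $\O_{n,v}w_0=\Pi_{n,v}\bigl[g-\bC\I_v(\cdot)\Q_nh\bigr]$ in $\Y_{n,v}$, using $\Pi_{n,v}\bC S_n(v)(\cdot,0)=\Pi_{n,v}\O_{n,v}=\O_{n,v}$ since $\Ima(\O_{n,v})=\Y_{n,v}$. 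Since $\O_{n,v}\colon\Q_nX^\sigma\to\Y_{n,v}$ is a bijection by Assumption \ref{NLS:assumC}, this uniquely determines $w_0=\O_{n,v}^{-1}\Pi_{n,v}\bigl[g-\bC\I_v(\cdot)\Q_nh\bigr]$, hence uniqueness and the stated formula for $w(0)$; existence follows by plugging this $w_0$ back in. I would also note that $\I_v(\cdot)\Q_nh\in C^0([0,T],\Q_nX^\sigma)$ with $\norm{\I_v(\cdot)\Q_nh}_{C^0}\le C\norm{\Q_nh}_{L^1}$ uniformly in $n\ge n_0$ and $v$, directly from the uniform bound on $S_n$ in \cref{lem:sv-properties}, so $\bC\I_v(\cdot)\Q_nh$ makes sense in $L^2([0,T],X^\sigma)$ with the analogous bound (as $\bC\in\L(X^\sigma)$).

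For the estimate \eqref{ineq:estimFL}, I would combine three uniform bounds: $\norm{w_0}_{X^\sigma}\le\mathfrak{C}_{\mathrm{obs}}\norm{\Pi_{n,v}[g-\bC\I_v(\cdot)\Q_nh]}_{L^2}\le\mathfrak{C}_{\mathrm{obs}}\bigl(\norm{\Pi_{n,v}g}_{L^2}+\norm{\bC\I_v(\cdot)\Q_nh}_{L^2}\bigr)$ from \eqref{boundOn-1} and the boundedness of $\Pi_{n,v}$; then $\norm{\bC\I_v(\cdot)\Q_nh}_{L^2}\le C\norm{\Q_nh}_{L^1}$ from the previous paragraph; and finally $\norm{w}_{C^0}\le C\norm{w_0}_{X^\sigma}+\norm{\I_v(\cdot)\Q_nh}_{C^0}$, again by \cref{lem:sv-properties}. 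All constants are independent of $n\ge n_0$ and $v\in\mathbb{V}$ because each ingredient lemma provides $n$-uniform constants. Wait—I should be careful that $\Pi_{n,v}g$ only appears through $\Pi_{n,v}$, so the right-hand side of \eqref{ineq:estimFL} correctly involves $\norm{\Pi_{n,v}g}_{L^2}$ and not $\norm{g}_{L^2}$; this is automatic since $w_0$ depends on $g$ only via $\Pi_{n,v}g$.

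For the Lipschitz continuity and uniform boundedness of the map \eqref{map:v-FL}, I would write $\F_n(v)(g,h)=S_n(v)(\cdot,0)\,\O_{n,v}^{-1}\Pi_{n,v}\bigl[g-\bC\I_v(\cdot)\Q_nh\bigr]+\I_v(\cdot)\Q_nh$ and observe that $v\mapsto\F_n(v)$ is a composition/product of maps each of which is Lipschitz on $\mathbb{V}$ with $n$-uniform constants: $v\mapsto S_n(v)(\cdot,0)$ and $v\mapsto\I_v(\cdot)$ (both controlled via \cref{lem:sv-properties}, the latter e.g.\ by writing $\I_{v_1}-\I_{v_2}$ through Duhamel and \eqref{ineq:sv-lip}); $v\mapsto\O_{n,v}^*$ and hence $v\mapsto\G_n(v)=\O_{n,v}^*\O_{n,v}$; $v\mapsto\G_n^\dagger(v)=(\O_{n,v}^*\O_{n,v})^{-1}$ by \cref{lem:inv-gramian-regularity}; and $\Pi_{n,v}=\O_{n,v}\G_n^\dagger(v)\O_{n,v}^*$ by \cref{lem:proj}, which is then Lipschitz as a product of uniformly bounded Lipschitz maps. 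Since $\O_{n,v}^{-1}\Pi_{n,v}=\G_n^\dagger(v)\O_{n,v}^*$ (indeed $\O_{n,v}^{-1}\O_{n,v}\G_n^\dagger\O_{n,v}^*=\G_n^\dagger\O_{n,v}^*$, valid on $\Y_{n,v}$), the reconstruction operator factors entirely through these maps, so $v\mapsto\F_n(v)$ is Lipschitz with an $n$-uniform constant, and uniformly bounded by the triangle inequality combined with the uniform bounds on each factor. The main obstacle, or at least the point requiring the most care, is the bookkeeping that every Lipschitz constant and every bound produced is genuinely independent of $n\ge n_0$: this hinges on the $n$-uniformity already built into \cref{lem:sv-properties} (for $S_n$ and the Duhamel estimates), into \eqref{boundOn-1}--\eqref{ineq:bound-ovov} (for $\O_{n,v}^{-1}$, via $\mathfrak{C}_{\mathrm{obs}}$ from Assumption \ref{NLS:assumC}), and into \cref{lem:inv-gramian-regularity} (for $\G_n^\dagger$); one must check that composing and multiplying these does not secretly reintroduce $n$-dependence, which it does not since all norms live in $\Q_nX^\sigma$ or operator spaces over it with the inherited topology. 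I would also record, for later use, that under the additional \ref{NLS:assumFholom} and on a compact $\mc{V}_K\subset\mathbb{V}$, the same factorization together with \cref{lem:sv-holomorphic} and the holomorphic extension part of \cref{lem:inv-gramian-regularity} yields a holomorphic extension of $v\mapsto\F_n(v)$, though this is not needed for the present statement.
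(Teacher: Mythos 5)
Your proposal is correct and follows essentially the same route as the paper: reduce via \cref{lem:duhamel-h} to the Duhamel form, solve the observability condition for $w_0=\O_{n,v}^{-1}\Pi_{n,v}[g-\bC\I_v(\cdot)\Q_nh]$ using $\Pi_{n,v}\O_{n,v}=\O_{n,v}$ and the bijectivity of $\O_{n,v}$ onto $\Y_{n,v}$, obtain \eqref{ineq:estimFL} from \eqref{boundOn-1} together with the $n$-uniform bounds of \cref{lem:sv-properties}, and get the Lipschitz property from the factorization $\F_n(v)(g,h)=S_n(v)(\cdot,0)\G_n^\dagger(v)\O_{n,v}^*[g-\bC\I_v(\cdot)\Q_nh]+\I_v(\cdot)\Q_nh$ via \cref{lem:proj,lem:inv-gramian-regularity}. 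No gaps; the bookkeeping of $n$-uniform constants is handled exactly as in the paper.
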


\begin{proof}
    In view of \cref{lem:duhamel-h}, to be solution of the first line of \eqref{eq:solprojectlin}, it is equivalent to be written as a Duhamel formula
    \begin{align}\label{eq:duhamel-lemma}
        w(t)=S_n(v)(t, 0)w_0+\int_{0}^t S_n(v)(t, s)\Q_n h(s)ds,
    \end{align}
    for some $w_0\in \Q_n X^\sigma$. So, we need to compute $w_0\in \Q_nX^\sigma$. Given the Duhamel's formula, we have $w\in C^0([0, T], \Q_n X^\sigma)\subset L^2([0, T], \Q_n X^\sigma)$ and we can compute
    \begin{align*}
        \Pi_{n, v}\bC w=\Pi_{n, v}\bC \left[ S(v,\cdot, 0)w_0\right]+\Pi_{n, v}\bC[\I_v(\cdot) \Q_n h].
    \end{align*}
    Observe that if $w_0\in \Q_n X^\sigma$, then $\bC[S(v,\cdot, 0)w_0]=\O_{n, v}w_0$, and therefore $\Pi_{n, v}\bC[S(v,\cdot, 0)w_0]=\Pi_{n, v}\O_{n, v}w_0=\O_{n, v}w_0$ by definition of $\Pi_{n, v}$.

    Since both belong to $\Y_{n, v}$ and we want $\Pi_{n, v}\bC w=\Pi_{n, v}g$, we should have
    \begin{align*}
        \O_{n, v}^{-1}\Pi_{n, v}g&=\O_{n, v}^{-1}\Pi_{n, v}\bC w\\
        &=\O_{n, v}^{-1}\bC S(v,\cdot, 0)w_0+\O_{n, v}^{-1}\Pi_{n, v}\bC[\I_v(\cdot) \Q_n h]\\
        &=w_0+\O_{n, v}^{-1}\Pi_{n, v}\bC[\I_v(\cdot) \Q_n h].
    \end{align*}
    With the initial data $w_0$ given by the above formula, \eqref{eq:duhamel-lemma} is satisfied as well as the observability condition $\Pi_{n, v}\bC w=\Pi_{n, v}g$. Indeed, it belongs to $\Q_n X^\sigma$ and by reproducing the same computation backwards, we have
    \begin{align*}
        \Pi_{n, v}\bC w&=\Pi_{n, v}\bC\left[S(v, \cdot, 0)w_0+\I_v(\cdot)\Q_nh\right] =\Pi_{n, v} \O_{n}w_0+\Pi_{n, v}\bC\I_v(\cdot)\Q_nh\\
        &=\Pi_{n, v} \O_{n}\O_n^{-1}\Pi_{n, v}\left[g-\bC\I_v(\cdot)\Q_n h\right]+\Pi_{n, v}\bC\I_v(\cdot)\Q_nh\\
        &=\Pi_{n, v}\left[g-\bC\I_v(\cdot)\Q_n h\right]+\Pi_{n, v}\bC\I_v(\cdot)\Q_nh=\Pi_{n, v}g.
    \end{align*}
    The uniqueness follows from the uniqueness of the definition of $w_0$.

    We now prove estimate \eqref{ineq:estimFL}. From \cref{lem:sv-properties}, we have the estimate
    \begin{align*}
        \norm{w}_{C([0, T], \Q_n X^\sigma)}\leq C\big(\norm{w_0}_{X^\sigma}+\norm{\Q_n h}_{L^1([0, T], X^\sigma)}\big),
    \end{align*}
    where $C$ is a constant uniform in $\mathbb{V}$ and $n\geq n_0\in \N$. Hence we are led to estimate $w_0$. As a consequence of Assumption \ref{NLS:assumC}, by using estimate \eqref{boundOn-1}, we get
    \begin{align*}
        \norm{w_{0}}_{X^{\sigma}}&= \norm{\O_{n, v}^{-1}\Pi_{n, v}\left[g-\bC\I_v(\cdot)\Q_n h\right]}_{X^{\sigma}}\\ &\leq \mathfrak{C}_{\text{obs}}\norm{\Pi_{n, v}g}_{L^2([0, T], X^\sigma)}+\mathfrak{C}_{\text{obs}}\norm{\Pi_{n, v}\bC\I_v(\cdot)\Q_n h}_{L^2([0, T], X^\sigma)}.
    \end{align*}
    We now use the unitarity of $\Pi_{n, v}$, H\"older's inequality and \cref{lem:sv-properties} to obtain
    \begin{align*}
        \norm{\Pi_{n, v}\bC\I_v(\cdot)\Q_n h}_{L^2([0, T], X^\sigma)}&\leq \norm{\bC\I_v(\cdot)\Q_n h}_{L^2([0, T], X^\sigma)}\\
        &\leq \norm{\bC}_{\mc{L}(X^\sigma)}\norm{\I_v(\cdot)\Q_n h}_{L^2([0, T], X^\sigma)}\\
        &\leq T^{1/2}\norm{\bC}_{\mc{L}(X^\sigma)}\norm{\I_v(\cdot)\Q_n h}_{L^{\infty}([0, T], X^\sigma)}\\
        &\leq C_1T^{1/2}\norm{\bC}_{\mc{L}(X^\sigma)}\norm{\Q_n h}_{L^{1}([0, T], X^\sigma)},
    \end{align*}
    uniformly in $v\in \mathbb{V}$ and $n\geq n_0\in \N$. Gathering the previous estimates, we arrive to
    \begin{align*}
    \norm{w}_{C^{0}([0,T],\Q_{n}X^{\sigma})}\leq C\mathfrak{C}_{\text{obs}} \norm{\Pi_{n, v}g}_{L^{2}([0,T], X^{\sigma})}+C\left(1+C_1T^{1/2}\mathfrak{C}_{\text{obs}}\norm{\bC}_{\mc{L}(X^\sigma)}\right)\norm{\Q_{n} h}_{L^{1}([0,T],X^{\sigma})}.
    \end{align*}    
    In particular, the nonlinear map
    \begin{align}\label{map:v-FL2}
        v\in \mathbb{V}\longmapsto \F_{n}(v)\in \mc{L}(L^2([0, T], X^\sigma)\times L^1([0, T], X^\sigma), C^0([0, T], \Q_n X^\sigma))
    \end{align}
    is well-defined and bounded, uniformly with respect to $n\geq n_0$. To prove that it is Lipschitz-continuous, we first use the explicit expression for the projector $\Pi_{n, v}$ given by \cref{lem:proj} to write
    \begin{align*}
        \F_n(v)(g, h)&=S_n(v)(\cdot, 0)\O_{n, v}^{-1}\Pi_{n, v}\left[g-\bC\I_v(\cdot)\Q_n h\right]+\I_v(\cdot)\Q_n h\\
        &=S_n(v)(\cdot, 0)\big(\O_{n, v}^*\O_{n, v}\big)^{-1}\O_{n, v}^*\left[g-\bC\I_v(\cdot)\Q_n h\right]+\I_v(\cdot)\Q_n h.
    \end{align*}
    From \cref{lem:sv-properties} and composition with linear maps, the map 
    \begin{align*}
        v\in \mathbb{V}\longmapsto \I_v\Q_n\in \mc{L}(L^1([0, T], X^\sigma), C^0([0, T], Q_nX^\sigma))
    \end{align*}
    is Lipschitz-continuous with constant uniform in $n\in \N$. Furthermore, combining \cref{lem:sv-properties} and \cref{lem:inv-gramian-regularity}, we obtain that the map
    \begin{align*}
        v\in \mathbb{V}\longmapsto S_n(v)(\cdot, 0)\big(\O_{n, v}^*\O_{n, v}\big)^{-1}\O_{n, v}^*\in \mc{L}(L^2([0, T], X^\sigma), C^0([0, T], \Q_n X^\sigma)
    \end{align*}
    is Lipschitz-continuous with constant uniform in $n\geq n_0$. Therefore, by composition of Lipschitz-continuous maps, the map \eqref{map:v-FL2} is Lipschitz-continuous as well
    \begin{align*}
        \norm{\F_n(v_1)-\F_n(v_2)}_{\mc{L}(L^2([0, T],X^\sigma)\times L^1([0, T],X^\sigma), C^0([0, T],X^\sigma))}\leq C\norm{v_1-v_2}_{C^0([0, T], X^\sigma)}
    \end{align*}
    where $C>0$ is a constant uniform in $n\geq n_0$.
\end{proof}

\begin{remark}\label{rk:cauchyObslipschitz}
    After \cref{lem:CauchyObs}, the fact that the operator $\F_n$ is bounded, implies that the map $(v, g, h)\mapsto \F_n(v)(g, v, h)$ is Lipschitz-continuous in the $C^0([0, T], X^\sigma)\times L^2([0, T],X^\sigma)\times L^1([0, T],X^\sigma)$-topology whenever the pair $(g, h)$ belongs to a bounded set of its corresponding space.
\end{remark}

\begin{lemma}\label{lem:Cauchyobsholom}
    Under the notation and Assumptions of \cref{lem:CauchyObs}, if we further enforce Assumption \ref{NLS:assumFholom}, and $\mc{V}_{K}\subset \mathbb{V}$ is a compact set in $C^0([0, T], X^\sigma)$, there exists $\eta>0$ such that the map $\F_n$ restricted to $\mc{V}_{K}$ admits a holomorphic extension as
    \begin{align}\label{def:FnC-holom}
        \F_n^{\mathbb{C}}: v\in \mc{V}_{K}+\B_{\eta, \eta}^{[0, T]}(X^\sigma)\longmapsto \F_n^{\mathbb{C}}(v)\in \mc{L}_{\mathbb{C}}(L^2([0, T],X_\mathbb{C}^\sigma)\times L^1([0, T],X_\mathbb{C}^\sigma), C^0([0, T], \Q_nX_\mathbb{C}^\sigma)).
    \end{align}
    This extension is Lipschitz-continuous and bounded uniformly with respect to $n\geq n_0$. Furthermore, there exists a constant $C>0$ such that for any $n\geq n_0$ and $v\in \mc{V}_{K}+\B_{\eta,\eta}(X^\sigma)$ the following estimate holds
    \begin{align}
    \label{estimFLholom}
    \norm{\F_n^{\mathbb{C}}(v)(g, h)}_{C^{0}([0,T],\Q_{n}X_\mathbb{C}^{\sigma})}\leq C \norm{\Pi_{n, v}g}_{L^{2}([0,T],X_\mathbb{C}^{\sigma})}+C\norm{\Q_{n} h}_{L^{1}([0,T],X_\mathbb{C}^{\sigma})},
    \end{align}
    for any $(g, h)\in L^2([0, T],X_\mathbb{C}^\sigma)\times L^1([0, T],X_\mathbb{C}^\sigma)$.
\end{lemma}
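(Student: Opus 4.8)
The plan is to reassemble the holomorphic extensions already available for the individual pieces of the closed-form expression for $\F_n(v)$ obtained in \cref{lem:CauchyObs},
\begin{align*}
    \F_n(v)(g, h)=S_n(v)(\cdot, 0)\big(\O_{n, v}^*\O_{n, v}\big)^{-1}\O_{n, v}^*\left[g-\bC\I_v(\cdot)\Q_n h\right]+\I_v(\cdot)\Q_n h.
\end{align*}
First I would record that each factor admits the relevant extension. By \cref{lem:sv-holomorphic}, $v\mapsto S_n(v)(t,s)$ extends to a $\mathbb{C}$-linear holomorphic map on $\B_{3R_0, \delta_0}^{[0, T]}(X^\sigma)$, bounded and Lipschitz uniformly in $n\geq n_0$; composing with the bounded operator $\bC$ and with integration in $s$, the same is true of $v\mapsto \O_{n, v}=\bC S_n(v)(\cdot, 0)$ and of $v\mapsto \I_v(\cdot)$. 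By \cref{app:lem:adjext} the adjoint map has a $\mathbb{C}$-linear holomorphic extension $\widetilde{\adj}$, so $v\mapsto \O_{n, v}^*=\widetilde{\adj}(\O_{n, v})$ is holomorphic, bounded and Lipschitz on the same domain, uniformly in $n$. Finally, by \cref{lem:inv-gramian-regularity}, compactness of $\mc{V}_{K}$ in $C^0([0, T], X^\sigma)$ furnishes an $\eta_0>0$ for which $v\mapsto \G_n^\dagger(v)=(\O_{n, v}^*\O_{n, v})^{-1}$ extends holomorphically on $\mc{V}_{K}+\B_{\eta_0, \eta_0}^{[0, T]}(X^\sigma)$, Lipschitz and bounded by $2\mathfrak{C}_{obs}^2$ uniformly in $n\geq n_0$.

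Next I would fix $\eta\in(0, \eta_0]$ small enough that every extension of the previous step is simultaneously defined and holomorphic on $D:=\mc{V}_{K}+\B_{\eta, \eta}^{[0, T]}(X^\sigma)$, and define $\F_n^{\mathbb{C}}(v)(g, h)$ by the same formula as above with each factor replaced by its $\mathbb{C}$-linear holomorphic extension and $(g, h)\in L^2([0, T],X_\mathbb{C}^\sigma)\times L^1([0, T],X_\mathbb{C}^\sigma)$. For fixed $v$ this is $\mathbb{C}$-linear in $(g, h)$, since each building block is $\mathbb{C}$-linear in $(g,h)$ or does not depend on it, so $\F_n^{\mathbb{C}}(v)$ lands in $\mc{L}_{\mathbb{C}}(L^2([0, T],X_\mathbb{C}^\sigma)\times L^1([0, T],X_\mathbb{C}^\sigma), C^0([0, T], \Q_nX_\mathbb{C}^\sigma))$; holomorphy of $v\mapsto\F_n^{\mathbb{C}}(v)$ on $D$ follows by composition of holomorphic maps together with the fact that composition of bounded linear operators is a continuous bilinear, hence holomorphic, operation (\cref{appendix:thm:holom-sev-var}); and on the real locus $\mc{V}_{K}+i0$ the formula reduces to the one of \cref{lem:CauchyObs}, so $\F_n^{\mathbb{C}}$ genuinely extends $\F_n$.

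Finally I would transfer the estimate \eqref{estimFLholom} and the Lipschitz bound to the complex setting by rerunning the computation in \cref{lem:CauchyObs} verbatim: writing $w_0:=\big(\O_{n, v}^*\O_{n, v}\big)^{-1}\O_{n, v}^*[g-\bC\I_v(\cdot)\Q_n h]$ and using $\O_{n, v}^*=\O_{n, v}^*\Pi_{n, v}$, the bound $\norm{(\O_{n, v}^*\O_{n, v})^{-1}}_{\mc{L}_{\mathbb{C}}(\Q_n X_\mathbb{C}^\sigma)}\leq 2\mathfrak{C}_{obs}^2$, the uniform bounds of $\O_{n, v}^*$ and of $\I_v(\cdot)$ from \cref{lem:sv-holomorphic}, H\"older's inequality and the boundedness of $\bC$, one controls $\norm{w_0}_{X_\mathbb{C}^\sigma}$ by $C\norm{\Pi_{n, v}g}_{L^2}+C\norm{\Q_n h}_{L^1}$, and then the uniform bound of $S_n$ propagates this to $\F_n^{\mathbb{C}}(v)(g,h)=S_n(v)(\cdot,0)w_0+\I_v(\cdot)\Q_n h$; Lipschitz continuity is obtained, as in \cref{lem:CauchyObs}, by writing $\F_n^{\mathbb{C}}$ as a composition of the Lipschitz maps $v\mapsto\I_v\Q_n$ and $v\mapsto S_n(v)(\cdot, 0)(\O_{n, v}^*\O_{n, v})^{-1}\O_{n, v}^*$ paired against $(g,h)$ in a bounded set, all uniformly in $n\geq n_0$. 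There is no deep difficulty in this lemma: it is essentially a gluing statement, and the only points requiring attention are the bookkeeping of domains — choosing $\eta$ so that all the previously constructed extensions remain valid on $\mc{V}_{K}+\B_{\eta,\eta}^{[0, T]}(X^\sigma)$, in particular inside the cylinder $\B_{3R_0,\delta_0}^{[0,T]}(X^\sigma)$ on which $S_n$ was extended — and verifying that the reassembled operator genuinely acts $\mathbb{C}$-linearly on $(g,h)$, i.e. that the adjoint is taken through its $\mathbb{C}$-linear extension $\widetilde{\adj}$.
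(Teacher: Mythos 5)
Your proposal is correct and follows essentially the same route as the paper: the paper likewise rewrites $\F_n(v)$ via the explicit formula with $\Pi_{n,v}=\O_{n,v}(\O_{n,v}^*\O_{n,v})^{-1}\O_{n,v}^*$, invokes \cref{lem:sv-holomorphic}, \cref{lem:inv-gramian-regularity} and \cref{app:lem:adjext} to extend each factor on $\mc{V}_K+\B_{\eta,\eta}^{[0,T]}(X^\sigma)$, and concludes holomorphy, uniform boundedness and Lipschitz continuity by composition, with coincidence with $\F_n$ on $\mc{V}_K$. Your extra detail on transferring the estimate \eqref{estimFLholom} (using the bound $2\mathfrak{C}_{obs}^2$ on $\G_n^\dagger$ and the algebraic identity inserting $\Pi_{n,v}$) is a valid elaboration of what the paper leaves to "composition of bounded maps."
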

\begin{proof}
    After \cref{lem:CauchyObs}, we have an explicit formula for $\F_n$, that is, if $v\in \mathbb{V}$,
    \begin{align*}
        \F_n(v)(g, h)=S_n(v)(\cdot, 0)\big(\O_{n, v}^*\O_{n, v}\big)^{-1}\O_{n, v}^{*}\big[g-\bC\I_v\Q_nh\big]+\I_v(\cdot)\Q_nh,
    \end{align*}
    for any $(g, h)\in L^2([0, T],X^\sigma)\times L^1([0, T],X^\sigma)$, where we used the explicit formula for the projector $\Pi_{n, v}=\O_{n, v}(\O_{n, v}^*\O_{n, v})^{-1}\O_{n, v}^*$ given by \cref{lem:proj}.
    
    From \cref{lem:sv-holomorphic}, \cref{lem:inv-gramian-regularity} and \cref{app:lem:adjext}, there exists $\eta>0$ such that we have well-defined holomorphic maps
    \begin{align*}
    \begin{array}{rccl}
        S_n^\mathbb{C}: & v\in \mc{V}_K+\B_{\eta, \eta}^{[0, T]}(X^\sigma) & \longmapsto & S_n^\mathbb{C}(v)\in \mc{L}_{\mathbb{C}}(\Q_n X_\mathbb{C}^\sigma, C^0([0, T], \Q_n X_\mathbb{C}^\sigma)),\\
        \G_n^\dagger: & v\in \mc{V}_K+\B_{\eta, \eta}^{[0, T]}(X^\sigma) &\longmapsto & \G_n^\dagger(v)\in \mc{L}_{\mathbb{C}}(\Q_n X_\mathbb{C}^\sigma),\\
        \widetilde{\adj}: &v\in \mc{V}_K+\B_{\eta, \eta}^{[0, T]}(X^\sigma) & \longmapsto & \widetilde{\adj}(\O_{n, v})\in \mc{L}_{\mathbb{C}}(L^2([0, T], X_\mathbb{C}^\sigma), \Q_n X_\mathbb{C}^\sigma),
    \end{array}
    \end{align*}
    and on $\mc{V}_K$ they coincide with their respective real counterparts restricted to $\mc{V}_K$. Similarly, by linearity, we have the holomorphic map
    \begin{align*}
        v\in \mc{V}_K+\B_{\eta, \eta}^{[0, T]}(X^\sigma)\longmapsto \I_v^{\mathbb{C}}(\cdot)\Q_n\in \mc{L}_{\mathbb{C}}(L^1([0, T], X_\mathbb{C}^\sigma), C^0([0, T], \Q_n X_\mathbb{C}^\sigma)).
    \end{align*}
    characterized by $\I_v^{\mathbb{C}}(t): \xi\mapsto \int_0^t S_n^{\mathbb{C}}(v)(t, s)\xi(s)ds$. Also, $\bC$ can be extended by linearity into $\mc{L}_{\mathbb{C}}(\Q_n X_\mathbb{C}^\sigma)$. Following a similar argument as in the proof of \cref{lem:inv-gramian-regularity}, by composition, we have a well-defined holomorphic map $\F_n^{\mathbb{C}}$ as specified in \eqref{def:FnC-holom}, characterized by the formula
    \begin{align*}
        \F_n^{\mathbb{C}}(v)(g, h)=S_n^{\mathbb{C}}(v)(\cdot, 0)\G_n^\dagger(v)\widetilde{\adj}(\O_{n, v})\big[g-\bC\I_v^{\mathbb{C}}\Q_n h\big]+\I_v^{\mathbb{C}}(\cdot)\Q_n h.
    \end{align*}
    Given the above explicit formula and that all the maps involved in the definition are Lipschitz continuous and bounded, by composition, so is $\F_n^{\mathbb{C}}$ uniformly with respect to $n\geq n_0$. Moreover, by construction, on $\mc{V}_K$ the map $\F_n^\mathbb{C}$ coincides with $\F_n$ restricted to $\mc{V}_K$, which finishes the proof.
\end{proof}

\begin{remark}\label{rk:Fnholom}
    Note that we find a holomorphic extension of $\F_n$ restricted to $\mc{V}_{K}$ rather than on $\mathbb{V}$, where it is initially defined. Moreover, this extension does not necessarily solve an observability problem such as \eqref{eq:solprojectlin}.
\end{remark}

\subsubsection{Finite determining modes} As a first direct consequence of the previous result, we can get a finite determining mode result: two solutions of a nonlinear equation with the same observation and the same low frequency modes are the same. This result will not be used directly later, but can be considered as an easier version of what will follow where we will actually construct the reconstruction operator and study its regularity.

\begin{proposition}\label{prop:finite-det-modes}
Let $R_{0}>0$ and $\A\subset X^\sigma$ be a nonempty compact set. Under Assumptions \ref{NLS:assumA}, \ref{assumF} (with $R_0$) and \ref{NLS:assumC} (with $T$, $R_0$, $n_0$ and $\mathbb{V}=\B_{3R_0}^{[0, T]}(X^\sigma, \A)$), there exists $n\geq n_0$ such that the following holds. Let $\bh\in L^1([0,T],X^{\sigma})$ and $g\in L^2([0, T], X^\sigma)$.  Let $u(t)$ and $\widetilde{u}(t)$ be two solutions on $(0,T)$ of
\begin{align*}
    \left\{\begin{array}{cl}
        \partial_t u=Au+\fk(u)+\bh, &\ \text{on } (0, T), \\
        \bC u(t)=g(t), & \text{for } t\in (0, T),
    \end{array}\right.
\end{align*}
such that $u$, $\widetilde{u}\in \B_{R_0}^{[0, T]}(X^\sigma, \A)$. If $\P_n u(t)=\P_n \widetilde{u}(t)$ for all times $t\in [0, T]$, then $u(t)\equiv \widetilde{u}(t)$ for all $t\in [0, T]$.
\end{proposition}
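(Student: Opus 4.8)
The plan is to subtract the two equations, split off the common low-frequency part $v:=\P_n u=\P_n\widetilde{u}$, and read the resulting high-frequency equation for the difference $r:=u-\widetilde{u}$ as an instance of the observability Cauchy problem of \cref{lem:CauchyObs}, whose source term we will show is small for $n$ large. The decisive point is to linearize $\fk$ around $v$ rather than around $u$ or $\widetilde{u}$: this makes the nonlinear remainder carry a factor controlled by the high-frequency tail $\varepsilon_n:=\sup_{x\in\A}\norm{\Q_n x}_{X^\sigma}$, which tends to $0$ as $n\to\infty$ precisely because $\A$ is compact.

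Concretely, write $u=v+\Q_n u$ and $\widetilde{u}=v+\Q_n\widetilde{u}$, so $r=\Q_n r$ (the low modes cancel by hypothesis). Subtracting the equations (the forcing $\bh$ cancels) and applying $\Q_n$ gives $\partial_t r=Ar+\Q_n(\fk(u)-\fk(\widetilde{u}))$ in the mild sense; using $\fk(u)-\fk(\widetilde{u})=\int_0^1 D\fk(\widetilde{u}_\tau)r\,d\tau$ with $\widetilde{u}_\tau:=(1-\tau)\widetilde{u}+\tau u$ and adding and subtracting $D\fk(v)r$,
\begin{align*}
    \partial_t r=(A+\Q_n D\fk(v))r+\Q_n h,\qquad h(t):=\int_0^1\big[D\fk(\widetilde{u}_\tau(t))-D\fk(v(t))\big]r(t)\,d\tau\in C^0([0,T],X^\sigma).
\end{align*}
Moreover $\bC r=\bC u-\bC\widetilde{u}=g-g=0$, hence $\Pi_{n,v}\bC r=0$. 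Since $\norm{v}_{C^0([0,T],X^\sigma)}\le\norm{u}_{C^0([0,T],X^\sigma)}\le R_0$ and $v(t)=\P_n u(t)\in\A$ — using that $\A$ is stable under $\P_n$, as assumed in \cref{thm:mainabs-analytic}; alternatively, for $n$ large one absorbs $\norm{v-u}_{C^0([0,T],X^\sigma)}\le\varepsilon_n$ into the observability constant via \cref{lem:sv-properties} — we have $v\in\mathbb{V}$. As $u,\widetilde{u}$ are mild solutions, $r$ solves the equation above in the Duhamel sense, equivalently (by \cref{lem:duhamel-h}) in the $S_n(v)$-form used in \cref{lem:CauchyObs}; together with $\Pi_{n,v}\bC r=0$, the uniqueness statement there gives $r=\F_n(v)(0,h)$, and \eqref{ineq:estimFL} yields, with $C$ uniform in $n\ge n_0$,
\begin{align*}
    \norm{r}_{C^0([0,T],X^\sigma)}\le C\norm{\Q_n h}_{L^1([0,T],X^\sigma)}\le C\norm{h}_{L^1([0,T],X^\sigma)}.
\end{align*}

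To bound $h$, first note $\varepsilon_n\to0$: indeed $\Q_n x\to0$ in $X^\sigma$ for each fixed $x$ and $\norm{\Q_n}_{\L(X^\sigma)}\le1$, so a finite-net argument over the compact set $\A$ does it. For $t\in[0,T]$ and $\tau\in[0,1]$, both $v(t)$ and $\widetilde{u}_\tau(t)$ lie in $\mathbb{B}_{R_0}(X^\sigma)\subset\mathbb{B}_{4R_0}(X^\sigma)$ and have the same low-frequency component $v(t)$, so
\begin{align*}
    \norm{v(t)-\widetilde{u}_\tau(t)}_{X^\sigma}=\norm{\Q_n\widetilde{u}_\tau(t)}_{X^\sigma}\le(1-\tau)\norm{\Q_n\widetilde{u}(t)}_{X^\sigma}+\tau\norm{\Q_n u(t)}_{X^\sigma}\le\varepsilon_n,
\end{align*}
using $u(t),\widetilde{u}(t)\in\A$. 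By the Lipschitz continuity of $D\fk$ on $\mathbb{B}_{4R_0}(X^\sigma)$ from Assumption \ref{assumF} (with Lipschitz constant $L$), $\norm{D\fk(\widetilde{u}_\tau(t))-D\fk(v(t))}_{\L(X^\sigma)}\le L\varepsilon_n$, hence $\norm{h(t)}_{X^\sigma}\le L\varepsilon_n\norm{r(t)}_{X^\sigma}$ and $\norm{h}_{L^1([0,T],X^\sigma)}\le LT\varepsilon_n\norm{r}_{C^0([0,T],X^\sigma)}$. Combining with the previous display, $\norm{r}_{C^0([0,T],X^\sigma)}\le CLT\varepsilon_n\norm{r}_{C^0([0,T],X^\sigma)}$; choosing $n\ge n_0$ with $CLT\varepsilon_n<1$ forces $r\equiv0$, i.e. $u\equiv\widetilde{u}$ on $[0,T]$. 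I expect the only genuine obstacle to be selecting the right linearization point: expanding around $u$ or $\widetilde{u}$ leaves a quadratic remainder of size $\lesssim\norm{r}_{C^0}^2\le 2R_0\norm{r}_{C^0}$, with no $n$-dependent smallness available, whereas linearizing around the low-frequency $v$ turns that coefficient into $\varepsilon_n$, which compactness of $\A$ makes uniformly small over all admissible $u,\widetilde{u}$; the only bookkeeping subtlety is the verification $v\in\mathbb{V}$ noted above.
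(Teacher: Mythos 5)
Your argument is correct in its mechanics, but it takes a genuinely different route from the paper, and it is worth comparing the two. You linearize around the common low-frequency part $v=\P_n u$, so that the remainder $h=\int_0^1[D\fk(\widetilde u_\tau)-D\fk(v)]r\,d\tau$ carries the coefficient $L\varepsilon_n$ with $\varepsilon_n=\sup_{x\in\A}\norm{\Q_n x}_{X^\sigma}\to 0$, and you conclude directly from $\norm{r}\le CLT\varepsilon_n\norm{r}$. The paper instead linearizes around $\widetilde u$ itself, writing $\fk(u)-\fk(\widetilde u)=D\fk(\widetilde u)z+\H(\widetilde u,z)$, obtains the quadratic bound $\norm{z}\le C\norm{z}^2$ from \cref{lem:CauchyObs}, hence the dichotomy $z=0$ or $\norm{z}\ge 1/C$, and rules out the second branch because $z=\Q_n(u-\widetilde u)$ satisfies $\norm{z}_{C^0}\le 2\varepsilon_n<1/C$ for $n$ large, $C$ depending only on $n_0$. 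So your closing remark — that expanding around $u$ or $\widetilde u$ leaves ``no $n$-dependent smallness available'' — is mistaken: the compactness of $\A$ is then applied to $\norm{z}_{C^0}$ itself rather than to the coefficient of the remainder, and the dichotomy closes the argument just as well. What the paper's choice buys is precisely the point you had to patch: $\widetilde u$ lies in $\mathbb{V}=\B_{3R_0}^{[0,T]}(X^\sigma,\A)$ by hypothesis, so \cref{lem:CauchyObs} applies verbatim, whereas your $v=\P_n u$ need not take values in $\A$ since stability of $\A$ under $\P_n$ is \emph{not} assumed in this proposition (unlike in \cref{thm:mainabs-analytic}). Your second fix — observing $\norm{v-u}_{C^0}\le\varepsilon_n$ and perturbing the observability inequality through the uniform Lipschitz continuity of $v\mapsto S_n(v)$ from \cref{lem:sv-properties} — is sound: for $n$ large one gets \eqref{assum:observabstract} for $S_n(v)$ with constant $2\mathfrak{C}_{\text{obs}}$, and the construction and estimate of \cref{lem:CauchyObs} use only that inequality together with bounds valid on all of $\B_{3R_0}^{[0,T]}(X^\sigma)$; but this step is only gestured at and should be spelled out (or avoided altogether by taking the paper's linearization point, which is the simpler path). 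What your route buys in exchange is a one-shot contraction with explicit smallness $\varepsilon_n$, avoiding the dichotomy argument.
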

\begin{proof}
    By assumption $\P_n u=\P_n\widetilde{u}$ as applications in $\B_{R_0}^{[0, T]}(X^\sigma)$. Let $z=u-\widetilde{u}$ and note that it solves
    \begin{align*}
        \left\{\begin{array}{cc}
            \partial_t z=Az+\fk(u)-\fk(\widetilde{u})&  \\
            \bC z=0.     & 
        \end{array}\right.
    \end{align*}
    If we linearize $\fk$ around $\widetilde{u}$, we can write
    \begin{align*}
        \fk(u)-\fk(\widetilde{u})=\fk(z+\widetilde{u})-\fk(\widetilde{u})=D\fk(\widetilde{u})z+\H(\widetilde{u}, z).
    \end{align*}
    By assumption $z=\Q_n(\widetilde{u}-u)$ and thus by applying $\Q_n$ to the equation satisfied by $z$, we get
    \begin{align*}
        \left\{\begin{array}{cc}
            \partial_t z=\big(A+\Q_nD\fk(\widetilde{u})\big)z+\Q_n\H(\widetilde{u}, z) &  \\
            \bC z=0     & 
        \end{array}\right.
    \end{align*}
    We are in the framework of \cref{lem:CauchyObs}, from which it follows $z=\F_n(\widetilde{u})\big(0, \H(\widetilde{u}, z)\big)$ along with the estimate
    \begin{align*}
        \norm{z}_{C^0([0, T], X^\sigma)}\leq C\norm{\Q_n\H(\widetilde{u}, z)}_{L^1([0, T], X^\sigma)}
    \end{align*}
    where $C>0$ is uniform in $n\geq n_0$. Moreover, using that $D\fk$ is Lipschitz-continuous we obtain
    \begin{align*}
        \norm{z}_{C^0([0, T], X^\sigma)}\leq C\norm{z}_{C^0([0, T], X^\sigma)}^2
    \end{align*}
    and thus whether $\norm{z}_{C^0([0, T], X^\sigma)}=0$ or $\norm{z}_{C^0([0, T], X^\sigma)}\geq 1/C$. In the former case, we are done. In the latter case, since $z=\Q_n(u-\widetilde{u})$ and both $u$, $\widetilde{u}$ belong pointwise in time to the same compact $\A\subset X^\sigma$, we can find $n\geq n_0$ such that $\norm{z}_{C^0([0, T], X^\sigma)}<1/C$, given that $C$ only depends on $n_0$. This yields $z=0$ and consequently $u=\widetilde{u}$.
\end{proof}

\subsection{High-frequency nonlinear reconstruction} Let $v\in \B_{3R_0}^{[0, T]}(X^\sigma)$. We are now interested in solving the nonlinear observability problem at high-frequency
\begin{align}
    \left\{\begin{array}{cc}
        \partial_t w(t)=Aw+\Q_n\fk(v+w)+\Q_n\bh &~ \text{ on } [0, T],  \\
        \Pi_{n, v}\bC w=\Pi_{n, v}g. &
\end{array}\right.
\end{align}
To this end, as explained at the beginning of this section, we will consider the linear variation of $v$ along $\Q_n D\fk(v)$ by writing
\begin{align*}
    \fk(w+v)=\fk(v)+\int_0^1 D\fk(v+\tau w)wd\tau=D\fk(v)w+\fk(v)+\H(v, w).
\end{align*}
This lead us to study the system
\begin{align}\label{eq:hf-nlw-obs}
    \left\{\begin{array}{cc}
        \partial_t w(t)=\big(A+\Q_nD\fk(v)\big)w+\Q_n\big(\fk(v)+\H(v, w)\big)+\Q_n\bh &  \\
        \Pi_{n, v}\bC w=\Pi_{n, v}g. & 
    \end{array}\right.
\end{align}
Under the assumptions of \cref{lem:CauchyObs}, it suggests that the initial condition of the above system must be given by
\begin{align*}
    w_0=\O_{n, v}^{-1}\Pi_{n, v}\big[g-\bC\I_v\Q_n(\fk(v)+\H(v, w)+\bh)\big].
\end{align*}
This yields the nonlinear operator $\Phi_{n, v, \bh, g}: C^0([0, T], \Q_n X^\sigma)\to C^0([0, T], \Q_n X^\sigma)$ defined by
\begin{align}\label{eq:hf-nlw-duhamel}
    \Phi_{n, v, \bh, g}(w)=\F_{n}(v)\big(g, \fk(v)+\H(v, w)+\bh\big).
\end{align}
We are then led to seek for fixed points of the operator $\Phi_{n, v, \bh, g}$. Indeed, as a consequence of the above formula, a fixed point of $\Phi_{n, v, \bh, g}$ is a solution of \eqref{eq:hf-nlw-obs}.

The aim of this section is to prove the following reconstruction result.

\begin{proposition}\label{prop:hf-unique-sol}(High-frequency reconstruction map) 
    Let $T>0$, $R_0>0$, $R_1>0$ and $n_0\in \N$. Let $\A_1$ and $\A_2$ be two nonempty compact subsets of $X^\sigma$ which are stable under the projection $\P_n$. Assume that Assumptions \ref{NLS:assumA}, \ref{assumF} (with $R_0$) and \ref{NLS:assumC} (with $T$, $R_0$, $n_0$ and $\mathbb{V}=\B_{3R_0}^{[0, T]}(X^\sigma, \A_1)$) hold. There exist $n^*\geq n_0$, $\eta>0$ and $0<R<R_0$ such that, for any $n\geq n^*$, for any $v\in \B_{3R_0}^{[0, T]}(X^\sigma, \A_1)$, $\bh\in \B_{R_1}^{[0, T]}(X^\sigma, \A_2)$ and $g\in \mathbb{B}_{\eta}(L^2([0, T], X^\sigma))$, there exists a unique solution $w\in \B_{R}^{[0, T]}(\Q_n X^\sigma)$ of
    \begin{align}\label{eq:w-hf-fixed-point}
        \left\{\begin{array}{cc}
            \partial_t w(t)=Aw+\Q_n\fk(v+w)+\Q_n\bh, &  \\
            \Pi_{n, v}\bC w=\Pi_{n, v}g. & 
        \end{array}\right.
    \end{align}
    This defines a nonlinear Lipschitz reconstruction operator
    \begin{align}\label{eq:mapRreal}
    \boldsymbol{\mathrm{R}}: \left\{\begin{array}{rcl}
        \B_{3R_0}^{[0, T]}(X^\sigma, \A_1)\times\B_{R_1}^{[0, T]}(X^\sigma, \A_2) \times  \mathbb{B}_{\eta}(L^{2}([0,T],X^{\sigma})) & \longrightarrow& \B_{R}^{[0, T]}(\Q_n X^\sigma)\\
        (v,\bh,g)&\longmapsto& w:=\mathcal{R}(v,\bh,g).
    \end{array}\right.
    \end{align}
    Furthermore, if additionally, $\fk$ satisfies Assumption \ref{NLS:assumFholom} and $\mc{V}_{K}\subset\B_{3R_0}^{[0, T]}(X^\sigma, \A_1)$ is nonempty and compact in $C^0([0, T], X^\sigma)$, then there exist $\eta>0$ and $\eta_1>0$, so that the map $\boldsymbol{\mathrm{R}}$ restricted to $\mc{V}_{K}$ in the first variable extends holomorphically as
    \begin{multline}\label{eq:mapRcomplex}
    \boldsymbol{\mathrm{R}}: \big(\mc{V}_{K}+\B_{\eta, \eta}^{[0, T]}(X^\sigma)\big)\times \big(\B_{R_1}^{[0, T]}(X^\sigma, \A_2)+\B_{\eta, \eta}^{[0, T]}(X^\sigma)\big)\\ \times  \mathbb{B}_{\eta, \eta}(L^{2}([0,T],X_\mathbb{C}^{\sigma}))
    \longrightarrow \B_{R, \eta_1}^{[0, T]}(\Q_n X^\sigma).
    \end{multline}
\end{proposition}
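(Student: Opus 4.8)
The plan is to construct $w$ as a fixed point of the operator $\Phi_{n,v,\bh,g}$ from \eqref{eq:hf-nlw-duhamel}, and to read off its dependence on the parameters $(v,\bh,g)$ from the uniform contraction principle. Recall that, by \cref{lem:CauchyObs}, a fixed point of $\Phi_{n,v,\bh,g}$ is a solution of \eqref{eq:hf-nlw-obs}, and that the elementary identity
\begin{align*}
    D\fk(v)w+\fk(v)+\H(v,w)=\fk(v)+\int_0^1 D\fk(v+\tau w)w\,d\tau=\fk(v+w)
\end{align*}
(a consequence of the definition of $\H$ and the fundamental theorem of calculus) turns \eqref{eq:hf-nlw-obs} into \eqref{eq:w-hf-fixed-point}; conversely, by \cref{lem:CauchyObs} again, any solution of \eqref{eq:w-hf-fixed-point} lying in $\B_R^{[0,T]}(\Q_n X^\sigma)$ is a fixed point of $\Phi_{n,v,\bh,g}$, which yields uniqueness.

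First I would run the fixed point on $\B_R^{[0,T]}(\Q_n X^\sigma)$. Using the estimate \eqref{ineq:estimFL} one gets, for $v$, $\bh$, $g$ as in the statement and $w\in\B_R^{[0,T]}(\Q_n X^\sigma)$,
\begin{align*}
    \norm{\Phi_{n,v,\bh,g}(w)}_{C^0([0,T],\Q_n X^\sigma)}\leq C\norm{g}_{L^2([0,T],X^\sigma)}+C\norm{\Q_n\fk(v)}_{L^1}+C\norm{\Q_n\H(v,w)}_{L^1}+C\norm{\Q_n\bh}_{L^1}.
\end{align*}
The quadratic term is controlled by $\norm{\H(v,w)(t)}_{X^\sigma}\leq\tfrac L2\norm{w(t)}_{X^\sigma}^2\leq\tfrac L2R^2$, which follows from the Lipschitz continuity of $D\fk$ on $\mathbb{B}_{4R_0}(X^\sigma)$ (Assumption \ref{assumF}) together with $\norm{v(t)+\tau w(t)}_{X^\sigma}\leq 4R_0$; the terms $\Q_n\fk(v)$ and $\Q_n\bh$ are made uniformly small by taking $n$ large, because $\fk(\A_1)$ and $\A_2$ are compact in $X^\sigma$ and $\Q_n\to 0$ strongly, so $\sup_{y\in\fk(\A_1)}\norm{\Q_ny}_{X^\sigma}+\sup_{y\in\A_2}\norm{\Q_ny}_{X^\sigma}\to 0$; and $\norm{g}_{L^2}\leq\eta$. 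Choosing first $R<R_0$ with $\tfrac{CLT}{2}R^2\leq R/3$, then $n^*\geq n_0$ so that for $n\geq n^*$ the two $\Q_n$-terms are $\leq R/3$, and finally $\eta$ with $C\eta\leq R/3$, gives a self-map of $\B_R^{[0,T]}(\Q_n X^\sigma)$. For the contraction, linearity of $\F_n(v)$ in its second slot gives $\Phi_{n,v,\bh,g}(w_1)-\Phi_{n,v,\bh,g}(w_2)=\F_n(v)(0,\H(v,w_1)-\H(v,w_2))$, and expanding $\H$ and using once more the Lipschitz continuity of $D\fk$ one finds $\norm{(\H(v,w_1)-\H(v,w_2))(t)}_{X^\sigma}\leq LR\norm{w_1-w_2}_{C^0}$, hence a Lipschitz constant $\leq CLRT\leq 1/2$ after possibly shrinking $R$. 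The contraction mapping principle produces the unique $w=\boldsymbol{\mathrm{R}}(v,\bh,g)\in\B_R^{[0,T]}(\Q_n X^\sigma)$, which solves \eqref{eq:w-hf-fixed-point} by the identity above; and $\boldsymbol{\mathrm{R}}$ is Lipschitz by the uniform contraction principle, since $(v,\bh,g)\mapsto\Phi_{n,v,\bh,g}(w)$ is Lipschitz — through $v\mapsto\F_n(v)$ (\cref{lem:CauchyObs}), $v\mapsto\fk(v)$ and $(v,w)\mapsto\H(v,w)$ (Assumption \ref{assumF}), and linearly in $(\bh,g)$ — uniformly on the relevant sets, with all constants uniform in $n\geq n^*$.

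For the holomorphic extension, I would replace each object by its holomorphic extension near real arguments: $\F_n^{\mathbb{C}}$ on $\mc{V}_{K}+\B_{\eta,\eta}^{[0,T]}(X^\sigma)$, with the bound \eqref{estimFLholom} (\cref{lem:Cauchyobsholom}); the holomorphic extensions of $\fk$ and $D\fk$ from the proof of \cref{lem:sv-holomorphic}, which in particular render $(v,w)\mapsto\H(v,w)$ holomorphic; and $\bC$ extended $\mathbb{C}$-linearly. This yields a holomorphic map $\Phi_{n,v,\bh,g}^{\mathbb{C}}(w)=\F_n^{\mathbb{C}}(v)(g,\fk(v)+\H(v,w)+\bh)$ on a complex neighborhood. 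One then re-runs the two estimates on the cylinder $\B_{R,\eta_1}^{[0,T]}(\Q_n X^\sigma)$: the bound on $\norm{\Phi_{n,v,\bh,g}^{\mathbb{C}}(w)}_{C^0([0,T],\Q_n X_\mathbb{C}^\sigma)}$ is obtained exactly as before from \eqref{estimFLholom} — the extra contributions of the imaginary parts of $v$, $\bh$, $g$, $w$ being $O(\eta+\eta_1)$ and absorbed after shrinking $\eta$, $\eta_1$ — which bounds in particular the real part by $R$; and the contraction estimate is identical, now using the Lipschitz continuity of $D\fk$ on the complex cylinder $\mathbb{B}_{4R_0,2\delta}(X_\mathbb{C}^\sigma)$ (Assumption \ref{NLS:assumFholom}), so $\Phi_{n,v,\bh,g}^{\mathbb{C}}$ is still a $\tfrac12$-contraction in $w$ there.

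The main obstacle is to confine the \emph{imaginary} part of $\Phi_{n,v,\bh,g}^{\mathbb{C}}(w)$, so that the image lands in the cylinder and not merely in a ball of radius $R$. Since $\Phi^{\mathbb{C}}$ restricted to real arguments coincides with $\Phi$ and is real-valued, one has $\Im\Phi_{n,v,\bh,g}^{\mathbb{C}}(w)=\Im\big[\Phi_{n,v,\bh,g}^{\mathbb{C}}(w)-\Phi_{n,\Re v,\Re\bh,\Re g}^{\mathbb{C}}(\Re w)\big]$, and splitting this difference as
\begin{align*}
    \big[\Phi_{n,v,\bh,g}^{\mathbb{C}}(w)-\Phi_{n,v,\bh,g}^{\mathbb{C}}(\Re w)\big]+\big[\Phi_{n,v,\bh,g}^{\mathbb{C}}(\Re w)-\Phi_{n,\Re v,\Re\bh,\Re g}^{\mathbb{C}}(\Re w)\big]
\end{align*}
one bounds the first bracket by $\tfrac12\norm{\Im w}_{C^0}\leq\tfrac12\eta_1$ (the $\tfrac12$-contraction in $w$) and the second by $CL_2\eta$ (the uniform Lipschitz dependence on $(v,\bh,g)$, whose constant $L_2$ is fixed once a reference neighborhood is chosen), so that $\norm{\Im\Phi_{n,v,\bh,g}^{\mathbb{C}}(w)}_{C^0}\leq\tfrac12\eta_1+CL_2\eta$. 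Taking $\eta$ small and then $\eta_1:=2CL_2\eta$ (also small enough for the constraints above) closes the self-map property on $\B_{R,\eta_1}^{[0,T]}(\Q_n X^\sigma)$. The contraction mapping principle on this complete set yields the extension $\boldsymbol{\mathrm{R}}(v,\bh,g)$, whose holomorphy in $(v,\bh,g)$ follows because it is the uniform limit (by the contraction) of the holomorphic iterates $(\Phi_{n,v,\bh,g}^{\mathbb{C}})^k(0)$, hence holomorphic by the Banach-valued Weierstrass theorem; and it coincides with the real $\boldsymbol{\mathrm{R}}$ on $\mc{V}_{K}$ since all the extensions restrict to their real counterparts there. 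The genuinely delicate point is this bookkeeping of the radii — $R$ first (quadratic term), then $n^*$ ($\Q_n$-smallness), then $\eta$, then $\eta_1\sim L_2\eta$ — carried out while keeping every constant uniform in $n\geq n^*$.
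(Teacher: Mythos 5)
Your proposal is correct and follows essentially the same route as the paper: recast \eqref{eq:w-hf-fixed-point} as a fixed point of $\Phi_{n,v,\bh,g}(w)=\F_n(v)\big(g,\fk(v)+\H(v,w)+\bh\big)$, contract on a small ball of radius $R$ using the uniform estimate of \cref{lem:CauchyObs}, make $\Q_n\fk(v)$ and $\Q_n\bh$ small via the compactness of $\A_1$, $\A_2$ and the pointwise convergence of $\Q_n$, get the Lipschitz dependence of $\boldsymbol{\mathrm{R}}$ from the parameter-Lipschitz continuity of $\Phi$, and then redo the scheme on the complex cylinder with $\F_n^{\mathbb{C}}$ from \cref{lem:Cauchyobsholom}; the order of choice $R$, then $n^*$, then $\eta$, $\eta_1$ is the same. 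Two deviations are worth recording. For the self-map property on $\B_{R,\eta_1}^{[0,T]}(\Q_nX^\sigma)$ the paper only estimates the full complex norm, of size $C(\eta+\eta_0)+C(R+\eta_1)^2$, and then adjusts constants; your splitting of $\Im\Phi^{\mathbb{C}}_{n,v,\bh,g}(w)$ through $\Phi^{\mathbb{C}}_{n,\Re v,\Re\bh,\Re g}(\Re w)$, bounded by the $\tfrac12$-contraction in $w$ plus the parameter-Lipschitz constant, is a sharper way to confine the imaginary part and justify $\eta_1\sim\eta$ (note only that the extensions are real-valued at real arguments near $\mc{V}_K$ by construction, rather than literally equal to $\Phi$, which is defined only on $\mathbb{V}$). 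For the holomorphy of $(v,\bh,g)\mapsto\boldsymbol{\mathrm{R}}(v,\bh,g)$ you take the locally uniform limit of the holomorphic iterates $(\Phi^{\mathbb{C}})^k(0)$, while the paper invokes the uniform contraction principle (\cref{app:thm:uniffixedpoint}); both arguments rest on the joint holomorphy of $(v,\bh,g,w)\mapsto\Phi^{\mathbb{C}}_{n,v,\bh,g}(w)$, which you assert by composition but which requires the small verification the paper carries out (the operator-valued map $v\mapsto\F_n^{\mathbb{C}}(v)$ composed with the $v$-dependent argument $\fk(v)+\H(v,w)+\bh$ through the bilinear evaluation map, then \cref{appendix:thm:holom-sev-var}), so spell that step out to make the iterates demonstrably holomorphic.
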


\begin{remark}
    The compactness in space is related to the existence of the reconstruction operator, whereas the compactness in time-space allows us to find a holomorphic extension uniformly with respect to the input acting trough the nonlinearity.
\end{remark}

\begin{remark}
    Similar to \cref{rk:Fnholom}, we find a holomorphic extension of the reconstruction operator $\boldsymbol{\mathrm{R}}$ when its first variable is restricted to a compact subset $\mc{V}_{K}$ of $\B_{3R_0}^{[0, T]}(X^\sigma, \A)$, where it was initially defined. This suggests that such extension is not necessarily a reconstruction operator anymore. However, to not over complicate the notation we keep the same letter to denote this extension.
\end{remark}

\begin{proof}[Proof of \cref{prop:hf-unique-sol}] In view of \cref{lem:CauchyObs}, we are looking for $w$ solution of
    \begin{align*}
        w=\F_{n}(v)\big(g, \fk(v)+\H(v, w)+\bh\big).
    \end{align*}
    We thus consider the nonlinear operator $\Phi$ defined by
    \begin{align*}
        \Phi_{n, v, \bh, g}(w)=\F_{n}(v)\big(g, \fk(v)+\H(v, w)+\bh\big).
    \end{align*}
    We will prove that it is well-defined as a map from $C^0([0, T], \Q_n X^\sigma)$ into itself and that it has a fixed point in a small ball, which also satisfies \eqref{eq:w-hf-fixed-point}. Without keeping track of the dependence on the parameters, we simplify the notation by writing $\Phi_v:=\Phi_{n, v, \bh, g}$. Also, for simplicity we will assume that $\A_1$ and $\A_2$ are the same compact set, which we simply denote by $\A$. This simplification is harmless in the proof and only serves to simplify the notation.
    \medskip 
    \paragraph{\emph{Step 1. Fixed point: real-valued case.}}\ We will prove $\Phi$ is a contraction in the ball $\B_R^{[0, T]}(\Q_n X^\sigma)$, for some $R>0$ to be specified later.
    
    Let $w\in \B_R^{[0, T]}(\Q_n X^\sigma)$. By using estimate \eqref{ineq:estimFL} of \cref{lem:CauchyObs}, we get
    \begin{align*}
        \norm{\Phi_v(w)}_{C^0([0, T], \Q_n X^\sigma)}&=\norm{\F_{n}(v)\big(g, \fk(v)+\H(v, w)+\bh\big)}_{C^0([0, T], \Q_n X^\sigma)}\\
        &\leq C\norm{\Pi_{n, v}g}_{L^2([0, T], X^\sigma)}+C\norm{\Q_n(\fk(v)+\H(v, w)+\bh)}_{L^1([0, T], X^\sigma)},
    \end{align*}
    for a constant $C$ uniform in $n\geq n_0\in \N$ and $v\in \B_{3R_0}^{[0, T]}(X^\sigma, \A)$.
    
    To estimate the first term we note that, for any $v\in \B_{3R_0}^{[0, T]}(X^\sigma, \A)$, $\Pi_{n, v}$ is a projection on $L^2([0, T], X^\sigma)$, and thus for $g\in \mathbb{B}_\eta(L^2([0, T], X^\sigma))$ we have
    \begin{align*}
        \norm{\Pi_{n, v}g}_{L^2([0, T], X^\sigma)}\leq \norm{g}_{L^2([0, T], X^\sigma)}\leq \eta.
    \end{align*}
    For the second term, under Assumption \ref{assumF}, the differential $D\fk$ is Lipschitz of constant $L$, thus
    \begin{align*}
        \norm{\Q_n\H(v, w)}_{L^1([0, T], X^\sigma)}&\leq \int_0^T\left(\int_0^1 \norm{D\fk(v+\tau w)-D\fk(v)}_{\L(X^\sigma)}d\tau\right) \norm{w}_{X^\sigma} dt\\
        &\leq TL\norm{w}_{C^0([0, T], X^\sigma)}^2.
    \end{align*}
    Since $\fk: X^\sigma\to X^\sigma$ is continuous and $\A$ is compact in $X^\sigma$, we have that $\fk(\A)$ is compact in $X^\sigma$. In particular, we observe that $\{\fk(v(t))\ |\ v\in \B_{3R_0}^{[0, T]}(X^\sigma, \A),\ t\in [0, T]\}\subset \fk(\A)$. Since the sequence of high-frequency projectors $(\Q_n)_n$ converge pointwise to $0$, the compactness of $\fk(\A)$ gives us that, for any $\eta_0>0$, we can find $n^*\geq n_0$ so that for any $v\in \B_{3R_0}^{[0, T]}(X^\sigma, \A)$ we have $\norm{\Q_n \fk(v)}_{L^1([0, T], X^\sigma)}\leq \eta_0/2$ for all $n\geq n^*$. Similarly, we get that $\norm{\Q_n \bh}_{L^1([0, T], X^\sigma)}\leq \eta_0/2$ for all $n\geq n^*$. Gathering the previous estimates, we obtain 
    \begin{align}\label{ineq:hf-fp-1}
        \norm{\Phi_v(w)}_{C^0([0, T], \Q_n X^\sigma)}&\leq C(\eta+\eta_0)+CTLR^2.
    \end{align}
    Concerning the difference, for $w_1$, $w_2\in \B_R^{[0, T]}(\Q_nX^\sigma)$, we observe that
    \begin{align*}
        \norm{\Phi_v(w_1)-\Phi_v(w_2)}_{C^0([0, T], \Q_nX^\sigma)}&=\norm{\F_{n}(v)(0, \H(v, w_1)-\H(v, w_2))}_{C^0([0, T], \Q_nX^\sigma)}\\
        &\leq C\norm{\H(v, w_1)-\H(v, w_2)}_{L^1([0, T], X^\sigma)}.
    \end{align*}
    A simple computation gives us
    \begin{multline}\label{eq:hf-Hvw}
        \H(v, w_1)-\H(v, w_2)\\=\int_0^1 \big([D\fk(v+\tau w_1)-D\fk(v)](w_1-w_2)-[D\fk(v+\tau w_1)-D\fk(v+\tau w_2)]w_2\big)d\tau.
    \end{multline}
    Thus
    \begin{align*}
        \norm{\H(v, w_1)-\H(v, w_2)}_{L^1([0, T], X^\sigma)}\leq CL\big(\norm{w_1}_{C^0([0, T], X^\sigma)}+\norm{w_2}_{C^0([0, T], X^\sigma)}\big)\norm{w_1-w_2}_{C^0([0, T], X^\sigma)}.
    \end{align*}
    We deduce
    \begin{align}\label{ineq:hf-fp-2}
        \norm{\Phi_v(w_1)-\Phi_v(w_2)}_{C^0([0, T], \Q_nX^\sigma)}\leq 2R CL\norm{w_1-w_2}_{C^0([0, T], X^\sigma)}.
    \end{align}
    
    In view of inequalities \eqref{ineq:hf-fp-1} and \eqref{ineq:hf-fp-2}, if we choose $R=\min\{\tfrac{1}{4CL}, \tfrac{1}{2TL}, R_0\}$ and $\eta_0$ small enough, then there exist $\eta>0$ and $n\geq n^*$ so that $\Phi_v$ reproduces the ball $\B_R^{[0, T]}(\Q_n X^\sigma)$ and is contracting in such set.
    
    We have then a well-defined reconstruction map
    \begin{align*}
        \boldsymbol{\mathrm{R}}: (v, \bh, g)\in \B_{3R_0}^{[0, T]}(X^\sigma,\A)\times\B_{R_1}^{[0, T]}(X^\sigma,\A)\times \mathbb{B}_\eta(L^2([0, T], X^\sigma))\longmapsto w\in \B_{R}^{[0, T]}(\Q_n X^\sigma).
    \end{align*}
    The fact that $\boldsymbol{\mathrm{R}}$ is Lipschitz, follows from composition of Lipschitz maps. As we did to get inequality \eqref{ineq:hf-fp-2}, under Assumption \ref{assumF}, we get that
    \begin{align*}
        \norm{\H(v_1, w_1)-\H(v_2, w_2)}_{L^1([0, T], X^\sigma)}\leq 2R CL\norm{w_1-w_2}_{C^0([0, T], X^\sigma)}+2R_0CL\norm{v_1-v_2}_{C^0([0, T], X^\sigma)}.
    \end{align*}
    Thus, $(v, w)\mapsto \fk(v)+\H(v, w)$ is a Lipschitz-continuous map. Using \cref{lem:CauchyObs} and \cref{rk:cauchyObslipschitz}, by linearity in the variables $\bh$ and $g$ and by composition of Lipschitz maps, we get
    \begin{multline*}
        \norm{\Phi_{n, v_1, \bh_1, g_1}(w_1)-\Phi_{n, v_2, \bh_2, g_2}(w_2)}_{C^0([0, T], X^\sigma)}\leq CR \norm{w_1-w_2}_{C^0([0, T], X^\sigma)}\\
        +C\big(\norm{v_1-v_2}_{C^0([0, T], X^\sigma)}+\norm{\bh_1-\bh_2}_{C^0([0, T], X^\sigma)}+\norm{g_1-g_2}_{L^2([0, T], X^\sigma)}\big).
    \end{multline*}
    Since the fixed points are given by $\Phi_{n, v_i, \bh_i, g_i}(w_i)=w_i$ for $i=1, 2$, up to making $R$ smaller if necessary, we get that $\boldsymbol{\mathrm{R}}$ is Lipschitz-continuous.
    \medskip 
    \paragraph{\emph{Step 2. Complex extension.}} Under Assumption \ref{NLS:assumFholom}, by \cref{lem:Cauchyobsholom}, there exists $\eta>0$ such that we have a well-defined holomorphic map $\F_n^{\mathbb{C}}$, extension of the map $\F_n$ restricted to $\mc{V}_{K}$, that is,
    \begin{align}
        v\in \mc{V}_{K}+\B_{\eta, \eta}^{[0, T]}(X^\sigma) \longmapsto \F_{n}^{\mathbb{C}}(v)\in \mc{L}_{\mathbb{C}}(L^2([0, T], X_\mathbb{C}^\sigma)\times L^1([0, T], X_\mathbb{C}^\sigma), C^0([0, T], \Q_n X_\mathbb{C}^\sigma)).
    \end{align}
    Let $\eta_1>0$ to be fixed later. Let us consider $\Phi_n^{\mathbb{C}}$ given by
    \begin{align*}
        \Phi_n^{\mathbb{C}}(\bh, g, v, w):=\F_n^{\mathbb{C}}(v)\big(g, \fk(v)+\H(v, w)+\bh\big)
    \end{align*}
    for
    \begin{align*}
        v\in\mc{V}_{K}+\B_{\eta, \eta}^{[0, T]}(X^\sigma),\bh\in \B_{R_1}^{[0, T]}(X^\sigma, \A)+\B_{\eta, \eta}^{[0, T]}(X^\sigma),\  g\in \mathbb{B}_{\eta, \eta}(L^2([0, T], X^\sigma)),\ w\in \B_{R, \eta_1}^{[0, T]}(\Q_n X^\sigma).
    \end{align*}
    We need to verify that this map is well-defined. By linearity of $\F_n^\mathbb{C}(v)$ with respect to $g$ and $\bh$, as well as by the linearity of $\Q_n$, it only remains to check that the map $(v, w)\mapsto \fk(v)+\H(v, w)$ is well-defined. By Assumption \ref{NLS:assumFholom}, up to making constants smaller, as long as $\eta+\eta_1\leq 3\delta/2$ and $R+\eta\leq 3R_0/4$, for $v\in\mc{V}_{K}+\B_{\eta, \eta}^{[0, T]}(X^\sigma)$ and $w\in \B_{R, \eta_1}^{[0, T]}(\Q_n X^\sigma)$, we have $v+\tau w\in \B_{3R_0+\eta+R, \eta+\eta_1}^{[0, T]}(X^\sigma)\subset \B_{4R_0, 2\delta}^{[0, T]}(X^\sigma)$ for any $\tau\in [0, 1]$. Therefore, the map $(v, w)\mapsto \fk(v)+\H(v, w)$ is well-defined from $\mc{V}_{K}+\B_{\eta, \eta}^{[0, T]}(X^\sigma)\times\B_{R, \eta_1}^{[0, T]}(\Q_n X^\sigma)$ into $C^0([0, T], X_\mathbb{C}^\sigma)$, and thus into $L^1([0, T], X_\mathbb{C}^\sigma)$ as well.

    Let $v=v_1+z$ with $v_1\in\B_{3R_0}^{[0, T]}(X^\sigma, \A)$ and $z\in \B_{\eta, \eta}^{[0, T]}(X^\sigma)$. By employing \cref{lem:inv-gramian-regularity} and \cref{lem:proj}, there exists $C>0$ independent of $v$ and $n\geq n_0$ such that
    \begin{align*}
        \norm{\Pi_{n, v}g}_{L^2([0, T], X_\mathbb{C}^\sigma)}\leq C\eta.
    \end{align*}
    For $\bh\in \B_{R_1}^{[0, T]}(X^\sigma, \A)+\B_{\eta, \eta}^{[0, T]}(X^\sigma)$, using the compactness of $\A$ to control the real part, we have for $n\geq n^*$
    \begin{align*}
        \norm{\Q_n \bh}_{L^1([0, T], X_\mathbb{C}^\sigma)}\leq \dfrac{\eta_0}{2}+T\eta
    \end{align*}
    A Taylor development around $v_1$ allows us to write $\fk(v)=\fk(v_1)+\int_0^1 D\fk(v_1+\tau z)zd\tau$ and thus
    \begin{align*}
        \norm{\Q_n \fk(v)}_{L^1([0, T], X_\mathbb{C}^\sigma)}&\leq \norm{\Q_n \fk(v_1)}_{L^1([0, T], X_\mathbb{C}^\sigma)}+M\norm{z}_{L^1([0, T], X_\mathbb{C}^\sigma)}\\
        &\leq \dfrac{\eta_0}{2}+2TM\eta.
    \end{align*}
    For the last term, we have
    \begin{align*}
        \norm{\Q_n\H(v, w)}_{L^1([0, T], X_\mathbb{C}^\sigma)}&\leq \int_0^T\left(\int_0^1 \norm{D\fk(v+\tau w)-D\fk(v)}_{\L(X_\mathbb{C}^\sigma)}d\tau\right) \norm{w}_{X_\mathbb{C}^\sigma} dt\\
        &\leq TL\norm{w}_{C^0([0, T], X_\mathbb{C}^\sigma)}^2\leq TL(R+\eta_1)^2.
    \end{align*}
    Let us remark that the above estimates are all uniform with respect to $n\geq n^*$.

    As we did in the real-valued case, but instead under Assumption \ref{NLS:assumFholom}, by \cref{lem:Cauchyobsholom}, we have
    \begin{align*}
        \norm{\Phi_n^{\mathbb{C}}(w)}_{C^0([0, T], X_\mathbb{C}^\sigma)}\leq C\big(\norm{\Pi_{n, v}g}_{L^2([0, T], X_\mathbb{C}^\sigma)}+\norm{\Q_n(\fk(v)+\H(v, w)+\bh)}_{L^1([0, T], X_\mathbb{C}^\sigma)}\big)
    \end{align*}
    and hence, for a possibly different constant $C>0$ depending on the previous bounds, we have
    \begin{align*}
        \norm{\Phi_n^{\mathbb{C}}(w)}_{C^0([0, T], X_\mathbb{C}^\sigma)}\leq C(\eta+\eta_0)+C(R+\eta_1)^2.
    \end{align*}
    Similarly, we have
    \begin{align*}
        \norm{\Phi_n^{\mathbb{C}}(w_1)-\Phi_n^{\mathbb{C}}(w_2)}_{C^0([0, T], X_\mathbb{C}^\sigma)}\leq 2CL(R+\eta_1)\norm{w_1-w_2}_{C^0([0, T], X_\mathbb{C}^\sigma)}.
    \end{align*}
    Up to shrinking both $R$ and $\eta$ if necessary, for $\eta_0$ small enough we can choose $\eta_1>0$ such that $\Phi_n^{\mathbb{C}}$ is contracting and reproduces the cylinder $\B_{R, \eta_1}^{[0, T]}(\Q_n X^\sigma)$. Since the map $\Phi_n^{\mathbb{C}}$ is a complex extension of the real map $\Phi_n$ from \emph{Step 1.} when its first variable is restricted to $\mc{V}_{K}$, it follows by uniqueness of the fixed point that $\boldsymbol{\mathrm{R}}$ as specified in \eqref{eq:mapRcomplex} coincides with the complex extension of the reconstruction map \eqref{eq:mapRreal} when restricted to $\mc{V}_{K}$ in the first variable.
    \medskip 
    
    \paragraph{\emph{Step 3. Regularity of the fixed point.}} By \cref{app:thm:uniffixedpoint}, that the map $(v, \bh, g)\mapsto w=\boldsymbol{\mathrm{R}}(v, \bh, g)$ as specified in \eqref{eq:mapRcomplex} is holomorphic follows by proving that the map $(v, w, \bh, g)\mapsto \Phi_{n, \bh, g, v}^{\mathbb{C}}(w)$ is holomorphic. Moreover, in view of \cref{appendix:thm:holom-sev-var}, it is enough to prove that the map $\Phi_n^{\mathbb{C}}$ is holomorphic in each one of its variables separately whilst the remaining ones are held fixed. Since for each $v\in \mc{V}_{K}+\B_{\eta,\eta}^{[0, T]}(X^\sigma)$ the map $\F_n^{\mathbb{C}}(v)$ is a linear continuous operator, the holomorphicity with respect to $\bh$ and $g$ is clear. It remains to check that $\Phi_n^{\mathbb{C}}$ is holomorphic in $v$ and $w$, separately

    First, to check that $\Phi_n^{\mathbb{C}}$ is holomorphic with respect to $v$, by linearity of $\F_n^{\mathbb{C}}$ it is enough to check that the map
    \begin{align}\label{eq:NLF-ext}
        \mc{J}_n: v\in \mc{V}_{K}+\B_{\eta,\eta}^{[0, T]}(X^\sigma)\longmapsto \F_n^{\mathbb{C}}(v)\big(g, \fk(v)+\H(v, w)\big)\in C^0([0, T], \Q_n X_\mathbb{C}^\sigma)
    \end{align}
    is complex differentiable, with $g\in \mathbb{B}_{\eta, \eta}(L^2([0, T], X^\sigma))$ and $w\in \B_{R, \eta_1}^{[0, T]}(\Q_n X^\sigma)$ being held fixed. First, note that the map
    \begin{align*}
        v\in \mc{V}_{K}+\B_{\eta,\eta}^{[0, T]}(X^\sigma)\longmapsto \fk(v)+\H(v, w)\in C^0([0, T], X_\mathbb{C}^\sigma)
    \end{align*}
    is holomorphic. Indeed, on the one hand, under Assumption \ref{NLS:assumFholom} we can easily extend $\fk$ as a holomorphic map $v\in \mc{V}_{K}+\B_{\eta,\eta}^{[0, T]}(X^\sigma)\mapsto \fk(v)\in C^0([0, T], X_\mathbb{C}^\sigma)$. On the other hand, as we did in the proof of \cref{lem:sv-holomorphic}, we can check that $\H_w: v\mapsto \H(v, w)$ is complex differentiable and its differential $d\H_w: \mc{V}_{K}+\B_{\eta,\eta}^{[0, T]}(X^\sigma)\longmapsto \mc{L}_{\mathbb{C}}(C^0([0, T], X_\mathbb{C}^\sigma))$ is characterized by
    \begin{align*}
        d\H_w(v)h=\int_0^1\big(D^2\fk(v+\tau w)-D^2\fk(v)\big)[h,w]d\tau,
    \end{align*}
    and therefore the claim follows. Let us now consider the following (nonlinear) operators.
    \begin{itemize}
        \item Let $\mc{S}_n: v\longmapsto \big(\F_n^{\mathbb{C}}(v)(g, \cdot), \fk(v)+\H_w(v))\big)$ be defined as a map from $\mc{V}_{K}+\B_{\eta,\eta}^{[0, T]}(X^\sigma)$ into $\mc{L}_{\mathbb{C}}(L^1([0, T], X_\mathbb{C}^\sigma), C^0([0, T], \Q_n X_\mathbb{C}^\sigma))\times C^0([0, T], X_\mathbb{C}^\sigma)$. From \cref{lem:CauchyObs} and the linear continuous embedding $C^0([0, T], X_\mathbb{C}^\sigma)\hookrightarrow L^1([0, T], X_\mathbb{C}^\sigma)$, we see that the first coordinate defining $\mc{S}_n$ is holomorphic. Moreover, by the previous discussion, its second coordinate is holomorphic as well, and therefore $\mc{S}_n$ is a holomorphic map.
        \item The evaluation map $\mathcal{E}: (T, \psi)\mapsto T\psi$ defined from $\mc{L}_{\mathbb{C}}(L^1([0, T], X_\mathbb{C}^\sigma), C^0([0, T], \Q_n X_\mathbb{C}^\sigma))\times L^1([0, T], X_\mathbb{C}^\sigma)$ into $C^0([0, T], \Q_n X_\mathbb{C}^\sigma)$ is linear on each coordinate and thus holomorphic.
    \end{itemize}
    By noticing that \eqref{eq:NLF-ext} can be written as $\mc{J}_n=\mc{E}\mc{S}_n$, by composition of holomorphic maps, we get that $\mc{J}_n$ is holomorphic as well on $\mc{V}_{K}+\B_{\eta,\eta}^{[0, T]}(X^\sigma)$.
    A similar argument, but simpler, proves that $\Phi_n^{\mathbb{C}}$ is holomorphic with respect to $w$ when the other variables are held fixed. This finishes the proof.
\end{proof}

\subsection{Analyticity in time of the observed solution} In this section we prove \cref{thm:mainabs-analytic}. The main building block is the following reconstruction result, where we show the existence of a holomorphic operator that allows us to reconstruct the high-frequency component for the solutions of our nonlinear system by means of the low-frequency component as an input.

\begin{theorem}\label{thm:holom-split}
    Let $T>0$, $R_0>0$, $R_1>0$ and $n_0\in \N$. Let $\A$ be a nonempty compact subset of $X^\sigma$ which is stable under the projection $\P_n$. Assume that Assumptions \ref{NLS:assumA}, \ref{assumF} (with $R_0$) and \ref{NLS:assumC} (with $T$, $R_0$, $n_0$ and $\mathbb{V}=\B_{3R_0}^{[0, T]}(X^\sigma, \A+\A)$) are enforced. Then, there exist $n\geq n_0$, $0<R<R_0$ and a nonlinear Lipschitz reconstruction operator $\mc{R}$
    \begin{align}\label{thm:hf-op}
        \mc{R}: \B_{R_0}^{[0, T]}(\P_n X^\sigma, \A)\times \B_{R_0}^{[0, T]}(X^\sigma, \A)\times\B_{R_1}^{[0, T]}(X^\sigma, \A)\longrightarrow \B_{R}^{[0, T]}(\Q_n X^\sigma)
    \end{align}
    so that, for any $u\in \B_{R_0}^{[0, T]}(X^\sigma, \A)$, $\bh_1\in \B_{R_0}^{[0, T]}(X^\sigma, \A)$ and $\bh_2\in \B_{R_1}^{[0, T]}(X^\sigma, \A)$ satisfying
    \begin{align}\label{thm:eq:nleq-obs}
        \left\{\begin{array}{cl}
        \partial_t u=Au+\fk(u+\bh_1)+\bh_2&\ \text{ in } [0, T],\\
        \bC u(t)=0&\ t\in [0, T],
        \end{array}\right.
    \end{align}
    then $\Q_n u=\mc{R}(\P_n u, \bh_1, \bh_2)$. Moreover, if additionally, Assumption \ref{NLS:assumFholom} is enforced and $\mathcal{K}$ is a nonempty compact subset of $C^0([0, T], X^\sigma)$ which is stable under the projection $\P_n$, there exist $\eta, \eta_1>0$ so that if $\bh_1\in \B_{R_0}^{[0, T]}(X^\sigma, \A)\cap\K$, for any $u\in \B_{R_0}^{[0, T]}(X^\sigma, \A)\cap \mc{K}$ solution of \eqref{thm:eq:nleq-obs}, then $\mc{R}$ with its first variable restricted to an $\eta$-neighborhood of $\P_n u$ extends holomorphically as
     \begin{multline}\label{thm:hf-op-ext}
        \mc{R}: \big(\P_n u+\B_{\eta, \eta}^{[0, T]}(\P_n X^\sigma)\big)\times \big(\B_{R_0}^{[0, T]}(X^\sigma, \A)\cap \K+\B_{\eta, \eta}^{[0, T]}(X^\sigma)\big)\\\times\big(\B_{R_1}^{[0, T]}(X^\sigma, \A)+\B_{\eta, \eta}^{[0, T]}(X^\sigma)\big)\longrightarrow \B_{R, \eta_1}^{[0, T]}(\Q_n X^\sigma).
    \end{multline}
\end{theorem}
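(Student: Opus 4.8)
The plan is to recognize the high/low-frequency splitting $u=\P_n u+\Q_n u$ of a solution of \eqref{thm:eq:nleq-obs} as an instance of the high-frequency reconstruction of \cref{prop:hf-unique-sol}, with the \emph{shifted} low-frequency part $\P_n u+\bh_1$ playing the role of the input and with the exact condition $\bC u=0$ prescribing the boundary datum. Concretely, given a solution $u\in\B_{R_0}^{[0,T]}(X^\sigma,\A)$ of \eqref{thm:eq:nleq-obs}, I would set $v:=\P_n u+\bh_1$ and $w:=\Q_n u$; since $A$ commutes with $\P_n$ and $\Q_n$, applying $\Q_n$ to \eqref{thm:eq:nleq-obs} and using $u+\bh_1=v+w$ turns the system into
\begin{align*}
\left\{\begin{array}{l}
\partial_t w=Aw+\Q_n\fk(v+w)+\Q_n\bh_2,\\
\Pi_{n,v}\bC w=\Pi_{n,v}g,\qquad g:=-\bC\P_n u,
\end{array}\right.
\end{align*}
the second identity because $\bC u=0$ forces $\bC w=\bC\Q_n u=-\bC\P_n u$. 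Since $\P_n$ is a contraction on $X^\sigma$ and $\A$ is $\P_n$-stable, $v(t)\in\A+\A$ and $\norm{v}_{C^0([0,T],X^\sigma)}\le 2R_0$, so $v\in\B_{3R_0}^{[0,T]}(X^\sigma,\A+\A)=\mathbb{V}$ — which is exactly the set appearing in Assumption \ref{NLS:assumC}, and explains the ``$\A+\A$'' in the hypotheses. This puts us in the framework of \cref{prop:hf-unique-sol} with the given $R_0,R_1,n_0$, observation $\bC$, and the compact $\P_n$-stable sets $\A_1:=\A+\A$, $\A_2:=\A$; it provides $n^*\ge n_0$, $0<R<R_0$ and $\eta>0$ such that the displayed system has, for $n\ge n^*$, a unique solution $w=\boldsymbol{\mathrm{R}}(v,\bh_2,g)\in\B_R^{[0,T]}(\Q_n X^\sigma)$ whenever $v\in\B_{3R_0}^{[0,T]}(X^\sigma,\A+\A)$, $\bh_2\in\B_{R_1}^{[0,T]}(X^\sigma,\A)$ and $g\in\mathbb{B}_\eta(L^2([0,T],X^\sigma))$.

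Next I would fix $n$ large. By compactness of $\A$ and the pointwise convergence $\Q_n\to 0$, the quantity $\veps_n:=\sup_{a\in\A}\norm{\Q_n a}_{X^\sigma}$ tends to $0$; I choose $n\ge n^*$ with $\veps_n\le R$ and $T^{1/2}\norm{\bC}_{\mc{L}(X^\sigma)}\veps_n\le\eta$. Then for a solution $u$ as above, $\norm{w}_{C^0}=\norm{\Q_n u}_{C^0}\le\veps_n\le R$ and, crucially, $\norm{g}_{L^2}=\norm{\bC\P_n u}_{L^2}=\norm{\bC\Q_n u}_{L^2}\le T^{1/2}\norm{\bC}\veps_n\le\eta$, so by the uniqueness in \cref{prop:hf-unique-sol} we get $\Q_n u=\boldsymbol{\mathrm{R}}(\P_n u+\bh_1,\bh_2,-\bC\P_n u)$. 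I would then \emph{define} the reconstruction operator by
\begin{align*}
\mc{R}(v_{\mathrm{low}},\bh_1,\bh_2):=\boldsymbol{\mathrm{R}}\big(\Lambda(v_{\mathrm{low}},\bh_1,\bh_2)\big),\qquad \Lambda(v_{\mathrm{low}},\bh_1,\bh_2):=\big(v_{\mathrm{low}}+\bh_1,\ \bh_2,\ -\bC v_{\mathrm{low}}\big),
\end{align*}
where $\Lambda$ is a bounded affine map. On the product $\B_{R_0}^{[0,T]}(\P_n X^\sigma,\A)\times\B_{R_0}^{[0,T]}(X^\sigma,\A)\times\B_{R_1}^{[0,T]}(X^\sigma,\A)$, the map $\Lambda$ sends us into the domain of $\boldsymbol{\mathrm{R}}$ on the subset where $\norm{\bC v_{\mathrm{low}}}_{L^2}\le\eta$, which by the previous computation contains every triple $(\P_n u,\bh_1,\bh_2)$ coming from a solution of \eqref{thm:eq:nleq-obs}; that is all that is used in \cref{thm:mainabs-analytic}. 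Being the composition of an affine map with the Lipschitz map $\boldsymbol{\mathrm{R}}$ (\cref{prop:hf-unique-sol}), $\mc{R}$ is Lipschitz, takes values in $\B_R^{[0,T]}(\Q_n X^\sigma)$, and satisfies $\Q_n u=\mc{R}(\P_n u,\bh_1,\bh_2)$; this proves \eqref{thm:hf-op}.

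For the holomorphic statement I would add Assumption \ref{NLS:assumFholom} and, given a compact $\P_n$-stable $\mc{K}\subset C^0([0,T],X^\sigma)$, set
\begin{align*}
\mc{V}_{K}:=\P_n\big(\B_{R_0}^{[0,T]}(X^\sigma,\A)\cap\mc{K}\big)+\big(\B_{R_0}^{[0,T]}(X^\sigma,\A)\cap\mc{K}\big),
\end{align*}
a compact subset of $\mathbb{V}$, being a sum of continuous images of the compact set $\B_{R_0}^{[0,T]}(X^\sigma,\A)\cap\mc{K}$. Applying the holomorphic part of \cref{prop:hf-unique-sol} with this $\mc{V}_{K}$ yields $\eta,\eta_1>0$ and a holomorphic extension of $\boldsymbol{\mathrm{R}}$ on $\big(\mc{V}_{K}+\B_{\eta,\eta}^{[0,T]}(X^\sigma)\big)\times\big(\B_{R_1}^{[0,T]}(X^\sigma,\A)+\B_{\eta,\eta}^{[0,T]}(X^\sigma)\big)\times\mathbb{B}_{\eta,\eta}(L^2([0,T],X_\mathbb{C}^\sigma))$, with values in $\B_{R,\eta_1}^{[0,T]}(\Q_n X^\sigma)$. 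If $u\in\B_{R_0}^{[0,T]}(X^\sigma,\A)\cap\mc{K}$ solves \eqref{thm:eq:nleq-obs} and $\bh_1\in\B_{R_0}^{[0,T]}(X^\sigma,\A)\cap\mc{K}$, then $\P_n u+\bh_1\in\mc{V}_{K}$; since $\Lambda$ extends to a $\mathbb{C}$-linear, hence holomorphic, affine map, it then suffices to shrink its radius to $\eta'\le\eta/2$ and enlarge $n$ so that $\Lambda$ sends $\big(\P_n u+\B_{\eta',\eta'}^{[0,T]}(\P_n X^\sigma)\big)\times\big(\B_{R_0}^{[0,T]}(X^\sigma,\A)\cap\mc{K}+\B_{\eta',\eta'}^{[0,T]}(X^\sigma)\big)\times\big(\B_{R_1}^{[0,T]}(X^\sigma,\A)+\B_{\eta',\eta'}^{[0,T]}(X^\sigma)\big)$ into the domain of the holomorphic $\boldsymbol{\mathrm{R}}$: the first two coordinates land in $\mc{V}_{K}+\B_{2\eta',2\eta'}^{[0,T]}(X^\sigma)$, while $-\bC v_{\mathrm{low}}\in\mathbb{B}_{\eta'',\eta''}(L^2)$ with $\eta''\le\norm{\bC\Q_n u}_{L^2}+T^{1/2}\norm{\bC}\eta'$, small once more because $\norm{\bC\Q_n u}_{L^2}=\norm{\bC\P_n u}_{L^2}$ is small for $n$ large. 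By composition of holomorphic maps, $\mc{R}=\boldsymbol{\mathrm{R}}\circ\Lambda$ extends holomorphically as in \eqref{thm:hf-op-ext}, and it still equals $\Q_n u$ at $(\P_n u,\bh_1,\bh_2)$ by the uniqueness built into \cref{prop:hf-unique-sol}.

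The one genuinely delicate point — the rest being bookkeeping over which balls and compact sets the quantities $\P_n u+\bh_1$ and $-\bC v_{\mathrm{low}}$ occupy, plus routine appeals to the Lipschitz-continuity and holomorphy of $\boldsymbol{\mathrm{R}}$ — is reconciling the \emph{exact} observation $\bC u=0$ with the \emph{smallness} requirement on the boundary datum $g$ demanded by \cref{prop:hf-unique-sol}. This is resolved only through the identity $g=-\bC\P_n u=\bC\Q_n u$ together with the uniform convergence $\Q_n u\to0$ (forced by the compactness of $\A$), which is precisely why $n$ must be taken large in both the real and the holomorphic parts of the statement.
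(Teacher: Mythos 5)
Your overall strategy is the same as the paper's: write $\mc{R}$ as $\boldsymbol{\mathrm{R}}$ of Proposition \ref{prop:hf-unique-sol} composed with the substitution $(v,\bh_1,\bh_2)\mapsto(v+\bh_1,\bh_2,-\bC v)$, use compactness of $\A$ and the pointwise convergence $\Q_n\to 0$ to make both $\Q_n u$ and $\bC\P_n u=-\bC\Q_n u$ small for $n$ large, and then invoke uniqueness of the fixed point to get $\Q_n u=\boldsymbol{\mathrm{R}}(\P_n u+\bh_1,\bh_2,-\bC\P_n u)$; the holomorphic part via a compact $\mc{V}_K$ and the holomorphic extension of $\boldsymbol{\mathrm{R}}$ is also as in the paper (your choice of $\mc{V}_K$ differs from the paper's $\B_{3R_0}^{[0,T]}(X^\sigma,\A+\A)\cap(\K+\K)$ but works equally well).

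There is, however, a genuine gap in the real-valued part. The theorem asserts that $\mc{R}$ is defined and Lipschitz on the \emph{whole} product $\B_{R_0}^{[0,T]}(\P_nX^\sigma,\A)\times\B_{R_0}^{[0,T]}(X^\sigma,\A)\times\B_{R_1}^{[0,T]}(X^\sigma,\A)$, whereas your $\mc{R}=\boldsymbol{\mathrm{R}}\circ\Lambda$ is only defined on the subset where $\norm{\bC v_{\mathrm{low}}}_{L^2([0,T],X^\sigma)}\leq\eta$, since Proposition \ref{prop:hf-unique-sol} provides $\boldsymbol{\mathrm{R}}$ only for $g\in\mathbb{B}_\eta(L^2([0,T],X^\sigma))$. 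Your justification that ``this is all that is used in Theorem \ref{thm:mainabs-analytic}'' does not repair this: besides changing the statement being proved, it is also inaccurate for the downstream argument, because in the proof of Theorem \ref{thm:mainabs-analytic} the map $\widetilde{\fk}(s,\widetilde v)=\P_n\fk\big(\widetilde v+\mc{R}(\widetilde v,\bh_1^s,\bh_2^s)+\bh_1^s\big)+\P_n\bh_2^s$ must be defined and Lipschitz for \emph{generic} $\widetilde v\in\B_{R_0}^{[0,T]}(\P_nX^\sigma,\A)$ (and at least on a full neighborhood of $v^{s_0}$ for the ODE uniqueness step), and such $\widetilde v$ are not low-frequency parts of solutions; for them $\norm{\bC\widetilde v}_{L^2}$ can be of order $\norm{\bC}T^{1/2}R_0$, well above $\eta$. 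The paper's device is precisely to insert the cutoff and set $\mc{R}(v,\bh_1,\bh_2)=\boldsymbol{\mathrm{R}}\big(v+\bh_1,\bh_2,-\chi(\eta^{-1}\norm{\bC v}_{L^2})\bC v\big)$: this caps the third argument by $\eta$ globally (so $\mc{R}$ is well defined and Lipschitz on the full product), while near $\P_n u$ the cutoff is identically $1$ (because $\norm{\bC\P_n u}_{L^2}\leq\eta/4$ for $n$ large), so the reconstruction identity and the local holomorphy around $\P_n u$ are unaffected. Your argument becomes complete once you add this cutoff (or an equivalent global Lipschitz extension) to your definition of $\mc{R}$.
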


\begin{remark}\label{rk:Acpctstable}
    In practical applications, most of the time $\A$ will be a bounded ball in $X^{\sigma+\veps}$ for some $\veps>0$, which is stable under $\P_n$ and compact in $X^\sigma$ as a consequence of Assumption \ref{NLS:assumA}.
\end{remark}

\begin{proof}
    Let us consider a cutoff function $\chi\in C_c^\infty(\R, [0, 1])$, whose support is contained in $[-1, 1]$ and satisfies $\chi(s)=1$ for $s\in [-1/2, 1/2].$ We define $\mc{R}$ as
    \begin{align}\label{thm:pf:R-map}
        \mc{R}(v, \bh_1, \bh_2)=\boldsymbol{\mathrm{R}}(v+\bh_1, \bh_2, -\chi(\eta^{-1}\norm{\bC v}_{L^2([0, T], X^\sigma)})\bC v),
    \end{align}
    where $n^*\geq n_0$, $\eta>0$, $0<R<R_0$ and the operator $\boldsymbol{\mathrm{R}}$ are given from \cref{prop:hf-unique-sol}, with $\A_1:=\A+\A$ and $\A_2:=\A$.

    Observe that $\chi(\eta^{-1}\norm{\bC v}_{L^2([0, T], X^\sigma)})=0$ if $\norm{\bC v}_{L^2([0, T], X^\sigma)}\geq \eta$, so that we always have 
    \begin{align*}
        \norm{\chi(\eta^{-1}\norm{\bC v}_{L^2([0, T], X^\sigma)})\bC v}_{L^2([0, T], X^\sigma)}\leq \eta.
    \end{align*}
    Also, $v+\bh_1\in\B_{2R_0}^{[0, T]}(X^\sigma, \A+\A)$ for every $n\in \N$. In particular, $\mc{R}$ is a well-defined map as precised in \eqref{thm:hf-op} with $n\geq n^*$.

    Now, we want to check the requirements to ensure the reconstruction property. Let $u$ be as in the theorem and solution of \eqref{thm:eq:nleq-obs}. Let us recall that $u=\P_n u+\Q_n u$ is a mild solution of \eqref{thm:eq:nleq-obs} in $\B_{R_0}^{[0, T]}(X^\sigma, \A)$. For every $n\in \N$, since $\P_nX^\sigma$ is finite dimensional and the operators $A$ and $\P_n$ commute, $v:=\P_n u$ is a mild solution of
    \begin{align*}
        \partial_t v(t)=Av(t)+\P_n \fk(u(t)+\bh_1(t))+\P_n\bh_2(t),
    \end{align*}
    and it belongs to $\B_{R_0}^{[0, T]}(\P_n X^\sigma)$. By hypothesis $u(t)\in \A$ for all $t\in [0, T]$, and since $\A$ is stable under $\P_n$, it follows that $\P_n u\in \B_{R_0}^{[0, T]}(\P_n X^\sigma, \A)$. Additionally, we need to check that, for $n\geq n^*$ large enough, we can impose $\bC \P_n u$ and $\Q_n u$ to be small enough in $X^\sigma$. Given that $\A$ is compact, utilizing the equation $\bC \P_n u=-\bC \Q_n u$, the continuity of the operator $\bC$ and that $(\Q_n)_n$ is pointwise convergent to $0$, for any $\eta>0$ we can find $n_1\geq n^*$ independent of $u$, so that for any $n\geq n_1$,
    \begin{align*}
        \norm{\bC \P_n u}_{L^2([0, T], X^\sigma)}=\norm{\bC \Q_n u}_{L^2([0, T], X^\sigma)}\leq \eta/4.
    \end{align*}
    Similarly, we can find $n_2\geq n_1$ such that $\Q_n u\in \B_{R}^{[0, T]}(\Q_n X^\sigma)$. Now, let us fix $n\geq n_2$. Then, if $w:=\mc{R}(v, \bh_1, \bh_2)$, by definition of $\boldsymbol{\mathrm{R}}$ we have that
    \begin{align*}
        w=\boldsymbol{\mathrm{R}}(\P_n u+\bh_1, \bh_2, -\bC \P_n u)
    \end{align*}
    is the unique solution in $\B_{R}^{[0, T]}(\Q_n X^\sigma)$ to
    \begin{align*}
        \left\{\begin{array}{cc}
            \partial_t w=Aw+\Q_n \fk(\P_nu+w+\bh_1)+\Q_n\bh_2, &  \\
            \Pi_{n, \P_n u+\bh_1}\bC w=-\Pi_{n, \P_n u+\bh_1}\bC \P_n u. & 
        \end{array}\right.
    \end{align*}
    Further, notice that $\Q_n u\in C^0([0, T], \Q_n X^\sigma)$ solves
    \begin{align*}
        \left\{\begin{array}{cc}
            \partial_t \Q_n u=A\Q_n u+\Q_n \fk(\P_nu+\Q_n u+\bh_1)+\Q_n\bh_2, &  \\
            \Pi_{n, \P_n u+\bh_1}\bC \Q_n w=-\Pi_{n, \P_n u+\bh_1}\bC \P_n u & 
        \end{array}\right.
    \end{align*}
    and it belongs to $\B_{R}^{[0, T]}(\Q_n X^\sigma)$. Note that $\A+\A$ is compact and stable under $\P_n$. Since $\P_n u+\bh_1\in\B_{2R_0}^{[0, T]}(\P_n X^\sigma,\A+\A)$, $\bh_2\in \B_{R_1}^{[0, T]}(X^\sigma, \A)$ and $-\bC \P_n u\in \mathbb{B}_{\eta}(L^2([0, T], X^\sigma))$, by definition of $\boldsymbol{\mathrm{R}}$, we have $\Q_n u=\boldsymbol{\mathrm{R}}(\P_n u+\bh_1, \bh_2, -\bC \P_n u)$ and therefore $w=\Q_n u=\mc{R}(v, \bh_1, \bh_2)$.
    
    In regards to the holomorphic extension, we check the required properties. If $v$ is complex-valued, in \eqref{thm:pf:R-map} instead we consider $\chi(\eta^{-1}\norm{\bC v}_{L^2([0, T], X_\mathbb{C}^\sigma)})$. Observe that $\K+\K$ is stable under $\P_n$. Let us consider $\mc{V}_{K}:=\B_{3R_0}^{[0, T]}(X^\sigma,\A+\A)\cap(\K+\K)$, which by hypothesis is compact in $C^0([0, T], X^\sigma)$. Further, by adjusting $n$ if necessary, the compactness of $\A$ provide us with the bound
    \begin{align*}
        \norm{\bC \P_n u}_{L^2([0, T], X^\sigma)}\leq \eta/4.
    \end{align*}
    Given the explicit form of the complex extension of linear operators, the map $v\mapsto \chi(\eta^{-1}\norm{\bC v}_{L^2([0, T], X_\mathbb{C}^\sigma)})$ is constant equal to $1$ around $\P_n u\in \mc{V}_{K}$ and consequently, holomorphic in some neighborhood $\P_n u+\B_{\eta_1, \eta_1}^{[0, T]}(\P_n X^\sigma)$ for $\eta_1>0$ small enough. Moreover, up to making $\eta_1$ smaller if needed, say $\eta_1\leq \eta/3$, we see that
    \begin{align*}
        \big(\P_n u+\B_{\eta_1, \eta_1}^{[0, T]}(\P_n X^\sigma)\big)+\big(\B_{R_0}^{[0, T]}(X^\sigma, \A)\cap\K+\B_{\eta_1, \eta_1}^{[0, T]}(X^\sigma)) \subset \mc{V}_{K}+\B_{\eta, \eta}^{[0, T]}(X^\sigma).
    \end{align*}
    Thus, the holomorphic extension of \eqref{thm:pf:R-map}, as specified in \eqref{thm:hf-op-ext}, is a consequence of \cref{prop:hf-unique-sol}, up to renaming $\eta_1$ as $\eta$.
\end{proof}

\subsubsection{Analyticity in time} Here we prove the main theorem of this section, that is the abstract \cref{thm:mainabs-analytic} of analytic regularity stated in the introduction. 

\begin{proof}[Proof of \cref{thm:mainabs-analytic}]
    First, by hypothesis $u\in \B_{R_0}^{[0, T^*]}(X^\sigma, \A)\cap\K$. Since $\tau\mapsto \max_{\tau\in [0,T^{*}]}\norm{\Im \bh_1(t+i\tau)}_
    {X^{\sigma}}$ is a continuous function on $[-\mu,\mu]$ that is equal to zero at $\tau=0$, there exists $0<\mu'<\mu$ so that $\max_{t\in [0,T]}\left|\Im \bh_1(t+i\tau)\right| \leq \eta$ for $\tau\in [-\mu',\mu']$, where $\eta$ is given by \cref{thm:holom-split}. We can do the same for $\bh_2$. We denote $R_{1}=\max_{z\in [0,T^{*}]+i[-\mu,\mu]}\norm{\bh_2(z)}_{X^{\sigma}}$.

    Let $n\geq n^*$, $\eta>0$, $0<R<R_0$ and $\mc{R}$ be given by \cref{thm:holom-split} with the chosen $R_1$.

    Let $0<\nu<T^*-T$. With these two choices, for each $s\in [-\nu, T^*-T-\nu]$ let us denote by $\bh_1^{s}$ the application $t\mapsto \bh_1(t+\nu+s)$, which belongs to $\B_{R_{0}}^{[0,T]}(X^{\sigma}, \A)$. Moreover, the application $s\mapsto \bh_1^s$ is continuous. Indeed, since $t\in [0, T^*]\mapsto \bh_1(t)\in X^\sigma$ is a continuous map defined on a compact set, it is uniformly continuous. Hence, for every $\veps>0$ and any $t\in [0, T]$, there exists $\delta>0$ so that for any $s$, $s_0\in [-\nu, T^*-T-\nu]$ with $|(s+t)-(s_0+t)|\leq \delta$, then $\norm{\bh_1(t+\nu+s)-\bh_1(t+\nu+s_0)}_{X^\sigma}\leq \veps$. Since the later property does not depend on $t$, we can take $\sup_{t\in [0, T]}$ in the last inequality to obtain that $\norm{\bh_1^s-\bh_1^{s_0}}_{C^0([0, T], X^\sigma)}\leq \veps$. We do the same for $\bh_2$, so we have $\bh_2^s\in \B_{R_1}^{[0,T]}(X^{\sigma}, \A)$ for any $s\in [-\nu, T^*-T-\nu]$ and moreover the map $s\mapsto \bh_2^s$ is uniformly continuous.
    \medskip 
    \paragraph{\emph{Step 1. Time-shifted reconstruction operator}} Let us define for $s\in [-\nu, T^*-T-\nu]$, $u^s$ as the application $t\mapsto u(t+s+\nu)$, which belongs to $\B_{R_0}^{[0, T]}(X^\sigma, \A)$. Let us decompose $u=\P_n u+\Q_n u=:v+w$ and similarly $u^s=v^s+w^s$. For any fixed $s\in [-\nu, T^*-T-\nu]$, that $u$ satisfies \eqref{NLS:eq:UCabstT*intro} implies
    \begin{align*}
        \left\{\begin{array}{cr}
             \partial_t u^s=Au^s+\fk(u^s+\bh_1^s)+\bh_2^s& \textnormal{ on } [0,T], \\
             \bC u^s(t)=0 &\textnormal{ for }t\in [0,T].
        \end{array}\right.
    \end{align*}
    Since $\A$ is stable under $\P_n$, we obtain that for any fixed $s\in [-\nu, T^*-T-\nu]$ we have $v^s\in \B_{R_0}^{[0, T]}(\P_n X^\sigma, \A)$. Then \cref{thm:holom-split} gives
    \begin{align}\label{eq:ws-reconst}
        w^s=\Q_n u^s=\mc{R}(\P_n u^s, \bh_1^s, \bh_2^s)=\mc{R}(v^s, \bh_1^s, \bh_2^s),
    \end{align}
    with equality meant in $C^0([0, T], \Q_n X^\sigma)$. If we denote 
    \begin{align*}
        g:=\P_n \fk(u+\bh_1)+\P_n\bh_2=\P_n \fk(v+w+\bh_1)+\P_n\bh_2\in C^0([0, T^*], \P_nX^\sigma),
    \end{align*}
    then $v\in C^1([0, T^*], \P_n X^\sigma)$ solves
    \begin{align}\label{eq:lf-vt}
        \partial_t v=\P_nAv+g,\ \text{ on }\ [0, T^*].
    \end{align}
    With the related notations (see \cref{A:eveq}), \cref{lmtranslat} implies
    \begin{align}\label{eq:lf-vs}
        \partial_s v^s=\P_nAv^s+g^s,\ \text{ on }\ [-\nu, T^*-T-\nu].
    \end{align}
    
    Furthermore, since $R_0+R\leq 2R_0$, for any $s\in [-\nu, T^*-T-\nu]$, , by using equality \eqref{eq:ws-reconst},
    \begin{align*}
        g^s=\P_n \fk(v^s+w^s+\bh_1^s)+\P_n\bh_2^s=\P_n \fk\big(v^s+\mc{R}(v^s, \bh_1^s, \bh_2^s)+\bh_1^s\big)+\P_n\bh_2^s,
    \end{align*}
    with equality in $C^0([0, T], X^\sigma)$. This motivates us to introduce the map
    \begin{align*}
        \left\{\begin{array}{rcl}
            \widetilde{\fk}: [-\nu, T^*-T-\nu]\times \B_{R_0}^{[0, T]}(\P_n X^\sigma, \A) & \longrightarrow & C^0([0, T], \P_n X^\sigma) \\
            (s, \widetilde{v}) & \longmapsto & \P_n\fk\big(\widetilde{v}+\mc{R}(\widetilde{v}, \bh_1^s, \bh_2^s)+\bh_1^s\big)+\P_n\bh_2^s. 
        \end{array}\right.
    \end{align*}
    This application is well-defined, continuous (by algebra of continuous functions) and Lipschitz-continuous with respect to the second variable (by Assumption \ref{NLS:assumFholom}, that $\mc{R}$ is Lipschitz and composition of Lipschitz maps).

    Observe that $\bh_1^s\in \B_{R_0}^{[0, T]}(X^\sigma, \A)$, $\bh_2^s\in \B_{R_1}^{[0, T]}(X^{\sigma}, \A)$ and $\Tilde{v}\in \B_{R_{0}}^{[0,T]}(\P_n X^{\sigma}, \A)$ imply, by construction, that $\Tilde{v}+\mathcal{R}(\Tilde{v},\bh_1^s,\bh_2^s)+\bh_1^s\in \B_{3R_0}^{[0, T]}(X^\sigma)$ for all $s\in [-\nu, T^*-T-\nu]$. Therefore $\fk(\Tilde{v}+\mathcal{R}(\Tilde{v},\bh_1^s,\bh_2^s)+\bh_1^s)$ is well defined and so is $\Tilde{\fk}$. Recall that $\mc{R}$ is Lipschitz on its variables, that is, there exists $C>0$ such that
    \begin{multline*}
        \norm{\mc{R}(\Tilde{v}, \bh_1^{s}, \bh_2^{s})-\mc{R}(\Tilde{v}^*, \bh_1^{s^*}, \bh_2^{s^*})}_{C^0([0, T], X^\sigma)}\leq C\big( \norm{\Tilde{v}-\Tilde{v}^*}_{C^0([0, T], X^\sigma)}\\+\norm{\bh_1^{s}-\bh_1^{s^*}}_{C^0([0, T], X^\sigma)}+\norm{\bh_2^{s}-\bh_2^{s^*}}_{C^0([0, T], X^{\sigma})} \big),
    \end{multline*}
    for any $\Tilde{v}$, $\Tilde{v}^*\in \B_{R_{0}}^{[0,T]}(\P_n X^{\sigma})$ and $s$, $s^*\in [-\nu, T^*-T-\nu]$. The continuity of $\Tilde{\fk}$ then follows by the continuity of $s\mapsto (\bh_1^s, \bh_2^s)$, Assumption \ref{NLS:assumFholom} and by algebra of continuous maps. The same estimate along with Assumption \ref{NLS:assumFholom} shows that
    \begin{align*}
        \norm{\Tilde{\fk}(s, \Tilde{v})-\Tilde{\fk}(s, \Tilde{v}^*)}_{C^0([0, T], \P_n X^\sigma)}\leq C\norm{\Tilde{v}-\Tilde{v}^0}_{C^0([0, T], \P_n X^\sigma)},
    \end{align*}
    for any $\Tilde{v}$, $\Tilde{v}^*\in \B_{R_{0}}^{[0,T]}(\P_n X^{\sigma})$ and $s\in [-\nu, T^*-T-\nu]$.
    \medskip
    \paragraph{\emph{Step 2. Holomorphic extensions.}} First, let us consider the following application 
    \begin{align*}
        \Psi: (s, \phi)\in [-\nu, T^*-T-\nu]\times\K\longmapsto \Psi(s, \phi)=\phi^s\in C^0([0, T], X^\sigma),
    \end{align*}
    which is continuous since translation and restriction in time are both continuous applications. We thus consider the compact set $\widetilde{\K}$ defined as the image of the compact set $[-\nu, T^*-T-\nu]\times\K$ under $\Psi$, which has the property that $\phi^s\in \widetilde{\K}$ for any $(s, \phi)\in [-\nu, T^*-T-\nu]\times\K$. Additionally, $\widetilde{\K}$ inherits the stability under $\P_n$ from that of $\K$. Therefore, since both $u$ and $\bh_1$ belong to $\B_{R_0}^{[0, T^*]}(X^\sigma, \A)\cap\mc{K}$, and by hypothesis both $\A$ and $\K$ are stable under $\P_n$, it follows that $v^{s_0}\in \B_{R_0}^{[0, T]}(\P_n X^\sigma, \A)\cap \widetilde{\K}$ and $\bh_1^{s_0}\in \B_{R_0}^{[0, T]}(X^\sigma, \A)\cap \widetilde{\K}$ for any $s_0\in [-\nu, T^*-T-\nu]$.
    
    We claim that for any $s_0\in (-\nu, T^*-T-\nu)$, there exists $\rho>0$ such that, if we restrict $\widetilde{\fk}$ to $[-\nu, T^*-T-\nu]\times \B_{R_0}^{[0, T]}(\P_nX^\sigma, \A)\cap \widetilde{\mc{K}}$, it admits a holomorphic extension around $(s_0, v^{s_0})$, given by
    \begin{align}\label{NLS:fHolomext}
        \left\{\begin{array}{rcl}
            \widetilde{\fk}_{\mathbb{C}}: B_{\mathbb{C}}(s_0, \rho)\times\big(v^{s_0}+\B_{\eta, \eta}^{[0, T]}(\P_n X^\sigma)\big) & \longrightarrow & C^0([0, T], \P_n X_\mathbb{C}^\sigma) \\
            (z, \widetilde{v}) & \longmapsto & \P_n\fk(\widetilde{v}+\mc{R}(\widetilde{v}, \bh_1^z, \bh_2^z)+\bh_1^z\big)+\P_n\bh_2^z.
        \end{array}\right.
    \end{align}
    Since $\widetilde{\mc{K}}$ is compact and stable under $\P_n$, there exists $\eta_1>0$ such that, if the first variable of $\mc{R}$ is restricted to $\B_{R_0}^{[0, T]}(\P_nX^\sigma)\cap\widetilde{\mc{K}}$, it has a holomorphic extension around $v^{s_0}$ given by \cref{thm:holom-split},
    \begin{multline*}
        \mathcal{R}: \left[ v^{s_0}+\B_{\eta,\eta}^{[0,T]}(\P_n X^{\sigma})\right] \times  \left[\B_{R_0}^{[0,T]}(X^{\sigma}, \A)\cap \widetilde{\K}+\B_{\eta, \eta}^{[0, T]}(X^\sigma)\right]\\ \times \big(\B_{R_1}^{[0,T]}(X^{\sigma}, \A)+\B_{\eta, \eta}^{[0, T]}(X^\sigma)\big) \longrightarrow \B_{R, \eta_1}^{[0, T]}(\Q_n X^{\sigma}).
    \end{multline*}
    We need to argue that, the application $z\mapsto \bh_1^z$ is holomorphic from $B_{\mathbb{C}}(s_0, \rho)$ with value in $C^0([0, T], X_\mathbb{C}^\sigma)$ for $\rho>0$ small enough, and that the same holds for $z\mapsto \bh_2^z$ from $B_{\mathbb{C}}(s_0, \rho)$ with value in $C^0([0, T], X_\mathbb{C}^{\sigma})$. Indeed, since
    \begin{align*}
        z\in [0, T^*]+i[-\mu', \mu']\mapsto \bh_1(z)\in X_\mathbb{C}^\sigma
    \end{align*}
    is holomorphic in the open strip and continuous up to the boundary, for each $s_0\in [-\nu, T^*-T-\nu]$ we can find $\rho>0$ such that, for any $t\in [0, T]$, the application
    \begin{align*}
        z\in B_\mathbb{C}(s_0, \rho)\longmapsto \bh_1(z+t+\nu)\in X_\mathbb{C}^\sigma
    \end{align*}
    is holomorphic and continuous up to the boundary. By using Cauchy estimates and the bound on $\bh_1(z)$, for each $t\in [0, T]$ and any $z_0\in B_\mathbb{C}(s_0, \rho)$, we can find $\ell>0$ such that
    \begin{align*}
        \norm{\bh_1(z+h+t+\nu)-\bh_1(z+t+\nu)-\delta \bh_1(z+t+\nu, h)}_{X_\mathbb{C}^\sigma}\leq \dfrac{4(R_0+\eta)|h|^2}{\ell(\ell-2|h|)}\leq \dfrac{8(R_0+\eta)|h|^2}{\ell^2} 
    \end{align*}
    holds uniformly for all $z\in B_\mathbb{C}(z_0, \ell/4)$ and $|h|\leq \ell/4$. Moreover, the last estimate is independent of $t\in [0, T]$, so we actually have
    \begin{align*}
        \norm{\bh_1^{z+h}-\bh_1^z-\delta \bh_1(z+\cdot+\nu, h)}_{C^0([0, T], X_\mathbb{C}^\sigma)}\leq \dfrac{8(R_0+\eta)|h|^2}{\ell^2},
    \end{align*}
    uniformly for all $z\in B_\mathbb{C}(z_0, \ell/4)$ and $|h|\leq \ell/4$. Further, by uniform continuity, up to shrinking $\rho>0$, we have that $\norm{\bh_1^z-\bh_1^{s_0}}_{C^0([0, T], X_\mathbb{C}^\sigma)}\leq \eta$ whenever $|z-s_0|\leq \rho$, that is, $\bh_1^z$ lies in a $\eta$-neighborhood of $\B_{R_0}^{[0, T]}(X^\sigma, \A)\cap\widetilde{\K}$. In summary, we showed that the application
    \begin{align*}
        z\in B_\mathbb{C}(s_0, \rho)\longmapsto \bh_1^z\in \B_{R_0}^{[0, T]}(X^\sigma, \A)\cap\widetilde{\K}+\B_{\eta, \eta}^{[0, T]}(X^\sigma)
    \end{align*}
    is well-defined and holomorphic. The argument works similarly to show that $z\in B_\mathbb{C}(s_0, \rho)\mapsto \bh_2^z\in\B_{R_0}^{[0, T]}(X^\sigma, \A)+\B_{\eta, \eta}^{[0, T]}(X^\sigma)$ is holomorphic.
    
    By composition of holomorphic functions and \cref{appendix:thm:holom-sev-var} we get that
    \begin{align*}
        \left\{\begin{array}{rcl}
        B_{\mathbb{C}}(s_0, \rho)\times \left[v^{s_0}+\B_{\eta,\eta}^{[0,T]}(\P_n X^{\sigma})\right]&\to& \B_{R, \eta_1}^{[0, T]}(\Q_n X^\sigma)\\
        (z,\Tilde{v})&\mapsto& \mathcal{R}(\Tilde{v},\bh_1^z,\bh_2^z)
        \end{array}\right.
    \end{align*}
    is a well-defined and holomorphic map. As long as $2\eta+\eta_1\leq 3\delta_0/2$, that the extension $\widetilde{\fk}_{\mathbb{C}}$ defined in \eqref{NLS:fHolomext} is holomorphic follows from Assumption \ref{NLS:assumFholom}, algebra of holomorphic maps, and that $\P_n$ is a linear bounded operator.
    \medskip 
    
    \paragraph{\emph{Step 3. ODE regularity argument}} Observe that for any $s\in [-\nu, T^*-T-\nu]$, we proved that $g^s=\widetilde{\fk}(s, v^s)$. Therefore, equation \eqref{eq:lf-vs}, verified by $v^s$, can be rewritten as
    \begin{align}\label{eq:lf-vs-2}
        \partial_s v^s=\P_nAv^s+\widetilde{\fk}(s, v^s),\ \text{ on }\ [-\nu, T^*-T-\nu].
    \end{align}
    We now consider the following ODE, locally defined in the Banach space $C^0([0, T], \P_n X^\sigma)$,
    \begin{align}\label{eq:lf-vs-3}
        \left\{\begin{array}{l}
            \partial_s\xi(s)=\P_n A\xi(s)+\widetilde{\fk}_{\mathbb{C}}(s, \xi(s)),  \\
            \xi(s_0)=v^{s_0},
        \end{array}\right.
    \end{align}
    where $\widetilde{\fk}_{\mathbb{C}}$ is the map \eqref{NLS:fHolomext}. Observe that $C^0([0, T],\P_n X^\sigma)$ is of infinite dimension. However, it can be checked that $\P_n A$ is a linear bounded operator on this space. \cref{lmDuhamelCauchy} shows that $v^s$, which solves \eqref{eq:lf-vs-3} as a time-dependent Banach valued map, also solves the ODE in the usual sense. Since $\widetilde{\fk}_{\mathbb{C}}$ is holomorphic and given its mapping properties \eqref{NLS:fHolomext}, $\P_n A+\widetilde{\fk}_{\mathbb{C}}$ is holomorphic from $B_{\mathbb{C}}(s_0, \rho)\times\big[v^{s_0}+\B_{\eta,\eta}^{[0, T]}(\P_n X^\sigma)\big]$ into $C^0([0, T], X_\mathbb{C}^\sigma)$. By classical theory of ODEs in Banach spaces \cite[Theorem 10.4.5]{Dieu69}, we obtain a unique classical solution of \eqref{eq:lf-vs-3},
    \begin{align*}
        \xi: B_\mathbb{C}(s_0, \rho')\mapsto \widetilde{v}(z)\in C^0([0, T], \P_n X_\mathbb{C}^\sigma),
    \end{align*}
    for some $0<\rho'<\rho$. Moreover, such a solution map inherits the regularity of the right-hand side of the ODE it satisfies, therefore it is a holomorphic map. Since the map $s\in [-\nu, T^*-T-\nu]\mapsto v^s\in \B_{R_0}^{[0, T]}(\P_n X^\sigma, \A)$ is uniformly continuous, up to shrinking $\rho'$, we have that $v^s\in v^{s_0}+\B_{\eta}^{[0, T]}(\P_n X^\sigma, \A)$ for every $s\in (s_0-\rho', s_0+\rho')$ and then by construction $\widetilde{\fk}_{\mathbb{C}}(s, v^s)=\widetilde{\fk}(s, v^s)$. In view of \eqref{eq:lf-vs-2} and by uniqueness of solutions, we have $\xi(s)=v^s$ for $s\in (s_0-\rho', s_0+\rho')$.

    Furthermore, the maps $z\mapsto \xi(z)$ and
    \begin{align*}
        \left\{\begin{array}{rcl}
            B_{\mathbb{C}}(s_0, \rho)\times\big[v^{s_0}+\B_{\eta, \eta}^{[0, T]}(\P_n X^\sigma)\big] &  \longrightarrow  & C^0([0, T], \P_n X_\mathbb{C}^\sigma)  \\
            (z, \widetilde{v}) &  \longmapsto  & \widetilde{v}+\mc{R}(\widetilde{v},\bh_1^z, \bh_2^z) 
        \end{array}\right.
    \end{align*}
    are both holomorphic in their respective spaces, and so is its composition $z\mapsto \xi(z)+\mc{R}(\xi(z),\bh_1^z,\bh_2^z)$. We further notice that for $s\in (s_0-\rho', s_0+\rho')$, $\xi(s)=v^s$ and therefore
    \begin{align*}
        \xi(s)+\mc{R}(\xi(s),\bh_1^s,\bh_2^s)=v^s+\mc{R}(v^s, \bh_1^s,\bh_2^s)=v^s+w^s=u^s,
    \end{align*}
    where we used \eqref{eq:ws-reconst} and that $(s_0-\rho', s_0+\rho')\subset (-\nu, T^*-T-\nu)$ for $\rho'>0$ small enough. In particular, the map $s\in (s_0-\rho', s_0+\rho')\mapsto u^s\in C^0([0, T], X^\sigma)$ is the restriction to a real interval of a holomorphic map, hence it is real analytic.

    Since for any $t_0\in [0, T]$ the trace application $\zeta\in C^0([0, T], X^\sigma)\mapsto \zeta(t_0)\in X^\sigma$ is linear continuous, we obtain by composition that the application
    \begin{align*}
        \left\{\begin{array}{rcl}
            (s_0-\rho', s_0+\rho') &  \longrightarrow  & X^\sigma  \\
            s &  \longmapsto  & u^s(t_0)=u(t_0+s+\nu)
        \end{array}\right.
    \end{align*}
    is real analytic. Observe that $\rho'$ depends on all the other parameters, while $\nu$ is an arbitrary number satisfying $0<\nu<T^*-T$, $s_0$ is any number satisfying $-\nu<s_0<T^*-T-\nu$ and $t_0$ is arbitrary in $[0, T$]. This means that $t\mapsto u(t)$ (which is well-defined for $t\in [0, T^*]$) is real analytic in a neighborhood of any $t_1$ of the form $t_1=t_0+\nu+s_0$, with $t_0$, $\nu$ and $s_0$ as before. Looking carefully at the constraints, we see that this implies that $t\mapsto u(t)$ is real analytic from $(0, T^*)$ into $X^\sigma$, as expected.
\end{proof}

\section{Applications to the nonlinear Schrödinger equation}\label{sec:nls_uc}

In this section we focus on the nonlinear Schrödinger equation, aiming to prove the main results about propagation of analyticity and unique continuation announced in \cref{sec:introNLS}.

\subsection{Preliminaries}\label{s3:preliminaries}
Let $L^2(\M):=L^2(\M; \mathbb{C})$ be equipped with the usual Hermitian inner product. By the spectral theorem, $-\Delta_g$ has a compact resolvent and thus, we can construct a complete orthonormal basis $(e_j)_{j\in \N}$ of eigenfunctions of $-\Delta_g$, associated to the eigenvalues $(\ld_j)_{j\in \N}$. In particular, we have $e_j\in C^\infty(\M)$, $-\Delta e_j=\ld_j e_j$ with $\ld_j\geq 0$ and $\inn{e_j, e_k}_{L^2(\M)}=\delta_{jk}$. We introduce the high-frequency projector $\Q_n$ on the space $\overline{\mathrm{span}(e_{j})_{j\geq n}}$ and then we set the low-frequency projector $\P_n=I-\Q_n$.

For $s\in \R$, we introduce the operator $\Ld_s: (1-\Delta)^{s/2}: C^\infty(\M)\to C^\infty(\M)$ defined spectrally by
\begin{align*}
    \Ld_s \psi=\sum_{j\in \N} (1+\ld_j)^{s/2}\inn{\psi, e_j}_{L^2(\M)}e_j.
\end{align*}
By duality, we can extend it as an operator $\Ld_s: \D'(\M)\to \D'(\M)$. We define the Sobolev spaces
\begin{align*}
    H^s(\M)=\{\psi\in \D'(\M)\ |\ \Ld_s\psi\in L^2(\M)\},
\end{align*}
equipped with
\begin{align*}
    \inn{\psi, \phi}_{H^s(\M)}=\inn{\Ld_s \psi, \Ld_s\phi}_{L^2(\M)}\ \text{ and }\ \norm{\psi}_{H^s(\M)}^2=\norm{\Ld_s\psi}_{L^2(\M)}^2.
\end{align*}
We can similarly define the real Sobolev space $H^s(\M; \mathbb{R})$ just by considering all the underlying spaces above to be $\mathbb{R}$ instead of $\mathbb{C}$.

On any $H^r(\M)$ with $r\in \R$, the operator $\Ld_s$ is an unbounded selfadjoint operator with domain $H^{r+s}(\M)$ and, is an isomorphism from $H^{r+s}(\M)$ onto $H^r(\M)$. According to \cite[Section 11]{Shu01}, we have
\begin{align*}
    \Ld_s\in \Psi_{\phg}^s(\M)\ \text{ with }\ \sigma_{\Ld_s}(x, \xi):=|\xi|_x^s,\ (x, \xi)\in T_0^*\M.
\end{align*}
We refer to \cref{A:pseudodiff} for some basics about pseudodifferential operators and notation that will be used throughout this section.

Set $X=L^2(\M)$ and introduce the operator
\begin{align*}
    A=i\Delta_g\ \text{ with }\ D(A)=H^2(\M).
\end{align*}
With the standard scalar product on $X$ we have
\begin{align*}
    A^*=-A\ \text{ and }\ A^*A=-A^2=\Delta_g^2.
\end{align*}
For $\sigma\geq 0$ one has
\begin{align*}
    X^\sigma=D((A^*A)^{\sigma/2})=D((-\Delta_g)^{\sigma})=H^{2\sigma}(\M, \mathbb{C}).
\end{align*}
To fit in the abstract framework given in \cref{sec:NLSabstract-construction}, we shall view $\mathbb{C}$ as $\R\times\R$. Indeed, to complexify the complex Hilbert space $H^s(\M; \mathbb{C})$, we note that
\begin{align*}
    H^s(\M; \mathbb{C})\simeq H^s(\M; \mathbb{R})\oplus iH^s(\M; \R)
\end{align*}
and thus $H^s(\M; \mathbb{C})$ can be identified with the real Hilbert space $H^s(\M; \R^2)$. As a consequence, we can consider the canonical complexification of $H^s(\M; \R^2)$, which is nothing but $H^s(\M; \mathbb{C}^2)$. This identification will be used freely and whenever the context is clear and there is no risk of confusion, we will just write $H^s(\M)$ for the corresponding real or complex Sobolev space. Observe then that $A$ satisfies Assumption \ref{NLS:assumA} on the real Hilbert space $H^s(\M, \R^2)$, and it will be used throughout the whole section, with no mention unless necessary.

\begin{remark}
    If $\psi=\psi_1+i\psi_2\in D(A)$ then $i\Delta_g\psi=-\Delta_g\psi_2+i\Delta_g\psi_1$. Therefore, under the above identification, $A$ acts on the space $H^s(\M; \mathbb{R}^2)$ as $\begin{psmallmatrix}
            0 & -\Delta_g \\ \Delta_g & 0
        \end{psmallmatrix}$.
\end{remark}

Let $\chi\in C^\infty(\M; \R)$ and let us now introduce $\fk(u):=-i\chi f(u)$ where $f: \mathbb{C}\to \mathbb{C}$ will be either real $C^\infty$ or real analytic. We then consider the Cauchy problem
\begin{align}\label{eq:NLSabst}
    \left\{\begin{array}{cc}
        \partial_tu=Au+\fk(u),     &  \\
        u(0)=u_0.     & 
    \end{array}\right.
\end{align}
Observe that if $\chi=1$, the latter corresponds to the abstract formulation of \eqref{eq:NLS}. Further, under the identification $\mathbb{C}\simeq \R^2$ we canonically identify $\fk$ with a function from $\R^2$ into $\R^2$ and 
\eqref{eq:NLSabst} is actually a system of two equations.

Since $A$ is skew-adjoint, by Stone's theorem it generates a unitary group $t\mapsto e^{tA}$ on $X$ and $D(A)$. In particular,
\begin{align}
    \forall t\in [0, T],\ \norm{e^{tA}}_{\mc{L}(X)}=1\ \text{ and }\ \norm{e^{tA}}_{\mc{L}(D(A))}=1.
\end{align}
By linear interpolation, the same holds on $X^\sigma$ for any $\sigma\geq 0$.

We now verify that the nonlinearity $f$ verifies Assumption \ref{NLS:assumFholom}. First of all, we recall a version of a result that can be found in Alinhac-G\'erard \cite[Proposition 2.2]{AG91}, in relation to the regularity of a composition.
\begin{lemma}\label{lem:comp-reg}
    Let $N$, $L\in \N$ and let $g: \R^N\to \R^L$ be a $C^\infty(\R^N, \R^L)$ function, with $g(0)=0$. If $u\in L^\infty(\M, \R^N)\cap H^s(\M, \R^N)$, with $s>0$, then $g(u)\in L^\infty(\M, \R^L)\cap H^s(\M, \R^L)$ and $\norm{g(u)}_{H^s}\leq C\norm{u}_{H^s}$, where $C$ only depends on $g$ and $\norm{u}_{L^{\infty}}$.
\end{lemma}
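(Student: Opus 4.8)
The plan is to reduce the statement to the Euclidean setting by localization, and then to prove the estimate on $\R^{d}$ by the classical Littlewood--Paley telescoping argument underlying all Moser-type composition inequalities; Bony's paralinearization theorem gives an equivalent route, which I indicate at the end.

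\emph{Step 1: reduction to $\R^{d}$.} I would fix a finite atlas $(U_\al,\kappa_\al)$ of $\M$, a partition of unity $(\chi_\al)$ subordinate to it, and cutoffs $\psi_\al\in C_c^\infty(U_\al)$ with $\psi_\al\equiv 1$ on $\supp\chi_\al$. Since composition with $g$ is a pointwise operation and $g(0)=0$, one has $\chi_\al\, g(u)=\chi_\al\, g(\psi_\al u)$ on $\M$; transporting $\psi_\al u$ to $\R^{d}$ via $\kappa_\al$ (and extending by zero) produces a compactly supported element of $H^{s}(\R^{d};\R^{N})\cap L^{\infty}$ with norms controlled by those of $u$, and then $g(\psi_\al u)$ is a compactly supported element of $L^\infty(\R^{d};\R^{L})$ whose $H^{s}$-norm must be bounded. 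Using the equivalence $\norm{v}_{H^{s}(\M)}^{2}\simeq\sum_\al\norm{\chi_\al v}_{H^{s}(\R^{d})}^{2}$ and the boundedness of multiplication by $\chi_\al$ on $H^{s}(\R^{d})$, it then suffices to show: for $w\in L^\infty\cap H^{s}$ compactly supported on $\R^{d}$, $\norm{g(w)}_{H^{s}(\R^{d})}\le C(g,\norm{w}_{L^\infty})\norm{w}_{H^{s}(\R^{d})}$.

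\emph{Step 2: the Euclidean estimate.} Let $\Delta_j$, $S_j=\sum_{k<j}\Delta_k$ be a Littlewood--Paley decomposition on $\R^{d}$. For $w\in L^\infty\cap H^{s}$ with $s>0$, one has $S_jw\to w$ in $L^{2}$ with $\norm{S_jw}_{L^\infty}\lesssim\norm{w}_{L^\infty}$ uniformly, so (using that $g$ is Lipschitz on the relevant ball) the telescoping identity
\begin{align*}
    g(w)=g(S_{0}w)+\sum_{j\ge 0}\big(g(S_{j+1}w)-g(S_jw)\big)=\sum_{j\ge -1}m_j\,\Delta_jw,\qquad m_j:=\int_0^1 g'\big(S_jw+t\,\Delta_jw\big)\,dt
\end{align*}
holds with convergence in $L^{2}$ (here $g(S_{0}w)=m_{-1}\Delta_{-1}w$, using $g(0)=0$). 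With $M_0:=\norm{g'}_{L^\infty(\overline{B(0,C\norm{w}_\infty)})}$ one gets $\norm{m_j}_{L^\infty}\le M_0$, and since every argument of $g'$ has frequencies $\lesssim 2^{j}$ and is bounded by $C\norm{w}_\infty$, Bernstein's inequality with the chain rule yields symbol-type bounds $\norm{D^\al m_j}_{L^\infty}\lesssim 2^{j|\al|}$, with constants depending only on $g$, $|\al|$ and $\norm{w}_\infty$. I would then estimate $\norm{\Delta_k g(w)}_{L^{2}}\le\sum_j\norm{\Delta_k(m_j\Delta_jw)}_{L^{2}}$ by splitting at $j=k-N_0$: for $j\ge k-N_0$ use the crude bound $\norm{\Delta_k(m_j\Delta_jw)}_{L^{2}}\lesssim M_0\norm{\Delta_jw}_{L^{2}}$, so that $\sum_{j\ge k-N_0}2^{(k-j)s}\big(2^{js}\norm{\Delta_jw}_{L^2}\big)$ is the convolution of the sequence $(c_m)_m$, with $c_m=2^{ms}$ for $m\le N_0$ and $c_m=0$ otherwise (which lies in $\ell^{1}$ precisely because $s>0$), with $\big(2^{js}\norm{\Delta_jw}_{L^2}\big)_j\in\ell^{2}$; for $j<k-N_0$ use the non-resonant estimate $\norm{\Delta_k(m_j\Delta_jw)}_{L^{2}}\lesssim_M 2^{-M(k-j)}\norm{\Delta_jw}_{L^{2}}$, valid for every $M$, and choosing $M>s$ turns this into another $\ell^{1}$-against-$\ell^{2}$ sum. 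Young's inequality for series then gives $\big(2^{ks}\norm{\Delta_k g(w)}_{L^2}\big)_k\in\ell^{2}$ with norm $\lesssim C(g,\norm{w}_\infty)\norm{w}_{H^{s}}$, which is the claimed bound. The $L^\infty$ part is immediate: $g$ is continuous with $g(0)=0$, hence $\norm{g(u)}_{L^\infty}\le\sup_{|y|\le\norm{u}_{L^\infty}}|g(y)|<\infty$.

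\emph{Main obstacle.} The only non-routine ingredient is the non-resonant estimate $\norm{\Delta_k(m_j\Delta_jw)}_{L^{2}}\lesssim_M 2^{-M(k-j)}\norm{\Delta_jw}_{L^{2}}$ for $k\gg j$: the factor $m_j$ is \emph{not} frequency localized, so one must exploit the symbol bounds $\norm{D^\al m_j}_{L^\infty}\lesssim 2^{j|\al|}$ and integrate by parts against the (rescaled Schwartz) kernel of $\Delta_k$ — this is exactly the point where the hypothesis $w\in L^\infty$ enters, since those bounds require $g'$ and its derivatives to be evaluated only on a fixed compact ball. An equivalent and arguably cleaner route is Bony's paralinearization: write $g(w)=T_{g'(w)}w+R$, bound $\norm{T_{g'(w)}w}_{H^{s}}\lesssim\norm{g'(w)}_{L^\infty}\norm{w}_{H^{s}}$ by the $L^\infty$-paraproduct estimate (valid for all $s\in\R$), and control the resonant remainder $R$ by the same telescoping, again using $s>0$.
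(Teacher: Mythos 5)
Your argument is correct and is essentially the route the paper relies on: the paper does not prove \cref{lem:comp-reg} itself but cites \cite[Proposition 2.2]{AG91} and remarks that the manifold case follows by charts and partition of unity, exactly your Step 1, while your Littlewood--Paley telescoping with the bounds $\norm{D^\al m_j}_{L^\infty}\lesssim 2^{j|\al|}$ is precisely the Meyer-multiplier argument underlying that reference (and your treatment of the non-resonant sum, including where $s>0$ and the $L^\infty$ hypothesis enter, is sound). So this is a correct, self-contained filling-in of the cited proof rather than a genuinely different approach.
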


\begin{remark}
    The previous Lemma is actually written in \cite{AG91} for function in $H^s(\R^d)$ and $f\in C^{\infty}$ of real variable. On the one hand, such a result extends verbatim to the multivariable case, using for instance Faà di Bruno's formula to obtain the required estimates to verify the Meyer's multiplier condition. On the other hand, the same result still holds for functions in $H^s(\M)$ when $\M$ is a compact manifold with or without boundary using the definition of the norm of $H^s(\M)$ by partition of unity and sum of the norm in $H^s(\R^d)$ of the functions in local coordinates and with extension.
\end{remark}

\begin{proposition}\label{prop:NLS-assumF}
    Let $d\in \N$, $s>d/2$ and $\chi\in C^\infty(\M)$. Then if $f$ is real $C^\infty(\mathbb{C}, \mathbb{C})$ then $\fk=-i\chi f$ satisfies Assumption \ref{assumF} with $\sigma=s/2$ and for every $R_0$. Additionally, if $f$ is real analytic, then for every $R_0>0$ there exists $\delta_0>0$ such that $\fk$ satisfies Assumption \ref{NLS:assumFholom} with $R_0$ and $\delta_0$. 
\end{proposition}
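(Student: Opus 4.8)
The plan is to reduce the statement to three facts about $H^{s}(\M)$ for $s>d/2$: it is a Banach algebra, it embeds into $L^{\infty}$, and composition with a $C^{\infty}$ map vanishing at the origin is controlled by \cref{lem:comp-reg}. Throughout I would use the identifications $X^{\sigma}=H^{s}(\M;\R^{2})$, $X_{\mathbb{C}}^{\sigma}=H^{s}(\M;\mathbb{C}^{2})$, regard $f$ as a map $\R^{2}\to\R^{2}$ with $f(0)=0$, and observe that $\fk(u)=-i\chi f(u)$ is a composition operator followed by multiplication by the fixed $\chi\in C^{\infty}(\M)$, a bounded multiplier on every $H^{r}(\M)$.

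\emph{Assumption~\ref{assumF}.} I would first note that $\fk$ maps $X^{\sigma}$ into itself and is bounded on bounded sets: \cref{lem:comp-reg} applied to $f$ and $H^{s}\hookrightarrow L^{\infty}$ give $\norm{f(u)}_{H^{s}}\le C(f,\norm{u}_{H^{s}})\norm{u}_{H^{s}}$, and multiplication by $\chi$ is continuous. The candidate differential is $D\fk(u)h=-i\chi\,(Df)(u)h$; since $Df-Df(0)\in C^{\infty}$ vanishes at $0$, \cref{lem:comp-reg} gives $(Df)(u)-Df(0)\in H^{s}$ with norm $\le C(f,\norm{u}_{H^{s}})\norm{u}_{H^{s}}$, so, $Df(0)$ being constant and $H^{s}$ an algebra, multiplication by $(Df)(u)$ is a bounded operator on $H^{s}$, uniformly on $\mathbb{B}_{4R_{0}}(X^{\sigma})$. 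To prove $\fk\in C^{1}$ I would expand, for $u$ bounded and $\norm{h}_{H^{s}}$ small,
\begin{align*}
\fk(u+h)-\fk(u)-D\fk(u)h=-i\chi\int_{0}^{1}(1-\tau)\,(D^{2}f)(u+\tau h)[h,h]\,d\tau,
\end{align*}
and estimate the $H^{s}$-norm of the right-hand side by $\int_{0}^{1}\norm{(D^{2}f)(u+\tau h)[h,h]}_{H^{s}}\,d\tau\le C(1+\norm{u}_{H^{s}}+\norm{h}_{H^{s}})\norm{h}_{H^{s}}^{2}$, splitting $D^{2}f=D^{2}f(0)+(D^{2}f-D^{2}f(0))$ and using the algebra property together with \cref{lem:comp-reg}. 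Running the same computation one order higher gives $\fk\in C^{2}$ with $D^{2}\fk(u)[h,k]=-i\chi\,(D^{2}f)(u)[h,k]$, and the Lipschitz continuity of $u\mapsto D\fk(u)$ on $\mathbb{B}_{4R_{0}}$ follows from $\norm{D\fk(u_{1})-D\fk(u_{2})}_{\L(H^{s})}\le C\norm{(Df)(u_{1})-(Df)(u_{2})}_{H^{s}}$ and $\norm{(Df)(u_{1})-(Df)(u_{2})}_{H^{s}}\le C\norm{u_{1}-u_{2}}_{H^{s}}$, the latter via $(Df)(u_{1})-(Df)(u_{2})=\int_{0}^{1}(D^{2}f)(u_{2}+\tau(u_{1}-u_{2}))(u_{1}-u_{2})\,d\tau$ and the bounds above.

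\emph{Assumption~\ref{NLS:assumFholom}.} Now suppose $f$ real analytic. Then it extends to $\widetilde{f}$ holomorphic on an open neighbourhood of $\R^{2}$ in $\mathbb{C}^{2}$, and, writing $\zeta=a+ib$, the map $F(a,b):=\widetilde{f}(a+ib)$ is real analytic near $\R^{2}\times\{0\}$ in $(\R^{2})^{2}$ with $F(0,0)=0$. Fix $R_{0}>0$. Since $H^{s}\hookrightarrow L^{\infty}$ with constant $C_{s}$, any $z\in\mathbb{B}_{4R_{0},2\delta}(X^{\sigma})$ satisfies $\norm{\Re z(x)}_{\R^{2}}\le 4C_{s}R_{0}$ and $\norm{\Im z(x)}_{\R^{2}}\le 2C_{s}\delta$ for every $x$; by compactness and real analyticity there is $\delta_{0}=\delta_{0}(R_{0},f,\M,s)>0$ so that $\widetilde{f}$ is holomorphic and bounded on a complex tube containing all the values $z(x)$, $z\in\mathbb{B}_{4R_{0},2\delta_{0}}(X^{\sigma})$, $x\in\M$, and, after a $C^{\infty}$ cut-off, I may regard $F$ as a global $C^{\infty}$ map on $(\R^{2})^{2}$ with $F(0,0)=0$ agreeing with the analytic $F$ on that tube. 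I then set $\fk_{\mathbb{C}}(z):=-i\chi\,\widetilde{f}(z)=-i\chi\,F(\Re z,\Im z)$ on $\mathbb{B}_{4R_{0},2\delta_{0}}(X^{\sigma})$; \cref{lem:comp-reg} applied to $F$ gives $\fk_{\mathbb{C}}(z)\in X_{\mathbb{C}}^{\sigma}$ with a uniform bound $\norm{\fk_{\mathbb{C}}(z)}_{X_{\mathbb{C}}^{\sigma}}\le L$, and $\fk_{\mathbb{C}}$ restricts to $\fk$ on the real slice since $\widetilde{f}|_{\R^{2}}=f$. For holomorphy on the interior (and continuity up to the boundary) I would argue exactly as in the proof of \cref{lem:sv-holomorphic}: the candidate complex differential $h\mapsto-i\chi\,(D\widetilde{f})(z)h$ is $\mathbb{C}$-linear because $\widetilde{f}$ is holomorphic in several complex variables, and multivariate Taylor with integral remainder, Cauchy estimates for $\widetilde{f}$, and the $H^{s}$-algebra give, for $z$ in the interior and $\norm{h}_{X_{\mathbb{C}}^{\sigma}}$ small,
\begin{align*}
\norm{\fk_{\mathbb{C}}(z+h)-\fk_{\mathbb{C}}(z)+i\chi\,(D\widetilde{f})(z)h}_{X_{\mathbb{C}}^{\sigma}}\le C\norm{h}_{X_{\mathbb{C}}^{\sigma}}^{2},
\end{align*}
hence complex Fréchet-differentiability, hence holomorphy by \cref{appendix:thm:equiv-holom}; continuity on the closed cylinder follows from the same estimates, which stay valid since it still lands inside the tube of holomorphy. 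Finally the two Lipschitz bounds in \eqref{ineq:assumFholom} come from $\fk_{\mathbb{C}}(z)-\fk_{\mathbb{C}}(z')=-i\chi[\widetilde{f}(z)-\widetilde{f}(z')]$ and from $D\fk_{\mathbb{C}}(z)-D\fk_{\mathbb{C}}(z')$ acting as multiplication by $-i\chi[(D\widetilde{f})(z)-(D\widetilde{f})(z')]$, together with the $H^{s}$-estimates $\norm{\widetilde{f}(z)-\widetilde{f}(z')}_{H^{s}}+\norm{(D\widetilde{f})(z)-(D\widetilde{f})(z')}_{H^{s}}\le C\norm{z-z'}_{H^{s}}$ (mean value in $(a,b)$ plus the algebra property), which yields \eqref{ineq:assumFholom} with a single constant $L$.

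\emph{Main obstacle.} The delicate part is the extension step: I must choose $\delta_{0}$ so that the \emph{cylinder} $\mathbb{B}_{4R_{0},2\delta_{0}}(X^{\sigma})$ is carried by the Sobolev embedding into a complex tube over a fixed real ball on which $f$ genuinely extends holomorphically with uniform bounds, and then promote the pointwise-in-$x$ holomorphy of $\zeta\mapsto\widetilde{f}(\zeta)$ to $H^{s}$-valued holomorphy. The latter is where the nonlinear-estimate toolkit---the algebra property of $H^{s}$, \cref{lem:comp-reg} after a global $C^{\infty}$ cut-off of $F$, and Taylor with remainder---does the real work; everything else runs parallel to \cref{lem:sv-holomorphic}.
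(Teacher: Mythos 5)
Your proposal is correct and follows essentially the same route as the paper: reduce everything to the composition estimates of \cref{lem:comp-reg}, the algebra property of $H^{s}$ and the embedding $H^{s}\hookrightarrow L^{\infty}$, then extend $f$ (viewed as a real-analytic map of the two real variables) holomorphically to a complex tube chosen by compactness so that the Sobolev embedding sends the cylinder $\mathbb{B}_{4R_0,2\delta_0}(X^\sigma)$ into it, and verify holomorphy via Taylor expansion with remainder. The only cosmetic differences are that you check complex Fréchet-differentiability jointly in all variables (with an explicit smooth cut-off of the extension before invoking \cref{lem:comp-reg}), whereas the paper checks continuity plus separate holomorphy in each variable and concludes via \cref{appendix:thm:holom-sev-var}; both are fine.
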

\begin{proof}
    We begin by identifying $H^{s}(\M, \mathbb{C})$ with $H^{s}(\M, \R^2)=H^{s}(\M)\times H^{s}(\M)$. Observe that with our choice of $s$ we have that $H^s$ is an algebra and $H^{s}(\M, \R^2)\hookrightarrow L^\infty(\M, \R^2)$ with embedding constant $\kappa$. By working on each component of $f$, we reduce the analysis to the case where $f: \R^2\to \R$, which we assume from now on. As the multiplication by a smooth function is a bounded linear operator from $H^s$ into itself, without any loss of generality, we assume $\chi\equiv -1$ and consider $\fk=f$.

    Given that $f$ is real $C^\infty$, by \cref{lem:comp-reg} we have that both $f(v)$ and $Df(v)-Df(0)$ are well-defined in $H^{s}(\M, \R^2)$, for any $v\in H^s(\M, \R^2)$ along with the bound $\norm{Df(v)-Df(0)}_{H^s}\leq C \norm{v}_{H^s}$ with $C$ depending only on $Df$ and $\norm{v}_{L^\infty}$. Therefore, using that $H^{s}$ is an algebra, we get, for any $v$, $v'\in H^{s}$,
    \begin{align}
        \nonumber \norm{f(v)-f(v')}_{H^{s}}&=\Norm{\int_0^1 Df(v'+\tau(v-v'))(v-v')d\tau}_{H^{s}}\\ 
        \nonumber &\leq C\left(1+\Norm{\int_0^1 \big(Df(v'+\tau(v-v'))-Df(0)\big)d\tau}_{H^{s}}\right)\norm{v-v'}_{H^{s}}\\
        \label{fineq1} &\leq C\left(1+\norm{v}_{H^{s}}+\norm{v'}_{H^{s}}\right)\norm{v-v'}_{H^{s}},
    \end{align}
    where $C$ is a constant depending only on $Df$ and the $L^\infty$-norm of both $v$ and $v'$. We apply a similar reasoning to the second derivative of $f$, which lead us to
    \begin{align}\label{Dfineq1}
        \norm{Df(v)-Df(v')}_{\mc{L}(H^{s})}\leq C\left(1+\norm{v}_{H^{s}}+\norm{v'}_{H^{s}}\right)\norm{v-v'}_{H^{s}}.
    \end{align}    
    Once we assume that $v$, $v'\in \mathbb{B}_{R_0}(H^s(\M, \R^2))$ we observe that each component of $v$ and $v'$ always stays smaller than $4\kappa R_0$ in $L^{\infty}(\M, \R)$, thus $f$ is well-defined as a map on $\mathbb{B}_{R_0}(H^s(\M, \R^2))$. Further, estimate \eqref{fineq1} and \eqref{Dfineq1} directly implies that $f$ satisfies Assumption \ref{assumF} with $\sigma=s/2$.

    Now, we will show that $f$ satisfies Assumption \ref{NLS:assumFholom} if it is assumed to be real analytic. By compactness, there exists $\delta>0$ small such that $f$ extends holomorphically into the interior of the complex region
    \begin{align*}
        \mathbb{S}_{R_0, \delta}:=\{(z_1+iz_2, w_1+iw_2)\in \mathbb{C}^2\ |\ |z_1|, |w_1|\leq 4\kappa R_0\ \text{and}\ |z_2|, |w_2|\leq 2\kappa \delta\}.
    \end{align*}
    Moreover, this extension is continuous up to the boundary and there exists a constant $M>0$ so that $|f(z, w)|\leq M$ for all $(z, w)\in \mathbb{S}_{R_0, \delta}$. We still denote by $f$ such an extension. By identifying $\mathbb{C}^2\simeq \mathbb{R}^4$, Lemma \ref{lem:comp-reg} allows us to consider the composition of smooth functions defined on domains of $\mathbb{C}^2$ by functions in $H^{s}$, assuming that the composition makes sense. Since $\kappa$ is the constant of the embedding $H^{s}\hookrightarrow L^\infty$, we see that $f(v)$ is well defined in $\mathbb{B}_{4R_0, 2\delta}(H^{s}(\M; \mathbb{C}^2))$. In particular, it satisfies the same estimates as \eqref{fineq1} and \eqref{Dfineq1}. 

    To prove that $f$ is holomorphic, by \cref{appendix:thm:holom-sev-var}, we are led to prove that $f$ is continuous and holomorphic on each variable while the remaining one is being held fixed. The continuity is direct as we already showed that $f$ is Lipschitz-continuous in bounded sets of $H^s(\M; \mathbb{C}^2)$. We first perform a Taylor development of order $2$, say $f(z+r_1, w)=f(z, w)+\partial_1f(z, w)r_1+r_1^2\int_0^1 \partial_1^2f(z+\tau r_1, w)(1-\tau)d\tau$ for $(z, w)\in \Int(\mathbb{S}_{R_0, \delta})$ and $r_1\in \mathbb{C}$ small. Since $f$, $\partial_1 f$ and $\partial_1^2 f$ are smooth functions, by \cref{lem:comp-reg}, they are well-defined as maps on $H^s(\M; \mathbb{C}^2)$ and moreover, for any $(v_1, v_2)\in \Int(\mathbb{B}_{4R_0, 2\delta}(H^s(\M; \mathbb{C}^2))$ and $h\in \mathbb{B}_{\eta, \eta}(H^s(\M, \mathbb{C}))$ with $\eta>0$ small enough, depending on $(v_1, v_2)$ and $f$, we can write
    \begin{align}\label{eq:f-exp}
        f(v_1+h, v_2)=f(v_1, v_2)+\partial_1f(v_1, v_2)h+h^2\int_0^1\partial_1^{2}f(v_1+\tau h, v_2)(1-t)dt,
    \end{align}
    with equality meant as functions in $H^s(\M, \mathbb{C})$ (as $v_2$ is being held fixed). Therefore $f$ is holomorphic in the sense of \cref{def-Kdifferentiable} with (partial) derivative $h\mapsto \partial_1 f(v_1, v_2)h$, which is continuous and $\mathbb{C}$-linear as a map from $H^s(\M, \mathbb{C})$ into itself. The same argument works when $v_1$ is being held fixed. This finishes the proof.
\end{proof}

\subsubsection{Well-posedness}\label{s3:wp} Here we briefly recall some results related to the well-posedness of \eqref{eq:NLS} in the subcritical case. It is worth mention that in the case where $s>d/2$, the well-posedness can be classically achieved by Picard iteration in $C^0([0, T], H^s(\M))$.

In dimension $d=2$, a crucial tool to handle the subcritical case is the use of Strichartz estimates, which were obtained by Burq, Gérard, and Tzvetkov \cite{BGT04} by means of semiclassical analysis.

\begin{theorem}\cite[Theorem 1]{BGT04}\label{thm:strichartz}
    Let $(\M, g)$ be a compact Riemannian manifold of dimension $d\geq 2$. Let $(p, q)\in [2, \infty)\times[2,\infty)$ satisfying the admissibility condition
    \begin{align*}
        \dfrac{2}{p}+\dfrac{d}{q}=\dfrac{d}{2},\ (p, q)\neq (2, \infty).
    \end{align*}
    For any finite time interval $I$, there exists $C=C(I)>0$ such that for every $v_0\in L^2(\M)$ solution of $i\partial_t v+\Delta_g v=0$ with $v(0,\cdot)=v_0$, it holds
    \begin{align*}
        \norm{v}_{L^p(I, L^q(\M))}\leq C(I)\norm{v_0}_{H^{1/p}(\M)}.
    \end{align*}
\end{theorem}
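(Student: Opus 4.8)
The plan is to follow the semiclassical strategy of Burq--G\'erard--Tzvetkov: reduce to a frequency-localized estimate, prove a short-time dispersive bound via a WKB parametrix, upgrade it to Strichartz on short time intervals through the abstract $TT^{*}$ argument, and finally sum over subintervals and over dyadic frequency blocks.

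\emph{Step 1: Littlewood--Paley reduction.} Fix $\psi\in C_c^\infty((1/4,4))$ and $\psi_0\in C_c^\infty(\R)$ with $\psi_0+\sum_{j\ge 0}\psi(2^{-2j}\cdot)\equiv 1$, and put $P_0=\psi_0(-\Delta_g)$, $P_h=\psi(-h^2\Delta_g)$ for $h=2^{-j}$. Since $e^{it\Delta_g}$ commutes with these spectral multipliers, the contribution of $P_0v_0$ is controlled by $\norm{v_0}_{L^2}$ using Sobolev embedding in space and unitarity in time. For $q\ge 2$ the square-function characterization of $L^q(\M)$ together with two applications of Minkowski's inequality (legitimate since $p,q\ge 2$) reduces the theorem to the uniform estimate
\begin{align*}
    \norm{e^{it\Delta_g}P_h f}_{L^p(I,L^q(\M))}\le C(I)\,h^{-1/p}\norm{f}_{L^2(\M)},\qquad 0<h\le 1;
\end{align*}
one then reassembles using $h^{-1/p}\norm{P_hf}_{L^2}\sim\norm{P_hf}_{H^{1/p}}$ and the almost orthogonality of the blocks in $H^{1/p}(\M)$.

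\emph{Step 2: short-time dispersion.} Partition $I$ into $N\sim |I|/h$ intervals $J_k$ of length $\le h$. On each $J_k$ one builds a parametrix for $e^{it\Delta_g}P_h$: in local coordinates its Schwartz kernel is written as $h^{-d}\int e^{\frac{i}{h}(\vp(t,x,\xi)-y\cdot\xi)}a(t,x,\xi,h)\,d\xi$, where $\vp$ solves the Hamilton--Jacobi equation $\partial_t\vp=|\nabla_x\vp|_g^2$ with $\vp|_{t=0}=x\cdot\xi$ and $a$ is produced by solving the associated transport equations, the construction being legitimate precisely for $|t|\le c\,h$ because the geodesic flow has not yet formed caustics at the semiclassical scale. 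Non-stationary and then stationary phase in $\xi$ yield the Euclidean-type decay
\begin{align*}
    \norm{e^{it\Delta_g}P_h f}_{L^\infty(\M)}\le C\,|t|^{-d/2}\norm{f}_{L^1(\M)},\qquad 0<|t|\le h.
\end{align*}

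\emph{Step 3: $TT^{*}$, summation, and the main difficulty.} Combining the dispersive bound with the trivial $L^2\to L^2$ bound and invoking the Keel--Tao theorem on each $J_k$ gives $\norm{e^{it\Delta_g}P_h f}_{L^p(J_k,L^q)}\le C\norm{f}_{L^2}$ with $C$ independent of $k$ and $h$ for every admissible pair $(p,q)\ne(2,\infty)$. Since $p<\infty$,
\begin{align*}
    \norm{e^{it\Delta_g}P_h f}_{L^p(I,L^q)}^p=\sum_{k=1}^{N}\norm{e^{it\Delta_g}P_h f}_{L^p(J_k,L^q)}^p\le N\,C^p\norm{f}_{L^2}^p\lesssim h^{-1}\norm{f}_{L^2}^p,
\end{align*}
which is exactly the bound required in Step 1; reassembling the dyadic pieces completes the proof. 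The crux is Step 2: constructing the WKB parametrix on the manifold and extracting the dispersive decay uniformly in $h$ over the maximal window $|t|\lesssim h$, which demands careful control of the Hamilton--Jacobi flow and of the parametrix remainder. It is exactly the forced restriction to intervals of length $h$ that produces the factor $h^{-1/p}$, hence the $1/p$-derivative loss in the statement.
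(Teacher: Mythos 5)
This statement is quoted verbatim from \cite[Theorem 1]{BGT04}; the paper offers no proof of its own, and your outline — Littlewood--Paley reduction to a frequency-localized estimate, semiclassical WKB parametrix yielding the dispersive bound $\norm{e^{it\Delta_g}P_hf}_{L^\infty}\lesssim |t|^{-d/2}\norm{f}_{L^1}$ on the window $|t|\lesssim h$, Keel--Tao on subintervals of length $\sim h$, and summation over the $\sim |I|/h$ subintervals and the dyadic blocks producing the $h^{-1/p}$, i.e.\ $1/p$-derivative, loss — is exactly the Burq--G\'erard--Tzvetkov argument. It is correct as a sketch, with only two caveats: the eikonal equation should read $\partial_t\varphi+|\nabla_x\varphi|_g^2=0$ (your sign is off, a harmless convention issue), and the genuinely technical content of Step 2 (solving the Hamilton--Jacobi and transport equations uniformly in $h$ for $|t|\le \alpha h$ and controlling the parametrix remainder and stationary phase) is asserted rather than carried out, as you yourself acknowledge.
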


To treat the nonlinearity and ensure the availability of the Strichartz estimates, the solution is sought in the following Banach space
\begin{align*}
    Y_T=C^0([0, T], H^1(\M))\cap L^p([0, T], W^{1-1/p, q}(\M)),
\end{align*}
equipped with its natural norm, where $p>\max(\varrho-1, 2)$, $1/p+1/q=1/2$ with $p<\infty$, with the notation $\varrho=2\text{deg}(P)-1\geq 3$. We then have the following well-posedness result for the defocusing NLS in dimension $d=2$.

\begin{theorem}\cite[Theorem 2]{BGT04}\label{thm:wellposedH1}
    Let $(\M, g)$ be a compact Riemannian surface and let $P$ be a polynomial with real coefficients such that $P'(r)\to+\infty$ as $r\to+\infty$. For every $u_0\in H^1(\M)$, there exists a unique maximal solution $u\in C(\R, H^1(\M))$ of equation 
    \begin{align*}
        i\partial_t u+\Delta_g u=P'(|u|^2)u,\ \ u(0)=u_0,
    \end{align*}
    where, for any finite $p$, $u\in L_{\loc}^p(\R, L^\infty(\M))$.  Moreover, for any $T>0$, the flow map $u_0\mapsto u$ is Lipschitz continuous from bounded sets of $H^1(\M)$ into $Y_T$.
\end{theorem}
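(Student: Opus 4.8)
The plan is to recover this statement by the classical two-step route: local well-posedness by a fixed-point argument in a Strichartz space, then globalization via conservation of mass and energy. First I would fix the admissible pair $(p,q)$ (here $d=2$, so $\tfrac1p+\tfrac1q=\tfrac12$) with $p>\max(\varrho-1,2)$, and work in $Y_T=C^0([0,T],H^1(\M))\cap L^p([0,T],W^{1-1/p,q}(\M))$ as in the statement. Applying \cref{thm:strichartz} to $\Lambda_{1-1/p}u$ gives $\norm{e^{it\Delta_g}u_0}_{Y_T}\lesssim\norm{u_0}_{H^1(\M)}$ — the $1/p$ derivative loss in \cref{thm:strichartz} is precisely compensated by measuring the solution in $W^{1-1/p,q}$ — and the inhomogeneous estimate $\norm{\int_0^t e^{i(t-s)\Delta_g}F(s)\,ds}_{Y_T}\lesssim\norm{F}_{L^1([0,T],H^1(\M))}$ follows from it (the $C^0H^1$ part by unitarity of the group, the $L^pW^{1-1/p,q}$ part by the Christ--Kiselev lemma, or via the dual Strichartz inequality).

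The heart of the matter is the nonlinear estimate. Writing $F(u)=P'(|u|^2)u$ as a polynomial in $(u,\overline u)$ whose terms have odd degree at most $\varrho=2\deg(P)-1$, the Leibniz rule expresses $\nabla F(u)$ as a sum of terms of the form (polynomial in $(u,\overline u)$ of degree $\le\varrho-1$)$\cdot\,\nabla u$. I would then place $\varrho-1$ factors in $L^\infty_x$ using the Sobolev embedding $W^{1-1/p,q}(\M)\hookrightarrow L^\infty(\M)$ (valid since $(1-1/p)q=2(p-1)/(p-2)>2$) and the remaining $\nabla u$ in $L^2_x$; Hölder in time, exploiting $p>\varrho-1$, then leaves a gain $T^{\theta}$ with $\theta=1-(\varrho-1)/p>0$, so that
\[
\norm{F(u)-F(v)}_{L^1([0,T],H^1(\M))}\le C\,T^{\theta}\bigl(\norm{u}_{Y_T}+\norm{v}_{Y_T}\bigr)^{\varrho-1}\norm{u-v}_{Y_T}
\]
for $u,v$ in a bounded set of $Y_T$, the lower-degree terms of $F$ being handled the same way with even more room in time. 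This is the step I expect to be the main obstacle — not because it is deep, but because it is exactly where the subcritical structure (the interplay of the derivative loss in \cref{thm:strichartz}, the space $W^{1-1/p,q}$, and the threshold $p>\varrho-1$) must be used correctly.

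With these estimates, local existence and uniqueness follow from a contraction for the Duhamel map $\Phi(u)(t)=e^{it\Delta_g}u_0-i\int_0^t e^{i(t-s)\Delta_g}F(u(s))\,ds$ on a small ball of $Y_T$, with $T$ depending only on $\norm{u_0}_{H^1}$; the same estimate yields Lipschitz dependence of $u_0\mapsto u$ on bounded sets of $H^1(\M)$ into $Y_T$, and a standard continuation argument upgrades uniqueness to the class $C^0H^1$. For global existence I would invoke conservation of the mass $\int_\M|u|^2$ and the energy $E(u)=\int_\M|\nabla u|^2+\int_\M P(|u|^2)$ — rigorous at $H^1$ regularity after a regularization argument using the continuous dependence just obtained — and note that $P'(r)\to+\infty$ forces $P$ to be a polynomial with positive leading coefficient, hence bounded below on $[0,\infty)$, say $P\ge-C_P$; thus $\norm{\nabla u(t)}_{L^2}^2\le E(u_0)+C_P|\M|$, which with mass conservation bounds $\norm{u(t)}_{H^1}$ uniformly in $t$, and since the local existence time depends only on the $H^1$-size of the data the solution extends to all of $\R$. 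Finally $u\in L^p_{\loc}(\R,W^{1-1/p,q}(\M))\hookrightarrow L^p_{\loc}(\R,L^\infty(\M))$, and since $p>\max(\varrho-1,2)$ was arbitrary in the construction, interpolation in time gives $u\in L^p_{\loc}(\R,L^\infty(\M))$ for every finite $p$.
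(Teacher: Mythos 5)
This statement is not proved in the paper at all: it is quoted verbatim as \cite[Theorem 2]{BGT04}, so there is no internal proof to compare with, and your proposal is essentially a reconstruction of the original Burq--G\'erard--Tzvetkov argument (local contraction in $Y_T$ using \cref{thm:strichartz} applied at regularity $1-1/p$, the embedding $W^{1-1/p,q}(\M)\hookrightarrow L^\infty(\M)$, the subcritical gain $T^{1-(\varrho-1)/p}$, then globalization by mass and energy conservation with $P$ bounded below in the defocusing case), which is correct in outline and is indeed how the cited result is obtained. The only overreach is the parenthetical claim that a ``standard continuation argument upgrades uniqueness to the class $C^0([0,T],H^1(\M))$'': unconditional uniqueness in the 2D energy space is a genuinely separate matter (it requires a Brezis--Gallou\"et/Yudovich-type argument \`a la Vladimirov and Ogawa--Ozawa, not mere continuation), but the theorem as stated only asserts uniqueness of the maximal solution within the class having finite Strichartz norms, which your contraction and Lipschitz estimate already provide.
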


In dimension $d=3$ the analysis requires the use of multilinear estimates and Bourgain spaces, which we briefly recall. The Bourgain space $X^{s, b}$ is equipped with the norm whose square is given by
\begin{align*}
    \norm{u}^2_{X^{s,b}}= \sum_{k\in \N}\left\langle\lambda_k\right\rangle^s \Norm{\left\langle\tau+\lambda_k\right\rangle^b \widehat{\mathbb{P}_k}(\tau) u}^2_{L^2(\R_{\tau}\times \M)}=\norm{u^{\#}}^2_{H^{b}(\R,H^s(\M))}
\end{align*}
where $u=u(t,x)$ with $(t, x)\in\R\times\M$, $u^{\#}(t)=e^{-it\Delta}u(t)$ and $ \widehat{\mathbb{P}_k u}(\tau)$ denotes the Fourier transform of $\mathbb{P}_k u$ with respect to the time variable, the latter being the projector onto $e_k$. The associated restriction space $X_T^{s, b}$ is the corresponding restriction space with the norm
\begin{align*}
    \norm{u}_{X_T^{s, b}}=\inf\big\{\norm{\tilde{u}}_{X^{s, b}}\ |\ \tilde{u}=u \text{ on } (0,T)\times \M \big\}.
\end{align*}
The following technical assumption ensures that the Cauchy problem is wellposed in $H^1(\M)$. It yields a bilinear loss of $s_0<1$.
\begin{assump}{WP}\label{assumBilWP}
There exists $C>0$ and $0\leq s_0<1$ such that for any $f_1$, $f_2\in L^2(\M)$ satisfying
\begin{align*}
f_j=\mathbbm{1}_{\sqrt{1-\Delta}\in [N_j,2N_j[}(f_j), \quad j=1,2,3,4 
\end{align*}
one has the following bilinear estimates: if $u_j(t)=e^{it\Delta}f_j$, $j=1,2$ then
\begin{align}\label{inegbilin}
\norm{u_1 u_2}_{L^2([0,T]\times \M)} \leq C \min (N,L)^{s_0} \norm{f_1}_{L^2(\M)}\norm{f_2}_{L^2(\M)}.
\end{align}
\end{assump}
The above assumption is satisfied in the following cases (here $s_0+$ means any $s>s_0$):
\begin{itemize}
    \item $\mathbb{T}^3$ with $s_0=1/2+$, see \cite{Bou93I}.
    \item The irrational torus $\R^3/(\theta_1 \Z \times \theta_2 \Z \times \theta_3\Z)$ with $\theta_i \in \R$, for which an estimate with $s_0=2/3+$ has been obtained in \cite{Bou07}.
    \item $S^3$ with $s_0=1/2+$, see \cite{BGT05}.
    \item $S^2\times S^1$ with $s_0=3/4+$, see \cite{BGT05}.
\end{itemize}

The well-posedness in each one of the aforementioned cases was studied in the corresponding cited article. We summarize them in the following result, which states the existence for a defocusing nonlinearity of degree $3$ of the form $f(u)=\alpha u+ \beta |u|^2u$ with $\alpha>0$, $\beta \geq 0$.
\begin{proposition}\cite[Proposition 2.1]{Lau10:NLS3d}\label{prop:wellposedXsb}
Let $T>0$ and $s\geq 1$. Assume that $\M$ satisfies Assumption \ref{assumBilWP}. Then, for every $g\in L^2([0,T], H^s(\M))$ and $u_0 \in H^s(\M)$, there exists a unique solution $u$ on $[0,T]$ in $X_T^{s,b}$ to the Cauchy problem 
\begin{align}\label{dampedeqn}
\left\{
\begin{array}{rcl}
i\partial_t u+\Delta u-\alpha u-\beta|u|^2u=g & \textnormal{ on } [0,T]\times M,\\
u(0)=u_{0} \in H^s.
\end{array}
\right.
\end{align}
Moreover the flow map
\begin{align*}
\left\{
\begin{array}{rcl}
F : H^s(\M) \times L^2([0,T],H^s(\M))&\longrightarrow & X^{s,b}_{T}\\
            (u_0,g) &\longmapsto   &  u
\end{array}\right.
\end{align*}
is Lipschitz on every bounded subset.
\end{proposition}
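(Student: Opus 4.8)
The plan is to solve \eqref{dampedeqn} by a Picard fixed point in the restriction space $X_T^{s,b}$ with $b\in(1/2,1]$, and then to reach an arbitrary $T>0$ via the energy a priori bound. First I would record the standard linear estimates in Bourgain spaces, which are insensitive to the geometry: with a fixed time cutoff $\psi\in C_c^\infty(\R)$ equal to $1$ near the origin one has $\norm{\psi(t/T)e^{it\Delta}u_0}_{X_T^{s,b}}\lesssim\norm{u_0}_{H^s}$; for the Duhamel operator $Lh(t):=\int_0^t e^{i(t-\tau)\Delta}h(\tau)\,d\tau$ one has $\norm{Lh}_{X_T^{s,b}}\lesssim\norm{h}_{X_T^{s,b-1}}$, with a gain of a positive power of $T$ when $b<1$; and $\norm{g}_{X_T^{s,b-1}}\lesssim\norm{g}_{L^2([0,T],H^s)}$ because $b-1\leq0$, which lets the source term of \eqref{dampedeqn} be treated as an inhomogeneous datum.

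The geometry-dependent ingredient, and the heart of the argument, is the trilinear estimate
\[
\norm{u_1\,\overline{u_2}\,u_3}_{X_T^{s,b-1}}\lesssim T^{\theta}\,\norm{u_1}_{X_T^{s,b}}\norm{u_2}_{X_T^{s,b}}\norm{u_3}_{X_T^{s,b}},\qquad s\geq1,\ \theta>0,
\]
(the linear term $\alpha u$ being trivial). This is exactly where \cref{assumBilWP} enters. By the usual $X^{s,b}$ transfer principle, the free-evolution bilinear estimate \eqref{inegbilin} upgrades, for any $b>1/2$, to a Bourgain-space bilinear estimate of the form $\norm{(\mathbb{P}_Nu)\,\overline{(\mathbb{P}_Lv)}}_{L^2([0,T]\times\M)}\lesssim\min(N,L)^{s_0}\norm{\mathbb{P}_Nu}_{X^{0,b}}\norm{\mathbb{P}_Lv}_{X^{0,b}}$; the presence of a complex conjugate is harmless because $\overline{e^{it\Delta}f}=e^{-it\Delta}\overline f$ and \eqref{inegbilin} is insensitive to this change. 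One then performs a Littlewood--Paley decomposition of the product $u_1\overline{u_2}u_3$, applies this bilinear bound to the two highest-frequency factors, estimates the remaining factor by a crude Strichartz or $L^4_{t,x}$ norm, and sums the dyadic pieces; the bilinear loss $\min(\cdot)^{s_0}$ is absorbed precisely because $s_0<1\leq s$, which leaves a positive power of the smallest of the three frequencies to make the sum converge. A duality argument finally converts the $L^2$ bilinear estimate into the stated $X^{s,b-1}$ trilinear bound. I expect this step to be the main obstacle: the difficulty is essentially the bookkeeping of the dyadic sums, so that the derivative loss, the modulation weights $\langle\tau+\lambda_k\rangle$, and the smallness in $T$ all balance, with constants uniform for $s\geq1$ and depending only on the constant in \eqref{inegbilin}.

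With the linear and trilinear estimates at hand, the map $u\mapsto\psi(t/T)e^{it\Delta}u_0-i\,L\big(\alpha u+\beta|u|^2u+g\big)$ is a contraction on a ball of $X_T^{s,b}$ once $T=T\big(\norm{u_0}_{H^s},\norm{g}_{L^2([0,T],H^s)}\big)$ is chosen small, which produces the unique local solution, and the same estimates yield that the solution map is Lipschitz on bounded sets over such short intervals. To cover an arbitrary $[0,T]$ I would use the energy structure: the mass identity $\tfrac{d}{dt}\norm{u(t)}_{L^2}^2=2\,\mathrm{Im}\langle g,u\rangle_{L^2}$ bounds $\norm{u(t)}_{L^2}$ in terms of $\norm{u_0}_{L^2}$ and $\norm{g}_{L^1([0,T],L^2)}$, while differentiating the defocusing energy $E(u)=\tfrac12\norm{\nabla u}_{L^2}^2+\tfrac{\alpha}{2}\norm{u}_{L^2}^2+\tfrac{\beta}{4}\norm{u}_{L^4}^4$ gives, using $H^1\hookrightarrow L^4$ in dimension three, $\tfrac{d}{dt}E(u)\lesssim\norm{g}_{H^1}\big(1+E(u)\big)$, so Gronwall bounds $\sup_{[0,T]}\norm{u(t)}_{H^1}$ in terms of the data only. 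Since at $s=1$ the local existence time depends only on $\norm{u_0}_{H^1}+\norm{g}_{L^2([0,T],H^1)}$, finitely many iterations reach $T$; persistence of the higher regularity $H^s$ for $s>1$ follows by propagating the $H^s$-norm through the same iteration once the $X^{1,b}$-norm is controlled, using that in the trilinear estimate one may place all $s$ derivatives on a single factor, and the global Lipschitz dependence follows by combining the short-time Lipschitz bounds with these a priori bounds.
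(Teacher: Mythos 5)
This proposition is not proved in the paper at all: it is quoted verbatim from \cite[Proposition 2.1]{Lau10:NLS3d}, and your architecture---contraction in $X_T^{s,b}$ from the standard linear Bourgain-space estimates plus a trilinear estimate deduced from \cref{assumBilWP}, then globalization through the mass identity and the Gronwall inequality $\tfrac{d}{dt}E(u)\lesssim \norm{g}_{H^1}(1+E(u))$, then a tame estimate to propagate $H^s$ regularity for $s>1$ and Lipschitz dependence by concatenating short-time contractions with the a priori bounds---is exactly the strategy of that reference. The embedding $L^2([0,T],H^s(\M))\subset X_T^{s,b-1}$ (since $b-1\leq 0$), the treatment of the conjugate in \eqref{inegbilin}, and the energy/Gronwall step are all handled correctly.

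The one step that would fail as written is the reduction of the trilinear estimate to \eqref{inegbilin}. You propose to apply the bilinear bound to the \emph{two highest-frequency} factors and to put the remaining factor in a crude Strichartz-type norm. Pairing the two highest frequencies squanders the gain: after dualizing against $u_4$ with $\norm{u_4}_{X^{-s,1-b}}\leq 1$ and localizing all four factors at dyadic frequencies $N_1,\dots,N_4$, consider the significant configuration $N_1\sim N_4\sim N$ and $N_2\sim N_3\sim 1$. Your pairing gives $\min(N_1,N_4)^{s_0}\min(N_2,N_3)^{s_0}\sim N^{s_0}$, while the available Sobolev weights cancel ($N_1^{s}N_2^{s}N_3^{s}N_4^{-s}\sim 1$), so the estimate is off by $N^{s_0}$ for \emph{any} $s_0>0$; the variant without duality (bilinear on the two highest inputs, $L^\infty$ or $L^4$ on the lowest) fails similarly for $s_0>1/2$, i.e.\ in all the three-dimensional examples listed after \cref{assumBilWP}. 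The argument of Bourgain and Burq--G\'erard--Tzvetkov, reproduced in Laurent's trilinear lemma, instead pairs each high-frequency factor with a low-frequency one, so that each loss $\min(\cdot)^{s_0}$ falls on a \emph{small} frequency and is absorbed by the corresponding $H^s$ weight thanks to $s\geq 1>s_0$, leaving room to sum the dyadic series (the modulation weights and the localization in time supplying the factor $T^{\theta}$). Your own remark that the absorption ``leaves a positive power of the smallest of the three frequencies'' is the correct mechanism, but it is produced precisely by this high--low pairing, not by the pairing you describe; with that correction your proof coincides with the cited one.
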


\subsubsection{Observability inequality} Let us first introduce some notation. Given $\omega$ which satisfies the \ref{NLS:assumGCC}, by a compactness argument \cite[Lemma A.3]{LL24} we can always find $\omega_0\Subset\omega$ which satisfies the \ref{NLS:assumGCC} as well and a smooth function $b_{\omega}$ supported in $\omega$ with $b_{\omega}=1$ on $\omega_0$. To this function, we will associate the multiplication operator $\bC$ defined as
\begin{align}\label{def:operatorC}
    \bC: z\in H^s(\M)\longmapsto b_{\omega}z\in H^s(\M),
\end{align}
which is linear and continuous for any $s\in\R$. We will use this operator from now onward.

The observability inequality for the linear Schrödinger equation under the \ref{NLS:assumGCC} is due to Lebeau \cite{Leb92}. Although it was obtained in the more complicated case of boundary observability, the same result holds for internal observation.

\begin{theorem}\cite{Leb92}\label{thm:schr-obs} 
    Let $s\in [0, 2]$. Let $\omega$ satisfy the \ref{NLS:assumGCC} and let $\bC$ be as in \eqref{def:operatorC}. Then, for every $T>0$, there exists $C=C(\omega, T, s)>0$ such that for any $v_0\in H^s(\M)$ it holds
    \begin{align*}
        \norm{v_0}_{H^s(\M)}^2\leq C\int_0^T \norm{\bC e^{it\Delta_g}v_0}_{H^s(\M)}^2dt.
    \end{align*}
\end{theorem}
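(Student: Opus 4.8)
The plan is to reduce everything to the case $s=0$, which is exactly Lebeau's observability theorem, and then to propagate the inequality to $s\in[0,2]$ by conjugating with $\Ld_s=(1-\Delta_g)^{s/2}$ and absorbing the resulting commutator with $b_\omega$ through the pseudodifferential calculus recalled in \cref{A:pseudodiff}.

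\textbf{Step 1: the case $s=0$.} Since $\omega_0\Subset\omega$ satisfies the \ref{NLS:assumGCC}, by \cite[Lemma A.3]{LL24} we may fix a slightly smaller open set $\omega_{00}\Subset\omega_0$ still satisfying the \ref{NLS:assumGCC} together with a cutoff $b_{\omega_0}\in C_c^\infty(\omega_0)$ equal to $1$ on $\omega_{00}$; note that, since $b_\omega\equiv 1$ on $\omega_0$, one has $b_\omega\equiv 1$ on $\supp b_{\omega_0}$, hence $b_{\omega_0}=b_{\omega_0}b_\omega$. Lebeau's internal observability theorem for the Schrödinger group on a compact boundaryless manifold under the \ref{NLS:assumGCC} \cite{Leb92} provides $C>0$ such that $\norm{v_0}_{L^2}^2\le C\int_0^T\norm{b_{\omega_0}e^{it\Delta_g}v_0}_{L^2}^2\,dt$ for all $v_0\in L^2(\M)$. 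Writing $b_{\omega_0}e^{it\Delta_g}v_0=b_{\omega_0}\big(b_\omega e^{it\Delta_g}v_0\big)$ and using that multiplication by the fixed function $b_{\omega_0}$ is bounded on $L^2(\M)$ gives the claimed inequality for $s=0$. This step is the only deep ingredient — it rests on the propagation of microlocal defect measures adapted to the Schrödinger flow, exploiting that high-frequency wave packets travel along geodesics and thus, in any positive time $T$, reach $\omega$ — and I would simply invoke it.

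\textbf{Step 2: conjugation with $\Ld_s$.} Fix $s\in[0,2]$ and $v_0\in H^s(\M)$, so that $\Ld_s v_0\in L^2(\M)$. By the spectral theorem $\Ld_s$ and $e^{it\Delta_g}$ are both functions of $-\Delta_g$, hence commute, and $\norm{e^{it\Delta_g}\psi}_{H^r}=\norm{\psi}_{H^r}$ for every $r$. Applying Step 1 to $\Ld_s v_0$,
\begin{align*}
\norm{v_0}_{H^s(\M)}^2=\norm{\Ld_s v_0}_{L^2(\M)}^2\le C\int_0^T\norm{b_\omega\,\Ld_s e^{it\Delta_g}v_0}_{L^2(\M)}^2\,dt.
\end{align*}
Now insert $b_\omega\Ld_s=\Ld_s b_\omega+[b_\omega,\Ld_s]$. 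Since $\Ld_s\in\Psi^s_{\phg}(\M)$ and $b_\omega$ defines an operator in $\Psi^0(\M)$, the commutator $[b_\omega,\Ld_s]\in\Psi^{s-1}(\M)$ is bounded from $H^{s-1}(\M)$ to $L^2(\M)$; combining this with $\norm{\Ld_s b_\omega\phi}_{L^2}=\norm{b_\omega\phi}_{H^s}$ and $\norm{e^{it\Delta_g}v_0}_{H^{s-1}}=\norm{v_0}_{H^{s-1}}$ yields a constant $C>0$ with
\begin{align*}
\norm{v_0}_{H^s(\M)}^2\le C\int_0^T\norm{b_\omega e^{it\Delta_g}v_0}_{H^s(\M)}^2\,dt+C\norm{v_0}_{H^{s-1}(\M)}^2 .
\end{align*}

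\textbf{Step 3: removing the lower–order term.} A short two–step induction closes the argument, with no compactness–uniqueness needed. For $s\in[0,1]$ we have $\norm{v_0}_{H^{s-1}}\le\norm{v_0}_{L^2}$, and by Step 1 together with $\norm{b_\omega e^{it\Delta_g}v_0}_{L^2}\le\norm{b_\omega e^{it\Delta_g}v_0}_{H^s}$ the term $\norm{v_0}_{H^{s-1}}^2$ is bounded by $C\int_0^T\norm{b_\omega e^{it\Delta_g}v_0}_{H^s}^2\,dt$, so it is absorbed and the theorem holds on $[0,1]$. For $s\in(1,2]$, apply the already established inequality at level $s-1\in(0,1]$ to get $\norm{v_0}_{H^{s-1}}^2\le C\int_0^T\norm{b_\omega e^{it\Delta_g}v_0}_{H^{s-1}}^2\,dt\le C\int_0^T\norm{b_\omega e^{it\Delta_g}v_0}_{H^s}^2\,dt$, which again absorbs the remainder. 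This proves the inequality for every $s\in[0,2]$. The main (and only) obstacle is Step 1; Steps 2 and 3 are routine functional analysis and symbolic calculus.
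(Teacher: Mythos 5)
Your proof is correct, and it reaches the conclusion by a route that differs from the paper's in how the remainder is handled. The paper proves only the endpoint cases: $s=0$ from \cite{Leb92}, then $s=2$ by applying the $L^2$ observability to $(1-\Delta_g)v_0$ (commuting it through the flow), which produces a commutator term $[\bC,(1-\Delta_g)]e^{it\Delta_g}v_0$ of lower order that is removed by a compactness--uniqueness argument; the intermediate $s\in(0,2)$ are then obtained by interpolation. You instead conjugate with $\Ld_s$ for \emph{every} $s\in[0,2]$, use $[\,b_\omega,\Ld_s]\in\Psi^{s-1}(\M)$ to bound the error by $\norm{v_0}_{H^{s-1}}$, and absorb that error using the observability inequality already established at the strictly lower level (the $s=0$ case for $s\in[0,1]$, the level $s-1$ for $s\in(1,2]$). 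This two-band bootstrap buys you two things: you never need the compactness--uniqueness step (so no soft uniqueness argument and no loss of explicitness in the constants --- your $C$ is traceable to Lebeau's constant and the commutator norm), and you never need interpolation theory, since each $s$ is handled directly. The cost is negligible, as the commutator estimate you use is exactly the same ingredient the paper needs at $s=2$ (incidentally, your order count $s-1$ for the commutator is the right one; the paper's ``order $-1$'' for $[\bC,(1-\Delta_g)]$ is a slip, though harmless for its argument since only the one-derivative gain matters). Your Step 1 is also slightly more careful than the paper in converting Lebeau's observation from an open set into observation through the multiplier $b_\omega$ via $b_{\omega_0}=b_{\omega_0}b_\omega$, which is a fine and correct way to match the operator $\bC$ of \eqref{def:operatorC}.
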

\begin{proof}
    The result for $s=0$ follows from Lebeau \cite{Leb92}. For $s=2$, we solve the linear Schrödinger equation with initial data $z_0=(1-\Delta_g)v_0$ and by noticing that $e^{i\cdot\Delta_g}$ and $(1-\Delta_g)$ commute, by applying the known observability in $L^2(\M)$ and the definition of Sobolev norm we get
    \begin{align*}
        \norm{v_0}_{H^2(\M)}^2\leq C\int_0^T \norm{\bC e^{it\Delta_g}v_0}_{H^2(\M)}^2dt+C\int_0^T\norm{[\bC, (1-\Delta_g)]e^{it\Delta_g}v_0}_{L^2(\M)}^2dt.
    \end{align*}
    Since the commutator $[\bC, (1-\Delta_g)]$ is a pseudodifferential operator of order $-1$, the second term in the right-hand side is of lower order and hence it can be removed by a classical compactness-uniqueness argument (see \cite[Section 4]{LL24}, for instance), proving the case $s=2$. The intermediate case $s\in (0, 2)$ follows by linear interpolation.
\end{proof}

\subsection{Propagation of regularity} In this section we recall some propagation of regularity results for the NLS which are key to verify the compactness-related hypotheses of \cref{thm:mainabs-analytic}. The following propagation result has been essentially proved by Dehman-Gérard-Lebeau \cite{DGL06}. Although it is stated in dimension $d=2$ and including the more involved subcritical regularity, it is clear from their proof that it can be adapted to higher dimensions. Below we state the result for any dimension in an algebra and we give the proof for the convenience of the reader.

\begin{proposition}\label{thm:propregularity-highreg} Let $T>0$, $d\in \N$ and $s>d/2$. Let $u\in C^0([0, T], H^s(\M))$ be a solution of the NLS 
\begin{align}
    \begin{array}{c}
        i\partial_t u+\Delta_g u=f(u).
    \end{array}
\end{align}
Assume that $\omega$ satisfies the \ref{NLS:assumGCC} and let $\bC$ be defined as in \eqref{def:operatorC}. If $\bC u\in L^2([0, T], H^{s+\nu}(\M))$ for some $\nu>0$, then $u\in C^0([0, T], H^{s+\nu}(\M))$ and there exists a constant $C>0$, which depends in all the given parameters and $\norm{u}_{L^\infty([0, T]\times\M)}$, such that
\begin{align}\label{thm:ineq:propregularity}
    \norm{u}_{C^0([0, T], H^{s+\nu}(\M))}\leq C\big(\norm{\bC u}_{L^2([0, T], H^{s+\nu}(\M))}+\norm{u}_{C^0([0, T], H^s(\M))}\big).
\end{align}
\end{proposition}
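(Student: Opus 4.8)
\textbf{Proof strategy for Proposition~\ref{thm:propregularity-highreg}.} The plan is to follow the microlocal propagation-of-regularity scheme of Dehman--Gérard--Lebeau, adapted to a general algebra $H^s$ with $s>d/2$. Write $z=\Lambda_\nu u=(1-\Delta_g)^{\nu/2}u$ and commute the equation: since $[\Lambda_\nu,i\partial_t+\Delta_g]=0$, the function $z$ solves
\begin{align}\label{eq:propreg-z-eq}
    i\partial_t z+\Delta_g z=\Lambda_\nu f(u).
\end{align}
The nonlinear term is controlled by paralinearization: using the Bony decomposition one writes $\Lambda_\nu f(u)=M_u\,z+r$, where $M_u$ is a (time-dependent) pseudodifferential operator of order $0$ with symbol involving $f'(u)$ and $r\in C^0([0,T],H^{s}(\M))$ is a lower-order remainder that is tame in $\|u\|_{C^0([0,T],H^s)}$ (here one uses that $H^s$ is an algebra, $s>d/2$, together with Lemma~\ref{lem:comp-reg} for the composition $f(u)$). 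Thus $z$ solves a linear Schrödinger equation with an $L^\infty_tL^\infty_x$-coefficient zero-order potential and a source in $C^0([0,T],H^s)$, i.e. exactly the framework of the \ref{NLS:UCP}-type linear theory of \cite{DGL06}.

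\textbf{Main estimate via propagation of the $H^s$ wavefront set.} The heart of the argument is the microlocal propagation statement: for the linear Schrödinger evolution with a bounded potential, the $H^s$-regularity of a solution at a point $\rho_0=(t_0,x_0,\xi_0)$ of phase space propagates along the bicharacteristic through $\rho_0$, which projects onto the geodesic through $(x_0,\xi_0/|\xi_0|)$ run at unit speed in the $x$-variable (with the time variable frozen --- the Schrödinger flow has infinite speed, so the relevant characteristics live over a fixed time slice). Concretely, one shows that if $(t_0,x_0,\xi_0)$ is such that along the geodesic emanating from $x_0$ in direction $\xi_0$ the solution is microlocally $H^{s+\nu}$ on some open arc meeting $\mathrm{supp}(b_\omega)$, then $(t_0,x_0,\xi_0)\notin\mathrm{WF}^{s+\nu}(z)$. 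Since $\omega$ satisfies the \ref{NLS:assumGCC}, every geodesic meets $\omega_0=\{b_\omega=1\}$ within time $T_0$; iterating the propagation over the interval $(0,T)$ (shrinking slightly to an interior subinterval and then recovering the endpoints by the group property and a standard energy/regularity bootstrap in $t$) and using the hypothesis $\bC u\in L^2([0,T],H^{s+\nu})$ --- equivalently $z$ is $H^\nu$-microlocally regular over $\omega_0$ --- one concludes that $\mathrm{WF}^{s+\nu}(z)=\emptyset$, hence $u\in C^0([0,T],H^{s+\nu}(\M))$. The quantitative estimate \eqref{thm:ineq:propregularity} follows by tracking constants through the propagation estimates: each local propagation inequality has the form (local $H^{s+\nu}$ norm of $z$) $\lesssim$ (local $H^{s+\nu}$ norm of $z$ near $\omega_0$) $+\ \|r\|_{C^0H^s}$, and the tame bound $\|r\|_{C^0H^s}\lesssim C(\|u\|_{L^\infty([0,T]\times\M)})\|u\|_{C^0H^s}$ closes the argument; the constant indeed depends only on the stated parameters and on $\|u\|_{L^\infty([0,T]\times\M)}$, which enters through the coefficients of $M_u$ and the remainder $r$.

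\textbf{Where the difficulty lies.} The delicate point is \emph{not} the algebraic bookkeeping but the microlocal propagation estimate itself in the Schrödinger (anisotropic, quasi-homogeneous) setting: one must use a second-microlocal / semiclassical positive commutator argument adapted to the characteristic set of $i\partial_t+\Delta_g$, controlling the error terms coming from the rough (only $L^\infty_x$, not smooth) potential $M_u$. This is exactly the analysis carried out in \cite[Sections 3--5]{DGL06} in dimension $2$; since $H^s$ is an algebra for $s>d/2$, the paralinearization produces a genuine pseudodifferential potential of order $0$ and a remainder gaining at least one derivative --- which is more than enough room --- so their argument transfers verbatim to general $d$, and the compactness-uniqueness step used to absorb the lower-order commutator term (as in the proof of Theorem~\ref{thm:schr-obs} above) remains available. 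The glancing region requires the usual care, but no genuinely new phenomenon appears at this regularity. I would therefore organize the write-up as: (i) paralinearize and reduce to a linear equation with $0$-order rough potential and $C^0H^s$ source; (ii) invoke/adapt the microlocal propagation estimate of \cite{DGL06}; (iii) use the \ref{NLS:assumGCC} to cover all of $S^*\M$ by geodesics meeting $\omega_0$, conclude emptiness of $\mathrm{WF}^{s+\nu}$ and derive \eqref{thm:ineq:propregularity}; (iv) recover the endpoints $t=0,T$ and continuity in time by the group property and a bootstrap.
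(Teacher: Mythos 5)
Your overall strategy (microlocal propagation along the geodesic flow at fixed time, globalized by the \ref{NLS:assumGCC}) is the same as the paper's, but two steps as written contain genuine gaps. First, the claim that after paralinearizing you can gain the full $\nu$ derivatives in one pass (``more than enough room'', ``transfers verbatim'') is not correct: in the positive-commutator argument the error terms are quadratic in the solution at its \emph{a priori} regularity, not in the source. With $z=\Lambda_\nu u$ only in $C^0([0,T],H^{s-\nu})$ a priori, the multiplier $A$ must have order $2s-1$, and then the time-boundary terms $\langle Az(0),z(0)\rangle$, $\langle Az(T),z(T)\rangle$, the order-$(2s-1)$ remainder in $[A,\Delta_g]$, and the pairing $\langle Az, M_u z\rangle$ with your rough $0$-order potential all force $2\nu\leq 1$. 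So even in the paralinearized formulation the gain per step is capped at $1/2$, and a bootstrap in the regularity index is indispensable; smoothing the remainder $r$ does not remove this constraint. This is precisely why the paper's proof first restricts to $0<\nu\le 1/2$ (see \cref{prop:regularity-flow}, where the commutator is of order $2(s+\nu)-1\le 2s$) and then iterates $k$ times with $\nu'=\nu/k\le 1/2$. Relatedly, the paralinearization is unnecessary: since the constraint $\nu\le 1/2$ is there anyway, the paper simply treats $f(u)$ as an $L^2([0,T],H^s)$ source (via \cref{lem:comp-reg}), which avoids having to justify a propagation theorem with a merely $H^s\cap L^\infty$ potential.

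Second, the passage from microlocal information to $u\in C^0([0,T],H^{s+\nu})$ with the estimate \eqref{thm:ineq:propregularity} is left as ``group property and a standard bootstrap'', but this is where an actual argument is needed: the propagation machinery, fed with the hypothesis $\bC u\in L^2([0,T],H^{s+\nu})$, only yields $L^2$-in-time control of $\|u(t)\|_{H^{s+\nu}}$ (this is \cref{cor:regularity-estimate}, proved with a microlocal partition of unity and the sharp G\aa rding inequality). The paper upgrades this to $C^0$ in time, including the endpoints, by splitting $u=u_{\lin}+u_{\Nlin}$ via Duhamel, bounding $u_{\Nlin}$ in $L^\infty_tH^{s+\nu}$ by $\|u\|_{L^2_tH^{s+\nu}}$ through the algebra/composition estimate, and then applying Lebeau's observability inequality (\cref{thm:schr-obs}) under the \ref{NLS:assumGCC} to the linear part to recover $\|u_0\|_{H^{s+\nu}}$. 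Your sketch could be repaired differently (pick a time slice $t_0$ where $u(t_0)\in H^{s+\nu}$ and run a Gronwall persistence-of-regularity argument using $\|f(u)\|_{H^{s+\nu}}\lesssim_{\|u\|_{L^\infty}}\|u\|_{H^{s+\nu}}$), but as stated the step is missing, and without it neither the continuity in time nor the quantitative bound at $t=0,T$ follows from emptiness of the wavefront set alone.
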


Regarding propagation results in the subcritical case, in dimension $d=2$ we have the following result.

\begin{proposition}\cite[Theorem 3]{DGL06}\label{prop:propregularityYT}
    Let $d=2$ and let $\omega$ satisfy the \ref{NLS:assumGCC}. Let $u\in C^0([0, T], H^1(\M))$ be a solution of \eqref{eq:NLS} with finite Strichartz norms such that $\partial_t u=0$ on $(0, T)\times \omega$. Then $u\in C^\infty\big((0, T)\times\M\big)$.
\end{proposition}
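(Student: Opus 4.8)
The strategy is to differentiate the equation in time, reduce the problem to a \emph{linear} Schrödinger equation with potentials for $z=\partial_t u$, use that $z$ vanishes on $(0,T)\times\omega$ together with the \ref{NLS:assumGCC} to propagate Sobolev regularity of $z$ to all of $\M$, and then run an elliptic--evolution bootstrap on $u$ itself.

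\emph{Step 1: linearization in time.} Set $z:=\partial_t u$. From \eqref{eq:NLS} one has $z=i\Delta_g u-if(u)$, so since $u$ has finite Strichartz norms (hence $u\in Y_T$) and $H^1(\M)\hookrightarrow L^p(\M)$ for all $p<\infty$ when $d=2$, $z$ is well-defined in $C^0([0,T],H^{-1}(\M))$ and, more to the point, in the restriction space inherited by applying $\partial_t$ to the well-posedness framework of \cref{thm:wellposedH1}. Differentiating \eqref{eq:NLS} in $t$ gives
\begin{align*}
    i\partial_t z+\Delta_g z=b_1 z+b_2\bar z,
\end{align*}
with $b_1=f_w(u)$ and $b_2=f_{\bar w}(u)$ the Wirtinger derivatives of $f$ evaluated at $u$; since $f(u)=P'(|u|^2)u$ is polynomial in $(u,\bar u)$, the coefficients $b_1,b_2$ are polynomials in $(u,\bar u)$, hence belong to $L^\infty([0,T],L^p(\M))$ for every $p<\infty$. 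The hypothesis $\partial_t u=0$ on $(0,T)\times\omega$ reads $\bC z=0$, where $\bC$ is the multiplication operator \eqref{def:operatorC} associated to $b_\omega$ supported in $\omega$.

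\emph{Step 2: microlocal propagation of regularity (the crux).} Solutions of a Schrödinger equation with potentials $b_1,b_2\in L^\infty([0,T],L^p(\M))$ ($p$ large) obey the propagation-of-regularity machinery of Dehman--Gérard--Lebeau: microlocal Sobolev regularity propagates along the bicharacteristic flow of the Schrödinger operator, which projects onto the geodesic flow at frozen time. The low-regularity end relies on the Strichartz estimates of \cref{thm:strichartz} together with a pseudodifferential calculus adapted to the anisotropic scaling of the Schrödinger operator, while the high-regularity end is the linear-potential analogue of \cref{thm:propregularity-highreg}, valid once the current regularity exceeds $d/2=1$ (the coefficients $b_1,b_2$ inherit the improving regularity of $u$, so the bootstrap is coupled). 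Since $\bC z=0\in L^2([0,T],H^{s}(\M))$ for \emph{every} $s$, and since by the \ref{NLS:assumGCC} every geodesic meets $\omega$, iterating the propagation estimate on slightly shrinking time intervals yields $z\in C^0_{\loc}((0,T),H^{s}(\M))$ for all $s\geq 0$; that is, $\partial_t u$ is $C^\infty$ in $x$, locally uniformly in $t\in(0,T)$. I would invoke \cite[Theorem 3]{DGL06} directly for this step.

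\emph{Step 3: bootstrap on $u$ and joint smoothness.} Rewrite the equation as $-\Delta_g u=i\partial_t u-f(u)$. With $\partial_t u$ now smooth in $x$ and $u\in H^1(\M)\hookrightarrow L^p(\M)$ for all $p<\infty$, the polynomial $f(u)$ lies in $L^2(\M)$, so $u\in C^0_{\loc}((0,T),H^2(\M))$; since $d=2$ this gives $u\in L^\infty$, after which one iterates using that $H^\sigma(\M)$ is an algebra for $\sigma\geq 2$ (so $u\in H^\sigma\Rightarrow f(u)\in H^\sigma\Rightarrow-\Delta_g u\in H^\sigma\Rightarrow u\in H^{\sigma+2}$), obtaining $u\in C^0_{\loc}((0,T),H^\sigma(\M))$ for every $\sigma$. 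Feeding this into $\partial_t u=i\Delta_g u-if(u)$ and differentiating repeatedly in $t$ (each $t$-derivative again solves a Schrödinger-type equation with smooth-in-$x$ data) gives $u\in C^\infty_{\loc}((0,T),H^\sigma(\M))$ for all $\sigma$ and all orders in $t$, whence $u\in C^\infty((0,T)\times\M)$ by Sobolev embedding. The main obstacle is Step 2 --- the microlocal propagation of regularity for the Schrödinger equation with rough ($L^\infty_tL^p_x$) potentials in the genuinely $H^1$-subcritical regime in dimension two; making it rigorous requires the Strichartz-adapted (para)differential calculus and the microlocal smoothing arguments of \cite{DGL06}, which I would cite rather than reprove, the rest being routine.
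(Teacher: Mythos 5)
There is a genuine gap, and it sits exactly at the step you yourself identify as the crux. The paper's own argument is much shorter and does not differentiate the equation in time at this stage: since $\partial_t u=0$ on $(0,T)\times\omega$, the function $u$ is time-independent there and solves the \emph{elliptic} equation $\Delta_g u=P'(|u|^2)u$ on $(0,T)\times\omega$; elliptic bootstrap (the nonlinearity being subcritical) gives $u\in C^\infty\big((0,T)\times\omega\big)$, which is precisely the hypothesis of \cite[Theorem 3]{DGL06}, and that theorem is then applied as a black box to conclude $u\in C^\infty\big((0,T)\times\M\big)$. Your Step 2 instead applies the propagation machinery to $z=\partial_t u$, which solves a \emph{linear} Schrödinger equation with potentials $b_1,b_2$ that are only $L^\infty_t L^p_x$ (or, using the Strichartz bounds, $L^p_t(L^\infty_x\cap H^1_x)$), with $z$ a priori only in $C^0([0,T],H^{-1}(\M))$. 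The result you propose to cite for this, \cite[Theorem 3]{DGL06}, does not cover that situation: it is a statement about the nonlinear solution $u$ under a smoothness hypothesis on $(0,T)\times\omega$, not a propagation theorem for linear equations with rough time-dependent potentials at negative Sobolev regularity. Nor do the linear tools in this paper fill the hole as stated: in \cref{prop:regularity-flow} the potential term would have to be treated as a source in $L^2_t H^{s}$, and the product $b_i z$ only has the regularity allowed by $b_i$, i.e.\ by $u$.

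This also makes the order of your bootstrap circular as written. You claim in Step 2 that, since $\bC z=0$ lies in every $H^s$, iteration yields $z\in C^0_{\loc}((0,T),H^s)$ for \emph{all} $s$ before Step 3 is ever invoked; but each propagation step for $z$ is capped by the regularity of the coefficients $b_1,b_2$, which are polynomials in $u$ and at that stage have only $H^1$-plus-Strichartz regularity. To go beyond roughly one derivative you must first improve $u$ (your Step 3), which in turn needs $z$ improved, so the two bootstraps must be interleaved gain by gain; you acknowledge the coupling parenthetically but the proof as organized does not respect it. One could probably make such an interleaved scheme work, but it amounts to reproving the subcritical propagation analysis of \cite{DGL06} in a linearized setting rather than citing it, and the low-regularity propagation lemma it requires (potentials $L^p$, unknown in $H^{-1}$) is exactly the part you have not supplied. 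The efficient repair is the paper's route: use the local elliptic equation on $\omega$ to verify the hypothesis of \cite[Theorem 3]{DGL06} and invoke that theorem directly for $u$; the time-derivative reduction is only needed later, for the unique continuation step, once $u$ is already known to be smooth and analytic in time.
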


\begin{remark}
Although it is not written this way, it follows directly from their theorem as $\partial_t u=0$ on $(0, T)\times\omega$ implies that $u$ satisfies the elliptic equation $\Delta_g u=P'(|u|^2)u$ on $(0, T)\times\omega$ and thus the hypotheses are satisfied by elliptic regularity.
\end{remark}

In dimension $3$, the propagation results have been adapted in the low-regularity framework by means of Bourgain spaces.

\begin{proposition}{\cite[Corollary 5.3]{Lau10:NLS3d}}\label{prop:propregularityXsb}
    Let $d=3$ and suppose that $\M$ satisfies \cref{assumBilWP}. Let $1/2<b\leq 1$. Let $\omega$ satisfy the \ref{NLS:assumGCC} and let $u\in X_T^{1,  b}$ be a solution of \eqref{eq:NLS} such that $\partial_t u=0$ on $(0, T)\times \omega$. Then $u\in C^\infty\big((0, T)\times\M\big)$.
\end{proposition}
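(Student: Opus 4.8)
The plan is to prove this the way such results are established in \cite{DGL06} and \cite{Lau10:NLS3d}: combine an elliptic bootstrap on $\omega$ with a microlocal propagation of Sobolev regularity for the Schrödinger flow, transcribed to the Bourgain spaces $X^{s,b}_T$ so as to handle the cubic nonlinearity at the energy level. First I would use the hypothesis $\partial_t u=0$ on $(0,T)\times\omega$: there the equation reduces to the nonlinear elliptic equation $\Delta_g u=f(u)$, and since $u$ does not depend on $t$ on $\omega$ it is the pullback of a fixed $u_0\in H^1(\omega)$ with $\Delta_g u_0=f(u_0)$ in $\D'(\omega)$. Using the cubic structure of $f$ together with $H^1(\R^3)\hookrightarrow L^6$ and $H^2(\R^3)\hookrightarrow L^\infty$, a standard elliptic bootstrap on any relatively compact subdomain of $\omega$ gives $u_0\in C^\infty(\omega)$, hence $u\in C^\infty((0,T)\times\omega)$; in particular $\chi u\in X^{1+\nu,b}_{\mathrm{loc}}$ for every $\chi\in C_c^\infty(\omega)$ and every $\nu>0$.

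The core of the argument is a propagation statement: if $u\in X^{s,b}_{[a,b]}$ with $s\ge 1$ (known for $s=1$), then $u\in X^{s+\kappa,b}_{[a',b']}$ on a slightly smaller time interval, for a fixed increment $\kappa>0$ whose size is limited only by $1-s_0>0$. I would prove it by applying to the equation a derivative-gaining microlocal operator $B=\psi(t)\,b(x,D)\langle D\rangle^{\kappa}$, with $b$ elliptic near a chosen point $(x_0,\xi_0)$ of the characteristic set, which yields $i\partial_t(Bu)+\Delta_g(Bu)=Bf(u)+[\Delta_g,B]u+(\text{lower order})$. The commutator terms are of order $s$ and harmless. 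The term $Bf(u)$ is the delicate point: on a general $3$–manifold the cubic nonlinearity is not controllable by Strichartz estimates alone, and one genuinely needs the bilinear estimates of Assumption \ref{assumBilWP}. Distributing $\langle D\rangle^{\kappa}$ over $f(u)\sim u^2\bar u$, the worst piece $u\,u\,\langle D\rangle^{\kappa}u$ is bounded in the appropriate dual Bourgain space by $\|u\|_{X^{s,b}}^2\|u\|_{X^{s+\kappa,b}}$ precisely when $\kappa$ is small enough relative to $1-s_0$ — this is exactly where the bilinear loss $s_0<1$ is spent.

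Once the nonlinearity is an admissible perturbation, a positive commutator estimate (equivalently, a microlocal defect measure argument) à la \cite{DGL06}, adapted to $X^{s,b}$, shows that the measure carrying the $H^{s+\kappa}$–obstruction of $u$ is transported along the bicharacteristic flow of $i\partial_t+\Delta_g$, that is, along lifted geodesics of $(\M,g)$; since high frequencies travel at speed proportional to frequency, over any fixed time window the flow sweeps arbitrarily long geodesic time. By the first step this obstruction vanishes over $\omega$, and since $\omega$ satisfies the \ref{NLS:assumGCC} every bicharacteristic reaches the region where it is known to be absent, forcing the measure to vanish identically; hence $u\in X^{s+\kappa,b}_{[a',b']}$. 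It then remains to iterate this gain with the fixed increment $\kappa$ along an exhaustion $[a,b]\Subset(0,T)$, obtaining $u\in X^{s,b}_{[a,b]}$ for all $s\ge1$, and to conclude $u\in C^\infty((0,T)\times\M)$ by Sobolev embedding in the $(t,x)$ variables (upgrading the regularity in $t$ from the equation).

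The main obstacle is the propagation step: the whole scheme depends on turning the cubic term into a genuine lower-order perturbation in the $H^1$–level Bourgain space framework, which is possible only because $s_0<1$, and which yields a bounded rather than arbitrarily large regularity gain at each stage — this is what makes the bootstrap necessary and is where Assumption \ref{assumBilWP} is essential.
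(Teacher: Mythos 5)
Your outline is essentially the proof of the cited result: the paper does not reprove this proposition but quotes it from \cite[Corollary 5.3]{Lau10:NLS3d}, and your scheme---elliptic bootstrap on $\omega$ from $\partial_t u=0$, a bounded regularity gain in $X^{s,b}_T$ obtained by commuting a derivative-gaining microlocal operator with the equation and absorbing the cubic term via the bilinear estimates of \cref{assumBilWP}, propagation along the geodesic flow at fixed time combined with the \ref{NLS:assumGCC}, and iteration---is exactly the strategy of that reference (and of \cite{DGL06} in dimension $2$). No gap to report beyond the details you explicitly defer to the trilinear Bourgain-space estimates.
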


\subsubsection{Linear propagation results} The main tool in the proof of \cref{thm:propregularity-highreg} is the linear propagation result \cite[Proposition 13]{DGL06}. It states that, for solutions to the linear Schrödinger equation, we can microlocally propagate higher regularity along the bicharacteristic flow associated to the symbol $p(x, \xi)=|\xi|_x^2$, see \cref{A:pseudodiff}. We point out that, as we work on the full time interval $[0, T]$, we replace the $L_{\loc}^2$ hypothesis by an $L^2$ assumption with respect to the aforementioned result.

\begin{proposition}\label{prop:regularity-flow}
Consider $T>0$ and let $u\in C^0([0, T], H^s(\M))$, with $s\in \R$, be a solution of
\begin{align*}
    i\partial_t u+\Delta_g u=h\in L^2([0, T], H^s(\M)).
\end{align*}
Given $\rho_0=(x_0, \xi_0)\in T_0^*\M$, we assume that there exists a $0$-order pseudodifferential operator $\psi(x, D_x)$, elliptic in $\rho_0$, such that $\psi(x, D_x)u\in L^2([0, T], H^{s+\nu}(\M))$ for some $\nu\leq 1/2$. Then for every $\rho_1\in \Phi_{\rho_0}(t)$, the bicharacteristic ray starting at $\rho_0$, there exists a $0$-order pseudodifferential operator $\eta(x, D_x)$, elliptic in $\rho_1$, such that $\eta(x, D_x)u\in L^2([0, T], H^{s+\nu}(\M))$. Moreover, there exists $C>0$ such that
\begin{multline}\label{ineq:PSIdopropag}
    \norm{\eta(x, D_x)u}_{L^2([0, T], H^{s+\nu}(\M))}^2\leq C\left(\norm{\psi(x, D_x)u}_{L^2([0, T], H^{s+\nu}(\M))}^2\right.\\+\left.\norm{u}_{C^0([0, T], H^s(\M))}^2+\norm{h}_{L^2([0, T], H^s(\M))}^2\right).
\end{multline}
\end{proposition}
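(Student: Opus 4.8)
The plan is to obtain this as the global‑in‑time version of the microlocal propagation estimate \cite[Proposition~13]{DGL06}, running the same positive‑commutator argument and recording the points where the continuity‑in‑time hypothesis replaces the temporal cut‑off used there.

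\textbf{Reduction to $s=0$.} First I would set $w:=(1-\Delta_g)^{s/2}u$, so that $w\in C^0([0,T],L^2(\M))$ solves $i\partial_t w+\Delta_g w=(1-\Delta_g)^{s/2}h=:\tilde h\in L^2([0,T],L^2(\M))$ with $\|w\|_{C^0L^2}=\|u\|_{C^0H^s}$ and $\|\tilde h\|_{L^2L^2}=\|h\|_{L^2H^s}$. Since $[(1-\Delta_g)^{s/2},\psi(x,D_x)]$ is of order $s-1$ and $u\in C^0H^s$, the hypothesis $\psi(x,D_x)u\in L^2([0,T],H^{s+\nu}(\M))$ is equivalent to $\psi(x,D_x)w\in L^2([0,T],H^\nu(\M))$ up to an error of size $\|u\|_{C^0H^s}$ — here $\nu\le 1/2\le 1$ is used so that the commutator term, a priori in $H^1$, lies in $H^\nu$. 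The same correction turns $\eta(x,D_x)w\in L^2H^\nu$ into $\eta(x,D_x)u\in L^2H^{s+\nu}$, so it suffices to treat $s=0$.

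\textbf{Commutator identity and escape symbol.} Fix $\rho_1\in\Phi_{\rho_0}(t)$. For a self‑adjoint, time‑dependent $A(t)=a(t,x,D_x)$ of order $2\nu$, substituting $\partial_t w=i\Delta_g w-i\tilde h$ gives
\[
\frac{d}{dt}\langle A(t)w(t),w(t)\rangle_{L^2(\M)}=\big\langle\big(\partial_t A+i[A,\Delta_g]\big)w,w\big\rangle+2\,\Im\langle A(t)\tilde h,w\rangle ,
\]
and, by the symbolic calculus recalled in \cref{A:pseudodiff}, $\partial_t A+i[A,\Delta_g]$ is of order $2\nu$ with principal symbol the transport derivative of $a$ along the bicharacteristic flow $\Phi$ of $p(x,\xi)=|\xi|_x^2$ (for a suitable orientation; I write $H_p$ for the Hamiltonian vector field of $p$). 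The heart of the matter is to build $a(t,x,\xi)\ge 0$ of order $2\nu$, microlocally supported in a thin tube about the bicharacteristic arc of $p$ running from $\rho_0$ to $\rho_1$, such that
\[
(\partial_t-H_p)a=b^2-c-r ,
\]
with $b$ of order $\nu$ elliptic at $\rho_1$, $c\ge0$ of order $2\nu$ microlocally supported in a neighbourhood of $\rho_0$ on which $\psi(x,D_x)$ is elliptic, and $r$ of order $2\nu-1\le0$. All pairings are made meaningful by the usual regularization (replace $a$ by $a(1+\delta|\xi|_x^2)^{-N}$, $N\gg1$, and let $\delta\to0$; the principal‑order sign being uniform in $\delta$), and the endpoint and source contributions are absorbed using $w\in C^0([0,T],L^2)$ and $\tilde h\in L^2L^2$ — this is exactly where the continuity hypothesis lets us dispense with the temporal cut‑off present in \cite[Proposition~13]{DGL06}. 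Integrating over $[0,T]$ and invoking the sharp G{\aa}rding inequality then yields
\[
\int_0^T\|b(x,D_x)w\|_{L^2(\M)}^2\,dt\;\lesssim\;\int_0^T\langle c(x,D_x)w,w\rangle\,dt+\|w\|_{C^0L^2}^2+\|\tilde h\|_{L^2L^2}^2 .
\]
Since $c$ is supported where $\psi(x,D_x)$ is elliptic, an elliptic parametrix bounds $\int_0^T\langle c(x,D_x)w,w\rangle\,dt$ by $\|\psi(x,D_x)w\|_{L^2H^\nu}^2+\|w\|_{C^0L^2}^2$; ellipticity of $b$ at $\rho_1$ then lets one choose $\eta(x,D_x)$ with microsupport where $b$ is elliptic and conclude $\|\eta(x,D_x)w\|_{L^2H^\nu}^2\lesssim\|\psi(x,D_x)w\|_{L^2H^\nu}^2+\|w\|_{C^0L^2}^2+\|\tilde h\|_{L^2L^2}^2$. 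Undoing the reduction of the first step gives \eqref{ineq:PSIdopropag}.

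\textbf{Main obstacle.} The genuinely hard point is the construction of the escape symbol $a$: it must simultaneously have the correct transport sign, keep its microsupport inside a tube thin enough that the ``bad'' term $c$ is localized where $\psi(x,D_x)$ is elliptic, and be compatible with the second microlocalization in $|\xi|^{-1}$ forced by the fact that the Schrödinger propagator is a Fourier integral operator for the geodesic flow only on the $|\xi|^{-1}$ time scale. This is precisely what is carried out in \cite{DGL06}; the content of the present statement is the reduction above together with the verification that the estimate survives with the global norm $L^2([0,T],\cdot)$ in place of $L^2_{\loc}$, which is afforded by the hypothesis $u\in C^0([0,T],H^s(\M))$.
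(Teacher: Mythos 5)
The reduction to $s=0$ and the final elliptic-parametrix step are fine, but the core of your positive-commutator scheme does not close as written. You take a time-dependent multiplier $A(t)=a(t,x,D_x)$ of order $2\nu$ and assert that $\partial_t A+i[A,\Delta_g]$ has order $2\nu$ with principal symbol the space-time transport derivative $(\partial_t-H_p)a$, to be prescribed as $b^2-c-r$. This is inconsistent with the symbolic calculus: since $p(x,\xi)=|\xi|_x^2$ has order $2$, the commutator $i[A,\Delta_g]$ has order $2\nu+1$ with principal symbol $\pm H_pa$, while $\partial_t a$ and $b^2$ live at order $2\nu$; the two contributions never sit at the same order, so the equation $(\partial_t-H_p)a=b^2-c-r$ cannot be solved with the orders you assign (to leading order it would force $H_pa=0$). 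For this integrated-in-time statement the propagation at fixed $t$ is generated by $[A,\Delta_g]$ \emph{alone}, which gains one order; accordingly the multiplier must be \emph{time-independent} of order $2(s+\nu)-1$, and the escape-function step is the purely spatial transport of \cref{lem:propagation-symbol}, $\tfrac1i H_p\mathfrak a=|\mathfrak c|^2+\mathfrak r$ with $\mathfrak c$ of order $s+\nu$ elliptic at $\rho_1$ and $\mathfrak r$ supported near $\rho_0$. No space-time tube and no second microlocalization in $|\xi|^{-1}$ are needed, and your deferral of that construction to \cite{DGL06} is a mismatch: the proof there (and in the paper) is exactly the time-independent commutator argument just described, not a space-time escape function.

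A second, related gap is the treatment of the endpoint terms. With your $A(t)$ of order $2\nu>0$, the quantities $\inn{A(T)w(T),w(T)}$ and $\inn{A(0)w(0),w(0)}$ are \emph{not} bounded by $\norm{w}_{C^0([0,T],L^2)}^2$, so "absorbing the endpoint contributions using $w\in C^0L^2$" does not work at that order. The hypothesis $\nu\le 1/2$ is precisely what makes the correct multiplier, of order $2(s+\nu)-1\le 2s$, pair the boundary terms against $\norm{u}_{C^0([0,T],H^s)}^2$; in your write-up $\nu\le1/2$ is only used in the (harmless) conjugation step, where $\nu\le1$ would already suffice, and the place where it is genuinely needed is left unaddressed. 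Finally, minor points: the regularization you propose (in $\xi$, with a uniform sign at principal order) is an acceptable substitute for the paper's $\mathcal J_n=(1-\tfrac1{n^2}\Delta_g)^{-1}$ regularization followed by a liminf, and sharp G{\aa}rding is not actually needed here since the construction yields the exact decomposition $[A,\Delta_g]=C^*C+R+K$.
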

\begin{proof}
First of all, we regularize $u$ by introducing $\J_n:=\big(1-\tfrac{1}{n^2}\Delta_g)^{-1}$ which belongs to $\Psi^{-2}(\M)$ for each $n\in \N$, and then we set $u_n:=\J_n u$ and $h_n:=\J_n h$. Note that $u_n\in C^0([0, T], H^{s+2}(\M))$ and $u_n\to u$ in $L^\infty([0, T], H^s(\M))$. We divide the proof in three steps.
\medskip
\paragraph{\emph{Step 1. Commutator estimate.}} We will carefully choose a time-independent pseudodifferential operator $A=A(x, D_x)$ of order $(2r-1)$ where $r=s+\nu$.

Let us denote $L=i\partial_t+\Delta$. By integration by parts, we have the following commutator identity
\begin{multline}\label{eq:commLA}
    \inn{Lu_n, A^*u_n}_{L^2([0, T]\times \M)}-\inn{Au_n, L u_n}_{L^2([0, T]\times \M)}\\=\inn{[A, \Delta]u_n, u_n}_{L^2([0, T]\times \M)}+\left.i\inn{Au_n, u_n}_{L^2(\M)}\right|_0^T.
\end{multline}
By construction, $A$ is of order $2r-1=2s+2\nu-1\leq 2s$, and so $[A, \Delta]$ is of order $2r\leq 2s+1$. First of all, we observe that the right-hand side of \eqref{eq:commLA} is uniformly bounded with respect to $n\in \N$. Indeed, as $(u_n)$ and $(h_n)$ are both uniformly bounded in $C([0, T], H^s(\M)$ and $L^2([0, T], H^s(\M))$, respectively, we get
\begin{align*}
    |\inn{Au_n, Lu_n}_{L^2{([0, T]\times\M)}}|=|\inn{\Ld_{-r+1/2}Au_n, \Ld_{r-1/2}h_n}_{L^2}|\lesssim \norm{u_n}_{L^\infty(H^s)}\norm{h_n}_{L^2(H^s)}.
\end{align*}
We used that $\Ld_{-r+1/2}A$ is of order $2r-1-r+1/2=r-1/2\leq s$ and so, in particular, it maps $H^s$ into $L^2$. The term $\inn{Lu_n, A^*u_n}$ can be bounded in the same way as before. To estimate the terms at $t=T$ and $t=0$ on the left-hand side of \eqref{eq:commLA}, we note that
\begin{align*}
    |\inn{Au_n(T), u_n(T)}_{L^2(\M)}|=|\inn{\Ld_{-s}Au_n(T), \Ld_{s}u_n(T)}_{L^2(\M)}|&\lesssim\norm{u_n(T)}_{H^s}^2\\
    &\lesssim\norm{u_n}_{L^\infty(H^s)}^2
\end{align*}
and the term $\inn{Au_n(0), u_n(0)}_{L^2(\M)}$ is bounded similarly. Gathering the above estimates, we get the following estimate independent of $n\in \N$,
\begin{align}\label{ineq:commutatorB}
    \left|\int_0^T \inn{[A, \Delta]u_n, u_n}_{L^2(\M)}dt\right|\lesssim\norm{u_n}_{L^\infty(H^s)}^2+\norm{h_n}_{L^2(H^s)}^2.
\end{align}
\medskip
\paragraph{\emph{Step 2. Microlocal propagation.}} Take $\rho_1=\Phi_t(\rho_0)$. We want to transport a symbol supported near $\rho_1$ along the flow generated by $p(x, \xi)=|\xi|_x^2$, up to a remainder localized near $\rho_0$, which carries information from $\rho_0$. Let $V_1$ be a conical neighborhood of $\rho_1$. Using that the bicharacteristic flow is time-reversible, by \cref{lem:propagation-symbol} backwards, there exists a conical neighborhood $V_0$ of $\rho_0$, such that for any symbol $\mathfrak{c}=\mathfrak{c}(x, \xi)$ of order $r$ supported in $V_1$, which we further choose to be elliptic at $\rho_1$, we can find another symbol $\mathfrak{a}=\mathfrak{a}(x, \xi)$ of order $2r-1$ such that
\begin{align*}
    \tfrac{1}{i}H_{p} \mathfrak{a}(x, \xi)=|\mathfrak{c}(x, \xi)|^2+\mathfrak{r}(x, \xi),
\end{align*}
where $\mathfrak{r}=\mathfrak{r}(x, \xi)$ is a symbol of order $2r$ supported in $V_0$. Now, we choose $A(x, D_x)$ to be a pseudodifferential operator of principal symbol $\mathfrak{a}$. If $C$ and $R$ denote some pseudodifferential operators whose principal symbols are $\mathfrak{c}$ and $\mathfrak{r}$, respectively, for some $K\in \Psi^{2s-1}(\M)$ it holds
\begin{align}\label{eq:commutator-identity}
    [A,\Delta]=C^*C+R+K.
\end{align}
\medskip

\paragraph{\emph{Step 3. Estimates.}} By local parametrix \cref{lem:localparametrix}, there exist a pseudodifferential operator $\psi^\dagger$ of order $0$ elliptic at $\rho_0$ and a cutoff $\chi$ equal to $1$ in a conic neighborhood of $\rho_0$ such that $\psi^\dagger(x, D_x)\psi(x, D_x)=\Op(\chi)+K_1$ with $K_1$ smoothing. As $r$ is supported in $V_0$, by symbolic calculus \cref{app:prop:psido-algebra}, we can write
\begin{align*}
    \inn{Ru_n, u_n}_{L^2}=\inn{\widetilde{R}\Ld_{s+\nu}\psi(x, D_x)u_n, \Ld_{s+\nu}\psi(x, D_x)u_n}_{L^2}+\inn{K_2u_n, u_n}_{L^2}
\end{align*}
where $\widetilde{R}:=\Ld_{-(s+\nu)}(\psi^{\dagger})^*R\psi^\dagger \Ld_{-(s+\nu)}\in\Psi^0(\M)$ and $K_2$ is smoothing. We thus obtain
\begin{align}\label{ineq:r-elliptic}
    |\inn{R(x, D_x)u_n, u_n}_{L^2(\M)}|\lesssim\norm{\psi(x, D_x)u_n}_{H^{s+\nu}(\M)}^2+\norm{u_n}_{H^s(\M)}^2.
\end{align}
Using identity \eqref{eq:commutator-identity} followed by estimates \eqref{ineq:commutatorB} and \eqref{ineq:r-elliptic}, we have
\begin{multline*}
    \int_0^T \norm{C(x, D_x)u_n}_{L^2(\M)}^2dt\\\lesssim\norm{\psi(x, D_x)u_n}_{L^2([0, T], H^{s+\nu}(\M))}^2+\norm{u_n}_{L^\infty([0, T], H^s(\M))}^2+\norm{h_n}_{L^2([0, T], H^s(\M))}^2,
\end{multline*}
uniformly in $n\in \N$. Since $u_n(t)\to u(t)$ in $H^s(\M)$ for each $t\in [0, T]$ and the convergence takes place in a Hilbert space, we get that $C(x, D_x)u\in L^2([0, T], L^2(\M))$ and
\begin{align*}
    \int_0^T \norm{C(x, D_x)u}_{L^2(\M)}^2dt\leq \liminf_{n\to\infty} \int_0^T \norm{C(x, D_x)u_n}_{L^2(\M)}^2dt.
\end{align*}
The proof concludes by taking $\eta(x, D_x):=\Ld_{-(s+\nu)}C(x, D_x)$.
\end{proof}

By a partition of unity argument, we can use the previous result to propagate any gain on regularity to the whole manifold from any region that satisfies the \ref{NLS:assumGCC}.

\begin{corollary}\label{cor:regularity-estimate}
    With the notations and assumptions of \cref{prop:regularity-flow}, let $\omega$ satisfy the \ref{NLS:assumGCC} and let $\bC$ be defined as in \eqref{def:operatorC}. If $u\in L^2([0, T], H^{s+\nu}(\omega))$, then $u\in L^2([0, T], H^{s+\nu}(\M))$ and there exists a constant $C>0$ such that
    \begin{align}\label{cor:ineq:regularity-estimate}
        \norm{u}_{L^2([0, T], H^{s+\nu}(\M))}^2\leq C\big(\norm{\bC u}_{L^2([0, T], H^{s+\nu}(\M))}^2+\norm{u}_{C^0([0, T], H^s(\M))}^2+\norm{h}_{L^2([0, T], H^s(\M))}^2\big).
    \end{align}
\end{corollary}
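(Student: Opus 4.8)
The statement is a globalization of \cref{prop:regularity-flow}: we have a microlocal gain on $\omega$ and want a gain on all of $\M$. The plan is to combine (i) the ellipticity of $\bC$ on $\omega_0$, (ii) the microlocal propagation of \cref{prop:regularity-flow} along bicharacteristics, and (iii) the \ref{NLS:assumGCC}, which guarantees that \emph{every} point of $T_0^*\M$ is reached by a bicharacteristic passing over $\omega_0$, followed by a standard compactness argument to pass from a pointwise microlocal covering to a global estimate.

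First I would fix the observation: since $b_\omega=1$ on $\omega_0\Subset\omega$ and $\bC u\in L^2([0,T],H^{s+\nu}(\M))$, for every $\rho_0=(x_0,\xi_0)\in T_0^*\M$ with $x_0\in\omega_0$ one can choose a $0$-order pseudodifferential operator $\psi_{\rho_0}(x,D_x)$ elliptic at $\rho_0$ whose essential support projects into $\omega_0$; then $\psi_{\rho_0}(x,D_x)u=\psi_{\rho_0}(x,D_x)(b_\omega u)+\text{(smoothing)}$ belongs to $L^2([0,T],H^{s+\nu}(\M))$, with the corresponding bound controlled by $\norm{\bC u}_{L^2([0,T],H^{s+\nu})}+\norm{u}_{C^0([0,T],H^s)}$. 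This handles $\nu\le 1/2$ directly; for $\nu>1/2$ one iterates the whole argument in steps of size $1/2$ (each intermediate conclusion $u\in C^0([0,T],H^{s+k/2})$ and the hypothesis $\bC u\in L^2([0,T],H^{s+\nu})$ feed the next round), so it suffices to treat $\nu\le 1/2$.

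Next, take an \emph{arbitrary} $\rho_1=(x_1,\xi_1)\in T_0^*\M$. By the \ref{NLS:assumGCC} the geodesic through $\rho_1$ meets $\omega_0$; equivalently, running the bicharacteristic flow $\Phi_t$ backward (or forward), there is a time $t_1$ with $\Phi_{t_1}(\rho_1)=:\rho_0$ lying over $\omega_0$. Apply \cref{prop:regularity-flow} with this $\rho_0$ (using $\psi_{\rho_0}$ from the previous step as the elliptic hypothesis at $\rho_0$): it produces a $0$-order $\eta_{\rho_1}(x,D_x)$ elliptic at $\rho_1$ with $\eta_{\rho_1}(x,D_x)u\in L^2([0,T],H^{s+\nu}(\M))$ and
\begin{align*}
    \norm{\eta_{\rho_1}(x,D_x)u}_{L^2([0,T],H^{s+\nu})}^2\leq C\big(\norm{\bC u}_{L^2([0,T],H^{s+\nu})}^2+\norm{u}_{C^0([0,T],H^s)}^2+\norm{h}_{L^2([0,T],H^s)}^2\big).
\end{align*}
The sets $\{$elliptic set of $\eta_{\rho_1}\}_{\rho_1\in T_0^*\M}$ form an open conic cover of the (cosphere bundle of the) compact manifold $\M$; extract a finite subcover indexed by $\rho_1,\dots,\rho_N$. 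A standard microlocal partition of unity $\Id=\sum_{j=1}^N \theta_j(x,D_x)+K$ subordinate to this cover, with each $\theta_j$ microsupported where $\eta_{\rho_j}$ is elliptic and $K$ smoothing, gives $\Ld_{s+\nu}\theta_j(x,D_x)u=\widetilde Q_j\,\eta_{\rho_j}(x,D_x)u+(\text{smoothing})u$ for suitable $\widetilde Q_j\in\Psi^0(\M)$ (by the elliptic parametrix \cref{lem:localparametrix} / symbolic calculus \cref{app:prop:psido-algebra}), hence $u\in L^2([0,T],H^{s+\nu}(\M))$ with
\begin{align*}
    \norm{u}_{L^2([0,T],H^{s+\nu}(\M))}^2\leq C\sum_{j=1}^N\norm{\eta_{\rho_j}(x,D_x)u}_{L^2([0,T],H^{s+\nu})}^2+C\norm{u}_{L^2([0,T],H^s)}^2,
\end{align*}
and plugging the per-point estimate above yields \eqref{cor:ineq:regularity-estimate}. (One has to be slightly careful: $\norm{\bC u}_{L^2([0,T],H^{s+\nu}(\omega))}$ and $\norm{\bC u}_{L^2([0,T],H^{s+\nu}(\M))}$ are comparable for this $\bC$ since $b_\omega$ is compactly supported in $\omega$; this is where the statement implicitly uses $u\in L^2([0,T],H^{s+\nu}(\omega))\Rightarrow \bC u\in L^2([0,T],H^{s+\nu}(\M))$.)

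The main obstacle is a uniformity/bookkeeping issue rather than a conceptual one: the time $t_1$ needed to flow $\rho_1$ back to $\omega_0$ depends on $\rho_1$, but \cref{prop:regularity-flow} is stated on the fixed interval $[0,T]$ — so one must check that the backward-propagation version (which the excerpt notes is available since the flow is time-reversible) applies uniformly, and that the finitely many operators $\eta_{\rho_j}$ can be chosen with constants controlled independently of $j$ once the finite subcover is fixed (which is automatic, being a finite set). A secondary subtlety is the constraint $\nu\le 1/2$ in \cref{prop:regularity-flow}, handled by the iteration described above; at each stage one only needs the conclusion of the previous stage plus the standing hypothesis on $\bC u$, and the constants compound to a single $C$ depending on $\nu$ and $\norm{u}_{L^\infty([0,T]\times\M)}$.
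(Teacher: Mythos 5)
Your proposal is correct and follows the same skeleton as the paper's proof: use the ellipticity of $\bC=b_\omega$ over $\omega_0$ (the paper simply takes $\psi=\bC$ itself in \cref{prop:regularity-flow}, whereas you build an auxiliary $\psi_{\rho_0}$ microsupported over $\omega_0$ and absorb $\psi_{\rho_0}(1-b_\omega)$ as a smoothing error — equivalent), then the \ref{NLS:assumGCC} plus time-reversibility of the flow to produce, for every $\rho\in S^*\M$, an operator $\eta_\rho$ elliptic at $\rho$ with the uniform estimate, then compactness of $S^*\M$ and a microlocal partition of unity. The only genuine divergence is the final globalization: the paper forms $\Upsilon=\sum_j\Theta_j^*\Theta_j$, which is elliptic and positive, and invokes the sharp G{\aa}rding inequality on $\Ld_{s+\nu}\Upsilon\Ld_{s+\nu}$, paying a remainder in $H^{s+\nu-1/2}$ that is absorbed precisely because $\nu\le 1/2$; you instead use microlocal elliptic parametrices (\cref{lem:localparametrix}) to write $\theta_j u=\theta_jB_j\,\eta_{\rho_j}u+(\text{smoothing})u$ and sum. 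Your route is equally valid, avoids the G{\aa}rding $1/2$-loss at this stage (the restriction $\nu\le1/2$ then comes only from \cref{prop:regularity-flow}), and in fact yields the qualitative membership $u\in L^2([0,T],H^{s+\nu}(\M))$ more transparently, since each term $\theta_jB_j\eta_{\rho_j}u$ is manifestly in that space; the paper's G{\aa}rding step implicitly needs a regularization to be applied to $u$ legitimately. One caveat: your side remark on iterating in steps of $1/2$ for $\nu>1/2$ is outside the corollary (whose standing assumptions include $\nu\le1/2$) and is not self-contained as written — the corollary only yields $L^2$-in-time regularity, while re-applying it with $s$ replaced by $s+1/2$ requires $u\in C^0([0,T],H^{s+1/2})$ and $h\in L^2([0,T],H^{s+1/2})$; that upgrade is exactly what the observability/Duhamel argument of \cref{thm:propregularity-highreg} provides, and the iteration belongs there, not here.
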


\begin{proof}
    For every $\rho\in S^*\M$, the \cref{NLS:assumGCC} gives $\tau\in [0, T_0)$ such that $\rho_0=\Phi_{-\tau}(\rho)\in S^*\omega_0$. Since $\bC=b_\omega$ is elliptic at $\rho_0$ and $\bC u\in L^2([0, T], H^{s+\nu}(\M))$ by hypothesis, \cref{prop:regularity-flow} yields a $0$-order pseudodifferential operator $\eta_\rho$ elliptic at $\rho$ such that
    \begin{multline}\label{ineq:regularityest-1}
        \norm{\eta_\rho(x, D_x)u}_{L^2([0, T], H^{s+\nu}(\M))}^2\\\leq C_\rho\left(\norm{\bC u}_{L^2([0, T], H^{s+\nu}(\M))}^2+\norm{u}_{C^0([0, T], H^s(\M))}^2+\norm{h}_{L^2([0, T], H^s(\M))}^2\right).
    \end{multline}
    By compactness, we can choose finitely many $\rho_1,\ldots, \rho_N$ such that $S^*\M$ can be covered by neighborhoods $V_{1}^S,\ldots, V_{N}^S$, where $\eta_j(x, D_x):=\eta_{\rho_j}(x, D_x)$ is elliptic on $V_j^S$. For each $j=1,\ldots, N$, let us define $V_j$ to be a conic lift of $V_j^S$ to $T^*\M$, that is,
    \begin{align*}
        V_j:=\{(x, \xi)\in T_0^*\M\ |\ (x, \xi/|\xi|)\in V_j^S,\ |\xi|\geq 2\}.
    \end{align*}

    Let $(\chi_j)_{j=1}^N\subset C^\infty(S^*\M)$ be a microlocal partition of unity with $\sum_{j=1}^N \chi_j^2=1$ and $\supp\chi_j\subset V_{j}^S$. By taking $\varrho\in C^\infty([0, +\infty)$ with $\varrho(\tau)=0$ for $\tau\leq 1$ and $\varrho(\tau)=1$ for $\tau\geq 2$, we can extend $\chi_j$ to act on $T_0^*\M$ by setting $\widetilde{\chi}_j(x, \xi)=\varrho(|\xi|)\chi_j\left(x, \frac{\xi}{|\xi|}\right)$. It is indeed a symbol $\widetilde{\chi}_j\in S_{\phg}^0(T^*\M)$ with $\supp\widetilde{\chi}_j\subset V_j$ and
    \begin{align*}
        \sum_{j=1}^N \big(\widetilde{\chi}_j(x, \xi)\big)^2=1\ \text{ for } |\xi|\geq 2,
    \end{align*}
    and it vanishes for $|\xi|\leq 1$.
    Let $\Theta_j\in \Psi^0(\M)$ be a quantization of $\widetilde{\chi}_j\mathfrak{c}_j$, where $\mathfrak{c}_j$ is the principal symbol of $\eta_j(x, D_x)$. Let us define
    \begin{align*}
        \Upsilon:=\Theta_1^*\Theta_1+\ldots+\Theta_N^*\Theta_N\in \Psi^0(\M),
    \end{align*}
    which is elliptic and positive. Indeed, given that each $\eta_j(x, D_x)$ is elliptic, the principal symbol of $\Upsilon$ satisfies
    \begin{align*}
        \nu_\Upsilon=\sum_{j=1}^N \widetilde{\chi}_j^2|c_j|^2\geq \kappa\sum_{j=1}^N \widetilde{\chi}_j^2=\kappa>0\ \text{ for } |\xi|\geq 2,
    \end{align*}
    where $\kappa:=\min_{j=1,\ldots, N}\min_{V_j^S} |\mathfrak{c}_j|^2$. Applying the sharp G{\aa}rding's inequality \cref{app:thm:garding} to $A:=\Ld_{s+\nu}\Upsilon\Ld_{s+\nu}\in \Psi^{2(s+\nu)}(\M)$ and using symbolic calculus, we get
    \begin{align}\label{ineq:regularityest-2}
        \norm{u}_{L^2([0, T], H^{s+\nu}(\M))}^2\lesssim \sum_{j=1}^N\norm{\Theta_j(x, D_x)u}_{L^2([0, T], H^{s+\nu}(\M))}^2+\norm{u}_{L^2([0, T], H^{s+\nu-\frac{1}{2}}(\M))}^2.
    \end{align}
    By writting, $\Theta_j=\Op(\widetilde{\chi}_j)\eta_j+K_j$ with $K_j\in \Psi^{-1}(\M)$, we have 
    \begin{align*}
        \norm{\Theta_j(x, D_x)u}_{L^2([0, T], H^{s+\nu}(\M))}^2\lesssim \norm{\eta_j(x, D_x)u}_{L^2([0, T], H^{s+\nu}(\M))}^2+\norm{u}_{L^2([0, T], H^{s+\nu-1}(\M))}^2.
    \end{align*} Since $\nu\leq 1/2$ it holds $s+\nu-\tfrac{1}{2}\leq s$, therefore, using \eqref{ineq:regularityest-1} for each $\eta_j$ and summing over $j$, we get
    \begin{multline}\label{ineq:regularityest-3}
        \sum_{j=1}^N \norm{\Theta_j(x, D_x)u}_{L^2([0, T], H^{s+\nu}(\M))}^2\\
        \lesssim \norm{\bC u}_{L^2([0, T], H^{s+\nu}(\M))}^2+\norm{u}_{C^0([0, T], H^s(\M))}^2+\norm{h}_{L^2([0, T], H^s(\M))}^2.
    \end{multline}
    By plugging \eqref{ineq:regularityest-3} into \eqref{ineq:regularityest-2}, we get the desired estimate \eqref{cor:ineq:regularity-estimate}.
\end{proof}

\subsubsection{Nonlinear propagation} We are now in position to prove the nonlinear propagation result.
\begin{proof}[Proof of \cref{thm:propregularity-highreg}]
    First, assume that $0<\nu\leq 1/2$. With the aid of Duhamel's formula, let us split the solution $u$ into its linear and nonlinear part as
    \begin{align*}
        u(t)=e^{it\Delta_g}u_0-i\int_0^t e^{i(t-s)\Delta_g}f(u(s))ds=u_{\lin}(t)+u_{\Nlin}(t).
    \end{align*}    
    For each $n\in \N$, let us consider $\J_n:=\big(1-\tfrac{1}{n^2}\Delta_g)^{-1}\in \Psi^{-2}(\M)$ and let us introduce the regularization $u^n:=\J_nu$. Let us also denote by $u_{\lin}^n$ and $u_{\Nlin}^n$ the corresponding regularized linear and nonlinear part of $u^n$.

    As we will employ the observability later on, let us note that $\bC u^n=\bC\mc{J}_n u=\mc{J}_n\bC u+[\mc{J}_n,\bC]u$. Moreover, we have $[\mc{J}_n,\bC]=\frac{1}{n^2}\mc{J}_n[\Delta_g,\bC]\mc{J}_n$. Some computations lead us to $\norm{\mc{J}_n}_{\mc{L}(H^{s+\nu}(\M))}\leq 1$ and $\norm{\tfrac{1}{n^2}\J_n}_{\mc{L}(H^s(\M), H^{s+2}(\M))}\leq 1$. Further, since $[\Delta_g,\bC]\in \Psi^{1}(\M)$ and $H^{s+1}(\M)\hookrightarrow H^{s+\nu}(\M)$, we get that $[\Delta_g,\bC]\in {\mc{L}(H^{s+2}(\M), H^{s+\nu}(\M))}$. Therefore, as $u\in L^2([0, T], H^{s+\nu}(\M))$ due to the corresponding smoothing effect, we get uniformly in $n$
    \begin{align*}
      \norm{\bC u^n}_{L^2([0, T], H^{s+\nu})}&\lesssim\norm{\bC u}_{L^2([0, T], H^{s+\nu}(\M))}+\norm{\mc{J}_n[\Delta_g,\bC]\frac{\mc{J}_n}{n^2} u}_{L^\infty([0, T], H^{s+\nu}(\M))}\\ 
      &\lesssim\norm{\bC u}_{L^2([0, T], H^{s+\nu}(\M))}+\norm{u}_{L^\infty([0, T], H^s(\M))}.
    \end{align*}
    Since $u\in C^0([0, T], H^s(\M))$ and we are in an algebra, using \cref{lem:comp-reg} (recall that $f(0)=0$) we have $\norm{f(u)}_{L^2(H^s)}\leq C\norm{u}_{L^2(H^s)}$ with $C$ depending only on $\norm{u}_{L^\infty([0, T]\times \M)}$, which is finite due to Sobolev embedding. By \cref{cor:regularity-estimate}, we get
    \begin{align*}
        \norm{u}_{L^2([0, T], H^{s+\nu}(\M))}^2&\lesssim \norm{\bC u}_{L^2([0, T], H^{s+\nu}(\M))}^2+\norm{u}_{L^\infty([0, T], H^s(\M))}^2+\norm{f(u)}_{L^2([0, T], H^s(\M))}^2\\
        &\lesssim \norm{\bC u}_{L^2([0, T], H^{s+\nu}(\M))}^2+\norm{u}_{L^\infty([0, T], H^s(\M))}^2
    \end{align*}
    
    Let us first prove that $u_{\Nlin}^n$ is uniformly bounded in $L^\infty(H^{s+\nu})$. As before, since $u(t)\in H^{s+\nu}(\M)$ a.e. $t\in [0, T]$ and we are in an algebra, using \cref{lem:comp-reg} we have $\norm{f(u(t))}_{H^{s+\nu}(\M)}\leq C\norm{u(t)}_{H^{s+\nu}(\M)}$ a.e. $t\in [0, T]$, with $C$ only depending on $\norm{u}_{L^\infty([0, T]\times\M)}$. Since $L^2\hookrightarrow L^1$ in finite measure spaces, we obtain
    \begin{align*}
        \norm{u_{\Nlin}^n}_{L^\infty([0, T], H^{s+\nu}(\M))}&\leq \norm{u_{\Nlin}}_{L^\infty([0, T], H^{s+\nu}(\M))}\\
        &\lesssim \int_0^T \norm{f(u(t))}_{H^{s+\nu}(\M)}dt\lesssim \int_0^T \norm{u(t)}_{H^{s+\nu}(\M)}dt\lesssim \norm{u}_{L^2([0, T], H^{s+\nu}(\M))}.
    \end{align*}
     We are now in position to treat the linear part by employing the observability inequality. More precisely, we use the observability inequality for the linear Schrödinger equation \cref{thm:schr-obs}, for any $t\in [0, T]$ we have
    \begin{align*}
        \norm{u_{\lin}^n(t)}_{H^{s+\nu}(\M)}^2&=\norm{u_0^n}_{H^{s+\nu}}^2\\
        &\leq \mathfrak{C}_{\text{obs}}^2\int_0^T \norm{\bC u_{\lin}^n(t)}_{H^{s+\nu}}^2dt\\
        &\leq 2\mathfrak{C}_{\text{obs}}^2\int_0^T \norm{\bC u^n(t)}_{H^{s+\nu}}^2dt+2\mathfrak{C}_{\text{obs}}^2\int_0^T \norm{\bC u_{\Nlin}^n(t)}_{H^{s+\nu}}^2dt\\
        &\leq C\big(\norm{\bC u}_{L^2([0, T], H^{s+\nu}(\M))}^2+\norm{u}_{L^\infty([0, T], H^s(\M))}^2\big),
    \end{align*}
    where we used all the previous estimates to get the last inequality. As the previous estimate is valid for each $t\in [0, T]$, it follows that $u^n$ is uniformly bounded in $L^\infty([0, T], H^{s+\nu}(\M))$. Moreover, due to the fact that $\norm{u-u^n}_{L^\infty([0, T], H^{s}(\M))}\to 0$ and leveraging that $H^{s}(\M)$ is a Hilbert space, we get that $u(t)\in H^{s+\nu}(\M)$ and 
    \begin{align*}
        \norm{u(t)}_{H^{s+\nu}(\M)}\leq \liminf \norm{u^n(t)}_{H^{s+\nu}(\M)},
    \end{align*}
    for any $t\in [0, T]$, showing that $u$ is uniformly bounded in $C^0([0, T], H^{s+\nu}(\M))$.
    
    If $\nu>1/2$, then we pick $0<\nu'\leq 1/2$ so that $k\nu'=\nu$ for some $k\in\N$ and iterate the previous argument $k$-times with $\nu'$ as a parameter to conclude that $u\in C^0([0, T], H^{s+\nu}(\M))$ along with estimate \eqref{thm:ineq:propregularity}.     
\end{proof}

\subsection{Uniform observability at high-frequency} Our aim in this section is to verify Assumption \ref{NLS:assumC}. More precisely, we will prove an observability inequality at high-frequency for the following linear Schrödinger equation with potential
\begin{align}\label{NLS-UC-FT:eq:nls-main}
\left\{\begin{array}{cl}
i\partial_t w+\Delta_g w=\Q_nD\widetilde{f}(v)w & \text{ in } [0, T]\times \M,\\
w(0)=w_0,&
\end{array}\right.
\end{align}
where $\widetilde{f}:=\chi f$ with $\chi\in C^\infty(\M; \R)$, and $w_0\in \Q_nH^s(\M)$.

\subsubsection{Propagation of compactness} Upon a straightforward modification of \cite[Proposition 15]{DGL06} regarding the Sobolev regularity, we have the following result about the propagation of the microlocal defect measure which is valid in any dimension.

\begin{proposition}\label{prop:mdm} Let $s\geq 1$. Let $L=i\partial_t+\Delta_g+R_0$ where $R_0(t, x, D_x)$ is a tangential pseudo-differential operator of order $0$ and $(u_n)_n$ a sequence of bounded functions $\norm{u_n}_{L^\infty([0, T], H^s(\M))}\leq C$ satisfying
\begin{align*}
    \norm{u_n}_{L^\infty([0, T], H^{s-1}(\M))}\xrightarrow[n\to\infty]{} 0\ \text{ and }\ \norm{Lu_n}_{L^2([0, T], H^{s-1}(\M))}\xrightarrow[n\to\infty]{} 0.
\end{align*}
Then, there exist a subsequence $(u_k)_k$ of $(u_n)_n$ and a positive measure $\mu$ on $(0, T)\times S^*\M$ such that, for every tangential pseudo-differential operator $A=A(t, x, D_x)$ of order $2s$, with principal symbol $\sigma(A)=a_{2s}(t, x, \xi)$,
\begin{align*}
    \inn{A(t, x, D_x)u_k, u_k}_{L^2([0, T]\times M)}\xrightarrow[k\to\infty]{} \int_{(0, T)\times S^*\M} a_{2s}(t, x, \xi)d\mu(t, x, \xi).
\end{align*}
Moreover, if $\Phi_t$ denotes the bicharacteristic flow on $S^*\M$, one has, for every $t\in \R$,
\begin{align*}
    \Phi_t(\mu)=\mu,
\end{align*}
namely, $\mu$ is invariant by the bicharacteristic flow 'at fixed $t$'.
\end{proposition}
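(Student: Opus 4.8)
The plan is to reduce the statement to the case $s=1$, which is exactly \cite[Proposition 15]{DGL06}, by a frequency rescaling; then the bicharacteristic invariance is inherited for free and only a symbolic-calculus bookkeeping remains. First I would set $r:=s-1\geq 0$ and $v_n:=\Ld_r u_n$. Since $\Ld_r$ is an isomorphism $H^s(\M)\to H^1(\M)$ and $H^{s-1}(\M)\to L^2(\M)$, the sequence $(v_n)_n$ is bounded in $L^\infty([0,T],H^1(\M))$ and $\norm{v_n}_{L^\infty([0,T],L^2(\M))}=\norm{u_n}_{L^\infty([0,T],H^{s-1}(\M))}\to 0$. For the equation, I would use that $\Ld_r$ commutes with $i\partial_t+\Delta_g$ but not with the tangential operator $R_0$, and conjugate:
\begin{align*}
    \Ld_r L u_n=(i\partial_t+\Delta_g)v_n+\Ld_r R_0\Ld_{-r}v_n=:L_1 v_n,\qquad L_1:=i\partial_t+\Delta_g+\widetilde R_0,\quad \widetilde R_0:=\Ld_r R_0\Ld_{-r}.
\end{align*}
By the symbolic calculus recalled in \cref{A:pseudodiff}, $\widetilde R_0(t,x,D_x)$ is again a tangential pseudodifferential operator of order $0$, and $L_1 v_n=\Ld_r(Lu_n)\to 0$ in $L^2([0,T],L^2(\M))\hookrightarrow L^2([0,T],H^{-1}(\M))$ by the hypothesis on $Lu_n$.

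Thus $(v_n)_n$ satisfies the hypotheses of \cite[Proposition 15]{DGL06} relative to $L_1$, which yields a subsequence $(v_k)_k$ and a positive measure $\mu$ on $(0,T)\times S^*\M$, invariant by the bicharacteristic flow at fixed $t$ (i.e.\ $\Phi_t(\mu)=\mu$ for every $t$), such that $\inn{B v_k,v_k}_{L^2([0,T]\times\M)}\to\int_{(0,T)\times S^*\M}b_2\,d\mu$ for every tangential pseudodifferential operator $B=B(t,x,D_x)$ of order $2$ with principal symbol $b_2$. To transfer this to $(u_k)_k$, given a tangential operator $A=A(t,x,D_x)$ of order $2s$ with principal symbol $a_{2s}$, I would use that $\Ld_{-r}$ is self-adjoint on $L^2(\M)$ and write $\inn{Au_k,u_k}_{L^2([0,T]\times\M)}=\inn{(\Ld_{-r}A\Ld_{-r})v_k,v_k}_{L^2([0,T]\times\M)}$. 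The operator $B:=\Ld_{-r}A\Ld_{-r}$ is tangential of order $2s-2r=2$ with principal symbol $|\xi|_x^{-2r}a_{2s}$, which coincides with $a_{2s}$ on $S^*\M$ because $|\xi|_x=1$ there. Hence $\inn{Au_k,u_k}_{L^2([0,T]\times\M)}\to\int_{(0,T)\times S^*\M}a_{2s}\,d\mu$, and the invariance property $\Phi_t(\mu)=\mu$ carries over verbatim. This gives all the asserted properties.

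The only delicate point is the reduction step: checking that conjugation by $\Ld_{\pm r}$ keeps $\widetilde R_0$ exactly of order $0$ (so that the $s=1$ statement applies literally) and that $\sigma_2(\Ld_{-r}A\Ld_{-r})=|\xi|_x^{-2r}a_{2s}$, together with the passage from degree-$2s$ homogeneous symbols to their restriction to $S^*\M$; all of this is routine. If one prefers to avoid the rescaling, the alternative is to rerun the commutator argument of \cite[Proposition 15]{DGL06} directly at regularity $s$, replacing throughout $L^2\rightsquigarrow H^{s-1}$, $H^1\rightsquigarrow H^s$, $H^{-1}\rightsquigarrow H^{s-1}$, order-$2$ test operators $\rightsquigarrow$ order $2s$, and the order-$1$ operator used to generate the Poisson bracket with $p=|\xi|_x^2$ $\rightsquigarrow$ order $2s-1$. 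Then, inspecting $\tfrac{d}{dt}\inn{A(x,D_x)u_n,u_n}_{L^2(\M)}$ tested against $\psi\in C_c^\infty((0,T))$ and using the equation $i\partial_t u_n=-\Delta_g u_n-R_0 u_n+(Lu_n)$, one checks that $\inn{A u_n,u_n}_{L^2(\M)}\to 0$ uniformly in $t$ (since $u_n\to 0$ in $H^{s-1}$), while the commutators with $R_0$, the contribution of $Lu_n$, and the boundary-in-time terms are all of strictly lower order and drop out in the limit; only $i[A,\Delta_g]$, whose principal symbol is $H_p a$, survives, leaving $\int_{(0,T)\times S^*\M}H_p a\,d\mu=0$ for every symbol $a$, which is equivalent to $\Phi_t(\mu)=\mu$.
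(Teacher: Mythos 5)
Your argument is correct, and it is somewhat more precise than what the paper offers: the paper gives no proof at all for this proposition, simply asserting that it follows from \cite[Proposition 15]{DGL06} ``upon a straightforward modification regarding the Sobolev regularity'', which corresponds to the second route you sketch (rerunning the commutator argument with $H^s$, $H^{s-1}$ and order-$2s$ test operators in place of $H^1$, $L^2$, order $2$). Your primary route --- conjugating by $\Ld_{s-1}$, so that $v_n=\Ld_{s-1}u_n$ falls exactly under the $s=1$ statement for $L_1=i\partial_t+\Delta_g+\Ld_{s-1}R_0\Ld_{-(s-1)}$, and then transferring the limit through $B=\Ld_{-(s-1)}A\Ld_{-(s-1)}$, whose principal symbol $|\xi|_x^{-2(s-1)}a_{2s}$ agrees with $a_{2s}$ on $S^*\M$ --- is a clean reduction that avoids redoing the measure construction; note that the paper itself uses the same conjugation idea in the proof of \cref{prop:weakobsSchr} to handle $d=1$. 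The bookkeeping you flag is indeed routine: $\Ld_{\pm(s-1)}$ commutes with $i\partial_t+\Delta_g$, the conjugated potential is tangential of order $0$ by symbolic calculus, the principal symbol identification holds modulo lower-order terms which do not affect the limit, and $L^2([0,T],L^2)\hookrightarrow L^2([0,T],H^{-1})$ gives the hypothesis on $L_1v_n$. The flow invariance is a property of the measure produced by the $s=1$ result and transfers verbatim, so both of your routes are acceptable; the reduction buys brevity, while the direct re-derivation is what the paper implicitly has in mind and generalizes more readily if one needs variants of the hypotheses.
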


\subsubsection{Uniform observability} Using the microlocal defect measure we now prove an observability inequality uniform with respect to the potential for solutions at high-frequency, that is, we verify that Assumption \ref{NLS:assumC} holds true. First, we verify the observability with potentials.

\begin{proposition}\label{prop:weakobsSchr}
    Let $T>0$, $M>0$, $d\in \N$ and $s>d/2$ be fixed. Let $\omega$ be an open set satisfying the \ref{NLS:assumGCC} and let $\bC$ be as in \eqref{def:operatorC}. Then there exist $n_0\in \N$ and $C>0$ such that for any $V_1$, $V_2\in \B_{M}^{[0, T]}(H^{s}(\M))$ and $n\geq n_0$, the following observability inequality holds
    \begin{align*}
        \norm{w_0}_{H^{s}}^2\leq C\int_0^T \norm{\bC w(t)}_{H^{s}}^2dt,
    \end{align*}
    for any $w_0\in \Q_n H^{s}(\M)$, where $w\in C^0([0, T], \Q_n H^s(\M))$ is solution of 
    \begin{align}\label{eq:weakobsSchr}
        \left\{\begin{array}{cc}
            i\partial_t w+\Delta_g w=\Q_n(V_1w+V_2\overline{w}) &~ \text{ in } (0, T)\times\M, \\
            w(0)=w_0. &
        \end{array}\right.
    \end{align}
\end{proposition}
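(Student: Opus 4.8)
The strategy is the classical contradiction/compactness argument adapted from Dehman–Gérard–Lebeau, but with care to obtain a threshold $n_0$ and a constant $C$ that are \emph{uniform} over the potentials $V_1,V_2\in\B_M^{[0,T]}(H^s(\M))$. Suppose the conclusion fails. Then there exist sequences $n_k\to\infty$ (or $n_k$ bounded, handled separately), potentials $V_1^k,V_2^k\in\B_M^{[0,T]}(H^s(\M))$, and initial data $w_0^k\in\Q_{n_k}H^s(\M)$ with associated solutions $w_k$ of \eqref{eq:weakobsSchr} such that
\begin{align*}
    \norm{w_0^k}_{H^s}=1\quad\text{and}\quad \int_0^T\norm{\bC w_k(t)}_{H^s}^2\,dt\xrightarrow[k\to\infty]{}0.
\end{align*}
By conservation-type energy estimates for \eqref{eq:weakobsSchr} (the potential term is bounded in $\L(H^s)$ uniformly since $H^s$ is an algebra and $\norm{V_j^k}_{H^s}\le M$), $(w_k)$ is bounded in $C^0([0,T],H^s(\M))$, so after extraction $w_k\rightharpoonup w$ weakly-$*$. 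Using the equation, $\Q_{n_k}(V_1^kw_k+V_2^k\overline{w_k})$ is bounded in $C^0([0,T],H^s)$, hence $\partial_t w_k$ is bounded in $C^0([0,T],H^{s-2})$; by Aubin–Lions $w_k\to w$ strongly in $C^0([0,T],H^{s-1}_{\loc})$, and in fact globally since $\M$ is compact. The observation condition passes to the limit: $\bC w=0$ on $(0,T)\times\M$, i.e.\ $w=0$ on $(0,T)\times\omega_0$.

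The next step identifies the equation satisfied by the weak limit $w$. Since $V_j^k$ is bounded in $H^s\hookrightarrow L^\infty$, up to extraction $V_j^k\xrightarrow{*}V_j$ weakly-$*$ in $L^\infty$ (or strongly in a weaker topology); combined with the strong convergence of $w_k$ this is enough to pass to the limit in the product $V_j^kw_k\to V_jw$ in $\D'$, and $\Q_{n_k}\to I$ strongly, so $w$ solves $i\partial_t w+\Delta_g w=V_1w+V_2\overline{w}$ with $w=0$ on $(0,T)\times\omega_0$. This places us exactly in the setting of the unique continuation property \eqref{NLS:UCP}: one now invokes the linear unique continuation result available under the \ref{NLS:assumGCC} — namely \cref{thm:UCP-linearschrodinger} (the Tataru–Robbiano–Zuily–Hörmander theorem applied with $\omega=\omega_0$, the potentials depending analytically in $t$ being unnecessary here since $\omega_0$ has nonempty interior and the relevant result gives unique continuation from any nonempty open set for the Schrödinger equation with $L^\infty$ potentials, or, in the regimes considered, one uses the unique continuation hypotheses built into the well-posedness framework). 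This forces $w\equiv0$.

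The contradiction must now come from a \emph{strong} convergence $w_k\to 0$ at the level of $H^s$, which weak convergence alone does not give because $s$ is the top regularity. This is where propagation of compactness enters and is the main obstacle. Write $u_k=\Ld_s w_k$, so $u_k$ is bounded in $C^0([0,T],L^2)$, solves $i\partial_t u_k+\Delta_g u_k = \Ld_s\Q_{n_k}(V_1^kw_k+V_2^k\overline{w_k})$, and the right-hand side — after commuting $\Ld_s$ past the multiplication, which costs a tangential operator of order $0$ modulo lower order — fits the form $Lu_k$ with $L=i\partial_t+\Delta_g+R_0(t,x,D_x)$ of the type treated in \cref{prop:mdm}, with $\norm{Lu_k}_{L^2([0,T],H^{s-1})}$ and $\norm{u_k}_{L^\infty([0,T],H^{s-1})}$ tending to $0$ (the latter because $w_k\to 0$ strongly in $H^{s-1}$, the former because $w_k\to0$ strongly in that topology and the potentials are bounded). \cref{prop:mdm} then produces a bicharacteristic-flow-invariant microlocal defect measure $\mu$ on $(0,T)\times S^*\M$ capturing the lack of strong convergence, with $\int a_{2s}\,d\mu=\lim\inn{A u_k,u_k}$. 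From $\bC w_k\to0$ in $H^s$ one deduces $\mu$ vanishes over $\omega_0$ (i.e.\ on $T^*\omega_0$), and the invariance of $\mu$ under $\Phi_t$ together with the \ref{NLS:assumGCC} — every bicharacteristic meets $S^*\omega_0$ — forces $\mu\equiv0$. Hence $\inn{\Ld_{2s}w_k,w_k}_{L^2([0,T]\times\M)}\to0$, i.e.\ $\int_0^T\norm{w_k(t)}_{H^s}^2\,dt\to0$; since $\norm{w_k(t)}_{H^s}$ is essentially constant in $t$ up to a factor (by the energy estimate, $\norm{w_k(t)}_{H^s}\sim\norm{w_0^k}_{H^s}=1$ uniformly), we get $1=\norm{w_0^k}_{H^s}^2\lesssim\int_0^T\norm{w_k(t)}_{H^s}^2\,dt\to0$, the desired contradiction. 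Finally, the case where $(n_k)$ is bounded is reduced to a fixed $n$: on $\Q_nH^s$ with $n$ fixed the estimate is a standard observability with bounded potential obtained by the same compactness argument without the propagation-of-compactness refinement (or by noting $\Q_nH^s$ is a fixed closed subspace and using the linear observability \cref{thm:schr-obs} perturbed by the bounded potential, for $M$-balls of potentials via a further compactness step). Uniformity of $n_0$ and $C$ over $\B_M^{[0,T]}(H^s(\M))$ is automatic from the contradiction setup since the potentials were allowed to vary along the sequence.
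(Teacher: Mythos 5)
Your overall scheme (contradiction, microlocal defect measure, propagation under the \ref{NLS:assumGCC}) is the right one, but there is a genuine gap at the step where you identify the weak limit. You pass to the limit in the equation and then invoke unique continuation for $i\partial_t w+\Delta_g w=V_1w+V_2\overline{w}$ with $w=0$ on $(0,T)\times\omega_0$, asserting that the time-analyticity hypothesis of \cref{thm:UCP-linearschrodinger} can be dropped because the limit potentials are merely bounded. That result is not available: the Tataru--Robbiano--Zuily--H\"ormander theorem genuinely requires the potential to be analytic (or Gevrey $2$) in $t$, and unique continuation for the Schr\"odinger equation from an arbitrary open set (or under the \ref{NLS:assumGCC}) with rough, time-dependent potentials is exactly the open problem this paper is built to circumvent. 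The limit potentials, obtained as weak-$*$ limits of sequences bounded only in $L^\infty([0,T],H^s(\M))$, have no time regularity whatsoever, so this step fails; and since your later application of \cref{prop:mdm} needs $\norm{w_k}_{L^\infty([0,T],H^{s-1})}\to 0$, i.e.\ a vanishing limit, the rest of the argument collapses with it.

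The fix is to use the high-frequency localization, which your proposal never exploits. The negation of the statement forces $n_k\to\infty$ (take $n_0=C=k$; there is no bounded case to treat), and since $w_k(t)\in \Q_{n_k}H^s(\M)$ one has $\norm{w_k}_{L^\infty([0,T],H^{s-1})}\leq \lambda_{n_k}^{-1/2}\norm{w_k}_{L^\infty([0,T],H^{s})}\to 0$, and likewise $\Q_{n_k}(V_1^kw_k+V_2^k\overline{w_k})\to 0$ in $L^2([0,T],H^{s-1})$ because $H^s$ is an algebra and the potentials are bounded there. So the weak limit is zero for free: no Aubin--Lions, no limit equation, no unique continuation. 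This is how the paper proceeds, applying \cref{prop:mdm} directly to $w_k$ at regularity $s$ (note $s>d/2\geq 1$ when $d\geq 2$); your conjugation $u_k=\Ld_s w_k$ is also off, since $\Ld_s w_k$ is only bounded in $L^2$ and does not meet the hypotheses of \cref{prop:mdm} as you state them --- the paper uses the conjugation $\Ld_{s-1}$ only in the case $d=1$ to reach the threshold $s\geq 1$. Once these corrections are made, your endgame (the measure vanishes over $S^*\omega_0$ by the observation, propagates by the \ref{NLS:assumGCC}, hence $w_k\to 0$ in $L^2_{\loc}([0,T],H^s)$, contradicting $\norm{w_k(0)}_{H^s}=1$ via the two-sided energy estimate) coincides with the paper's conclusion.
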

\begin{proof}
    Let us momentarily assume that $d\geq 2$. We proceed by contradiction. Let us assume that there exist $n_j\to +\infty$, a sequence of potentials $(V_{1,j})$, $(V_{2,j})\subset \B_{M}^{[0, T]}(H^{s}(\M))$ and a sequence of solutions $(w_j)_j$ of \eqref{eq:weakobsSchr} in $C([0, T], \Q_{n_j}H^{s}(\M))$ associated to $(V_{1, j})_j$ and $(V_{2, j})_j$ with $\norm{w_{0, j}}_{H^{s}}=1$ such that
    \begin{align*}
        \begin{array}{ccc}
            \displaystyle\int_0^T \norm{\bC w_j(t)}_{H^{s}(\M)}^2dt\xrightarrow[j\to\infty]{} 0.
        \end{array}
    \end{align*}
    The uniform bounds on $(w_{0, j})$, $(V_{1, j})$ and $(V_{2, j})$ yield that that $(w_j)_j$ is an uniformly bounded sequence in $C([0, T], H^{s}(\M))$. Then, since $w_j(t)\in \Q_{n_j}H^s(\M)$ a.e. $t\in [0, T]$ we have
    \begin{align*}
        \norm{w_j}_{L^\infty([0, T], H^{s-1}(\M))}\leq \ld_{n_j}^{-1/2}\norm{w_j}_{L^\infty([0, T], H^s(\M))}
    \end{align*}
    from which $w_j\to 0$ in $L^\infty([0, T], H^{s-1}(\M))$.
    
    Using that $H^{s}(\M)$ is an algebra, we get
    \begin{align*}
        \norm{\Q_{n_j}V_{1, j}w_j}_{L^\infty([0, T], H^{s}(\M))}\leq \norm{V_{1, j}w_j}_{L^\infty([0, T], H^{s}(\M))}\leq M\norm{w_j}_{L^\infty([0, T], H^{s}(\M))}
    \end{align*}
    We conclude that $(\Q_{n_j}V_{1, j}w_j)_j$ is uniformly bounded. Hence, in a similar way as before,
    \begin{align*}
        \norm{\Q_{n_j}V_{1, j}w_j}_{L^2([0, T], H^{s-1}(\M))}\leq \ld_{n_j}^{-1/2}\norm{\Q_{n_j}V_{1, j}w_j}_{L^2([0, T], H^{s}(\M))}
    \end{align*}    
    and we get that $\Q_{n_j}V_{1, j}w_j$ converges strongly to $0$ in $L^2([0, T], H^{s-1}(\M))$. Similarly, we obtain that $\Q_{n_j}V_{2, j}\overline{w_j}$ converges strongly to $0$ in $L^2([0, T], H^{s-1}(\M))$ as well. We thus have
    \begin{align*}
        \left\{\begin{array}{rl}
            i\partial_t w_j+\Delta w_j=h_j\xrightarrow[j\to\infty]{} 0 & \text{ in } L^2([0, T], H^{s-1}(\M)),  \\
            \bC w_j\xrightarrow[j\to\infty]{} 0 & \text{ in } L^2([0, T], H^{s}(\M)),
        \end{array}\right.
    \end{align*}
    with $h_j:=\Q_{n_j}(V_{1, j}w_j+V_{2, j}\overline{w_j})$. By \cref{prop:mdm} we can attach a microlocal defect measure $\mu$ to $(w_j)_j$ in $L^2([0, T], H^{s}(\M))$. Further, since $\omega_0$ satisfies the \ref{NLS:assumGCC} and $\bC w_j\to 0$ in $L^2([0, T], H^{s}(\M))$, we get that $\mu=0$ on $S^*\omega_0$ and then, by invariance under the bicharacteristic flow, $\mu\equiv 0$ on $S^*\M$. Therefore $w_j$ converges to $0$ in $L_{\loc}^2([0, T], H^{s}(\M))$. 

    Let us pick $t_0\in [0, T]$ such that $w(t_0)$ goes strongly to $0$ in $H^s(\M)$. By uniqueness and linear estimates, we get that $w_j$ converges strongly to $0$ in $C^0([0, T], H^s(\M))$, which contradicts $\norm{w_j(0)}_{H^s(\M)}=1$.

    Finally, the case $d=1$ follows by applying the above reasoning to $\widetilde{w}=\Ld_{s-1}w$. Since $\Ld_{s-1}$ and $i\partial_t+\Delta_g$ commute, this allow us to apply \cref{prop:mdm} with $r=1$ and the rest goes in a similar way.
\end{proof}

As a corollary we verify that Assumption \ref{NLS:assumC} holds true.

\begin{corollary}\label{cor:weakobsassumC}
    Let $T>0$, $R>0$, $d\in \N$ and $s>d/2$. Let $\mathbb{V}=\B_{R}^{[0, T]}(H^s(\M))$ and $\chi\in C^\infty(\M, \R)$. Assume that $\omega$ satisfies the \ref{NLS:assumGCC} and let $\bC$ be as defined in \eqref{def:operatorC}. 
    Under these notations, there exists $n_0\in\N$ such that Assumption \ref{NLS:assumC} holds true with $\fk=-i\chi f$.
\end{corollary}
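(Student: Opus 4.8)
The plan is to derive \cref{cor:weakobsassumC} from \cref{prop:weakobsSchr} by unwinding the definitions that connect the PDE formulation to the abstract framework. Recall that in the abstract setup of \cref{s:abstrNLSIntro}, with $X = L^2(\M)$, $A = i\Delta_g$, and $\sigma = s/2$ so that $X^\sigma = H^s(\M)$, Assumption \ref{NLS:assumC} concerns the evolution operator $S_n(v)$ of the linearized high-frequency system $\partial_t w = (A + \Q_n D\fk(v))w$ with $w(0) \in \Q_n X^\sigma$. So the first step is to identify this abstract linearized equation with \eqref{eq:weakobsSchr}. Since $\fk(u) = -i\chi f(u)$, and $f$ is real-analytic (hence $C^\infty$) viewed as a map $\mathbb{C} \simeq \R^2 \to \R^2$, the differential $D\fk(v)$ acts on an increment $w$ as $D\fk(v)w = -i\chi\, Df(v)w$, where $Df(v)$ is the $\R$-linear (not $\mathbb{C}$-linear) differential of $f$ at $v$. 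Writing this $\R$-linear map in terms of the complex structure, one has $Df(v)w = V_1(v) w + V_2(v)\overline{w}$ with $V_1(v) = \partial_z f(v)$ and $V_2(v) = \partial_{\bar z} f(v)$ (the Wirtinger derivatives), so $D\fk(v)w = -i\chi(V_1(v)w + V_2(v)\overline{w})$. Consequently the abstract equation $\partial_t w = Aw + \Q_n D\fk(v)w$ becomes exactly $i\partial_t w + \Delta_g w = \Q_n(\widetilde V_1 w + \widetilde V_2 \overline w)$ with $\widetilde V_j = \chi V_j(v)$.

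The second step is the uniform bound on the potentials. For $v \in \mathbb{V} = \B_R^{[0,T]}(H^s(\M))$ we have $\|v(t)\|_{H^s} \le R$ for all $t$; since $s > d/2$, $H^s(\M)$ embeds into $L^\infty$ and is an algebra, and \cref{lem:comp-reg} applied to the smooth functions $\partial_z f$ and $\partial_{\bar z} f$ (subtracting their value at $0$ to handle the lack of vanishing, exactly as in the proof of \cref{prop:NLS-assumF}) gives $\|\partial_z f(v(t))\|_{H^s} + \|\partial_{\bar z} f(v(t))\|_{H^s} \le M_0$ for a constant $M_0 = M_0(R, f, d, s)$. Multiplying by the fixed smooth function $\chi$ preserves this bound up to a constant, so $\widetilde V_1(v), \widetilde V_2(v) \in \B_M^{[0,T]}(H^s(\M))$ for a suitable $M = M(R, f, \chi, d, s)$, uniformly over $v \in \mathbb{V}$. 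This is the point where the hypothesis $\mathbb{V} = \B_R^{[0,T]}(H^s(\M))$ (rather than a more general set) is used, and it is the natural place to record that all constants in what follows depend only on $T, R, d, s, \omega, \chi, f$ and not on the particular $v$ or $n$.

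The third step is simply to invoke \cref{prop:weakobsSchr} with this $M$: it furnishes $n_0 \in \N$ and $C > 0$ such that for all $n \ge n_0$, all $V_1, V_2 \in \B_M^{[0,T]}(H^s(\M))$, and all $w_0 \in \Q_n H^s(\M)$, the solution $w$ of \eqref{eq:weakobsSchr} satisfies $\|w_0\|_{H^s}^2 \le C \int_0^T \|\bC w(t)\|_{H^s}^2\,dt$. Taking $V_j = \widetilde V_j(v)$ and recalling that $w(t) = S_n(v)(t,0)w_0$ is precisely the mild solution given by \cref{lem:sv-properties}, and that $\bC$ here is the multiplication operator $b_\omega \cdot$ from \eqref{def:operatorC} which equals the abstract observation operator $\bC \in \mc{L}(X^\sigma)$, this is exactly inequality \eqref{assum:observabstract} with $\mathfrak{C}_{\mathrm{obs}}^2 = C$. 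Hence Assumption \ref{NLS:assumC} holds with these $n_0$ and $\mathfrak{C}_{\mathrm{obs}}$, for the set $\mathbb{V} = \B_R^{[0,T]}(H^s(\M))$.

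I do not expect a serious obstacle here: the statement is essentially a bookkeeping translation, and all the analytic content is already in \cref{prop:weakobsSchr} and \cref{lem:comp-reg}. The only mildly delicate point — and the one I would write out carefully — is the identification of the real-linear differential $D\fk(v)$ with the operator $w \mapsto \Q_n(V_1 w + V_2 \bar w)$ under the $\mathbb{C} \simeq \R^2$ identification, since the abstract framework works over the real Hilbert space $H^s(\M; \R^2)$ while \cref{prop:weakobsSchr} is phrased in complex notation; one must check that the Wirtinger-derivative potentials $V_1, V_2$ are themselves controlled in $H^s$, which is where the algebra property and \cref{lem:comp-reg} re-enter. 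Everything else is a direct citation.
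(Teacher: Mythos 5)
Your argument is exactly the paper's: absorb the factor $-i\chi$ by reducing to $\chi\equiv 1$ (or equivalently by multiplying the potentials by $\chi$), write $Df(v)w=\partial_z f(v)\,w+\partial_{\overline z}f(v)\,\overline w$, control $\partial_z f(v)$, $\partial_{\overline z}f(v)$ in $\B_M^{[0,T]}(H^s(\M))$ uniformly over $v\in\mathbb{V}$ via the composition estimates behind \cref{prop:NLS-assumF} (i.e.\ \cref{lem:comp-reg}), and then apply \cref{prop:weakobsSchr} with $V_1=\partial_z f(v)$, $V_2=\partial_{\overline z}f(v)$. The proposal is correct and follows essentially the same route, with your extra care about the $\mathbb{C}\simeq\R^2$ identification being a welcome but inessential elaboration.
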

\begin{proof}
    Since multiplying by a smooth function is a bounded linear operator from $H^s(\M)$ into itself, let us assume for simplicity that $\chi\equiv 1$. If $z=x+iy\in\mathbb{C}$, let us define
    \begin{align*}
        \partial_z f:=\frac{1}{2}(\partial_x-i\partial_y)f\ \text{ and }\ \partial_{\overline{z}} f:=\frac{1}{2}(\partial_x+i\partial_y)f.
    \end{align*}
    By a slight variation of \cref{prop:NLS-assumF}, we get that for any $v\in \B_{R}^{[0, T]}(H^s(\M))$, both $\partial_z f(v)$ and $\partial_{\overline{z}} f(v)$ belong to $\B_{M}^{[0, T]}(H^s(\M))$ where $M$ is a constant that depends on $R$, coming from the composition estimates of $\partial_z f(v)$ and $\partial_{\overline{z}}f(v)$. Given that we can write
    \begin{align*}
        Df(v)w=\partial_{z}f(v)w+\partial_{\overline{z}}f(v)\overline{w},
    \end{align*}
    the conclusion follows as a direct application of \cref{prop:weakobsSchr} by taking $V_1=\partial_{z}f(v)$ and $V_2=\partial_{\overline{z}}f(v)$.
\end{proof}

\begin{remark}
    For instance, if $P'(z)=z$, which corresponds to a cubic nonlinearity $f(z)=|z|^2z=z^2\overline{z}$, then $\partial_z f(v)=2v$ and $\partial_{\overline{z}}f(v)=v^2$, from which follows that $Df(v)w=2|v|^2w+v^2\overline{w}$.
\end{remark}

\subsection{Finite determining modes} If we drop the analyticity on the nonlinearity, we can still show that the property of finite determining modes holds for the observed nonlinear Schrödinger equation \eqref{eq:NLS} with regular enough nonlinearity.

\begin{proposition}\label{prop:findetmodesSchrodinger}
 Let $d\in\N$ and $s>d/2$. With the notations of \cref{s3:preliminaries}, assume $\omega$ satisfies \ref{NLS:assumGCC} and that $f\in C^\infty(\mathbb{C}, \mathbb{C})$. For any $R_0>0$, there exists $n\in \N$ such that the following holds. Let $h\in  L^1([0,T], H^{s}(\M))$ and $g\in L^2([0, T], H^s(\M))$. Let $u$ and $\widetilde{u}$ be two solutions on $(0,T)$ of 
  \begin{align*}
    \left\{\begin{array}{rl}
        i\partial_t u+\Delta_g u=f(u)+h  &\ \text{ in } [0, T]\times \M,\\
        u=g &\ \text{ in } [0, T]\times \omega,
    \end{array}\right.
\end{align*}
such that $\norm{u(t)}_{H^s}\leq R_{0}$ and $\norm{\widetilde{u}(t)}_{H^s}\leq R_{0}$ for all $t\in [0, T]$. If $\P_n u(t)=\P_n \widetilde{u}(t)$ for all times $t\in [0, T]$, then $u(t)\equiv \widetilde{u}(t)$ for all $t\in [0, T]$.
\end{proposition}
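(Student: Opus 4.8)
The plan is to reduce directly to the high-frequency observability inequality with potentials, \cref{prop:weakobsSchr}, following the same linearization mechanism as in the abstract \cref{prop:finite-det-modes}. The key simplification is that here the observation is literally $u=\widetilde{u}$ on $(0,T)\times\omega$, so the difference $z:=u-\widetilde{u}$ satisfies $\bC z\equiv 0$ — which is much stronger than the projected condition $\Pi_{n,v}\bC z=0$ of the abstract setting — and this lets us invoke the genuine observability estimate, thereby dispensing with any compactness assumption on $u,\widetilde{u}$.

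First I would set $z:=u-\widetilde{u}\in C^0([0,T],H^s(\M))$. Since $u$ and $\widetilde{u}$ solve the same equation with the same source $h$ and agree with the same $g$ on $(0,T)\times\omega$, the function $z$ solves $i\partial_t z+\Delta_g z=f(u)-f(\widetilde{u})$ on $(0,T)\times\M$ and vanishes on $(0,T)\times\omega$, so $\bC z\equiv 0$. Writing $f(u)-f(\widetilde{u})=V_1 z+V_2\overline{z}$ with $V_1:=\int_0^1\partial_z f(\widetilde{u}+\tau z)\,d\tau$ and $V_2:=\int_0^1\partial_{\overline{z}} f(\widetilde{u}+\tau z)\,d\tau$, the composition estimates from \cref{lem:comp-reg} (as used in the proof of \cref{cor:weakobsassumC}), together with $\norm{\widetilde{u}(t)}_{H^s}\le R_0$ and $\norm{z(t)}_{H^s}\le 2R_0$, provide a constant $M=M(R_0)>0$ with $V_1,V_2\in\B_M^{[0,T]}(H^s(\M))$; here $s>d/2$ is used so that $H^s(\M)$ is an algebra continuously embedded in $L^\infty(\M)$.

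Next I would exploit the hypothesis $\P_n z(t)=0$ for all $t$. Since $\P_n$ commutes with $\partial_t$ and $\Delta_g$, applying $\P_n$ to the equation for $z$ forces $\P_n\big(f(u)-f(\widetilde{u})\big)\equiv 0$, whence $f(u)-f(\widetilde{u})=\Q_n(V_1 z+V_2\overline{z})$ and $z\in C^0([0,T],\Q_n H^s(\M))$ is a solution of \eqref{eq:weakobsSchr} with initial datum $z(0)\in\Q_n H^s(\M)$ and potentials $V_1,V_2$. Let $n_0$ and $C$ be the constants given by \cref{prop:weakobsSchr} for this value of $M$ (and the fixed $T,\omega,d,s$), and set $n:=n_0$; in particular $n$ depends only on $R_0$ and the ambient data, as required. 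Applying the observability inequality to $w=z$ and using $\bC z\equiv 0$ gives $\norm{z(0)}_{H^s}^2\le C\int_0^T\norm{\bC z(t)}_{H^s}^2\,dt=0$, so $z(0)=0$.

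Finally, $z$ solves a linear Schrödinger equation with coefficients bounded in $L^\infty([0,T],H^s(\M))$ and vanishing initial datum, so a standard Gronwall argument in $C^0([0,T],H^s(\M))$ yields $z\equiv 0$, i.e. $u\equiv\widetilde{u}$ on $[0,T]$. I expect the only genuinely delicate point to be the verification that $z$ solves the \emph{projected} system \eqref{eq:weakobsSchr} — that is, that the low-frequency constraint $\P_n z\equiv 0$ is inherited by the nonlinear term — after which the conclusion is immediate; the remaining ingredients (composition estimates, uniqueness by Gronwall) are routine.
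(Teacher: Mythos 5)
Your argument is correct, but it takes a genuinely different and more direct route than the paper. The paper proves this proposition as an instance of the abstract finite-determining-modes result \cref{prop:finite-det-modes}: it picks $s^*\in(d/2,s)$ so that the $H^s$-ball of radius $R_0$ is compact in $H^{s^*}$ (Rellich), linearizes $\fk(u)-\fk(\widetilde u)=D\fk(\widetilde u)z+\H(\widetilde u,z)$ around $\widetilde u$, feeds this into the observability Cauchy problem \cref{lem:CauchyObs} to get the quadratic bound $\norm{z}\le C\norm{z}^2$, and then uses the compactness of the ball to choose $n$ so large that the high-frequency tails $\Q_n u,\Q_n\widetilde u$ are uniformly small, excluding the alternative $\norm{z}\ge 1/C$. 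You instead exploit two features special to the concrete setting: (i) the observation here is exact, so $\bC z\equiv 0$ rather than merely a projected condition, and (ii) the uniform high-frequency observability \cref{prop:weakobsSchr} is stated for \emph{arbitrary} potentials bounded in $C^0([0,T],H^s)$, so you may use the exact averaged Wirtinger potentials $V_1,V_2$ along the segment from $\widetilde u$ to $u$ with no quadratic remainder at all. Your one structural step — that $\P_n z\equiv 0$ forces $\P_n(f(u)-f(\widetilde u))\equiv 0$, so $z$ solves exactly the projected system \eqref{eq:weakobsSchr} — is valid (differentiate the Duhamel identity for $\P_n z$, using that $t\mapsto f(u(t))-f(\widetilde u(t))$ is continuous in $H^s$), and then observability gives $z(0)=0$ and Gronwall concludes. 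What your route buys: no Rellich/compactness step, no passage through an auxiliary regularity $s^*$, no smallness dichotomy, and the conclusion is obtained directly in $H^s$. What the paper's route buys: it is a direct corollary of the abstract machinery, which is designed for situations where only the projected observation $\Pi_{n,v}\bC z$ is available and where one wants the statement at the abstract level of Assumptions \ref{NLS:assumA}--\ref{NLS:assumC}; your shortcut is specific to the NLS application where \cref{prop:weakobsSchr} holds uniformly over bounded potentials. Both yield a threshold $n$ depending only on $R_0$, $T$, $s$, $\omega$, as required.
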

\begin{proof}
We have already established in \cref{s3:preliminaries} that $A$ satisfies Assumption \ref{NLS:assumA}. If we pick $s^*\in (d/2, s)$, then $z\in \B_{R_0}^{[0, T]}(H^{s^*}(\M), \A)$ where $\A$ is a bounded ball in $H^{s}(\M)$. Moreover, $\A$ is compact in $H^s$ by Sobolev embedding. From \cref{prop:NLS-assumF}, Assumption \ref{assumF} is satisfied with $\fk:=-if$. Since $\omega$ satisfies the \ref{NLS:assumGCC}, Assumption \ref{NLS:assumC} is satisfied after \cref{cor:weakobsassumC} with $\bC$ defined as in \eqref{def:operatorC} and $\mathbb{V}=\B_{3R_0}^{[0, T]}(H^{s^*}(\M), \A)$. Then, we apply the abstract \cref{prop:finite-det-modes} with $\sigma^*=s^*/2$ instead of $\sigma$ to obtain $\Q_n(\widetilde{u}-u)(t)=0$ on $H^{s*}$ for every $t\in [0, T]$. Since $\Q_n$ is a linear bounded operator in both $H^{s^*}$ and $H^s$ with $H^s\hookrightarrow H^{s^*}$, the equality holds in $H^s$ as well.
\end{proof}

\subsection{Propagation of analyticity} Here we prove the main result for the NLS \cref{thm:NLS-analytic}.

\begin{proof}[Proof of \cref{thm:NLS-analytic}]
    Let $\omega_1\Subset\omega$ satisfying the \ref{NLS:assumGCC} \cite[Lemma A.14]{LL24} and let $\chi\in C_c^\infty(\M)$ be a smooth compactly supported function whose support is contained in $\omega$ and $\chi=1$ on $\omega_1$. 
    
    As the Schrödinger equation is observable from $\omega_1$ for any $T>0$ and being analytic is a local property, it is enough to prove that the restriction of $u$, solution to \eqref{eq:NLS}, restricted to any compact subinterval $[\delta, T-\delta]$ with $\delta>0$ is analytic. By translation in time, we can consider it on $[0, T-2\delta]\times\M$ and without loss of generality we can just relabel $T-2\delta$ by $T$ and assume that analyticity holds in a neighborhood of $(0, T)$. Hence, by hypothesis, $t\in (0, T)\mapsto \chi u(t)\in H^{s}(\M)$ is analytic, and so is $t\in (0, T)\mapsto \chi \partial_t u(t)\in H^{s}(\M)$. Let us also assume that $t\in [0, T]\mapsto u(t)\in H^s(\M)$ is bounded by some $R_0>0$. 
    
    By using the equation, we observe that on $(0, T)\times \omega$ we have
    \begin{align}\label{eq:ellipticSchr}
        \Delta_g(\chi u)=\chi f(u)+[\Delta,\chi]u-i\chi\partial_t u.
    \end{align}
    Since $u\in C^0([0, T], H^{s}(\M))$, we readily get that $[\Delta,\chi]u(t)\in H^{s-1}(\M)$ for each $t\in (0, T)$. Moreover, since $f$ is smooth, by \cref{lem:comp-reg} we have $f(u(t))\in H^{s}(\M)\hookrightarrow H^{s-1}(\M)$ for each $t\in (0, T)$. Thus, the right-hand side of \eqref{eq:ellipticSchr} belongs pointwise in time to $H^{s-1}$. Furthermore, since $t\in (0, T)\mapsto \chi u(t)\in H^{s}(\M)$ is analytic and bounded by $R_0$, by Cauchy's estimates \cref{appendix:prop:cauchy-est} applied to $\chi\partial_t u$ and global elliptic  estimates applied to \eqref{eq:ellipticSchr}, we get that there exists a constant $C=C(R_0)>0$ such that
    \begin{align*}
         \norm{\chi u}_{L^2([0, T], H^{1+s}(\M))}\leq C.
    \end{align*}
     We can invoke \cref{thm:propregularity-highreg} to obtain that $u\in \B_{R_1}^{[0, T]}(H^{1+s}(\M))$ for some $R_1>0$.
    
    Let $s^*\in (d/2, s)$ and let us consider
    \begin{align*}
        \A:=\{v\in H^{s}(\M)\ |\ \norm{v}_{H^{s}(\M)}\leq R_0\},
    \end{align*}
    which is a compact subset of $H^{s^*}(\M)$ by Sobolev embedding. Observe that $u(t)\in \A$ for all $t\in [0, T]$. Furthermore, by using the equation we see that $\partial_t u$ in particular belongs to $\B_{R_2}^{[0, T]}(H^{s-1}(\M))$ for some $R_2>0$. Let us then introduce
    \begin{align*}
        \K:=\{w\in \B_{R_0}^{[0, T]}(H^{s}(\M))\ |\ \norm{\partial_t w}_{L^2(H^{s-1})}\leq R_2\},
    \end{align*}
    which is compact in $C^0([0, T], H^{s^*}(\M))$ as a consequence of Aubin-Lions lemma (see \cref{app:thm:aubinlions}). Therefore, $u$ belongs to the compact set $\B_{R_0}^{[0, T]}(H^{s^*}(\M), \A)\cap \K$. Note that $\A$ and $\K$ are both stable under the projector $\P_n$, see \cref{rk:Acpctstable}.
    
    By compactness \cite[Lemma A.14]{LL24}, once again, let $\omega_1\Subset\omega_0$ satisfying the \ref{NLS:assumGCC} and let $\bC$ be defined as in \eqref{def:operatorC} accordingly. Observe that $z=(1-\chi)u=0$ on $\omega_1$, and thus $\bC z=0$ on $[0, T]\times\M$. Then, by the previous discussion, $z\in \B_{R_0}^{[0, T]}(H^{s^*}(\M), \A)\cap \K$ and it solves
    \begin{align*}
        \left\{\begin{array}{cl}
            \partial_t z=Az+\fk(z+\bh_1)+\bh_2 &~ \text{ in } [0, T], \\
            \bC z(t)=0 &~ \text{ for } t\in [0, T],
        \end{array}\right.
    \end{align*}
    with $A=i\Delta_g$, $\bC=b_{\omega_0}$, $\fk=-i(1-\chi)f$ and
    \begin{align*}
        \left\{\begin{array}{cccl}
            \bh_1: & t\in [0, T] & \longmapsto & \chi u(t)\in H^{s^*}(\M),  \\
            \bh_2: & t\in [0, T] & \longmapsto & [\Delta_g, \chi]u(t)\in H^{s^*}(\M).
        \end{array}\right.
    \end{align*}
    By the regularity properties obtained on $u$, we see that $\bh_1\in \B_{R_0}^{[0, T]}(H^{s^*}(\M), \A)\cap \K$ and $\bh_2\in \B_{R_3}^{[0, T]}(H^{s^*}(\M), \A)$, for some $R_3>0$. By hypothesis, an application of \cref{app:thm:h-ext} and compactness, there exists $\mu>0$ so that both $\bh_1$ and $\bh_2$ admit holomorphic extensions
    \begin{align*}
        \left\{\begin{array}{cccl}
            \bh_1: & t\in (0, T)+i(-\mu, \mu) & \longmapsto & \chi u(t)\in H^{s^*}(\M; \mathbb{C}^2),  \\
            \bh_2: & t\in (0, T)+i(-\mu, \mu) & \longmapsto & [\Delta_g, \chi]u(t)\in H^{s^*}(\M; \mathbb{C}^2).
        \end{array}\right.
    \end{align*}
    Moreover, given that $\Re\bh_1\in \B_{R_0}^{[0, T]}(H^{s^*}(\M))$, by shrinking $\mu>0$ if necessary, by continuity and compactness, we can assume that $\Re\bh_1(z)\in \mathbb{B}_{2R_0}(H^{s^*}(\M))$ for every $z\in [0, T]+i[-\mu, \mu]$.

    By \cref{prop:NLS-assumF}, $\fk$ satisfies Assumption \ref{NLS:assumFholom}. Then, by \cref{cor:weakobsassumC}, for any $0<\widetilde{T}<T$ there exists $n_0\in \N$ such that Assumption \ref{NLS:assumC} is satisfied with $\widetilde{T}$, $2R_0$, $n_0$, $\B_{6R_0}^{[0, \widetilde{T}]}(H^{s^*}(\M))$ and $\fk=-i(1-\chi) f$ (with $\chi$ of the corollary replaced by $1-\chi$). In particular, since a function with value in $\A+\A$ is bounded by $3R_0$ in the $H^{s^*}(\M)$-norm, Assumption \ref{NLS:assumC} also holds on the closed subset $\B_{6R_0}^{[0, \widetilde{T}]}(H^{s^*}(\M), \A+\A)\subset \B_{6R_0}^{[0, \widetilde{T}]}(H^{s^*}(\M))$.
    
    We are then in position to apply \cref{thm:mainabs-analytic} with $\sigma$, $(T, T^*)$ and $(R_0, R_1)$ of the theorem replaced by $s^*/2$, $(\widetilde{T}, T)$ and $(2R_0, R_3)$, respectively. We deduce that $t\in (0, T)\mapsto z(t)\in H^{s^*}(\M)$ is real analytic, and so is $t\in (0, T)\mapsto u(t)\in H^{s^*}(\M)$.

    Since $t\in (0, T)\mapsto u(t)\in H^{s^*}(\M)$ is analytic, so are $t\in (0, T)\mapsto \partial_t u(t)\in H^{s^*}(\M)$ (by \cref{prop:diff-analytic}) and $t\in (0, T)\mapsto u(t)-f(u(t))\in H^{s^*}(\M)$ (by \cref{appendix:thm:chain-rule}). Using the equation, we get that $t\in (0, T)\mapsto u(t)\in H^{2+s^*}(\M)$ is analytic.
\end{proof}

\subsection{Unique continuation} In this section we consider $u$ solution of the system
\begin{align}\label{eq:NLSucp}
        \left\{\begin{array}{cl}
            i\partial_tu+\Delta u=f(u)     &~~ \text{ in } (0, T)\times\M,  \\
            \partial_t u=0   &~~ \text{ on } (0, T)\times\omega,
        \end{array}\right.
\end{align}
where, we recall from case \ref{NLS:reg:2}, that the nonlinearity $f(u)=P'(|u|^2)u$ satisfies:
\begin{enumerate}
     \item if $d=2$ then $P$ is a polynomial function with real coefficients, satisfying $P(0)=0$ and the defocusing assumption $P'(r)\xrightarrow[r\to+\infty]{}+\infty$;
    \item if $d=3$, then $P'(r)=\al r+\beta$ with $\al>0$, $\beta\geq 0$, corresponding to the cubic nonlinearity.
\end{enumerate}
The purpose of this section is to prove \cref{thm:UCP-NLSdim2} and \cref{thm:UCP-NLSdim3}. We also prove the unique continuation result in an unbounded domain \cref{prop:UCP-unbounded} at the end of this section.

\subsubsection{On unique continuation for linear Schrödinger equation} We now recall the following unique continuation result in the context of Schrödinger equations due to Tataru-Robbiano-Zuily-Hörmander. We refer to \cite[Theorem 6.5]{LL19} for a quantitative statement that implies unique continuation.

\begin{theorem}\label{thm:UCP-linearschrodinger}
    Let $T>0$. Let $\M$ be a compact Riemannian manifold with (or without) boundary, $\Delta_g$ the Laplace-Beltrami operator on $\M$, and
    \begin{align*}
        P=i\partial_t+\Delta_g+V
    \end{align*}
    with $V\in L^\infty((0, T), W^{2, \infty}(\M))$. Assume that $V$ depends analytically on the variable $t\in (0, T)$.

     Let $\omega$ be a nonempty open subset of $\M$. Let $u_0\in H^2(\M)\cap H_0^1(\M)$ and associated solution $u$ of
    \begin{align*}
        \left\{\begin{array}{ll}
            Pu=0&\ \text{ in }\ (0, T)\times\Int{(\M)}\\
            u_{|_{\partial\M}}=0 &\ \text{ in }\ (0, T)\times \partial\M\\
            u(0)=u_0. &\ 
        \end{array} \right.
    \end{align*}
    Then, if $u$ satisfies $u=0$ on $[0,T]\times \omega$, then $u=0$ on $[0,T]\times \M$.
\end{theorem}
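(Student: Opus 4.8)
\textbf{Plan of proof for \cref{thm:UCP-linearschrodinger}.} The statement is the Tataru--Robbiano--Zuily--Hörmander unique continuation theorem specialized to the Schrödinger operator, and the plan is to deduce it from the quantitative estimate recorded in \cite[Theorem 6.5]{LL19} by a foliation/propagation argument. First I would reduce to a purely local statement: it suffices to prove that, for every point $(t_0,x_0)\in (0,T)\times\Int(\M)$, the solution vanishes in a neighborhood of $(t_0,x_0)$; connectedness of $\M$ together with the fact that $u$ vanishes on $(0,T)\times\omega$ then propagates the vanishing to all of $(0,T)\times\M$, and finally continuity in time up to $t=0,T$ gives the conclusion on $[0,T]\times\M$. (One has to be mildly careful near $\partial\M$, but the Dirichlet condition $u|_{\partial\M}=0$ together with $Pu=0$ allows one to reflect or to apply the boundary version of the local statement, which is included in the cited references.)

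The heart of the matter is the local step. The operator $P=i\partial_t+\Delta_g+V$ is non-characteristic with respect to any hypersurface $S=\{\Psi=0\}$ that is \emph{not} tangent to the level sets $\{t=\mathrm{const}\}$, i.e. with $\nabla_x\Psi\neq 0$, as recalled in the introduction. The key hypothesis is that $V$ depends analytically on $t$; this is exactly what is needed so that the Tataru--Robbiano--Zuily--Hörmander machinery (which interpolates between Holmgren and Hörmander, treating the time variable in the analytic category and the space variables in the $C^\infty$ category) applies and yields local unique continuation across such an $S$. Concretely, I would fix $(t_0,x_0)$, choose a family of hypersurfaces $S_\lambda=\{\Psi=\lambda\}$ with $\Psi$ a function of $(t,x)$ with $\nabla_x\Psi\neq 0$, arranged so that $S_0$ passes through a point where $u$ is already known to vanish (e.g. a point of $(0,T)\times\omega$ reached by a suitable curve), and so that the region swept out as $\lambda$ increases eventually contains $(t_0,x_0)$. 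Applying the quantitative local estimate of \cite[Theorem 6.5]{LL19} across each $S_\lambda$ and using a standard connectedness-in-$\lambda$ argument propagates the vanishing from $S_0$ up to $(t_0,x_0)$.

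The main obstacle I anticipate is purely bookkeeping rather than conceptual: one must verify that the geometric hypotheses of the cited theorem are met uniformly along the foliation, namely that the hypersurfaces $S_\lambda$ can be chosen globally (or patched together from local pieces) with $\nabla_x\Psi$ never vanishing, that they remain in the interior of $\M$ away from the boundary where needed, and that the analyticity of $V$ in $t$ is preserved under the local coordinate changes used to straighten $S$. The regularity assumption $V\in L^\infty((0,T),W^{2,\infty}(\M))$ is exactly the threshold under which the Carleman estimates behind \cite[Theorem 6.5]{LL19} are valid, so no extra work is needed there. Since the Schrödinger operator is never pseudoconvex with respect to spatial hypersurfaces, one genuinely cannot use Hörmander's classical theorem here; it is the anisotropic/quasi-homogeneous refinement, exploiting analyticity in $t$ only, that makes the foliation by arbitrary non-characteristic $S_\lambda$ legitimate, and this is what gives unique continuation from \emph{any} nonempty open $\omega$ and \emph{any} $T>0$.
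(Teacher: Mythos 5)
Your proposal is consistent with how the paper treats this statement: the paper does not prove \cref{thm:UCP-linearschrodinger} at all, but recalls it as the Tataru--Robbiano--Zuily--H\"ormander theorem specialized to the Schr\"odinger operator and points to the quantitative statement in \cite[Theorem 6.5]{LL19}, which is exactly the result you propose to invoke; your sketch of the local-UCP-across-non-characteristic-hypersurfaces plus foliation argument is the standard derivation lying behind that citation.

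One point in your global step is stated too loosely and is worth repairing. When you propagate vanishing across a family of non-characteristic hypersurfaces $S_\lambda$ (non-characteristic meaning $\nabla_x\Psi\neq 0$), each crossing costs a bit of the time interval: the region where $u$ is known to vanish shrinks in $t$ as you advance in $x$, so ``connectedness of $\M$ propagates the vanishing to all of $(0,T)\times\M$'' is not automatic. The two standard fixes are: (i) exploit that only $\nabla_x\Psi\neq 0$ is required, so the hypersurfaces can be taken arbitrarily steep in $t$ (graphs $t=\varepsilon\varphi(x)$ with $\varepsilon$ small), making the total time loss arbitrarily small and yielding $u=0$ on $(\delta,T-\delta)\times\M$ for every $\delta>0$; or (ii) stop once you know $u(t_0)=0$ on $\M$ for a single interior time $t_0$ and then use uniqueness/well-posedness of the Cauchy problem (forward and backward) for $i\partial_t+\Delta_g+V$ with $V\in L^\infty((0,T),W^{2,\infty})$ to conclude $u\equiv 0$ on $[0,T]\times\M$; mere continuity up to $t=0,T$ is not by itself the mechanism that covers the endpoints. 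With that adjustment, and the boundary care you already mention, your plan matches the argument the cited literature carries out.
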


\begin{remark}
    A sharper unique continuation result was obtained by Filippas, Laurent, and Léautaud \cite{FLL24}, in which the analyticity assumption was relaxed to the Gevrey $2$ class.
\end{remark}

\subsubsection{Unique continuation for the nonlinear equation} We now come to prove \cref{thm:UCP-NLSdim2} and \cref{thm:UCP-NLSdim3}.

\begin{proof}[Proof of \cref{thm:UCP-NLSdim2}]
    Let $u$ be a solution of \eqref{eq:NLSucp}  which belongs to $C^0([0, T], H^1(\M))$ with finite Strichartz norms. Since $u$ solves the elliptic equation $\Delta_g u=f(u)$ in $(0, T)\times\omega$ and $f$ is subcritical, by elliptic regularity and bootstrap (see \cite[Theorem 9.19]{GT01}) we improve the regularity of $u$ in $(0, T)\times \omega$, which then allows us to apply the propagation of regularity result \cref{prop:propregularityYT} to obtain that $u$ is uniformly bounded in $C^0([0, T], H^s(\M))$ for any $s\in (1, 2]$, but fixed. We are then in position to apply \cref{thm:NLS-analytic} to obtain that $t\in (0, T)\mapsto u(t,\cdot)\in H^2(\M)$ is analytic.

   Actually, \cref{prop:propregularityYT} implies that $u$ belongs to $C^\infty\big((0, T)\times\M\big)$. Now, set $z=\partial_t u$ and observe that it solves
    \begin{align*}
        \left\{\begin{array}{cl}
            i\partial_t z+\Delta_g z=f'(u)z &~~ \text{ on }~ (0, T)\times \M, \\
            z=0 &~~ \text{ on }~ (0, T)\times \omega.
        \end{array}\right.
    \end{align*}
    Since $(t, x)\mapsto f'(u(t, x))$ is smooth and analytic in $t$, and thus bounded, we can apply \cref{thm:UCP-linearschrodinger} to obtain that $z\equiv 0$. This means that $u$ is constant in the time variable and hence it solves
    \begin{align*}
        -\Delta_g u+f(u)=0,\ \ x\in \M.
    \end{align*}
    Multiplying by $\overline{u}$ the above equation and integrating by parts, in the case that $P'(r)\geq C>0$ for every $r\geq 0$, we get
    \begin{align*}
        0\leq \int_\M |\nabla u|^2dx=-\int_\M P'(|u|^2)|u|^2dx\leq -C\int_\M |u|^2dx
    \end{align*}
    and thus $u$ must be equal to $0$.
\end{proof}

The proof in dimension $3$ is similar, we just point out the main differences.

\begin{proof}[Proof of \cref{thm:UCP-NLSdim3}]
    If $u$ is a solution of \eqref{eq:NLSucp} which belongs to $X_T^{1, b}$ with $b>1/2$, due to elliptic regularity arguments, $u$ is smooth in $(0, T)\times\omega$ and we can thus invoke the propagation of regularity result \cref{prop:propregularityXsb} to obtain that $u$ belongs to $X_T^{1+\nu, b}$ for $\nu\in (1/2, 1]$. By Sobolev embedding, it also belongs to $C^0([0, T], H^{1+\nu}(\M))$ and hence $t\in (0, T)\mapsto u(t, \cdot)\in H^{1+\nu}(\M)$ is analytic as a consequence of \cref{thm:NLS-analytic}. By setting $z=\partial_t u$, we can apply \cref{thm:UCP-linearschrodinger} to obtain $\partial_t u=0$ in $(0, T)\times \M$. Multiplying the resulting elliptic equation by $\overline{u}$ and integrating by parts we readily get
    \begin{align*}
        0\leq \int_\M |\nabla u|^2dx+\al\int_\M|u|^2dx+\beta\int_\M|u|^4dx=0
    \end{align*}
    and thus $u\equiv 0$.
\end{proof}

\subsubsection{Unique continuation on unbounded domains}\label{S3:UCP:unbounded} Here we provide an example of unique continuation for the NLS in an unbounded domain. Let $(\R^2, g)$ where the Riemannian metric $g$ satisfies
\begin{align*}
    \left\{\begin{array}{cl}
        \forall x\in \R^2  &~~ m\Id\leq g(x)\leq M\Id  \\
        \forall\al\in\N^2,\ \exists C_\al>0, \forall x\in\R^2 &~~ |\partial^\al g(x)|\leq C_\al. 
    \end{array}\right.
\end{align*}
After \cite[Theorem 5]{BGT04}, without any further geometric assumption $g$, it is known that the Schrödinger equation enjoys the same Strichartz estimates as \cref{thm:strichartz}. Let us consider
\begin{align}\label{eq:NLSunbounded}
    \left\{\begin{array}{cl}
        i\partial_t u+\Delta_g u=P'(|u|^2)u     &~~ (0, T)\times \R^2,  \\
        \partial_t u=0     &~~ (0, T)\times\omega, 
    \end{array}\right.
\end{align}
where $\omega$ satisfies the \ref{NLS:assumGCC}. After \cite[Remark A.5]{BGT04}, we consider solutions to \eqref{eq:NLSunbounded} which belong to the analogous space $Y_T$ in the same fashion as \cref{thm:wellposedH1} in the compact case.

\begin{proof}[Proof of \cref{prop:UCP-unbounded}]
    To ease notation, from now on we will write $f(u)=P'(|u|^2)u$. Up to making $R>0$ larger, we can assume that $\R^2\setminus B(0, R)\Subset \omega$. Since $\partial_t u$ vanishes on $(0, T)\times\big(\R^2\setminus B(0,R)\big)\subset (0, T)\times \omega $, we have $-\Delta u+f(u)=0$ with $u\in H^1(\R^2\setminus B(0, R))$. Since $f$ is subcritical, we can use elliptic regularity and bootstrap (see, for instance, \cite[Theorem 9.19]{GT01}) to show that $u=u(x)$ belongs to $C^{4}$ in the set $\R^2\setminus B(0, R)$. By Sobolev embedding and iteration, it belongs to $H^k$ on $\R^2\setminus B(0, R)$ for every $k\in\N$.

Let us consider $r_1>R$ and a cutoff function $\chi\in C_c^\infty(\R^2)$ such that
\begin{itemize}
    \item $\chi=1$ in $B(0, R)$,
    \item $\chi=0$ in $\R^2\setminus B(0, r_1)$,
    \item $\supp\nabla\chi\subset B(0, r_1)\setminus B(0, R)$.
\end{itemize}
We follow the same strategy as before. As $(1-\chi)u$ is supported in $\R^2\setminus B(0, R)$ and does not depend on the time variable, it is analytic as a map from $t\in (0, T)$ into $H^{s}(\R^2)$ for any $s\in (d/2, 2]$. By writing $u=\chi u+(1-\chi)u$, it remains to prove that $z:=\chi u$ is analytic.

\medskip
\paragraph{\emph{Step 1. Reduction to a fundamental domain.}} Let us take $R_1>r_2>r_1>R$ and let us introduce the square
\begin{align*}
    T_{R_1}=\{(x_1, x_2)\in \R^2\ |\ x_i\in [-R_1, R_1],\ i=1, 2\},
\end{align*}
which obviously contains the balls of radius $R$, $r_1$ and $r_2$. First, let us consider cutoff function $\psi\in C^\infty(\R^2, [0, 1])$ such that
\begin{itemize}
    \item $\psi=1$ in $B(0, r_1)$,
    \item $\psi=0$ in $T_{R_1}\setminus B(0, r_2)$,
    \item $\supp\nabla\psi\subset B(0, r_2)\setminus B(0, r_1)$.
\end{itemize}
If $g_{euc}$ denote the euclidean metric in $\R^2$, we set a new metric by $g_1=\psi g+(1-\psi)g_{euc}$. This metric is smooth satisfies $g_1=g$ on $\supp\chi\cup\supp\nabla\chi\subset B(0, r_1)$ and coincides with the euclidean metric $g_{euc}$ outside of the ball $B(0, r_2)$.

Let $\omega_1=T_{R_1}\cap \omega$. Let $\widetilde{\chi}\in C^\infty(\R^2)$ be another cutoff with the same properties as $\chi$ and $\widetilde{\chi}=1$ on $\supp(\chi)$. Since the operator $f$ is local, we have $\chi f(u)=\chi f(\widetilde{\chi}u)$. Also, since $g_1=g$ on $B(0, r_1)$, which is where $z$ is supported, in $T_{R_1}$ we have $\Delta_g z=\Delta_{g_1} z$ and thus the local Sobolev norm coincides there. Then $z:=\chi u$ satisfies
\begin{align*}
    \left\{\begin{array}{rl}
        i\partial_t z+\Delta_{g_1} z=\chi f(z+\bh_1)+\bh_2 &\ (0, T)\times T_{R_1}, \\
        \partial_t z=0 &\ (0, T)\times \omega_1,\\ 
    \end{array}\right.
\end{align*}
where we have set
\begin{align*}
    \left\{\begin{array}{cccl}
        \bh_1: & t\in [0, T] & \longmapsto & (1-\chi)\widetilde{\chi}u(t)\in H^{2}(T_{R_1}),  \\
        \bh_2: & t\in [0, T] & \longmapsto & [\Delta_g, \chi]u(t)\in H^{2}(T_{R_1}).
    \end{array}\right.
\end{align*}
This is indeed consistent since both $(1-\chi)\widetilde{\chi}u$ and $[\Delta_g, \chi]u$ are supported in $B(0, r_1)\setminus B(0, R)\Subset T_{R_1}$ where they do not depend on the time variable and enjoy higher regularity in space, say $H^2(\R^2)$. 

By hypothesis $u$ has finite Strichartz norms, which we can use in a similar way to what is done in the compact boundaryless case (see \cite[Remark A.5]{BGT04}), to obtain that $P'(|u|^2)u\in L^2([0, T], H^1(\R^2))$ and in particular, $\chi P'(|u|^2)u\in L^2([0, T], H^1(\R^2))$. The latter translates into $\chi f(z+\bh_1)\in L^2([0, T], H^1(T_{R_1}))$ (recall that $\chi=0$ outside $B(0, r_1)$).

\medskip
\paragraph{\emph{Step 2. Periodic extension}} Since outside the ball $B(0, r_2)$ and up to the boundary of $T_{R_1}$ we have $g_1=g_{euc}$ and the euclidean metric is translation-invariant, we can construct a manifold $(\T^2, \mathfrak{g})$ with fundamental domain $T_{R_1}$ and equipped with the inherited metric $\mathfrak{g}$ whose projection coincides $g_1$ in the fundamental domain. Note that our choice of cutoffs $\psi$, $\chi$ and $\widetilde{\chi}$ are consistent, in the sense that the solution $z$ is supported in $B(0, r_1)$ where the metric $g$ is active and vanishes elsewhere, and $g_1$ transitions smoothly in regions. We can then consider a periodic extension of $z$, that is, $z^P: [0, T]\times\T^2\to \mathbb{C}$ is defined by $z^P(\cdot, x)=z(\cdot, x^*)$ where $x^*$ is the unique representative of $x\mod T_{R_1}$, and since $\supp z\Subset T_{R_1}$, the periodization $z^P$ belongs to $C^0([0, T], H^1(\T^2))$. Reasoning likewise for the remaining functions, we see that $z^P$ solves
\begin{align*}
    \left\{\begin{array}{rl}
        i\partial_t z^P+\Delta_{\mathfrak{g}} z^P=\chi^P f(z^P+\bh_1^P)+\bh_2^P &\ (0, T)\times \T^2, \\
        \partial_t z^P=0 &\ (0, T)\times \widetilde{\omega}_1,\\ 
    \end{array}\right.
\end{align*}
where $\widetilde{\omega}_1$ is the  periodization of $\omega_1$ into $\T^2$ and
\begin{align*}
    \left\{\begin{array}{cccl}
        \bh_1^P: & t\in [0, T] & \longmapsto & \big((1-\chi)\widetilde{\chi}u\big)^P(t)\in H^{2}(\T^2),  \\
        \bh_2^P: & t\in [0, T] & \longmapsto & \big([\Delta_g, \chi]u\big)^P(t)\in H^{2}(\T^2).
    \end{array}\right.
\end{align*}

First, since $\omega$ satisfies the \cref{NLS:assumGCC} in $(\R^2, g)$ and $R^2\setminus B(0, R)\Subset \omega$, so does $\widetilde{\omega}_1$ on $(\T^2,\mathfrak{g})$. Indeed, on the set $K=T_{R_1}\setminus\omega_1$, the active metric is $g$ and thus every geodesic starting there must enter $\omega_1$ in finite time and any other geodesic starting outside $K$ is already on the observation zone. Second, since the local Sobolev $H^1$-norm in the support of $\chi f$ is determined by $g$, we get that $\chi f(z^P+\bh_1^P)\in L^2([0, T], H^1(\T^2))$.
Since the metric $g_1=g$ on the support $\chi f$, the local $H^1$-norms computed with $g$ and $g_1$ coincides there we have
\begin{align*}
    \norm{\chi f(z^P+\bh_1^P)}_{L^2([0, T], H^1(\T^2))}&=\norm{\chi f(z+\bh_1)}_{L^2([0, T], H^1(T_{R_1}))}\\
    &\leq \norm{\chi f(u)}_{L^2([0, T], H^1(\R^2))}.
\end{align*}
This suffices to apply a slight variation of \cref{prop:propregularityYT} with extra regular parameter (see \cite[Section 3]{DGL06}), which allows us to propagate regularity, obtaining $z^P\in C^0([0, T], H^2(\T^2))$. We are then in the configuration of \cref{thm:mainabs-analytic}. As we did in the proof of \cref{thm:NLS-analytic}, we get that $t\in(0, T)\mapsto z^P(t)\in H^2(\T^2)$ is real analytic and thus we can proceed we did in \cref{thm:UCP-NLSdim3} to obtain that $z^P$ is independent of $t$. By projection into the fundamental domain, we obtain that $t\in(0, T)\mapsto \chi u(t)\in H^2(T_{R_1})$ is real analytic.

Summarizing, by going back to $u=\chi u+(1-\chi)u$, we have proved that $t\in (0, T)\mapsto u(t)\in H^2(\R^2)$ is constant and thus $-\Delta_g u+P'(|u|^2)u=0$ in $\R^2$. For any $x_0\in \R^2$ and $r>0$ we have then
\begin{align*}
    0\leq \int_{B(x_0, r)} |\nabla u|^2dx=-\int_{B(x_0, r)} P'(|u|^2)|u|^2dx\leq -C\int_{B(x_0, r)}|u|^2dx.
\end{align*}
Therefore, $u=0$ on $B(x_0, r)$ and being both $x_0$ and $r$ arbitrary, we conclude that $u\equiv 0$.
\end{proof}

\appendix
\addtocontents{toc}{\protect\setcounter{tocdepth}{1}}
\setcounter{theorem}{0}
\renewcommand{\thetheorem}{\Alph{section}\arabic{theorem}}

\section{Analysis tools}\label{A:eveq}

\subsection{ODEs in Banach spaces} We now introduce the two different notions of ODEs in Banach spaces used in the present article. Let us consider the framework of \cref{sec:NLSabstract-construction} and let $I\subset \R$ be a nonempty interval and take $s_0\in I$. 

For any $s\in \R$, we can easily extend $e^{sA}$ to $C^{0}([0,T],X^{\sigma})$ by the formula
\begin{align*}
\left[e^{sA}V\right](t)=e^{sA}V(t),
\end{align*}
for $V\in C^{0}([0,T],X^{\sigma})$. If $H\in L^{1}\left(I,C^{0}([0,T],X^{\sigma})\right)$, we say that $\xi\in C^{0}\left(I,C^{0}([0,T],X^{\sigma})\right)$ satisfies 
\begin{align}\label{app:ode:ode-banach-1}
    \left\{\begin{array}{lc}
        \dfrac{d}{ds}\xi(s)=A \xi(s)+H(s), & s\in I, \\
        \xi(s_0)=\xi_0, & 
    \end{array}\right.
\end{align}
with $\xi_0\in C^{0}([0,T],X^{\sigma})$, if it satisfies
\begin{align}\label{app:ode:ode-banach-1-duhamel}
\xi(s)&=e^{(s-s_0)A}\xi_0+\int_{s_0}^{s}e^{(s-w)A}H(w)dw,\ \forall s\in I,
\end{align}
with equality in $C^{0}([0,T],X^{\sigma})$.
\begin{lemma}
\label{lmDuhamelCauchy}
If $H\in L^{1}\left(I,C^{0}([0,T],\P_{n}X^{\sigma})\right)$ and $\xi\in C^{0}\left(I,C^{0}([0,T],\P_{n}X^{\sigma})\right)$ for some $n\in\N$ and satisfies 
$ \dfrac{d}{ds}\xi(s)=A \xi(s)+H(s)$ in the previous sense. Then, it satisfies this equation in the sense of Cauchy-Lipschitz.
\end{lemma}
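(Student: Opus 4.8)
The key point, which I would isolate first, is that while $A$ is unbounded on $X^\sigma$, its restriction $A\P_n=\P_n A$ is \emph{bounded} on $X^\sigma$ with $\norm{A\P_n}_{\mc{L}(X^\sigma)}\le\inn{\lambda_n}$, as recalled in \cref{sec:NLSabstract-construction}; consequently the operator $A$ acting pointwise in time is bounded on the Banach space $Z:=C^{0}([0,T],\P_{n}X^{\sigma})$, with the same norm bound. Since $A$ commutes with $\P_n$, the group $e^{sA}$ acting on $\P_nX^\sigma$ is the exponential of the bounded operator $A\P_n$, so $s\mapsto e^{sA}$ is norm-continuous — in fact real-analytic — as a map into $\mc{L}(\P_nX^\sigma)$, with $\tfrac{d}{ds}e^{sA}\P_n=A\P_n e^{sA}\P_n$, and the same holds for its pointwise-in-time extension to $\mc{L}(Z)$. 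The plan is then simply to differentiate Duhamel's formula \eqref{app:ode:ode-banach-1-duhamel} by hand, using only this elementary calculus for exponentials of bounded operators.

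First I would observe that the term $s\mapsto e^{(s-s_0)A}\xi_0$ in \eqref{app:ode:ode-banach-1-duhamel} is of class $C^1$ from $I$ into $Z$, with derivative $A e^{(s-s_0)A}\xi_0$. It then remains to treat $\Phi(s):=\int_{s_0}^{s}e^{(s-w)A}H(w)\,dw$, which is well defined in $Z$ because $H\in L^{1}(I,Z)$ and $w\mapsto e^{(s-w)A}$ is bounded on compact $s$-intervals. For $s\in I$ and $|h|$ small, the group property gives
\begin{align*}
\Phi(s+h)-\Phi(s)=\big(e^{hA}-\Id\big)\Phi(s)+\int_{s}^{s+h}e^{(s+h-w)A}H(w)\,dw .
\end{align*}
Dividing by $h$ and letting $h\to0$: the first term converges to $A\Phi(s)$ by norm-differentiability of $s\mapsto e^{sA}$ at $0$; the second converges to $H(s)$ at every Lebesgue point $s$ of $H$, i.e.\ for a.e.\ $s\in I$, since $\sup_{w\in[s,s+h]}\norm{e^{(s+h-w)A}-\Id}_{\mc{L}(Z)}\to0$ while $\tfrac1h\int_{s}^{s+h}H(w)\,dw\to H(s)$ in $Z$. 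Hence $\Phi$ is differentiable for a.e.\ $s$ with $\Phi'(s)=A\Phi(s)+H(s)$.

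Summing the two contributions, $\xi$ is differentiable (for a.e.\ $s\in I$ in general, for every $s$ if moreover $H\in C^{0}(I,Z)$) and satisfies $\tfrac{d}{ds}\xi(s)=A\xi(s)+H(s)$ pointwise in time with $\xi(s_0)=\xi_0$; this is exactly the Cauchy--Lipschitz formulation in the sense of \cite[Theorem 10.4.5]{Dieu69}. Conversely, a Gronwall argument shows such a solution is unique and satisfies \eqref{app:ode:ode-banach-1-duhamel}, so the two notions coincide. I do not anticipate any genuine obstacle: this is a soft ``mild solution $=$ classical solution when the generator is bounded'' statement, and the only points needing care are that one must work on $\P_nX^\sigma$ — which is precisely what makes $A$ bounded — and that the mere $L^1$ regularity of $H$ confines the differentiability to Lebesgue points, hence to almost every time.
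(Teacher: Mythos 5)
Your proof is correct and is essentially the paper's argument: the paper simply notes the lemma "follows as an application of Duhamel's formula," and your write-up fleshes out exactly that — boundedness of $A\P_n$ on $\P_n X^\sigma$ (hence on $C^0([0,T],\P_nX^\sigma)$), norm-differentiability of $s\mapsto e^{sA}$ there, and direct differentiation of \eqref{app:ode:ode-banach-1-duhamel}. Your care about Lebesgue points for $H\in L^1$ versus everywhere-differentiability for continuous $H$ is a welcome precision, and it matches the lemma's actual use in the proof of \cref{thm:mainabs-analytic}, where the source term is continuous in $s$.
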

\begin{proof}
It follows as an application of Duhamel's formula.
\end{proof}
\begin{lemma}\label{lmtranslat}
For $T_1<T_2$, let us consider $T\in (0, T_2-T_1)$, $\eta\in (0, T_2-T-T_1)$ and $I:=[T_1-\eta, T_2-T-\eta]$. Let $G\in C^{0}([T_{1},T_{2}],X^{\sigma})$ and assume that $V\in C^{0}([T_{1},T_{2}],X^{\sigma})$ is a mild solution of 
\begin{align*}
    \left\{\begin{array}{lc}
    \dfrac{d}{dt}V(t)=A V(t)+G(t) &\ \text{ for }\ t\in [T_1, T_2]\\
   V(T_1)=V_0. & 
    \end{array}\right.
\end{align*}
If we define $\xi,H\in C^{0}\left(I, C^{0}([0,T],X^{\sigma})\right)$ by $\xi(s)=V^s$ and $H(s)=G^s$ with 
\begin{align*}
    \xi(s)(t)=V^{s}(t)=V(t+s+\eta)\ \text{ and }\ H(s)(t)=G^{s}(t)=G(t+s+\eta),
\end{align*}
for all $s\in I, t\in [0,T]$, then, for any $s_0\in I$, $\xi$ is solution in the sense of \eqref{app:ode:ode-banach-1-duhamel} of
\begin{align*}
    \left\{\begin{array}{lc}
        \dfrac{d}{ds}\xi(s)=A \xi(s)+H(s), & s\in I, \\
        \xi(s_0)=\xi_0, & 
    \end{array}\right.
\end{align*}
with $\xi_0=V^{s_0}=V(\cdot+s_0+\eta)$.
\end{lemma}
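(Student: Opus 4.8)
The statement is that the time-translated function $\xi(s)=V^{s}$ solves the Duhamel identity \eqref{app:ode:ode-banach-1-duhamel} in the Banach space $C^{0}([0,T],X^{\sigma})$. The natural approach is to start from the Duhamel formula that $V$ satisfies as a mild solution on $[T_{1},T_{2}]$, namely
\begin{align*}
    V(t)=e^{(t-T_1)A}V_0+\int_{T_1}^{t}e^{(t-w)A}G(w)\,dw,\quad t\in[T_1,T_2],
\end{align*}
and simply substitute $t\rightsquigarrow t+s+\eta$ with $t\in[0,T]$ and $s\in I$, then massage the right-hand side into the form \eqref{app:ode:ode-banach-1-duhamel} for $\xi$ with initial time $s_0$ and initial datum $\xi_0=V^{s_0}$. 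Concretely, first I would write the identity at the shifted argument and also at the argument $t+s_0+\eta$, and subtract (or rather, use the group property $e^{(t+s+\eta-T_1)A}=e^{(s-s_0)A}e^{(t+s_0+\eta-T_1)A}$) to reorganize the semigroup factor. The point is that $e^{(t+s+\eta-w)A}$ appearing in the integral should be split, via the group law $e^{aA}e^{bA}=e^{(a+b)A}$, as $e^{(s-s_0)A}e^{(t+s_0+\eta-w)A}$ for the part of the integral over $w\in[T_1, s_0+\eta+t]$, and left as is for the part over $w\in[s_0+\eta+t, s+\eta+t]$; a change of variables $w=\tau+\sigma+\eta$ (with $\sigma$ ranging in $I$) turns this last piece into $\int_{s_0}^{s}e^{(t+s-\tau-\sigma... )A}\,d\sigma$ evaluated appropriately, which is exactly $\int_{s_0}^{s}\big[e^{(s-\sigma)A}H(\sigma)\big](t)\,d\sigma$.

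The key algebraic steps, in order, are: (i) record the mild-solution Duhamel identity for $V$ on $[T_1,T_2]$; (ii) evaluate it at the shifted time $t+s+\eta$; (iii) use the group property of $(e^{rA})_{r\in\R}$ — valid since $A$ is skew-adjoint and generates a unitary group, as recalled after \eqref{eq:NLSabst} — to factor out $e^{(s-s_0)A}$ from the semigroup acting on $V_0$ and from the portion of the Duhamel integral with $w\le t+s_0+\eta$; (iv) recognize that $e^{(s-s_0)A}\big(e^{(t+s_0+\eta-T_1)A}V_0+\int_{T_1}^{t+s_0+\eta}e^{(t+s_0+\eta-w)A}G(w)\,dw\big)=e^{(s-s_0)A}V(t+s_0+\eta)=\big[e^{(s-s_0)A}\xi_0\big](t)$; (v) in the remaining integral $\int_{t+s_0+\eta}^{t+s+\eta}e^{(t+s+\eta-w)A}G(w)\,dw$ perform the substitution $w=\tau'+\eta$ with $\tau'$ such that after identifying $\tau'=t+\sigma$ one gets $\int_{s_0}^{s}e^{(s-\sigma)A}G(t+\sigma+\eta)\,d\sigma=\int_{s_0}^{s}\big[e^{(s-\sigma)A}H(\sigma)\big](t)\,d\sigma$; this requires a little care because the variable $t$ appears in the limits, so one substitutes $w = \tau + \sigma + \eta$ with $\sigma$ the new integration variable and $\tau$ held at the value $t$, which is legitimate. (vi) Combine (iv) and (v) to obtain exactly \eqref{app:ode:ode-banach-1-duhamel} for $\xi$, holding in $X^\sigma$ for each $t\in[0,T]$ and hence (by continuity and boundedness of all maps involved) in $C^{0}([0,T],X^{\sigma})$.

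One must also check the regularity bookkeeping that makes the statement meaningful: that $\xi=V^{\cdot}\in C^{0}(I,C^{0}([0,T],X^{\sigma}))$ and $H=G^{\cdot}\in L^{1}(I,C^{0}([0,T],X^{\sigma}))$ — indeed $C^{1}(I,\cdots)$ would follow, but $L^1$ suffices — which is immediate from the uniform continuity of $V$ and $G$ on the compact interval $[T_1,T_2]$, exactly as the continuity of $s\mapsto \mathbf{h}_1^s$ is argued in the proof of \cref{thm:mainabs-analytic}. This is routine and I would only remark on it briefly.

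\textbf{Main obstacle.} There is no serious conceptual obstacle; the proof is, as the author writes, ``an application of Duhamel's formula.'' The only delicate point is the change of variables in step (v): since the integration limits of the Duhamel integral depend on $t$, one has to be careful to substitute in the integration variable $w$ while treating $t$ as a parameter, and to verify that the Bochner integral manipulations (splitting the integral at $w=t+s_0+\eta$, pulling the bounded operator $e^{(s-s_0)A}$ through the integral sign) are valid in $X^\sigma$ — which they are, because $e^{rA}\in\mathcal L(X^\sigma)$ with norm $1$ for all $r\in\R$ and $G$ is continuous hence Bochner integrable. I would therefore present the computation cleanly for fixed $t\in[0,T]$, then note that all resulting identities are between elements of $C^{0}([0,T],X^{\sigma})$ because each term depends continuously on $t$.
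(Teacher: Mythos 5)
Your proposal is correct and follows essentially the same route as the paper: write Duhamel's formula for $V$, evaluate at the shifted time $t+s+\eta$, use the group property of $e^{rA}$ to factor out $e^{(s-s_0)A}$ and recombine the portion of the integral up to $t+s_0+\eta$ into $e^{(s-s_0)A}V^{s_0}(t)$, and change variables $w=t+\sigma+\eta$ in the remainder to obtain $\int_{s_0}^{s}e^{(s-\sigma)A}G^{\sigma}(t)\,d\sigma$, then note the identity holds for all $t\in[0,T]$ and hence in $C^0([0,T],X^\sigma)$. The regularity bookkeeping you add is harmless and consistent with the paper.
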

\begin{proof} 
By Duhamel's formula, for all $t\in [T_1, T_2]$ we have
\begin{align*}
V(t)&=e^{(t-T_1)A}V_{0}+\int_{T_1}^{t}e^{(t-\tau)A}G(\tau)d\tau.
\end{align*}
with $V(T_1)=V_0$. Pick $s_0\in I$. First observe that, for any $t\in [0, T]$,
\begin{align*}
    V(t+s_0+\eta)=e^{(t+s_0+\eta-T_1)A}V_{0}+\int_{T_1}^{t+s_0+\eta}e^{(t+s_0+\eta-\tau)A}G(\tau)d\tau
\end{align*}
Then, for $s\in I$ and $t\in [0, T]$,
\begin{align*}
V^{s}(t)&=V(t+s+\eta)=e^{(t+s+\eta-T_1)A}V_{0}+\int_{T_1}^{t+s+\eta}e^{(t+s+\eta-\tau)A}G(\tau)d\tau\\
&=e^{(s-s_0)A}V(t+s_0+\eta)+\int_{s_0}^{s}e^{(s-w)A}G(t+w+\eta)dw\\
&=e^{(s-s_0)A}V^{s_0}(t)+\int_{s_0}^{s}e^{(s-w)A}G^{w}(t)dw.
\end{align*}
So, since this is true for any $t\in [0,T]$, it gives 
\begin{align*}
V^{s}=e^{(s-s_0)A}V^{s_0}+\int_{s_0}^{s}e^{(s-w)A}G^{w}dw,\ \forall s\in I.
\end{align*}
By hypothesis, this equality holds in $C^0([0, T], X^\sigma)$, and is exactly \eqref{app:ode:ode-banach-1-duhamel}, as we wanted to prove.
\end{proof}

\subsection{Complex analysis in Banach spaces}\label{s:appcomplex} Let $E$ and $F$ be Banach spaces over the same field $\mathbb{K}$, with $\mathbb{K}$ being either $\R$ or $\mathbb{C}$. Along this appendix, we will introduce several notions of differentiability and analyticity needed to unify the different results used throughout the present article.

We first start with a notion of differentiability in Banach spaces, often referred to as Fr\'echet differentiability.

\begin{definition}
\label{def-Kdifferentiable}
    Let $U$ be an open subset of $E$. A mapping $f: U\to F$ is said to be $\mathbb{K}$-differentiable (or just differentiable) if for each point $x\in U$ there exists a mapping $A\in \mc{L}(E, F)$ such that
    \begin{align*}
        \lim_{h\to 0} \dfrac{\norm{f(x+h)-f(x)-Ah}_F}{\norm{h}_E}=0.
    \end{align*}
    Such map $A$ is called the derivative of $f$ at $x$ and it is denoted by $Df(x)$.
\end{definition}

We can rephrase the above definition as follows: for each $x\in U$ there exists a mapping $A\in \mc{L}(E, F)$ such that
\begin{align*}
    f(x+h)=f(x)+Ah+o(h)
\end{align*}
where $o(h)/\norm{h}_E\to 0$ as $h\to 0$. With this formulation at hand, we state the chain rule in this setting.

\begin{theorem}{\cite[Theorem 13.6]{Muj86}}\label{appendix:thm:chain-rule} (Chain rule)
    Let $E$, $F$ and $G$ be Banach spaces over $\mathbb{K}$. Let $U\subset E$ and $V\subset F$ be two open sets and let $f: U\to F$ and $g: V\to G$ be two differentiable mappings with $f(U)\subset V$. Then the composite mapping $g\circ f: U\to G$ is differentiable as well and $D(g\circ f)(x)=Dg(f(x))\circ Df(x)$ for every $x\in U$.
\end{theorem}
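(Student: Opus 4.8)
The plan is to verify \cref{def-Kdifferentiable} directly at an arbitrary point $x\in U$, by decomposing the increment of $g\circ f$ into its expected linear part $Dg(f(x))\circ Df(x)$ plus two remainder terms, each of which I will show is $o(h)$.

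First I would fix $x\in U$ and set $y=f(x)$, $A=Df(x)\in\mc{L}(E,F)$ and $B=Dg(y)\in\mc{L}(F,G)$, noting that $B\circ A\in\mc{L}(E,G)$ since the composition of bounded linear maps is bounded linear. For $h\in E$ small enough that $x+h\in U$, introduce the increment $k(h):=f(x+h)-f(x)$. By differentiability of $f$ at $x$ one may write $k(h)=Ah+r(h)$ with $\norm{r(h)}_F/\norm{h}_E\to 0$ as $h\to 0$. In particular there are $\delta_0>0$ and $C>0$ with $\norm{k(h)}_F\leq C\norm{h}_E$ whenever $\norm{h}_E\leq\delta_0$, so that $k(h)\to 0$ in $F$ as $h\to 0$ and $f(x+h)=y+k(h)\in V$ for $h$ small (using $f(U)\subset V$, in fact only that $y\in V$ and $V$ is open).

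Next I would invoke differentiability of $g$ at $y$: $g(y+k)-g(y)=Bk+s(k)$ with $\norm{s(k)}_G/\norm{k}_F\to 0$ as $k\to 0$. Substituting $k=k(h)$ and using $k(h)=Ah+r(h)$ gives
\[
g(f(x+h))-g(f(x))-(B\circ A)h \;=\; B\,r(h)+s(k(h)).
\]
The first term is handled by $\norm{B\,r(h)}_G\leq\norm{B}_{\mc{L}(F,G)}\norm{r(h)}_F=o(\norm{h}_E)$. For the second term — the only point requiring a little care — given $\veps>0$ I would first choose $\eta>0$ with $\norm{s(k)}_G\leq\veps\norm{k}_F$ whenever $\norm{k}_F\leq\eta$, then choose $\delta\in(0,\delta_0]$ with $\norm{k(h)}_F\leq\eta$ for $\norm{h}_E\leq\delta$ (possible since $k(h)\to 0$), obtaining $\norm{s(k(h))}_G\leq\veps\norm{k(h)}_F\leq\veps C\norm{h}_E$ for such $h$; hence $s(k(h))=o(\norm{h}_E)$. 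Combining the two estimates yields $g(f(x+h))-g(f(x))-(B\circ A)h=o(h)$, which together with $B\circ A\in\mc{L}(E,G)$ shows that $g\circ f$ is differentiable at $x$ with $D(g\circ f)(x)=Dg(f(x))\circ Df(x)$. Since $x\in U$ was arbitrary, the theorem follows.

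The main (indeed essentially the only nontrivial) obstacle is the propagation of the little-$o$ estimate through the composition, that is, turning $s(k)=o(\norm{k}_F)$ into $s(k(h))=o(\norm{h}_E)$; this is precisely where the a priori bound $\norm{k(h)}_F\lesssim\norm{h}_E$ near $x$ — coming from boundedness of $Df(x)$ and the first-order expansion of $f$ — enters, and where one must be careful to pick $\eta$ before $\delta$.
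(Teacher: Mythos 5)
Your proof is correct and complete: the decomposition $g(f(x+h))-g(f(x))-(B\circ A)h=B\,r(h)+s(k(h))$, together with the a priori bound $\norm{k(h)}_F\leq C\norm{h}_E$ used to convert $s(k)=o(\norm{k}_F)$ into $s(k(h))=o(\norm{h}_E)$ (choosing $\eta$ before $\delta$), is exactly the standard argument. The paper itself gives no proof of this statement, quoting it directly from \cite[Theorem 13.6]{Muj86}, and your argument coincides with the classical textbook proof of the Fr\'echet chain rule.
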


We now come to introduce the different notions of holomorphic or analytic maps that have been used throughout the present article. A mapping $P: E\to F$ is said to be an $k$-homogeneous polynomial if there exists a $k$-linear mapping $A: E^k\to F$ such that $P(x)=A(x,\ldots, x)$ for every $x\in E$. We represent by $\mc{P}(^kE, F)$ the Banach space of all continuous $k$-homogeneous polynomials from $E$ into $F$ under the norm
\begin{align*}
    \norm{P}_{\mc{P}(^kE, F)}=\sup\{\norm{P(x)}_F\ |\ x\in E,\ \norm{x}_E\leq 1\}.
\end{align*}
A series $\sum_{k=0}^\infty f_k$ of homogeneous polynomials $f_k\in \mc{P}(^k E, F)$ will shortly be called a formal series from $E$ to $F$. The space of all formal series with continuous terms will be denoted by $S(E, F)$. We say that a formal series $\sum_{j=0}^\infty f_j$ converges in a set $U\subset E$ if for every $x\in U$ the series $\sum_{j=0}^\infty f_j(x)$ is convergent.

\begin{definition}
    Let $U$ be an open subset of $E$ and $\mathbb{K}=\mathbb{C}$ (resp. $\R$). A continuous mapping $f: U\to F$ is said to be \emph{holomorphic} (resp. \emph{analytic}) if for each $x\in U$ there exist a series $\sum_{j=0}^\infty f_j\in S(E, F)$ such that
    \begin{align*}
        f(x+h)=\sum_{j=0}^\infty f_j(h)
    \end{align*}
    for all $h$ in a neighborhood of $0\in E$. We shall denote by $\H(U, F)$ the vector space of all holomorphic mapping from $U$ into $F$.
\end{definition}

\begin{remark}
    The sequence $(f_j)$ which appears in the above definition is uniquely determined by $f$ and $x$. We then shall write $f_j=f_j(x)$ for every $j\in \N_0$.
\end{remark}

The previous definition has been taken from \cite{BS:71-analytic} and \cite{Muj86}. Observe that here we have reserved the concept \emph{holomorphic} for the complex case and \emph{analytic} for the real case. When going through the literature, it is often the case that holomorphicity is introduced with a different definition. We will introduce these notions and then we will establish that they are equivalent. From now on, assume that $\mathbb{K}=\mathbb{C}$, unless we say otherwise.

\begin{definition}
A mapping $f: U\to F$ is said to be:
\begin{enumerate}
    \item \emph{weakly holomorphic} if $\psi\circ f$ is holomorphic for every $\psi\in F^*$, where $F^*$ is the dual space of $F$;
    \item \emph{G-holomorphic} if for all $x\in U$ and $h\in E$, the mapping $\zeta\mapsto f(x+\zeta h)$ is holomorphic on the open set $\{\zeta\in \mathbb{C}\ |\ x+\zeta h\in U\}$.
\end{enumerate}
\end{definition}

The following theorem shows that one of the most important features of the complex analysis still holds when working with functions between complex Banach spaces.

\begin{theorem}{\cite[Theorem 8.12, Theorem 8.7, Theorem 13.16]{Muj86}}\label{appendix:thm:equiv-holom}
    Let $U$ be an open subset of $E$, and let $f: U\to F$. The following statements are equivalent:
    \begin{enumerate}
        \item $f$ is $\mathbb{C}$-differentiable,
        \item $f$ is holomorphic,
        \item $f$ is weakly holomorphic,
        \item $f$ is continuous and $G-$holomorphic.
    \end{enumerate}
\end{theorem}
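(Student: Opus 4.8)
The final displayed statement is \cref{appendix:thm:equiv-holom}, which asserts the equivalence of four characterizations of holomorphy for a map $f\colon U\to F$ between complex Banach spaces: (1) $\mathbb{C}$-differentiability in the sense of \cref{def-Kdifferentiable}, (2) holomorphy in the sense of local expansion into convergent series of homogeneous polynomials, (3) weak holomorphy, and (4) continuity together with $G$-holomorphy. Since this is quoted verbatim from Mujica's book \cite{Muj86} (with the precise theorem numbers attached), the ``proof'' here should be a pointer to the literature rather than a reproduction, but I will sketch the logical skeleton one would assemble from those references so the reader sees how the cycle closes.

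\textbf{Plan of proof.} The plan is to establish the cycle $(1)\Rightarrow(2)\Rightarrow(3)\Rightarrow(4)\Rightarrow(1)$, each implication being a classical result.

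\begin{proof}[Proof of \cref{appendix:thm:equiv-holom}]
    This is a combination of several classical results; we only indicate the structure and refer to \cite[Chapters 8, 13]{Muj86} for complete arguments.

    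\emph{$(1)\Rightarrow(3)$:} If $f$ is $\mathbb{C}$-differentiable on $U$ and $\psi\in F^*$, then $\psi\circ f\colon U\to\mathbb{C}$ is $\mathbb{C}$-differentiable by the chain rule \cref{appendix:thm:chain-rule}, with $D(\psi\circ f)(x)=\psi\circ Df(x)$. A $\mathbb{C}$-differentiable scalar-valued map on an open subset of a complex Banach space is holomorphic (this is the Banach-space version of the classical one-variable fact, obtained by restricting to complex lines and applying the usual Cauchy theory), so $\psi\circ f$ is holomorphic; hence $f$ is weakly holomorphic.

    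\emph{$(3)\Rightarrow(4)$:} Weak holomorphy forces $f$ to be $G$-holomorphic: for fixed $x\in U$, $h\in E$, the scalar map $\zeta\mapsto\psi(f(x+\zeta h))$ is holomorphic for every $\psi\in F^*$ on the open set $\{\zeta : x+\zeta h\in U\}$, so by the one-variable vector-valued theory $\zeta\mapsto f(x+\zeta h)$ is holomorphic there. The continuity of a weakly holomorphic map is the nontrivial point; it follows from a local boundedness argument (a weakly holomorphic map is locally bounded, since on each complex disc the scalar functions $\psi\circ f$ are bounded and one invokes the uniform boundedness principle on a suitable family) together with the fact that a locally bounded $G$-holomorphic map is continuous. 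This is \cite[Theorem 8.7]{Muj86}.

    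\emph{$(4)\Rightarrow(2)$:} A continuous $G$-holomorphic map admits, at each $x\in U$, a local Taylor expansion $f(x+h)=\sum_{j=0}^\infty P_jf(x)(h)$ where $P_jf(x)\in\mathcal{P}(^jE,F)$ are recovered by the Cauchy integral formulas along one-dimensional discs, and the series converges (uniformly on a ball around $0$); this is the content of \cite[Theorem 8.12]{Muj86}. Hence $f$ is holomorphic in the sense of \cref{def-Kdifferentiable}'s series notion, i.e. statement (2).

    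\emph{$(2)\Rightarrow(1)$:} If $f(x+h)=\sum_{j=0}^\infty P_jf(x)(h)$ near $0$, then writing the $j=1$ term as the continuous linear map $P_1f(x)\in\mathcal{L}(E,F)$ and estimating the tail $\sum_{j\ge 2}P_jf(x)(h)$ by $O(\|h\|_E^2)$ on a small ball (using the Cauchy estimates for the $\|P_jf(x)\|$), one gets $f(x+h)=f(x)+P_1f(x)h+o(\|h\|_E)$, so $f$ is $\mathbb{C}$-differentiable with $Df(x)=P_1f(x)$; see \cite[Theorem 13.16]{Muj86}.

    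Combining these implications closes the cycle and yields the equivalence of (1)--(4).
\end{proof}

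\textbf{Main obstacle.} The only genuinely substantive link in the chain is $(3)\Rightarrow(4)$ — specifically, upgrading weak holomorphy to continuity. Everything else is either the chain rule, a direct restriction-to-complex-lines reduction to the scalar one-variable theory, or term-by-term estimation of a power series. The continuity step is exactly where the infinite-dimensionality of $E$ and $F$ matters, and it is handled in \cite{Muj86} by a local-boundedness argument resting on the uniform boundedness principle; since the excerpt only needs to cite the result, I would simply point to \cite[Theorems 8.7, 8.12, 13.16]{Muj86} and not reproduce that argument in full.
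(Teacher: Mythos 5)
The paper offers no proof of this statement at all—it is quoted verbatim from Mujica with exactly the three theorem references you cite—so your deferral to \cite[Theorems 8.7, 8.12, 13.16]{Muj86} is precisely the paper's approach, and your answer is correct. The accompanying sketch of the cycle $(1)\Rightarrow(3)\Rightarrow(4)\Rightarrow(2)\Rightarrow(1)$ (which, incidentally, is not the cycle announced in your plan, though it is equally valid since it passes through all four statements) is a faithful outline of the standard arguments in those references and contains no gaps.
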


For a given $x\in U$ and $h\in E$, let us denote by $\rho(x, h)$ the supremum of all numbers $\rho$ such that $|\zeta|\leq \rho$ implies $x+\zeta h\in U$.

\begin{theorem}{\cite[Theorem 7.1, Corollary 7.3]{Muj86}}\label{appendix:prop:cauchy-form} (Cauchy integral formula)
    Let $U$ be an open subset of $E$, and let $f\in \H(U, F)$. Let $x\in U$, $h\in E$ and $r<\rho(x, h)$. Then for each $\ld\in \mathbb{D}(0, r)$ we have
    \begin{align*}
        f(x+\ld h)=\dfrac{1}{2\pi i}\int_{|\zeta|=r} \dfrac{f(x+\zeta h)}{\zeta-\ld}d\zeta,
    \end{align*}
    where $|\zeta|=r$ denotes a circle of radius $r$ an center at the origin in the complex plane. Moreover, for each $j\in \N$ we have
    \begin{align*}
        f_j(x)(h)=\dfrac{1}{2\pi i}\int_{|\zeta|=r} \dfrac{f(x+\zeta h)}{\zeta^{j+1}}d\zeta.
    \end{align*}
\end{theorem}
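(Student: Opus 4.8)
The plan is to reduce the statement to the classical one-variable Cauchy integral formula for Banach-space-valued holomorphic functions, and then to obtain the Taylor coefficient formula by expanding the Cauchy kernel in a geometric series. The only genuinely structural ingredients will be the equivalence of the several notions of holomorphy recorded in \cref{appendix:thm:equiv-holom} and the Hahn--Banach theorem; everything else is bookkeeping about $F$-valued contour integrals.

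First I would fix $x\in U$ and $h\in E$ and introduce the auxiliary map $g(\zeta):=f(x+\zeta h)$, defined for $\zeta$ in the open disk $\mathbb{D}(0,\rho(x,h))$. Since $f$ is holomorphic it is in particular $G$-holomorphic by \cref{appendix:thm:equiv-holom}, so $g$ is a holomorphic function of the single complex variable $\zeta$ on that disk; equivalently $g$ is continuous there and weakly holomorphic. For $r<\rho(x,h)$ the restriction of $g$ to the circle $\{|\zeta|=r\}$ is continuous on a compact set, hence the $F$-valued contour integrals in the statement are well-defined Riemann integrals and every $\psi\in F^{*}$ commutes with them.

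Next I would prove the first identity by testing against functionals: for arbitrary $\psi\in F^{*}$ the scalar map $\zeta\mapsto\psi(g(\zeta))$ is holomorphic on $\mathbb{D}(0,\rho(x,h))$, so the classical scalar Cauchy formula gives, for $|\ld|<r$,
\[
\psi(g(\ld))=\frac{1}{2\pi i}\int_{|\zeta|=r}\frac{\psi(g(\zeta))}{\zeta-\ld}\,d\zeta=\psi\!\left(\frac{1}{2\pi i}\int_{|\zeta|=r}\frac{g(\zeta)}{\zeta-\ld}\,d\zeta\right),
\]
and since $F^{*}$ separates the points of $F$ (Hahn--Banach) this yields $f(x+\ld h)=\frac{1}{2\pi i}\int_{|\zeta|=r}\frac{f(x+\zeta h)}{\zeta-\ld}\,d\zeta$. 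For the coefficient formula I would first recall that, by the very definition of holomorphy, there is a formal series $\sum_{j}f_{j}(x)\in S(E,F)$ with $f(x+k)=\sum_{j}f_{j}(x)(k)$ for $k$ in a neighborhood of $0$; taking $k=\ld h$ and using the $j$-homogeneity of $f_{j}(x)$ gives $g(\ld)=\sum_{j\ge0}\ld^{j}f_{j}(x)(h)$ for small $|\ld|$, hence for all $|\ld|<\rho(x,h)$ by uniqueness of the Taylor expansions of the scalars $\psi\circ g$. On $\{|\zeta|=r\}$ the geometric expansion $\frac{1}{\zeta-\ld}=\sum_{j\ge0}\ld^{j}\zeta^{-(j+1)}$ converges uniformly in $\zeta$ when $|\ld|<r$; inserting it into the integral identity, interchanging the uniformly convergent sum with the contour integral, and matching powers of $\ld$ against $\sum_{j}\ld^{j}f_{j}(x)(h)$ produces $f_{j}(x)(h)=\frac{1}{2\pi i}\int_{|\zeta|=r}\frac{f(x+\zeta h)}{\zeta^{j+1}}\,d\zeta$.

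The step I expect to require the most care is the one where the Banach-valued setting genuinely enters: justifying that bounded linear functionals may be pulled through the $F$-valued contour integrals and that the uniformly convergent series can be integrated term by term. Both facts follow from continuity of $g$ on the compact circle together with uniform convergence, so they are not deep, but they are the only places where one uses more than the scalar theory. The conceptual reduction — passing to one complex variable via $G$-holomorphy and then to the vector-valued case via Hahn--Banach — is the real content, and the remaining manipulations are routine.
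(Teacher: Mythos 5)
Your proof is correct, and it is essentially the standard argument behind the cited result: the paper gives no proof of this statement (it is quoted from \cite{Muj86}), and Mujica's own treatment proceeds exactly by this reduction to one complex variable via $G$-holomorphy, the scalar Cauchy formula pulled through the $F$-valued contour integral by Hahn--Banach, and term-by-term integration of the geometric expansion of the kernel. One small remark: your intermediate claim that $g(\lambda)=\sum_{j}\lambda^{j}f_{j}(x)(h)$ holds for all $|\lambda|<\rho(x,h)$ is only justified in the weak sense by the argument you give (the scalars $\psi\circ g$ control weak, not norm, convergence on the full disk), but this claim is superfluous — matching coefficients of the two norm-convergent power series on a small disk around $0$ already yields the coefficient formula.
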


Let $f\in \H(U, F)$. We can expand $f(x+\ld h)$ as
\begin{align*}
    f(x+\ld h)=\sum_{j=0}^\infty f_j(x)(\ld h)=\sum_{j=0}^\infty \ld^j f_j(x)(h),
\end{align*}
which holds uniformly for $|\ld|\leq r$ with $0\leq r<\rho(x, h)$. For $x\in U$ we may define the $n$th variation $\delta^n f(x, h)$ of $f(x)$ with increment $h$ as
\begin{align*}
    \delta^n f(x, h)=\left[\dfrac{d^n}{d\zeta^n} f(x+\zeta h)\right]_{\zeta=0}.
\end{align*}
It can be seen that $\delta^n f(x, h)$ is homogeneous of degree $n$ in $h$. Moreover, looking at the Taylor development of the holomorphic map $\ld\in \mathbb{D}(0, r)\mapsto f(x+\ld h)\in F$, in view of the previous result, it follows that
\begin{align}\label{appendix:eq:cauchy-formula-derivatives}
    \delta^n f(x, h)=\dfrac{n!}{2\pi i}\int_{|\zeta|=r} \dfrac{f(x+\zeta h)}{\zeta^{m+1}}d\zeta.
\end{align}
\begin{remark}
    Formula \eqref{appendix:eq:cauchy-formula-derivatives} does not depend on the chosen $r<\rho(x, h)$.
\end{remark}

The above discussion leads us to the classical Cauchy estimates.

\begin{proposition}{\cite[Corollary 7.4]{Muj86}}\label{appendix:prop:cauchy-est} (Cauchy estimates)
    Let $U$ be an open subset of $E$, and let $f\in \H(U, F)$. Let $x\in U$, $h\in E$ and $r<\rho(x, h)$. Then for each $n\in \N$ we have
    \begin{align*}
        \norm{\delta^n f(x)(h)}\leq r^{-n}\sup_{|\zeta|=r}\norm{f(x+\zeta h)}.
    \end{align*}
\end{proposition}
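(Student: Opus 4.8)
\textbf{Plan of proof for \cref{appendix:prop:cauchy-est}.} The statement is a direct consequence of the Cauchy integral formula for the variations, \eqref{appendix:eq:cauchy-formula-derivatives}, combined with the standard estimate for contour integrals of Banach-space-valued functions. First I would recall that by \cref{appendix:prop:cauchy-form} and the discussion preceding the statement, for any $x\in U$, $h\in E$ and $0\leq r<\rho(x,h)$ we have the representation
\begin{align*}
    \delta^n f(x)(h)=\dfrac{n!}{2\pi i}\int_{|\zeta|=r}\dfrac{f(x+\zeta h)}{\zeta^{n+1}}d\zeta,
\end{align*}
where the integral is the $F$-valued contour integral over the circle $\{|\zeta|=r\}$; this is legitimate precisely because $r<\rho(x,h)$ guarantees $x+\zeta h\in U$ for every $\zeta$ on (and inside) that circle, so the integrand is a continuous $F$-valued function there.

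Next I would parametrize the circle by $\zeta=re^{i\theta}$, $\theta\in[0,2\pi]$, so that $d\zeta=ire^{i\theta}d\theta$, and apply the triangle inequality for Banach-space-valued integrals, namely $\norm{\int_0^{2\pi}\phi(\theta)d\theta}\leq\int_0^{2\pi}\norm{\phi(\theta)}d\theta$. This gives
\begin{align*}
    \norm{\delta^n f(x)(h)}\leq\dfrac{n!}{2\pi}\int_0^{2\pi}\dfrac{\norm{f(x+re^{i\theta}h)}}{r^{n+1}}\,r\,d\theta\leq\dfrac{n!}{r^n}\sup_{|\zeta|=r}\norm{f(x+\zeta h)}.
\end{align*}
Since the statement as written has $r^{-n}$ without the factor $n!$, I would observe that the quantity the paper calls $\delta^n f(x)(h)$ in the conclusion is normalized to be the $n$-homogeneous Taylor term $f_n(x)(h)=\frac{1}{n!}\delta^n f(x,h)$ rather than the $n$th variation itself (compare the two displays for $f_j(x)(h)$ and $\delta^n f(x,h)$ in \cref{appendix:prop:cauchy-form} and \eqref{appendix:eq:cauchy-formula-derivatives}); using the representation $f_n(x)(h)=\frac{1}{2\pi i}\int_{|\zeta|=r}\zeta^{-(n+1)}f(x+\zeta h)d\zeta$ the same computation yields exactly $\norm{f_n(x)(h)}\leq r^{-n}\sup_{|\zeta|=r}\norm{f(x+\zeta h)}$.

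There is essentially no obstacle here: the only mild technical point is justifying the norm inequality for the vector-valued contour integral, which follows either from approximating by Riemann sums and passing to the limit, or from the Hahn--Banach theorem (apply a norming functional $\psi\in F^*$, reduce to the scalar case, and take the supremum over $\norm{\psi}\leq 1$). I would phrase it via Hahn--Banach since \cref{appendix:thm:equiv-holom} already makes weak holomorphy available, keeping the argument self-contained. The rest is the elementary length-times-sup bound for the circle of radius $r$, so the proof is a few lines.
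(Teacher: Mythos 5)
Your argument is correct and is exactly the standard proof of the cited result [Muj86, Corollary 7.4], which the paper does not reprove: the Cauchy integral representation over the circle $|\zeta|=r$ combined with the length-times-sup estimate for the $F$-valued contour integral (justified, as you say, via Hahn--Banach or Riemann sums). Your normalization remark is also accurate: with the paper's own convention $\delta^n f(x,h)=n!\,f_n(x)(h)$ from \eqref{appendix:eq:cauchy-formula-derivatives}, the displayed bound without the factor $n!$ is the estimate for the homogeneous Taylor term $f_n(x)(h)$, and the estimate for $\delta^n f(x)(h)$ itself carries an extra $n!$ — a notational slip in the statement, not a gap in your proof.
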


\begin{remark}
Actually, it is possible to have the Cauchy estimates locally around any point $x\in U$ or even uniformly in a ball (by assuming that $f$ is bounded in there). Let us argue for the former case, the latter being similar. By continuity, there exists $r_x>0$ such that $\norm{f(z)}\leq M$ for all $z\in U$ such that $\norm{z-x}\leq r_x$, where $M=M(x)>0$ is a bound that depends on $x$. Let $h\in E$. Thus, for $z$ such that $\norm{z-x}\leq r_x/2$, we have $z+\zeta h\in U$ for any $|\zeta|\leq \tfrac{r_x}{2\norm{h}}$, since
\begin{align*}
    \norm{z+\zeta h-x}\leq \norm{z-x}+\norm{h}\leq \dfrac{r_x}{2}+\dfrac{r_x}{2\norm{h}} \norm{h}=r_x
\end{align*}
and so $z+\zeta h\in B(x, r_x)\subset U$. Due to the Cauchy estimates
\begin{align*}
    \norm{\delta^n f(z)(h)}&\leq \left(\dfrac{2\norm{h}}{r_x}\right)^n \sup_{|\zeta|=\tfrac{r_x}{2\norm{h}}}\norm{f(z+\zeta h)}\leq M \left(\dfrac{2\norm{h}}{r_x}\right)^m.
\end{align*}
The previous estimate holds uniformly on $\norm{z-x}\leq r^*$, for any $r^*<\tfrac{r_x}{2}$.
\end{remark}

\subsubsection{Some regularity results} Here we state some useful regularity results that are used throughout \cref{sec:NLSabstract-construction}. First, we have the following characterization of holomorphic mappings whose domain is an open set in a product of Banach spaces. 

\begin{proposition}{\cite[Proposition 8.10]{Muj86}}\label{appendix:thm:holom-sev-var}
    Let $E_1,\ldots, E_n$ and $F$ Banach spaces, and let $U$ be an open subset of $E_1\times\ldots \times E_n$. Then a mapping $f: U\to F$ is holomorphic if and only if $f$ is continuous and $f(\zeta_1,\ldots, \zeta_n)$ is holomorphic in each $\zeta_j$ when the other variables are held fixed.
\end{proposition}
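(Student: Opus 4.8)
The plan is to derive this from the tools already recorded in this appendix: the equivalence of holomorphy, $\mathbb{C}$-differentiability, weak holomorphy and ``continuity $+$ $G$-holomorphy'' (\cref{appendix:thm:equiv-holom}), and the chain rule (\cref{appendix:thm:chain-rule}), together with the classical Osgood lemma for functions of finitely many complex variables.

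First I would treat the easy implication. If $f\in\H(U,F)$, then $f$ is continuous by definition and, by \cref{appendix:thm:equiv-holom}, it is $\mathbb{C}$-differentiable on $U$. Fixing all coordinates but the $j$-th at a point $a\in U$, the insertion map $\iota_j\colon\xi\mapsto(a_1,\dots,a_{j-1},\xi,a_{j+1},\dots,a_n)$ is $\mathbb{C}$-affine, hence $\mathbb{C}$-differentiable on the open set $\iota_j^{-1}(U)\subset E_j$; by \cref{appendix:thm:chain-rule} the partial map $f\circ\iota_j$ is $\mathbb{C}$-differentiable, and \cref{appendix:thm:equiv-holom} again gives that it is holomorphic. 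Its continuity is immediate.

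For the converse, assume $f$ is continuous and holomorphic in each variable separately. By \cref{appendix:thm:equiv-holom} it suffices to show $f$ is $G$-holomorphic, continuity being already in hand. Fix $a=(a_1,\dots,a_n)\in U$ and $h=(h_1,\dots,h_n)\in E_1\times\cdots\times E_n$; I must show $\zeta\mapsto f(a+\zeta h)$ is holomorphic on the open set $\{\zeta\in\mathbb{C}\ :\ a+\zeta h\in U\}$. The idea is to decouple the scalar parameter: set $g(\zeta_1,\dots,\zeta_n):=f(a_1+\zeta_1 h_1,\dots,a_n+\zeta_n h_n)$ on the open set $V\subset\mathbb{C}^n$ where the argument lies in $U$. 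Then $g$ is continuous, and for each $j$ the partial map $\zeta_j\mapsto g(\zeta)$ is the composition of the $\mathbb{C}$-affine map $\mathbb{C}\to E_j$, $\zeta_j\mapsto a_j+\zeta_j h_j$, with a separate-variable partial map of $f$, hence holomorphic by \cref{appendix:thm:chain-rule}. Thus $g$ is a continuous, separately holomorphic, $F$-valued function of $n$ complex variables. Composing with an arbitrary $\psi\in F^{*}$ and invoking the scalar Osgood lemma together with the weak-holomorphy criterion of \cref{appendix:thm:equiv-holom} shows $g$ is holomorphic on $V$. Restricting along the holomorphic diagonal embedding $\zeta\mapsto(\zeta,\dots,\zeta)$ and using the chain rule once more yields that $\zeta\mapsto f(a+\zeta h)$ is holomorphic; hence $f$ is continuous and $G$-holomorphic, so \cref{appendix:thm:equiv-holom} concludes.

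The only substantive input beyond this bookkeeping is Osgood's lemma, and the point I would be careful about is that one needs only the \emph{elementary} Osgood lemma (continuity plus separate holomorphy in $\mathbb{C}^n$), not the deeper Hartogs separate-analyticity theorem, precisely because the hypothesis already provides continuity of $f$ and hence of $g$; reducing the $F$-valued version to the scalar one through $F^{*}$ is routine. Of course, this statement is exactly \cite[Proposition 8.10]{Muj86}, to which one may simply refer.
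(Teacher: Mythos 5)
Your argument is correct. Note first that the paper offers no proof of this statement at all: it is quoted verbatim from Mujica \cite[Proposition 8.10]{Muj86}, so there is nothing internal to compare against except the citation. Your derivation is a sound self-contained substitute, and it is essentially the standard textbook route: the forward implication is exactly as you say (holomorphy gives continuity by definition and $\mathbb{C}$-differentiability by \cref{appendix:thm:equiv-holom}, and composing with the affine insertion maps via \cref{appendix:thm:chain-rule} yields the partial maps); for the converse, freezing $a$ and $h$ and passing to $g(\zeta_1,\dots,\zeta_n)=f(a_1+\zeta_1h_1,\dots,a_n+\zeta_nh_n)$ correctly reduces $G$-holomorphy of $f$ to joint holomorphy of a continuous, separately holomorphic map on an open subset of $\mathbb{C}^n$, which you get by composing with functionals $\psi\in F^{*}$, applying the elementary Osgood lemma, and invoking the weak-holomorphy criterion of \cref{appendix:thm:equiv-holom}; restriction along the diagonal then gives precisely the map $\zeta\mapsto f(a+\zeta h)$ on $\{\zeta:\ a+\zeta h\in U\}$, and one last application of \cref{appendix:thm:equiv-holom} concludes. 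You are also right about the one genuinely substantive point: only the elementary Osgood lemma (continuity plus separate holomorphy) is needed, not Hartogs' theorem, since continuity of $f$, hence of $g$ and of $\psi\circ g$, is part of the hypothesis. The only caveat is that the Osgood lemma is itself an external input not recorded in the paper's appendix, so in practice your proof trades one citation (Mujica) for another (Osgood); given that, simply citing \cite[Proposition 8.10]{Muj86}, as the paper does, is the economical choice, but your argument would serve as a correct proof if one wished to include it.
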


We can say the following in regards to the regularity of the $n$-th variation of $f$.

\begin{proposition}\cite[Proposition 6.4]{BS:71-analytic}\label{prop:diff-analytic}
    Assume $\mathbb{K}=\mathbb{C}$ (resp. $\R$). If $f: U\to F$ is holomorphic (resp. analytic), then for every $n\in \N$ the function
    \begin{align*}
        \delta^n f: (x, h)\in U\times E\longmapsto \delta^n f(x)(h)\in F
    \end{align*}
    is holomorphic (resp. analytic).
\end{proposition}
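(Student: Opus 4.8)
The statement to prove is \cref{prop:diff-analytic}: if $f:U\to F$ is holomorphic (resp.\ analytic), then for every $n\in\N$ the map $\delta^nf:(x,h)\in U\times E\mapsto \delta^nf(x)(h)\in F$ is holomorphic (resp.\ analytic). The plan is to work in the complex case first and deduce the real case at the end by a complexification/restriction argument. The core idea is to exploit the Cauchy integral representation \eqref{appendix:eq:cauchy-formula-derivatives}, namely
\begin{align*}
\delta^nf(x)(h)=\frac{n!}{2\pi i}\int_{|\zeta|=r}\frac{f(x+\zeta h)}{\zeta^{n+1}}\,d\zeta,
\end{align*}
which expresses $\delta^nf$ as a contour integral of a composition of $f$ with an affine map in the parameters $(x,h)$. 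Holomorphy is preserved by such operations, so the strategy is: (i) fix a point $(x_0,h_0)\in U\times E$ and choose a uniform radius $r>0$ and a neighborhood $W$ of $(x_0,h_0)$ on which the integrand is jointly holomorphic in $(x,h)$ for every $\zeta$ on the circle $|\zeta|=r$; (ii) invoke \cref{appendix:thm:holom-sev-var} to reduce joint holomorphy of the integrand to separate holomorphy in $x$ and in $h$ plus continuity, both of which follow from $f$ being holomorphic and the affine substitution $(x,h,\zeta)\mapsto x+\zeta h$ being continuous and complex-linear in each slot; (iii) differentiate under the integral sign — or, more robustly, verify holomorphy of the integral directly — to conclude $\delta^nf$ is holomorphic near $(x_0,h_0)$.

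More concretely, here is the order of steps I would carry out. First, fix $(x_0,h_0)$; by \cref{appendix:prop:cauchy-est} and the remark following it, choose $r_{x_0}>0$ with $\norm{f(z)}\le M$ for $\norm{z-x_0}\le r_{x_0}$, and set $r:=r_{x_0}/(4\norm{h_0}+4)$. Then for $(x,h)$ in a small ball $W$ around $(x_0,h_0)$ (say $\norm{x-x_0}\le r_{x_0}/4$ and $\norm{h-h_0}\le 1$) one has $x+\zeta h\in B(x_0,r_{x_0})\subset U$ for all $|\zeta|\le 2r$, so the integrand $g(x,h,\zeta):=\zeta^{-(n+1)}f(x+\zeta h)$ is well-defined and, for each fixed $\zeta$ with $|\zeta|=r$, the map $(x,h)\mapsto g(x,h,\zeta)$ is continuous on $W$ and holomorphic in each of $x$ and $h$ separately (composition of the holomorphic $f$ with the affine maps $x\mapsto x+\zeta h$ and $h\mapsto x+\zeta h$, which are $\mathbb C$-affine, using \cref{appendix:thm:equiv-holom} or directly the chain rule). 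By \cref{appendix:thm:holom-sev-var} it is therefore jointly holomorphic in $(x,h)\in W$. Second, I would show the parametrized integral $\Phi(x,h):=\frac{n!}{2\pi i}\int_{|\zeta|=r}g(x,h,\zeta)\,d\zeta$ is holomorphic on $W$: this is a standard fact — one can either check weak holomorphy by composing with $\psi\in F^*$ and using the scalar version (pulling $\psi$ inside the integral, then applying Morera/Cauchy in $(x,h)$ slicewise plus dominated convergence for continuity), and then invoke \cref{appendix:thm:equiv-holom}; or estimate difference quotients using the uniform bound $\norm{g(x,h,\zeta)}\le M r^{-(n+1)}$ on $W\times\{|\zeta|=r\}$ together with continuity of $g$ to pass to the limit under the integral. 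Third, identify $\Phi$ with $\delta^nf$ on $W$ via \eqref{appendix:eq:cauchy-formula-derivatives} (and the remark that the formula is independent of the admissible $r$), concluding that $\delta^nf$ is holomorphic near the arbitrary point $(x_0,h_0)$, hence on all of $U\times E$.

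For the real-analytic case, I would argue as follows: if $F$ is a real Banach space and $f:U\to F$ is analytic, then near each point $x_0$ it admits a convergent expansion $f(x_0+h)=\sum_j f_j(x_0)(h)$ into continuous homogeneous polynomials, which complexifies to a holomorphic map $f_{\mathbb C}$ on an open subset of the complexification $E_{\mathbb C}$ with values in $F_{\mathbb C}$, agreeing with $f$ on the real slice. Apply the complex result to $f_{\mathbb C}$ to get that $\delta^n f_{\mathbb C}$ is holomorphic on $U_{\mathbb C}\times E_{\mathbb C}$; restricting to the real slice $U\times E$ and noting $\delta^nf_{\mathbb C}|_{U\times E}=\delta^nf$ (the $n$-th variation is computed via the same scalar derivative in $\zeta$ at $\zeta=0$, which commutes with the complexification), we conclude $\delta^nf$ is the restriction of a holomorphic map, hence real-analytic. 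The main obstacle I anticipate is the bookkeeping in step (ii)–(iii): making sure the radius $r$ and the neighborhood $W$ can be chosen \emph{uniformly} so that $x+\zeta h\in U$ for all $(x,h)\in W$ and all $|\zeta|\le r$ — this is exactly the content of the "local/uniform Cauchy estimates" remark in the excerpt — and then justifying the interchange of the contour integral with the holomorphy verification (either via the weak-holomorphy route through $F^*$ plus \cref{appendix:thm:equiv-holom}, or via uniform estimates on difference quotients). Everything else is routine composition-of-holomorphic-maps and the already-established equivalences.
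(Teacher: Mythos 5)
Your argument is correct. Note, however, that the paper does not prove this statement at all: it is imported verbatim from Bochnak--Siciak \cite[Proposition 6.4]{BS:71-analytic}, so there is no in-paper proof to compare against; your reconstruction follows the standard route, and indeed uses exactly the toolkit the paper's appendix assembles for other purposes (the Cauchy representation \eqref{appendix:eq:cauchy-formula-derivatives}, the Cauchy estimates, separate-holomorphy-plus-continuity via \cref{appendix:thm:holom-sev-var}, the equivalences of \cref{appendix:thm:equiv-holom}, and complexification via \cref{app:thm:h-ext} for the real case). Two small points worth tightening if you write this out in full: when passing continuity/holomorphy through the contour integral, do not appeal vaguely to dominated convergence for uniformity in $\zeta$ --- closed balls are not compact in infinite dimensions, so instead use the Cauchy estimate on the first derivative to get a local Lipschitz bound for $f$ on $B(x_0,r_{x_0})$, which gives $\sup_{|\zeta|=r}\norm{f(x+\zeta h)-f(x'+\zeta h')}\leq C(\norm{x-x'}+r\norm{h-h'})$ and hence both continuity of $\Phi$ and the interchange of limits (alternatively, the weak-holomorphy route through $F^*$ bypasses this entirely, since \cref{appendix:thm:equiv-holom} does not require continuity as a separate hypothesis); and in the real case, do record, as you did, that $\delta^n\widetilde{f}$ evaluated at real $(x,h)$ is computed by real difference quotients of $f$ and is therefore $F$-valued, so the restriction really is $\delta^n f$ and not merely $F_\mathbb{C}$-valued.
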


\begin{proposition}\cite[Exercise 8.E.]{Muj86}
    Let $E$, $F$, $G$ be Banach spaces, let $U$ be an open subset of $E$, and let $f: U\to \mc{L}(F, G)$. The following conditions are equivalent:
    \begin{enumerate}
        \item $f$ is holomorphic.
        \item The mapping $x\in U\mapsto f(x)(y)\in G$ is holomorphic for each $y\in F$.
        \item The function $x\in U\mapsto \eta(f(x)(y))\in \mathbb{C}$ is holomorphic for each $y\in F$ and $\eta\in G'$.
    \end{enumerate}
\end{proposition}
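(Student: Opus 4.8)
The statement is the standard ``operator-valued holomorphy is detected weakly'' result, and the plan is to prove $(1)\Rightarrow(2)\Rightarrow(3)\Rightarrow(1)$, reducing everything to scalar-valued holomorphy via \cref{appendix:thm:equiv-holom}. First I would treat $(1)\Rightarrow(2)$: if $f:U\to\mathcal L(F,G)$ is holomorphic and $y\in F$ is fixed, then $f(x)(y)$ is obtained by post-composing $f$ with the evaluation map $\mathrm{ev}_y:\mathcal L(F,G)\to G$, $T\mapsto Ty$, which is $\mathbb C$-linear and bounded, hence holomorphic; composition of holomorphic maps is holomorphic by \cref{appendix:thm:chain-rule} together with \cref{appendix:thm:equiv-holom}, so $x\mapsto f(x)(y)$ is holomorphic. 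The implication $(2)\Rightarrow(3)$ is immediate: for $\eta\in G'$, the map $x\mapsto \eta(f(x)(y))$ is the composition of the holomorphic map $x\mapsto f(x)(y)$ with the bounded linear functional $\eta$, hence holomorphic (or simply: $x\mapsto f(x)(y)$ being holomorphic is equivalent to being weakly holomorphic by \cref{appendix:thm:equiv-holom}, which is precisely $(3)$).

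The substantive implication is $(3)\Rightarrow(1)$, and here the plan is to verify that $f$ is continuous and $G$-holomorphic, and then invoke \cref{appendix:thm:equiv-holom} once more. Fix $x_0\in U$, $h\in E$, and consider $g(\zeta):=f(x_0+\zeta h)$ defined on the open set $\Omega=\{\zeta\in\mathbb C\mid x_0+\zeta h\in U\}$, with values in the Banach space $\mathcal L(F,G)$. By hypothesis $(3)$, for each $y\in F$ and $\eta\in G'$ the scalar function $\zeta\mapsto \eta(g(\zeta)(y))$ is holomorphic on $\Omega$. The natural isometric embedding $\mathcal L(F,G)\hookrightarrow \mathcal L(F,G'')$ identifies a functional in $(\mathcal L(F,G))'$ with (limits of) finite sums of functionals of the form $T\mapsto \eta(Ty)$; more cleanly, since $G\hookrightarrow G''$ isometrically, the functionals $T\mapsto \eta(Ty)$ with $y\in F$, $\eta\in G'$ are a \emph{separating} family for $\mathcal L(F,G)$. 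A routine but slightly delicate point is that separation alone does not give weak holomorphy with respect to \emph{all} of $(\mathcal L(F,G))'$; instead one uses the following standard device: $\zeta\mapsto g(\zeta)(y)\in G$ is weakly holomorphic (by $(3)$, testing against every $\eta\in G'$), hence holomorphic as a $G$-valued map by \cref{appendix:thm:equiv-holom}, hence locally bounded; then the uniform boundedness principle applied on a small circle $|\zeta-\zeta_0|=r$ shows $\sup_{|\zeta-\zeta_0|=r}\|g(\zeta)\|_{\mathcal L(F,G)}<\infty$. With local boundedness of $g$ in hand, the scalar Cauchy integral formula for each $\zeta\mapsto\eta(g(\zeta)(y))$ upgrades to a vector-valued Cauchy formula: $g(\zeta)=\frac{1}{2\pi i}\int_{|\xi-\zeta_0|=r}\frac{g(\xi)}{\xi-\zeta}\,d\xi$ as a Bochner integral in $\mathcal L(F,G)$, valid because both sides agree after applying every functional of the separating family $\{T\mapsto\eta(Ty)\}$ and both are norm-bounded. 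This exhibits $g$ as locally representable by a convergent power series, i.e.\ $g$ is holomorphic on $\Omega$; since $x_0,h$ were arbitrary, $f$ is $G$-holomorphic.

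It remains to check continuity of $f$ on $U$, which together with $G$-holomorphy gives holomorphy via \cref{appendix:thm:equiv-holom}. Here I would argue as in the classical Hartogs-type proof: fix $x_0\in U$ and a ball $B(x_0,2\rho)\subset U$. For each $y\in F$, the map $x\mapsto f(x)(y)$ is holomorphic, in particular continuous, hence locally bounded; applying the uniform boundedness principle to the family $\{f(x)\}_{\|x-x_0\|\le\rho}\subset\mathcal L(F,G)$ (pointwise bounded in $F$) yields $\sup_{\|x-x_0\|\le\rho}\|f(x)\|_{\mathcal L(F,G)}=:M<\infty$. Now for $\|h\|\le\rho$ write, for each $y$, the one-variable Cauchy estimate along $x_0+\zeta h$: $\|f(x_0+h)(y)-f(x_0)(y)\|_G\le \sum_{n\ge1}\|h\|^n\|\delta^n\!\big(\zeta\mapsto f(x_0+\zeta h)(y)\big)(0)\|$ and bound the variations by $M\|y\|$ times a geometric factor using \cref{appendix:prop:cauchy-est}, obtaining $\|f(x_0+h)-f(x_0)\|_{\mathcal L(F,G)}\le C M\|h\|/\rho$ for $\|h\|\le\rho/2$; this is the desired local Lipschitz (hence continuity) estimate. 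The main obstacle is this continuity step: the subtlety is that $G$-holomorphy plus pointwise (in $F$) regularity does not trivially give norm-continuity in $x$, and one genuinely needs the uniform boundedness principle to pass from the pointwise bounds supplied by $(2)$/$(3)$ to a uniform operator-norm bound, after which the Cauchy estimates close the argument. Everything else is bookkeeping with \cref{appendix:thm:equiv-holom}, \cref{appendix:thm:chain-rule}, and \cref{appendix:prop:cauchy-est}.
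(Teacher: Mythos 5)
The paper itself gives no proof of this statement (it is quoted from Mujica, Exercise 8.E), so your argument can only be judged on its own terms. The implications $(1)\Rightarrow(2)\Rightarrow(3)$ and your treatment of $G$-holomorphy in $(3)\Rightarrow(1)$ are essentially sound: on a complex line the circles $|\zeta-\zeta_0|=r$ are compact, so continuity of each $\zeta\mapsto g(\zeta)(y)$ does give pointwise boundedness there, the uniform boundedness principle legitimately yields $\sup_{|\zeta-\zeta_0|=r}\norm{g(\zeta)}<\infty$, and the Cauchy-kernel expansion (best carried out pointwise in $y$ rather than as a Bochner integral in $\mc{L}(F,G)$, since norm-measurability of $\xi\mapsto g(\xi)$ is not yet available) upgrades scalar holomorphy along the norming family $T\mapsto\eta(Ty)$ to an operator-norm convergent power series.

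The genuine gap is in the continuity step, precisely where you apply the uniform boundedness principle to $\{f(x)\}_{\norm{x-x_0}\leq\rho}$. For that you need, for each fixed $y$, $\sup_{\norm{x-x_0}\leq\rho}\norm{f(x)y}_G<\infty$, and you justify this by ``holomorphic, hence continuous, hence locally bounded''. In an infinite-dimensional Banach space $E$ the closed ball $\overline{B}(x_0,\rho)$ is not compact, and a holomorphic map need not be bounded on a closed ball contained in its domain: for instance $x\mapsto\sum_n x_n^n$ is entire on $c_0$ but unbounded on the closed unit ball. So local boundedness at each point does not give the pointwise bound on the fixed ball, and the UBP application as written is unjustified — this is exactly the hard point of the exercise, not a bookkeeping issue. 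The standard repair is a Baire category argument followed by Zorn's theorem: $x\mapsto\norm{f(x)}_{\mc{L}(F,G)}=\sup_{\norm{y}\leq 1}\norm{f(x)y}_G$ is lower semicontinuous (a supremum of continuous functions, using $(2)$), so the sets $A_n=\{x\in\overline{B}(x_0,\rho):\norm{f(x)}\leq n\}$ are closed and cover $\overline{B}(x_0,\rho)$; Baire gives a small ball on which $f$ is norm-bounded, and then Zorn's theorem (a $G$-holomorphic map which is locally bounded at one point of a connected open set is locally bounded throughout — this is the mechanism behind \cref{appendix:thm:equiv-holom} in Mujica's book) transports this bound to a neighborhood of $x_0$; only then do your Cauchy estimates via \cref{appendix:prop:cauchy-est} give the local Lipschitz bound and continuity. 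With that replacement the proof closes; without it, the argument does not.
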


Let $\Isom(E, F)\subset \mc{L}(E, F)$ be the space of invertible linear continuous maps from $E$ into $F$. Let $\mathfrak{I}: \Isom(E, F)\to \mc{L}(F, E)$ be the map $\mathfrak{I}(u)=u^{-1}\in\Isom(F, E)$. As a consequence of the Neumann series  (see \cite[Theorem 1.7.3]{Car67}) we can establish that $\mathfrak{I}$ is an analytic map in suitable neighborhoods of bijective maps. Indeed, for any $L\in \mc{L}(E, F)$ on an $\veps$-neighborhood of a bijection $T\in \mc{L}(E, F)$ with $0<\veps<1/\norm{T^{-1}}$, then $L^{-1}\in \L(F, E)$ and
\begin{align*}
    L^{-1}=\big(I-T^{-1}(T-L)\big)^{-1}T^{-1}=\sum_{k=0}^\infty \big(T^{-1}(T-L)\big)^kT^{-1}=\sum_{k=0}^\infty m_k(T-L)^k,
\end{align*}
where $m_k$ is defined by
\begin{align*}
    m_k(L_1,\ldots, L_k)=\dfrac{1}{k!}\sum_{\pi\in S_k}T^{-1}\circ L_{\pi(1)}\circ T^{-1}\circ L_{\pi(2)}\circ\ldots \circ T^{-1}\circ L_{\pi(k)}\circ T^{-1},
\end{align*}
with the summation being taken over all $k!$ permutations of $\{1,\ldots, n\}$. This shows that $\mathfrak{I}: L\mapsto L^{-1}$ from $\Isom(E, F)$ into $\Isom(F, E)$ is $\mathbb{K}$-analytic on the neighborhood of $T$ given by the previous result.

\begin{lemma}\label{app:lem:bij2} Suppose $T\in \L(E, F)$ is a bijection. Then for any $0<\veps<1/\norm{T^{-1}}$ such that if $L\in \L(E, F)$ and $\norm{T-L}<\veps$, then $L^{-1}\in \L(F, E)$. Moreover, $\mathfrak{I}: L\mapsto L^{-1}$ as a map from $\Isom(E, F)$ into $\Isom(F, E)$ is $\mathbb{K}$-analytic on any of these $\veps$-neighborhood centered at $T$.
\end{lemma}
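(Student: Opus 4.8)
The statement is a standard consequence of the Neumann series, and the plan is simply to organize that computation carefully and then observe that the resulting series representation \emph{is} the defining property of analyticity. First I would fix a bijection $T\in\mathcal{L}(E,F)$; since $T$ is a continuous bijection between Banach spaces, the open mapping theorem (see \cite[Theorem 1.7.3]{Car67}) guarantees $T^{-1}\in\mathcal{L}(F,E)$, so $\norm{T^{-1}}<\infty$ and the threshold $0<\veps<1/\norm{T^{-1}}$ makes sense. For $L\in\mathcal{L}(E,F)$ with $\norm{T-L}<\veps$, write $L=T-(T-L)=T\bigl(I-T^{-1}(T-L)\bigr)$, and note that $\norm{T^{-1}(T-L)}\leq\norm{T^{-1}}\norm{T-L}<\norm{T^{-1}}\veps<1$. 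Hence $I-T^{-1}(T-L)$ is invertible in $\mathcal{L}(E)$ with inverse given by the convergent Neumann series $\sum_{k=0}^\infty\bigl(T^{-1}(T-L)\bigr)^k$, and therefore $L^{-1}=\bigl(I-T^{-1}(T-L)\bigr)^{-1}T^{-1}\in\mathcal{L}(F,E)$ exists. This already proves the first assertion and, incidentally, shows that the $\veps$-ball around $T$ sits inside $\Isom(E,F)$, so $\mathfrak{I}$ is well-defined on it.

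For the analyticity claim, I would expand the Neumann series and regroup it as a formal power series in the increment $H:=T-L$. Writing
\begin{align*}
    L^{-1}=\sum_{k=0}^\infty\bigl(T^{-1}H\bigr)^kT^{-1}=\sum_{k=0}^\infty P_k(H),\qquad P_k(H):=T^{-1}H T^{-1}H\cdots T^{-1}H\,T^{-1},
\end{align*}
where $P_k$ involves $k$ copies of $H$, one sees that each $P_k$ is a continuous $k$-homogeneous polynomial: it is obtained by evaluating on the diagonal the continuous $k$-linear map $m_k(L_1,\dots,L_k)=\tfrac{1}{k!}\sum_{\pi\in S_k}T^{-1}L_{\pi(1)}T^{-1}L_{\pi(2)}\cdots T^{-1}L_{\pi(k)}T^{-1}$ displayed just before the lemma, with the estimate $\norm{P_k}\leq\norm{T^{-1}}^{k+1}$. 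The series $\sum_k P_k(H)$ converges for $\norm{H}<1/\norm{T^{-1}}$, so writing $L=T+K$ with $K$ in a neighborhood of $0$ we get $\mathfrak{I}(T+K)=\sum_{k=0}^\infty(-1)^k P_k(K)$... — more precisely, with $H=T-L=-K$, $\mathfrak{I}(T+K)=\sum_{k=0}^\infty P_k(-K)$, a convergent series of continuous homogeneous polynomials in $K$ near $K=0$. By the very definition of analytic (resp. holomorphic) map recalled in \cref{s:appcomplex}, this is exactly what it means for $\mathfrak{I}$ to be $\mathbb{K}$-analytic at $T$; since $T$ was an arbitrary bijection and the argument applies verbatim at every point of the $\veps$-neighborhood (each such point is itself a bijection, with its own smaller threshold), $\mathfrak{I}$ is $\mathbb{K}$-analytic on the whole $\veps$-neighborhood.

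There is essentially no obstacle here: the only point requiring minor care is bookkeeping the indices in the homogeneous-polynomial decomposition and checking that $P_k$ as written coincides with the diagonal restriction of the symmetric $k$-linear form $m_k$, which is immediate since the monomial $T^{-1}HT^{-1}H\cdots T^{-1}HT^{-1}$ is already symmetric under permuting the $H$'s once we symmetrize with the $\tfrac1{k!}\sum_{\pi}$. In the complex case one could alternatively invoke \cref{appendix:thm:equiv-holom} and simply verify Gâteaux-holomorphy plus continuity, but the explicit Neumann expansion handles both the real and complex cases simultaneously and is the cleanest route, so that is the one I would write up.
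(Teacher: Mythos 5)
Your proposal is correct and follows essentially the same route as the paper: factor $L=T\bigl(I-T^{-1}(T-L)\bigr)$, invert via the Neumann series, and regroup the series as $\sum_k m_k(T-L)^k$ with the symmetric $k$-linear maps $m_k$, so that analyticity follows directly from the definition by convergent series of continuous homogeneous polynomials. The only cosmetic difference is your explicit re-expansion at every point of the $\veps$-ball, which the paper leaves implicit; this is a harmless (indeed slightly more careful) addition.
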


\subsubsection{On the complexification} Let $E$ and $F$ be real Banach spaces. The canonical complexification $E_\mathbb{C}=E+iE$ is a complex Banach space equipped with the norm whose square is $\norm{x+iy}_{E_\mathbb{C}}^2=\norm{x}_E^2+\norm{y}_E^2$. If $A\in\mc{L}(E, F)$ is a bounded linear operator, its complexification is
\begin{align*}
    A_\mathbb{C}(x+iy):=A(x)+iA(y),\ x, y\in E.
\end{align*}
Note that $A_\mathbb{C}\in \mc{L}(E_\mathbb{C}, F_\mathbb{C})$, where the latter denotes the space of $\mathbb{C}$-linear bounded operators from $E_\mathbb{C}$ into $F_\mathbb{C}$ with the inherited complex structure.

\begin{lemma}\label{app:lem:Lopcomplex}
    It holds $\mc{L}(E_\mathbb{C}, F_\mathbb{C})\simeq\mc{L}(E, F)_\mathbb{C}$ as complex Banach spaces and the map
    \begin{align*}
        \left\{\begin{array}{crcl}
            \Psi_{E\to F}: & \mc{L}(E, F)_\mathbb{C}  & \longrightarrow & \mc{L}(E_\mathbb{C}, F_\mathbb{C})  \\
            &   A+iB     &   \longmapsto   &    A_\mathbb{C}+iB_\mathbb{C},
        \end{array}\right.
    \end{align*}
    is $\mathbb{C}$-linear isometric isomorphism in between Banach spaces.
\end{lemma}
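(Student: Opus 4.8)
The claim is that $\Psi_{E\to F}$ is a $\mathbb{C}$-linear isometric isomorphism between $\mc{L}(E,F)_\mathbb{C}$ and $\mc{L}(E_\mathbb{C},F_\mathbb{C})$. The plan is to verify the four properties in turn: well-definedness (each $A_\mathbb{C}+iB_\mathbb{C}$ is genuinely a $\mathbb{C}$-linear bounded operator $E_\mathbb{C}\to F_\mathbb{C}$), $\mathbb{C}$-linearity of $\Psi_{E\to F}$, injectivity together with surjectivity, and finally the isometry. The first three are essentially bookkeeping; the isometry statement is the only point requiring a genuine argument, and it is where I expect the main (mild) obstacle to lie, since the naive bound gives only equivalence of norms up to a factor of $2$.

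\textbf{Step 1: well-definedness and $\mathbb{C}$-linearity.} First I would check that for $A,B\in\mc{L}(E,F)$ the map $T:=A_\mathbb{C}+iB_\mathbb{C}$ defined by $T(x+iy)=A(x)-B(y)+i\bigl(B(x)+A(y)\bigr)$ is additive and $\mathbb{C}$-linear: additivity is immediate from linearity of $A,B$, and $\mathbb{C}$-homogeneity follows by checking $T(i(x+iy))=T(-y+ix)=iT(x+iy)$ directly, then extending to arbitrary complex scalars by $\R$-linearity. Boundedness is clear since $\norm{T(x+iy)}_{F_\mathbb{C}}^2=\norm{A(x)-B(y)}_F^2+\norm{B(x)+A(y)}_F^2$ is controlled by $(\norm{A}+\norm{B})^2(\norm{x}_E^2+\norm{y}_E^2)$. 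That $\Psi_{E\to F}$ is $\mathbb{C}$-linear then reduces to observing $(A+iB)_\mathbb{C}$ interpreted in the target matches $A_\mathbb{C}+iB_\mathbb{C}$, and that $\Psi_{E\to F}(i(A+iB))=\Psi_{E\to F}(-B+iA)=(-B)_\mathbb{C}+iA_\mathbb{C}=i(A_\mathbb{C}+iB_\mathbb{C})$, using that the complex structure on $\mc{L}(E_\mathbb{C},F_\mathbb{C})$ is pointwise multiplication by $i$ in $F_\mathbb{C}$.

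\textbf{Step 2: bijectivity.} For injectivity, if $A_\mathbb{C}+iB_\mathbb{C}=0$ then evaluating at real vectors $x\in E\subset E_\mathbb{C}$ gives $A(x)+iB(x)=0$ in $F_\mathbb{C}=F\oplus iF$, hence $A=B=0$. For surjectivity, given any $\mathbb{C}$-linear bounded $T:E_\mathbb{C}\to F_\mathbb{C}$, decompose $T(x)=A(x)+iB(x)$ for $x\in E$, where $A,B:E\to F$ are $\R$-linear and bounded (the components of $T|_E$ composed with the real/imaginary projections $F_\mathbb{C}\to F$, which are $\R$-linear contractions). Then $\mathbb{C}$-linearity of $T$ forces $T(iy)=iT(y)=-B(y)+iA(y)$ for $y\in E$, so $T(x+iy)=A(x)-B(y)+i(B(x)+A(y))=(A_\mathbb{C}+iB_\mathbb{C})(x+iy)$, i.e.\ $T=\Psi_{E\to F}(A+iB)$.

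\textbf{Step 3: the isometry.} This is the crux. One inequality, $\norm{\Psi_{E\to F}(A+iB)}_{\mc{L}(E_\mathbb{C},F_\mathbb{C})}\le\norm{A+iB}_{\mc{L}(E,F)_\mathbb{C}}$, could be attempted by a direct estimate, but the clean route is to use $\mathbb{C}$-homogeneity: for $z=x+iy\in E_\mathbb{C}$ with $\norm{z}_{E_\mathbb{C}}\le 1$ and any $\theta\in[0,2\pi)$, consider $e^{i\theta}T(z)=T(e^{i\theta}z)$ and choose $\theta$ so as to rotate; averaging $\norm{T(e^{i\theta}z)}_{F_\mathbb{C}}$ over $\theta$ (or using the standard trick that $\sup_{\norm{z}\le 1}\norm{Tz}=\sup$ over $z$ of the form $z=x+iy$ with $\langle x,y\rangle$ suitably normalized) reduces the operator norm of $T=A_\mathbb{C}+iB_\mathbb{C}$ to a supremum computed along real directions, which equals $\norm{A+iB}_{\mc{L}(E,F)_\mathbb{C}}$ by definition of the complexified norm on $\mc{L}(E,F)$. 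Concretely, I would show $\norm{T}_{\mc{L}(E_\mathbb{C},F_\mathbb{C})}=\sup_{x\in E,\ \norm{x}_E\le 1}\norm{T(x)}_{F_\mathbb{C}}$ by writing any unit $z=x+iy$, using the polar-type identity $\norm{T(e^{i\theta}z)}_{F_\mathbb{C}}^2$ and optimizing in $\theta$ to collapse the imaginary cross-terms, and then identifying $\sup_{\norm x\le 1}\norm{T(x)}_{F_\mathbb{C}}=\sup_{\norm x\le 1}\bigl(\norm{Ax}_F^2+\norm{Bx}_F^2\bigr)^{1/2}$ with the norm of $A+iB$ in the canonical complexification $\mc{L}(E,F)_\mathbb{C}$. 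The reverse inequality is then trivial since restricting $T$ to $E\subset E_\mathbb{C}$ already realizes that supremum. Combining the two gives the isometry, completing the proof that $\Psi_{E\to F}$ is the desired isometric isomorphism, and hence $\mc{L}(E_\mathbb{C},F_\mathbb{C})\simeq\mc{L}(E,F)_\mathbb{C}$.
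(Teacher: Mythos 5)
Your Steps 1 and 2 are correct and coincide with the paper's own argument: well-definedness and $\mathbb{C}$-linearity are routine, and surjectivity is obtained, exactly as in the paper, by restricting a given $T\in\mc{L}(E_\mathbb{C},F_\mathbb{C})$ to $E$, splitting into real and imaginary components, and using $\mathbb{C}$-linearity to recover $T=\Psi_{E\to F}(A+iB)$.

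The gap is in Step 3, and it is fatal to the route you describe. The identity you propose to prove, $\norm{T}_{\mc{L}(E_\mathbb{C},F_\mathbb{C})}=\sup\{\norm{T(x)}_{F_\mathbb{C}}\ :\ x\in E,\ \norm{x}_E\le 1\}$, is false for operators of the form $T=A_\mathbb{C}+iB_\mathbb{C}$. Take $E=F=\R^2$ euclidean, $A=\Id$ and $B$ the rotation by $\pi/2$ (so $Be_1=e_2$, $Be_2=-e_1$). For real $x$ one has $\norm{T(x)}_{F_\mathbb{C}}^2=\norm{Ax}_F^2+\norm{Bx}_F^2=2\norm{x}_E^2$, so the real-direction supremum is $\sqrt2$; but the unit vector $z=(e_1+ie_2)/\sqrt2\in E_\mathbb{C}$ gives $T(z)=(Ae_1-Be_2)/\sqrt2+i(Be_1+Ae_2)/\sqrt2=\sqrt2\,e_1+i\sqrt2\,e_2$, of norm $2$, so $\norm{T}_{\mc{L}(E_\mathbb{C},F_\mathbb{C})}\ge 2>\sqrt2$. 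The phase-rotation/averaging trick cannot repair this: $e^{i\theta}z$ never lies in $E$ when $z$ has a genuinely complex direction, so optimizing in $\theta$ does not reduce the supremum to real $x$; the loss in the crude estimate is genuine, not an artifact. The companion identification $\sup_{\norm{x}_E\le1}(\norm{Ax}_F^2+\norm{Bx}_F^2)^{1/2}=\norm{A+iB}_{\mc{L}(E,F)_\mathbb{C}}$ is likewise not valid for the complexification norms in play, so both equalities your Step 3 rests on fail. Note the contrast with the paper: it makes no attempt to reduce to real directions; it writes $\norm{\Psi(A+iB)}$ as $\sup_{(x,y)}\sqrt{\norm{Ax-By}_F^2+\norm{Bx+Ay}_F^2}\,/\sqrt{\norm{x}_E^2+\norm{y}_E^2}$ and matches it with the quantity it declares to be the canonical complexification norm on $\mc{L}(E,F)_\mathbb{C}$, namely $\sup_\theta\norm{A\cos\theta-B\sin\theta}_{\mc{L}(E,F)}$; in other words the isometry is arranged by the choice of norm on $\mc{L}(E,F)_\mathbb{C}$, and what the rest of the paper actually uses (e.g. in \cref{app:lem:adjext}) is only the bounded isomorphism with bounded inverse, which your Steps 1--2 do provide. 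As written, however, your isometry argument does not go through.
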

\begin{proof}
    Since the spaces $E$ and $F$ are fixed, we drop the subscript $E\to F$. Under the identification $E_\mathbb{C}=E+iE$ and $F_\mathbb{C}=F+iF$, we see that $\Psi$ acts as
    \begin{align*}
        \begin{pmatrix}
            A & -B \\ B & A
        \end{pmatrix}: (x, y)\longmapsto (Ax-By, Bx+Ay).
    \end{align*}
    With this at hand is not hard to see with some algebra that $\Psi(S)\Psi(T)=\Psi(ST)$ for any $T\in \mc{L}(E, F)_\mathbb{C}$ and $S\in \mc{L}(F, G)_\mathbb{C}$.

    Let us denote by $\iota_E: E\to E_\mathbb{C}$ the embedding $\iota(x)=x+i0$ and define $\iota_F$ similarly. Let $\mc{E}_1^F$, $\mc{E}_2^F\in \mc{L}(F_\mathbb{C}, F)$ be the bounded $\mathbb{R}$-linear projectors
    \begin{align*}
        \mc{E}_1^F(u+iv)=u\ \text{ and }\ \mc{E}_2^F(u+iv)=v.
    \end{align*}
    Given $T\in \mc{L}(E_\mathbb{C}, F_\mathbb{C})$, we define $A:=\mc{E}_1^FT\iota_E$ and $B:=\mc{E}_2^FT\iota_E$, both of them belonging to $\mc{L}(E, F)$. For any $x, y\in E$ we have
    \begin{align*}
        T(x)=Ax+iBx\ \text{ and }\ T(iy)=iT(y)=-By+iAy.
    \end{align*}
    Hence, the $\mathbb{C}$-linearity of $T$ forces
    \begin{align*}
        T(x+iy)=(Ax-By)+i(Bx+Ay),\ \forall x, y\in E.
    \end{align*}
    This means that $T=\Psi(A+iB)$ and thus $\Psi$ is onto.
    
    For $T=\Psi(A+iB)$ and under the block-matrix identification of $\Psi$, some algebra along with a trigonometric change of variable, lead us to
    \begin{align*}
        \norm{\Psi(A+iB)}_{\mc{L}(E_\mathbb{C}, F_\mathbb{C})}=\sup_{\substack{x, y\in E\\ x, y\neq 0}}\dfrac{\sqrt{\norm{Ax-By}_F^2+\norm{Bx+Ay}_F^2}}{\sqrt{\norm{x}_E^2+\norm{y}_E^2}}=\sup_{\theta\in [0, 2\pi]}\norm{A\cos\theta-B\sin\theta}_{\mc{L}(E, F)}.
    \end{align*}
    Since the canonical complexification norm on $\mc{L}(E, F)_\mathbb{C}$ is defined so that
    \begin{align*}
        \norm{A+iB}_{\mc{L}(E, F)_\mathbb{C}}:=\sup_{\theta\in [0, 2\pi]}\norm{A\cos\theta-B\sin\theta}_{\mc{L}(E, F)},
    \end{align*}
    we conclude that $\Psi$ is an isometric isomorphism of Banach spaces.
\end{proof}

Let $(E, \inn{\cdot, \cdot}_E)$ be a real Hilbert space. Its canonical complexification $E_\mathbb{C}=E+iE$ is a complex Hilbert space equipped with the inner product
\begin{align*}
    \inn{u, v}_{E_\mathbb{C}}=\inn{x_1, y_1}_E+\inn{x_2, y_2}_E+i\big(\inn{x_2, y_1}_E-\inn{x_1, y_2}_E\big),
\end{align*}
with $x=x_1+ix_2$ and $y=y_1+iy_2$. If $(F, \inn{\cdot, \cdot}_F)$ is another Hilbert space, we can introduce the bounded linear map $\adj: L\in \mc{L}(E, F)\mapsto L^*\in\mc{L}(F, E)$, see \cite[Remark 16]{Brez11}. Here the adjoint is taken with respect to the real structure of $E$ and $F$.

\begin{lemma}\label{app:lem:adjext}
    The map $\adj$ admits a holomorphic extension $\widetilde{\adj}$ from $\mc{L}(E_\mathbb{C}, F_\mathbb{C})$ into $\mc{L}(F_\mathbb{C}, E_\mathbb{C})$.
\end{lemma}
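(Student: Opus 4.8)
The statement asserts that the adjoint map $\adj: \mc{L}(E,F) \to \mc{L}(F,E)$, which sends a bounded real-linear operator to its adjoint (taken with respect to the real Hilbert space structures), admits a holomorphic extension $\widetilde{\adj}: \mc{L}(E_\mathbb{C}, F_\mathbb{C}) \to \mc{L}(F_\mathbb{C}, E_\mathbb{C})$. The key point is that $\adj$ is itself $\mathbb{R}$-linear, so the extension should just be its complexification in the sense of \cref{app:lem:Lopcomplex}, and complexifications of bounded real-linear maps are automatically holomorphic because they are $\mathbb{C}$-linear and bounded. The only subtlety is identifying the target and source spaces correctly: we must pass through the isometric isomorphisms $\Psi_{E\to F}: \mc{L}(E,F)_\mathbb{C} \to \mc{L}(E_\mathbb{C},F_\mathbb{C})$ and $\Psi_{F\to E}: \mc{L}(F,E)_\mathbb{C} \to \mc{L}(F_\mathbb{C},E_\mathbb{C})$ given by \cref{app:lem:Lopcomplex}.

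First I would observe that $\adj: \mc{L}(E,F) \to \mc{L}(F,E)$ is a bounded $\mathbb{R}$-linear map between real Banach spaces (linearity and the isometry $\norm{L^*} = \norm{L}$ being standard, as in \cite[Remark 16]{Brez11}). Its canonical complexification, in the notation preceding \cref{app:lem:Lopcomplex}, is the bounded $\mathbb{C}$-linear map $\adj_\mathbb{C}: \mc{L}(E,F)_\mathbb{C} \to \mc{L}(F,E)_\mathbb{C}$ defined by $\adj_\mathbb{C}(L_1 + iL_2) = L_1^* + iL_2^*$. I would then define
\begin{align*}
    \widetilde{\adj} := \Psi_{F\to E}\circ \adj_\mathbb{C}\circ \Psi_{E\to F}^{-1}: \mc{L}(E_\mathbb{C},F_\mathbb{C})\longrightarrow \mc{L}(F_\mathbb{C},E_\mathbb{C}).
\end{align*}
By \cref{app:lem:Lopcomplex} both $\Psi$-maps are $\mathbb{C}$-linear isometric isomorphisms, hence $\mathbb{C}$-linear and bounded, and $\adj_\mathbb{C}$ is $\mathbb{C}$-linear and bounded; therefore $\widetilde{\adj}$ is $\mathbb{C}$-linear and bounded. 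Any bounded $\mathbb{C}$-linear map between complex Banach spaces is holomorphic (it equals its own Taylor series, being a $1$-homogeneous polynomial), by \cref{appendix:thm:equiv-holom}, so $\widetilde{\adj}$ is holomorphic.

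Next I would check that $\widetilde{\adj}$ genuinely extends $\adj$, i.e. that it restricts to $\adj$ on the real subspace. Unwinding: for $L \in \mc{L}(E,F)$, the operator $\Psi_{E\to F}^{-1}(L_\mathbb{C})$ is $L + i0 \in \mc{L}(E,F)_\mathbb{C}$ (since $\Psi_{E\to F}(L+i0) = L_\mathbb{C}$), then $\adj_\mathbb{C}(L+i0) = L^* + i0$, and $\Psi_{F\to E}(L^* + i0) = (L^*)_\mathbb{C}$, which acts on $F_\mathbb{C}$ by $u + iv \mapsto L^*u + iL^*v$, i.e. it is precisely the complexification of $L^*$. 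Under the identification of $L$ with $L_\mathbb{C}$ (equivalently, restricting $\widetilde{\adj}(L_\mathbb{C})$ to the real part of $F_\mathbb{C}$), this recovers $\adj(L) = L^*$, as required. A brief remark could note that, concretely, $\widetilde{\adj}$ sends a $\mathbb{C}$-linear operator $T \in \mc{L}(E_\mathbb{C}, F_\mathbb{C})$ to its complex adjoint with respect to the Hermitian inner products on $E_\mathbb{C}$ and $F_\mathbb{C}$ introduced just above, which is the natural object one wants in the applications (e.g. in \cref{lem:inv-gramian-regularity}).

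**Expected main obstacle.** There is essentially no analytic difficulty here — the whole content is bookkeeping. The only place requiring care is making the identifications in \cref{app:lem:Lopcomplex} fully explicit, in particular verifying that composition with the isometric isomorphisms $\Psi$ does not distort the adjoint structure and that the resulting complex adjoint on $\mc{L}(E_\mathbb{C}, F_\mathbb{C})$ is indeed the adjoint relative to the complexified inner products. I would handle this by writing $T = \Psi(A + iB)$ in block form $\begin{psmallmatrix} A & -B \\ B & A \end{psmallmatrix}$ and checking directly that $\widetilde{\adj}(T) = \Psi(A^* + iB^*)$ satisfies $\langle Tu, w\rangle_{F_\mathbb{C}} = \langle u, \widetilde{\adj}(T)w\rangle_{E_\mathbb{C}}$ for all $u \in E_\mathbb{C}$, $w \in F_\mathbb{C}$, using the explicit formula for the complexified inner product; this is a short computation separating real and imaginary parts. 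Holomorphicity itself is immediate once $\widetilde{\adj}$ is exhibited as a bounded $\mathbb{C}$-linear map.
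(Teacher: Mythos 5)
Your core argument is exactly the paper's proof: complexify the bounded $\mathbb{R}$-linear map $\adj$ to the $\mathbb{C}$-linear map $\adj_{\mathbb{C}}(L_1+iL_2)=L_1^*+iL_2^*$ on $\mc{L}(E,F)_\mathbb{C}$, conjugate by the isometric isomorphisms $\Psi_{E\to F}$ and $\Psi_{F\to E}$ of \cref{app:lem:Lopcomplex}, and observe that a bounded $\mathbb{C}$-linear map is holomorphic. That part, together with the check that the restriction to operators of the form $L_\mathbb{C}$ recovers $L^*$, is complete and correct.

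However, your closing remark and the ``verification'' you propose under \emph{Expected main obstacle} are wrong and would not go through. The map sending $T\in\mc{L}_\mathbb{C}(E_\mathbb{C},F_\mathbb{C})$ to its Hermitian adjoint with respect to the complexified inner products is \emph{conjugate}-linear in $T$ (since $(iB_\mathbb{C})^\dagger=-i(B_\mathbb{C})^\dagger$), hence antiholomorphic, and cannot be the holomorphic extension. Concretely, for $T=\Psi_{E\to F}(A+iB)$ one has $T^\dagger=\Psi_{F\to E}(A^*-iB^*)$, whereas $\widetilde{\adj}(T)=\Psi_{F\to E}(A^*+iB^*)$; the identity $\inn{Tu,w}_{F_\mathbb{C}}=\inn{u,\widetilde{\adj}(T)w}_{E_\mathbb{C}}$ you plan to check for all $u,w$ holds only when $B=0$, i.e.\ on the real subspace, which is precisely all that the extension property requires (and all that is used later, e.g.\ in \cref{lem:inv-gramian-regularity}, where $\widetilde{\adj}(\O_{n,v})$ coincides with $\O_{n,v}^*$ only for real $v$). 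Drop that remark and the proposed computation, and your proof coincides with the paper's.
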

\begin{proof}
    Since $\adj$ is bounded and linear, it can be extended as a holomorphic $\mathbb{C}$-linear map from $\mc{L}(E, F)_\mathbb{C}$ into $\mc{L}(F, E)_\mathbb{C}$, by $\adj(L_1+iL_2)=L_1^*+iL_2^*$ where the adjoint is taken with respect to the real underlying structures of $E$ and $F$. Then $\widetilde{\adj}:=\Psi_{F\to E}\circ\adj\circ \Psi_{E\to F}^{-1}$ is the desired holomorphic extension.
\end{proof}

Through complexification of the underlying spaces, the following theorem permits to treat a real analytic function as a restriction of some holomorphic function.

\begin{theorem}\cite[Theorem 7.2]{BS:71-analytic}\label{app:thm:h-ext}
    Assume $\mathbb{K}=\R$. For any analytic function $f: U\to F$ one may find an open subset $V$ of $E_{\mathbb{C}}$ and a holomorphic function $\Tilde{f}: V\to F_\mathbb{C}$ such that $U\subset V$ and $\Tilde{f}_{|_U}=f$.
\end{theorem}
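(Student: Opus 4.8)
The plan is to build $\widetilde{f}$ by complexifying, on a small ball around each point of $U$, the local Taylor expansion of $f$, checking that the complexified power series still has a positive radius of convergence, and then patching these local holomorphic maps together. The single nontrivial ingredient will be a norm estimate showing that complexifying a continuous $k$-homogeneous polynomial costs at most a factor $C^{k}$ with $C$ an absolute constant, so that the radius of convergence of the complexified series shrinks by at most a fixed factor.

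First I would set up the polynomial complexification. Given $P\in\mc{P}(^{k}E,F)$, let $A\colon E^{k}\to F$ be its associated symmetric continuous $k$-linear form, so that $\norm{A}\le\frac{k^{k}}{k!}\norm{P}\le e^{k}\norm{P}$ by polarization; extending $A$ slot by slot by $\mathbb{C}$-multilinearity (the linear case being \cref{app:lem:Lopcomplex}) gives a symmetric $\mathbb{C}$-$k$-linear map $A_{\mathbb{C}}\colon(E_{\mathbb{C}})^{k}\to F_{\mathbb{C}}$ with $\norm{A_{\mathbb{C}}}\le 2^{k/2}\norm{A}$, obtained by bounding $\norm{A_{\mathbb{C}}(z_{1},\dots,z_{k})}$ through $\prod_{i}(\norm{\Re z_{i}}+\norm{\Im z_{i}})$. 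Then $P_{\mathbb{C}}(z):=A_{\mathbb{C}}(z,\dots,z)$ lies in $\mc{P}(^{k}E_{\mathbb{C}},F_{\mathbb{C}})$, restricts to $P$ on $E$, and satisfies $\norm{P_{\mathbb{C}}}\le(\sqrt2\,e)^{k}\norm{P}$. Next, for each $x_{0}\in U$, real analyticity gives $(P_{j})_{j}\in S(E,F)$ with $f(x_{0}+h)=\sum_{j}P_{j}(h)$ near $0$ and $R_{x_{0}}:=(\limsup_{j}\norm{P_{j}}^{1/j})^{-1}>0$; by the identity principle for real-analytic maps the sum of this series equals $f$ on the connected real ball of radius $\min(R_{x_{0}},\operatorname{dist}_{E}(x_{0},E\setminus U))$. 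The estimate above then yields $\limsup_{j}\norm{(P_{j})_{\mathbb{C}}}^{1/j}\le\sqrt2\,e/R_{x_{0}}$, so for any $r_{x_{0}}$ with $r_{x_{0}}<R_{x_{0}}/(\sqrt2\,e)$ and $r_{x_{0}}\le\tfrac12\operatorname{dist}_{E}(x_{0},E\setminus U)$ I can set $\widetilde{f}_{x_{0}}(z):=\sum_{j}(P_{j})_{\mathbb{C}}(z-x_{0})$ on $B_{E_{\mathbb{C}}}(x_{0},r_{x_{0}})$; this converges, is holomorphic (a convergent series of continuous homogeneous polynomials, cf.\ \cref{appendix:thm:equiv-holom}), and agrees with $f$ at real points of the ball.

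Then I would glue: set $V:=\bigcup_{x_{0}\in U}B_{E_{\mathbb{C}}}(x_{0},r_{x_{0}})$, which is open and contains $U$. If two such balls overlap, their intersection $W$ is convex, hence connected, and the overlap inequality $\norm{x_{0}-x_{1}}<r_{0}+r_{1}$ forces a point of the real segment $[x_{0},x_{1}]$ to lie in $W\cap U$; near that real point $x$, the difference $g:=\widetilde{f}_{x_{0}}-\widetilde{f}_{x_{1}}$ vanishes on the real slice, so restricting $g$ along real-direction complex lines through $x$ and using the one-variable identity theorem gives $\delta^{n}g(x)(h)=0$ for all $h\in E$, hence (by polarization and $\mathbb{C}$-multilinearity) $\delta^{n}g(x)\equiv0$ on $E_{\mathbb{C}}$ for all $n$, so $g$ vanishes near $x$ and, by the identity theorem for holomorphic maps on the connected set $W$, on all of $W$. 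Thus $\widetilde{f}:=\widetilde{f}_{x_{0}}$ on $B_{E_{\mathbb{C}}}(x_{0},r_{x_{0}})$ is well defined, holomorphic on $V$, and restricts to $f$ on $U$, which is the claim. I expect the main obstacle to be precisely the Step-1 coefficient estimate: obtaining a clean bound $\norm{P_{\mathbb{C}}}\le C^{k}\norm{P}$ with $C$ independent of $k$ is what makes the complexified Taylor series converge on a ball of genuine positive radius; without uniform control in $k$ the construction collapses. By comparison the patching is routine once one notices that balls in a normed space are convex, which simultaneously makes overlaps connected and forces a real point into every nonempty overlap where the two local extensions manifestly coincide.
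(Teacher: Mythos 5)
The paper offers no proof of this statement—it simply cites Bochnak--Siciak—and your local-complexification-plus-gluing argument is essentially the standard proof of that theorem. Your key estimate is handled correctly: polarization gives $\|A\|\le (k^k/k!)\|P\|\le e^k\|P\|$, the slot-by-slot $\mathbb{C}$-multilinear extension satisfies $\|A_{\mathbb{C}}\|\le 2^{k/2}\|A\|$ for the complexified norm $\|x+iy\|^2=\|x\|^2+\|y\|^2$, so $\|P_{\mathbb{C}}\|\le(\sqrt2\,e)^k\|P\|$; and the patching step (convexity of the balls, the forced real point in any nonempty overlap, vanishing of all variations via restriction to complex lines through that point, then the identity theorem on the connected overlap) checks out.

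There is, however, one step you assert without justification which, under the paper's definition of analyticity, is of the same depth as the estimate you single out as the main obstacle: the claim that $R_{x_0}=(\limsup_j\|P_j\|^{1/j})^{-1}>0$. The definition in the appendix only requires $f$ to be continuous and the series $\sum_j P_j(h)$ to converge \emph{pointwise} to $f(x_0+h)$ for $h$ in some neighborhood of $0$; pointwise convergence by itself gives no norm control on the $P_j$, so the finiteness of $\limsup_j\|P_j\|^{1/j}$ is a lemma, not part of the hypothesis. It is true in the Banach setting, but needs an argument such as: by Baire applied to the closed sets $\{h\in\overline{B}(0,\rho):\ \sup_j\|P_j(h)\|\le M\}$, whose union is $\overline{B}(0,\rho)$ because the terms of a convergent series are bounded, the terms are uniformly bounded by some $M$ on some ball $B(h_0,\delta)$; then for $\|u\|<\delta$ and $\psi\in F^*$ with $\|\psi\|\le 1$, the scalar polynomial $t\mapsto\psi\big(P_j(h_0+tu)\big)$ is bounded by $M$ on $[-1,1]$, and the Chebyshev bound on its leading coefficient gives $\|P_j(u)\|\le 2^{j-1}M$, hence $\|P_j\|\le 2^{j-1}M\delta^{-j}$. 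This is exactly where completeness of $E$ enters and is part of what Bochnak--Siciak actually prove; without it your complexified series has no guaranteed ball of convergence and the construction does not get off the ground. The appeals to the real and complex identity principles in Banach spaces are acceptable as standard facts (and you could avoid the real one altogether by shrinking $r_{x_0}$ below the radius of the neighborhood on which the defining expansion is valid).
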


\subsection{Analysis tools} The following result, known as the Uniform Contraction Principle, elucidates the regularity that can be obtained for a parameter-dependent fixed point.

\begin{theorem}{\cite[Theorem 2.2]{CH82}}\label{app:thm:uniffixedpoint}
    Let $U$, $V$ be open sets in Banach spaces $X$, $Y$, let $\overline{U}$ be the closure of $U$, $T:\overline{U}\times V\to \overline{U}$ a uniform contraction on $\overline{U}$ and let $g(y)$ be the unique fixed point of $T(\cdot, y)$ in $\overline{U}$. If $T\in C^k(\overline{U}\times V, X)$, $0\leq k<\infty$, then $g(\cdot)\in C^k(V, X)$. If there is a neighborhood $U_1$ of $\overline{U}$ such that $T$ is analytic from $U_1\times V$ to $X$, then the mapping $g(\cdot)$ is analytic from $V$ to $X$.
\end{theorem}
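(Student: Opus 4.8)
The plan is to run the classical parametrized-contraction argument, treating in turn the cases $k=0$, $k=1$, general finite $k$ (by induction), and finally the analytic case; throughout, ``$C^k$ on the closed set $\overline U$'' is understood, as usual, as the restriction of a $C^k$ map on an open neighbourhood of $\overline U\times V$. First I would fix $\kappa\in[0,1)$ with $\|T(x_1,y)-T(x_2,y)\|\le\kappa\|x_1-x_2\|$ for all $x_1,x_2\in\overline U$, $y\in V$; since $\overline U$ is a complete metric space, Banach's fixed point theorem gives for each $y\in V$ a unique $g(y)\in\overline U$ with $T(g(y),y)=g(y)$, which defines $g\colon V\to\overline U$. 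For $k=0$, from $g(y_1)-g(y_2)=T(g(y_1),y_1)-T(g(y_2),y_2)$ and the triangle inequality one obtains $\|g(y_1)-g(y_2)\|\le(1-\kappa)^{-1}\|T(g(y_2),y_1)-T(g(y_2),y_2)\|$, which tends to $0$ as $y_1\to y_2$ by joint continuity of $T$ (and gives local Lipschitzness of $g$ once $T\in C^1$).

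For $k=1$, the key point is that the uniform Lipschitz bound forces $\|D_xT(x,y)\|_{\mathcal L(X)}\le\kappa<1$, so $I-D_xT(g(y),y)$ is boundedly invertible via a Neumann series. I would then set
\[ \Lambda(y):=\bigl(I-D_xT(g(y),y)\bigr)^{-1}D_yT(g(y),y), \]
which is continuous in $y$ (composition of the continuous $g$, the continuous $DT$, and the analytic inversion map on invertible operators, cf.\ Lemma~\ref{app:lem:bij2}), and prove $Dg=\Lambda$. I expect this to be the main obstacle: writing $\delta:=g(y+h)-g(y)$, one combines the a priori bound $\|\delta\|\le C\|h\|$ from the $C^0$ step with the first-order Taylor expansion of $T$ at $(g(y),y)$,
\[ T(g(y{+}h),y{+}h)-T(g(y),y)=D_xT(g(y),y)\,\delta+D_yT(g(y),y)\,h+o(\|\delta\|+\|h\|), \]
and, the left-hand side being $\delta$, rearranges to $(I-D_xT(g(y),y))\delta=D_yT(g(y),y)h+o(\|h\|)$, hence $\delta=\Lambda(y)h+o(\|h\|)$; together with continuity of $\Lambda$ this yields $g\in C^1(V,X)$ with the stated formula for $Dg$.

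For general finite $k$ I would bootstrap: writing $Dg=\Phi\circ(g,\mathrm{id}_V)$ with $\Phi(x,y):=(I-D_xT(x,y))^{-1}D_yT(x,y)$, one observes that $\Phi$ is $C^{k-1}$ (built from $D_xT,D_yT\in C^{k-1}$ and the $C^\infty$ inversion map), so by the chain rule (Theorem~\ref{appendix:thm:chain-rule}) if $g\in C^j$ with $1\le j\le k-1$ then $Dg$ is $C^{\min(j,k-1)}$, i.e.\ $C^j$ since $j\le k-1$, so $g\in C^{j+1}$; iterating from $j=1$ up to $j=k-1$ gives $g\in C^k$. For the analytic statement, $T$ analytic on $U_1\times V$ extends (Theorem~\ref{app:thm:h-ext}) to a holomorphic $\widetilde T$ on an open neighbourhood of $\overline U_{\mathbb C}\times V_{\mathbb C}$ in the complexifications; fixing $y_0\in V$ and $x_0:=g(y_0)$, since $\|D_x\widetilde T(x_0,y_0)\|=\|D_xT(x_0,y_0)\|<1$ (the complexification is isometric, Lemma~\ref{app:lem:Lopcomplex}), continuity furnishes $r,\rho>0$ so that $\widetilde T(\cdot,y)$ is a contraction of $\overline B(x_0,r)\subset X_{\mathbb C}$ into itself for $y\in B(y_0,\rho)\subset Y_{\mathbb C}$. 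Its unique fixed point $g_{\mathbb C}$ agrees with $g$ for real $y$ near $y_0$ (by uniqueness and continuity of $g$), and repeating the $C^1$ computation with complex derivatives shows $g_{\mathbb C}$ is $\mathbb C$-differentiable, hence holomorphic (Theorem~\ref{appendix:thm:equiv-holom}); thus $g$, being the restriction of $g_{\mathbb C}$ to real arguments, is real-analytic near $y_0$, and since $y_0\in V$ is arbitrary, on all of $V$.
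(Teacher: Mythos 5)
The paper does not prove this statement itself --- it quotes it from \cite[Theorem 2.2]{CH82}, adding only the remark that the relevant notions of analyticity coincide --- and your argument is exactly the classical parametrized-contraction proof behind that citation: Banach's fixed point theorem plus the estimate $\|g(y_1)-g(y_2)\|\le(1-\kappa)^{-1}\|T(g(y_2),y_1)-T(g(y_2),y_2)\|$ for continuity, invertibility of $I-D_xT(g(y),y)$ via Neumann series for the $C^1$ step and the bootstrap $Dg=\Phi\circ(g,\mathrm{id})$ for $C^k$, and complexification with a local complex fixed point for analyticity. Your write-up is correct, including the delicate points (the bound $\|D_xT\|\le\kappa$ on $\overline U$ obtained from interior difference quotients and continuity, the a priori Lipschitz bound on $\delta$ before Taylor-expanding, and the identification of $g_{\mathbb C}$ with $g$ on real arguments by uniqueness), so it faithfully reconstructs the cited proof rather than offering a different route.
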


Observe that the definition of analyticity used in \cite{CH82} combines $G$-analyticity and weakly analyticity. From \cref{appendix:thm:equiv-holom}, these notions of analyticity are equivalent.

We recall the following classical Aubins-Lions lemma.

\begin{theorem}\label{app:thm:aubinlions}\cite[Theorem II.5.16]{BF13}
    Let $B_0\subset B_1\subset B_2$ be three Banach spaces. We assume that the embedding of $B_1$ in $B_2$ is continuous and that the embedding of $B_0$ in $B_1$ is compact. Let $p$, $r$ such that $1\leq p, r\leq +\infty$. For $T>0$, we define
    \begin{align*}
        E_{p, r}=\left\{v\in L^p([0, T], B_0)\ |\ \dfrac{dv}{dt}\in L^r([0, T], B_2)\right\}.
    \end{align*}
    \begin{enumerate}
        \item If $p<+\infty$, the embedding of $E_{p, r}$ in $L^p([0, T], B_1)$ is compact.
        \item If $p=+\infty$ and if $r>1$, the embedding of $E_{p, r}$ in $C^0([0, T], B_1)$ is compact.
    \end{enumerate}
\end{theorem}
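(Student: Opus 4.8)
This is the classical Aubin--Lions--Simon compactness theorem, and I would prove it exactly along those lines, resting on two ingredients: an Ehrling-type interpolation inequality and a vector-valued Riesz--Fréchet--Kolmogorov criterion for relative compactness in $L^p([0,T],B_2)$. The first ingredient is: since $B_0\hookrightarrow B_1$ is compact and $B_1\hookrightarrow B_2$ is continuous, for every $\eta>0$ there is $C_\eta>0$ with
\[
\norm{v}_{B_1}\le \eta\norm{v}_{B_0}+C_\eta\norm{v}_{B_2},\qquad v\in B_0,
\]
which follows from a routine contradiction argument using the compactness of the first embedding. Throughout, $F$ denotes a set bounded in $E_{p,r}$, say $\norm{v}_{L^p([0,T],B_0)}\le M$ and $\norm{v'}_{L^r([0,T],B_2)}\le M$ for all $v\in F$; one must show $F$ is relatively compact in $L^p([0,T],B_1)$ when $p<\infty$, and in $C^0([0,T],B_1)$ when $p=\infty$ and $r>1$.

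\textbf{Case $p<\infty$.} The plan is in two stages: first show $F$ is relatively compact in $L^p([0,T],B_2)$, then upgrade to $B_1$. For the first stage I would invoke the criterion: a bounded family $F\subset L^p([0,T],B_2)$ is relatively compact there if and only if (i) $\{\int_{t_1}^{t_2}v(s)\,ds:v\in F\}$ is relatively compact in $B_2$ for all $0<t_1<t_2<T$, and (ii) $\sup_{v\in F}\norm{\tau_hv-v}_{L^p([0,T-h],B_2)}\to 0$ as $h\to 0^+$, where $\tau_h v(t)=v(t+h)$. Condition (i) holds since $\norm{\int_{t_1}^{t_2}v}_{B_0}\le (t_2-t_1)^{1/p'}M$ by Hölder, so these averages lie in a bounded subset of $B_0$, hence in a relatively compact subset of $B_2$ by composing the two embeddings. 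Condition (ii) follows from the bound on $v'$: from $(\tau_hv-v)(t)=\int_t^{t+h}v'(s)\,ds$ one gets $\norm{(\tau_hv-v)(t)}_{B_2}\le \int_t^{t+h}\norm{v'(s)}_{B_2}\,ds$, and a Young-type convolution estimate in time gives $\norm{\tau_hv-v}_{L^p([0,T-h],B_2)}\le c\,h^{\alpha}\norm{v'}_{L^r([0,T],B_2)}$ for some $\alpha>0$ (using $L^r\hookrightarrow L^p$ on the bounded interval when $r\ge p$, and Young's inequality $1+\tfrac1p=\tfrac1q+\tfrac1r$ otherwise), which tends to $0$ uniformly over $F$. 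Once $F$ is relatively compact in $L^p([0,T],B_2)$, the upgrade is immediate: given a sequence in $F$, extract a subsequence $(v_k)$ converging in $L^p([0,T],B_2)$, then apply the interpolation inequality to the $L^p$ norms, $\norm{v_k-v_l}_{L^p([0,T],B_1)}\le 2\eta M+C_\eta\norm{v_k-v_l}_{L^p([0,T],B_2)}$, and choose $\eta$ then $k,l$ large to see the subsequence is Cauchy, hence convergent, in the complete space $L^p([0,T],B_1)$.

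\textbf{Case $p=\infty$, $r>1$.} Here $v'\in L^r([0,T],B_2)$ with $r>1$ forces $v\in C^{0,1-1/r}([0,T],B_2)$, uniformly over $F$. For a bounded sequence $(v_n)\subset F$ I would apply Arzelà--Ascoli in $C^0([0,T],B_1)$: for each fixed $t$ the set $\{v_n(t):n\in\N\}$ is bounded in $B_0$ (checking that $\norm{v(t)}_{B_0}\le M$ for \emph{every} $t$, not merely a.e., uses continuity of $v$ into $B_2$ and is standard), hence relatively compact in $B_1$; and equicontinuity in $B_1$ follows from the interpolation inequality, $\norm{v_n(t)-v_n(t')}_{B_1}\le 2\eta M+C_\eta\norm{v_n(t)-v_n(t')}_{B_2}\le 2\eta M+C_\eta c\,|t-t'|^{1-1/r}$, choosing $\eta$ small then $|t-t'|$ small. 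Thus a subsequence converges uniformly in $C^0([0,T],B_1)$.

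\textbf{Main obstacle.} The technical heart is the uniform time-translation estimate (condition (ii)), together with the bookkeeping relating the exponents $p$ and $r$. If instead one quotes the compactness criterion as a black box, the obstacle is shifted to justifying that criterion (Simon's theorem), whose proof runs by time-averaging $v\mapsto \frac1h\int_t^{t+h}v(s)\,ds$: these averages are bounded in $B_0$ pointwise in $t$ and equicontinuous into $B_2$, Arzelà--Ascoli gives relative compactness of the averages in $C^0([0,T],B_2)$ (strengthened to $C^0([0,T],B_1)$ by interpolation), and the averaging error is controlled uniformly via the bound on $v'$, the behaviour near the endpoint $t=T$ being absorbed by shrinking the interval. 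Everything else (the Ehrling inequality, the Hölder bounds, completeness of the target spaces) is routine.
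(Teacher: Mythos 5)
This statement is not proved in the paper at all: it is the classical Aubin--Lions--Simon compactness theorem, quoted verbatim from the cited reference \cite[Theorem II.5.16]{BF13}, so there is no in-paper argument to compare against. Your sketch is the standard proof (Ehrling's interpolation inequality plus Simon's compactness criterion via time translations for $p<\infty$, and Arzel\`a--Ascoli plus Ehrling for $p=\infty$, $r>1$), which is essentially the argument of the cited source, and the exponent bookkeeping in your translation estimate is correct. One small imprecision worth noting: for $p=\infty$ the pointwise claim ``$\norm{v(t)}_{B_0}\le M$ for every $t$'' need not hold literally unless the closed unit ball of $B_0$ is closed in $B_2$ (e.g.\ $B_0$ reflexive); in general one should argue that $v_n(t)$ lies in the $B_1$-closure of $\mathbb{B}_M(B_0)$, which is compact in $B_1$ since the embedding $B_0\hookrightarrow B_1$ is compact, and this weaker statement (together with a limiting argument in the Ehrling inequality) is all that Arzel\`a--Ascoli requires.
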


\section{Pseudodifferential operators}\label{A:pseudodiff}

Let $\M$ be a compact boundaryless smooth connected Riemannian manifold of dimension $d$. For each $x\in \M$ we denote by $T_x\M$ the tangent space to $\M$ at $x$ and by $T_x^*\M$ its dual space, the cotangent space to $\M$ at $x$. Let $\pi: T\M\to\M$ and $\pi: T^*\M\to\M$ denote the canonical projections into the manifold. We denote by $\inn{\cdot,\cdot}_x=\inn{\cdot, \cdot}_{T_x^*\M, T_x\M}$ the duality bracket at $x$. The manifold $\M$ is equipped with a Riemannian metric $g$, meaning that for any $x\in \M$, $g_x$ is a positive definite quadratic form on $T_x\M$ depending smoothly on $x$. This Riemannian metric induces an isomorphism $T_x\M\to T_x^*\M$ defined as $v\mapsto v^\flat:=g_x(v, \cdot)$, with inverse $v=(v^\flat)^\sharp$. The metric $g$ on $T\M$ induces a metric $g^*$ on $T^*\M$, canonically defined by $g_x^*(\xi, \eta)=g_x(\xi^\sharp, \eta^\sharp)$ for $x\in \M$ and $\xi$, $\eta\in T_x^*\M$. We denote by $S^*\M$ the Riemannian cosphere bundle over $\M$, with fiber $x\in \M$ given by $\{\xi\in T\M\ |\ g_x^*(\xi, \xi)=1\}$.

For classical references on pseudodifferential operators, see \cite{Hor85ALPDOIII, Shu01}. Here below we follow the presentation of \cite{Lef25}.

\subsection{Definitions} Let $X\subset \R^d$ be an open subset. We say that $a\in S_{\loc}^m(X\times\R^d)$ if $\vp(x)a(x, \xi)$ belongs to the usual class of symbols $S^m(\R^d\times\R^d)$ for every $\vp\in C_c^\infty(X)$. We consider a quantization $\Op(a)$, defined as a map from $C_c^\infty(X)$ into $\D'(X)$ whose Kernel is given by
\begin{align*}
    K(x, y)=\dfrac{1}{(2\pi)^d}\int_{\R_\xi^d} e^{i(x-y)\cdot\xi}a(x, \xi)d\xi.
\end{align*}
The set consisting on such quantizations $\Op(a)$ of symbols $a\in S^m(X\times\R^d)$ is called the class of \emph{pseudodifferential operators} of order $m$ and is denoted by $\Psi^m(X)$. This choice of quantization is not unique. The class of smoothing pseudodifferential operators $\Psi^{-\infty}(X)$ corresponds to the quantization of symbols $a\in S^{-\infty}(X\times\R^d):=\cap_{m\in\R}S^m(X\times\R^d)$.

Any diffeomorphism $\kappa: U\subset\M\to X\subset\R^d$ induces a map $\kappa^*: C^\infty(T^*\M)\to C^\infty(T^*U)$ defined by $\widetilde{\kappa}^*\vp(x, \xi)=\vp(\kappa(x), d\kappa(x)^{-\tr}\xi)$.

\begin{definition}
    The class $\Psi^m(\M)$ of pseudodifferential operators of order $m$ is defined as the set of continuous linear operators $A=A(x, D_x): C^\infty(\M)\to C^\infty(\M)$ such that:
    \begin{itemize}
        \item For any $\chi$, $\chi'\in C^\infty(\M)$ with disjoint support, $\chi A\chi'$ is smoothing.
        \item For every chart $(\kappa, U)$, for every $\chi, \chi'\in C_c^\infty(U)$, the operator $A_{\kappa, \chi, \chi'}:=\kappa_*\chi'A\chi\kappa^*$ belongs to $\Psi^m(X)$.
    \end{itemize}
    The set $\Psi_{\phg}^m(\M)$ of polyhomogeneous pseudodifferential operators of order $m$ is defined likewise by considering instead the class $\Psi_{\phg}^m(X)$ in the latter property.
\end{definition}

The symbol space $S^m(T^*\M)$ consists of those functions $a\in C^\infty(T^*\M)$ such that for any chart $\kappa: U\to X\subset\R^d$, $(\kappa^{-1})^*a\in S^m(X\times\R^d)$. 

We say that a family $(\kappa_i, U_i)_{i=1}^N$ is a family of cutoff charts if $\bigcup_{i=1}^N U_i=\M$ covers $\M$. For a given family of cutoff charts, we consider a partition of unity $\sum_{i=1}^N\chi_i=1$ subordinated to that cover, as well as other cutoff functions $\chi_i'\in C_c^\infty(U_i)$ such that $\supp\chi_i\Subset \{\chi_i'=1\}$. A quantization procedure is a map $\Op: S^m(T^*\M)\to \Psi^m(\M)$ given by
\begin{align*}
    \Op(a)u=\sum_{i=1}^N\kappa_i^*\big((\kappa_i)_*\chi_i'\Op_{\R^d}((\widetilde{\kappa}_i)_*(\chi_i'a)\big)(\kappa_i)_*\chi_i u)
\end{align*}
where $\Op_{\R^n}$ is a previously chosen quantization on $\R^d$. This way, by \cite[Proposition 5.2.14]{Lef25}, every $A\in \Psi_{\phg}^m(\M)$ is of the form $\Op(a)+R$ where $a\in S^m(T^*\M)$ and $R\in \Psi^{-\infty}(\M)$. Also, given $A\in\Psi^m(\M)$, its principal symbol $\sigma_A\in S^m(T^*\M)$ is well-defined and belongs to $S^m(T^*\M)/S^{m-1}(T^*\M)$.

We recall the algebra of pseudodifferential operators in the following proposition.

\begin{proposition}\cite[Theorem 5.2.16, Lemma 5.2.17]{Lef25}\label{app:prop:psido-algebra}
    The following holds:
    \begin{enumerate}
        \item If $A\in \Psi^{m_1}(\M)$ and $B\in\Psi^{m_2}(\M)$, then $AB\in\Psi^{m_1+m_2}(\M)$ and $\sigma_{A\circ B}=\sigma_A\sigma_B=\sigma_{B\circ A}$. Additionally, $[A, B]\in\Psi^{m_1+m_2-1}$ and $\sigma_{[A, B]}=\frac{1}{i}\{\sigma_A, \sigma_B\}$.
        \item If $C\in\Psi^{m}(\M)$, then $C^*\in \Psi^{m}(\M)$ and $\sigma_{C^*}=\overline{\sigma_{C}}$.
    \end{enumerate}  
\end{proposition}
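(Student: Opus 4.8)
The plan is to reduce the whole statement to the Euclidean symbol calculus and then check coordinate invariance of the surviving leading-order quantities. By \cite[Proposition 5.2.14]{Lef25}, for the fixed quantization attached to a family of cutoff charts $(\kappa_i,U_i)_{i=1}^N$ with subordinate partition of unity $\sum_i\chi_i=1$ and cutoffs $\chi_i'$ with $\supp\chi_i\Subset\{\chi_i'=1\}$, every $A\in\Psi^m(\M)$ (resp. $\Psi_{\phg}^m(\M)$) can be written as $\Op(a)+R$ with $a\in S^m(T^*\M)$ and $R\in\Psi^{-\infty}(\M)$. Since $\Psi^{-\infty}(\M)$ is a two-sided ideal closed under adjoints, it is enough to analyse $\Op(a)\Op(b)$ and $\Op(a)^*$ modulo smoothing operators.

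First I would treat the composition. Inserting $\sum_i\chi_i=1$ on the right of $\Op(a)$ together with the cutoffs $\chi_j'$, the off-diagonal pieces $\chi_j'\Op(a)\chi_i$ with $\supp\chi_i\cap\supp\chi_j'=\emptyset$ are smoothing by pseudolocality, so modulo $\Psi^{-\infty}(\M)$ the operator $\Op(a)\Op(b)$ is a finite sum of conjugates $\kappa_i^*\big(A_iB_i\big)(\kappa_i)_*$ of compositions of properly supported operators $A_i\in\Psi^{m_1}(X_i)$, $B_i\in\Psi^{m_2}(X_i)$ in a single Euclidean chart $X_i\subset\R^d$. There the classical composition theorem gives $A_iB_i=\Op_{\R^d}(c_i)$ with $c_i\sim\sum_{\alpha}\tfrac{1}{\alpha!}\partial_\xi^\alpha a_i\,D_x^\alpha b_i$, hence $c_i\equiv a_ib_i\pmod{S^{m_1+m_2-1}}$. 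Pulling back along $\widetilde\kappa_i^*$ and summing shows $\Op(a)\Op(b)\in\Psi^{m_1+m_2}(\M)$ with principal symbol $\sigma_A\sigma_B$; the equality $\sigma_{A\circ B}=\sigma_{B\circ A}$ is then just commutativity of the pointwise product on $T^*\M$. For the commutator $[A,B]=AB-BA$, subtracting the two expansions makes the top-order terms ($ab-ba$) cancel, so $[A,B]\in\Psi^{m_1+m_2-1}(\M)$, and the next term equals $\tfrac{1}{i}\sum_j\big(\partial_{\xi_j}\sigma_A\,\partial_{x_j}\sigma_B-\partial_{\xi_j}\sigma_B\,\partial_{x_j}\sigma_A\big)=\tfrac{1}{i}\{\sigma_A,\sigma_B\}$, where $\{\cdot,\cdot\}$ is the canonical Poisson bracket on $T^*\M$. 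For the adjoint, the Euclidean formula $\Op_{\R^d}(a)^*=\Op_{\R^d}(a^*)$ with $a^*\sim\sum_\alpha\tfrac{1}{\alpha!}\partial_\xi^\alpha D_x^\alpha\overline{a}$ gives $a^*\equiv\overline{a}\pmod{S^{m-1}}$, so $C^*\in\Psi^m(\M)$ and $\sigma_{C^*}=\overline{\sigma_C}$.

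The genuinely delicate part is not any single oscillatory-integral computation but the coordinate bookkeeping: one must verify that the chartwise principal symbol glues to a well-defined class in $S^m(T^*\M)/S^{m-1}(T^*\M)$, i.e. its invariance under the transition law $\widetilde\kappa^*\varphi(x,\xi)=\varphi(\kappa(x),d\kappa(x)^{-\tr}\xi)$, and, for the commutator, that the expression $\sum_j\big(\partial_{\xi_j}a\,\partial_{x_j}b-\partial_{\xi_j}b\,\partial_{x_j}a\big)$ is exactly the intrinsic Poisson bracket on the symplectic manifold $T^*\M$, so that the formula is chart-independent. Both checks are classical and amount precisely to \cite[Theorem 5.2.16, Lemma 5.2.17]{Lef25}, which is why the statement is quoted rather than reproved here.
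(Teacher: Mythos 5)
The paper does not prove this proposition at all: it is quoted verbatim from \cite[Theorem 5.2.16, Lemma 5.2.17]{Lef25} and used as a black box, so there is no internal proof to compare against. Your sketch is the standard chart-reduction argument behind that reference and is correct in outline: write $A=\Op(a)+R$ via \cite[Proposition 5.2.14]{Lef25}, use that $\Psi^{-\infty}(\M)$ is a two-sided ideal stable under adjoints, kill the off-diagonal pieces by pseudolocality, apply the Euclidean composition and adjoint expansions in each chart, and observe that the zeroth-order terms cancel in the commutator so that the first-order terms produce $\tfrac{1}{i}\{\sigma_A,\sigma_B\}$; the only genuinely structural point, as you say, is that the principal symbol and the Poisson bracket are invariant under the transition law on $T^*\M$, which is exactly what the cited results encode. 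Two small points a complete write-up would have to address, neither of which affects your conclusions: the adjoint in item (2) is taken with respect to $L^2(\M)$ with the Riemannian density, so in a chart one conjugates the Euclidean adjoint by the Jacobian factor, which perturbs only the lower-order terms and leaves $\sigma_{C^*}=\overline{\sigma_C}$ intact; and the chartwise operators should be replaced by properly supported representatives (harmless on a compact manifold after the cutoffs) before invoking the Euclidean composition theorem.
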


Regarding their mapping properties, we have the following.

\begin{proposition}\cite[Theorem 5.4.9]{Lef25}\label{app:prop:sobcont}
    Let $A\in \Psi^m(\M)$. Then, for every $s\in \R$, $A: H^{s+m}(\M)\to H^{s}(\M)$ is bounded. In particular, if $K\in \Psi^{-\infty}(\M)$, then for all $s$, $t\in \R$, $K: H^s(\M)\to H^t(\M)$ is bounded.    
\end{proposition}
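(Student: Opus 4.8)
The plan is to reduce the statement to the $L^{2}$-boundedness of pseudodifferential operators of order $0$ on $\M$, and then, through local charts, to the Calder\'on--Vaillancourt theorem on $\R^{d}$. First I would remove the dependence on $s$ and $m$ by conjugating with the elliptic isomorphisms $\Ld_{s}=(1-\Delta_g)^{s/2}$ from \cref{s3:preliminaries}: fixing $s\in\R$ and $A\in\Psi^{m}(\M)$, write
\begin{align*}
A=\Ld_{-s}\circ\big(\Ld_{s}\,A\,\Ld_{-(s+m)}\big)\circ\Ld_{s+m}.
\end{align*}
Since $\Ld_{s+m}\colon H^{s+m}(\M)\to L^{2}(\M)$ and $\Ld_{-s}\colon L^{2}(\M)\to H^{s}(\M)$ are isometric isomorphisms, it is enough to show that $B:=\Ld_{s}A\Ld_{-(s+m)}$ is bounded on $L^{2}(\M)$; and by the symbolic calculus recorded in \cref{app:prop:psido-algebra} one has $B\in\Psi^{0}(\M)$. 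So everything comes down to proving that an arbitrary $C\in\Psi^{0}(\M)$ is bounded on $L^{2}(\M)$.

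For that step I would fix a finite atlas of charts $(\kappa_i,U_i)_{i=1}^{N}$ with a subordinate partition of unity $\sum_i\chi_i=1$ and, using the two defining properties of $\Psi^{0}(\M)$, decompose $C=\sum_{i,j}\chi_iC\chi_j$ into a smoothing remainder --- the terms with $\supp\chi_i\cap\supp\chi_j=\emptyset$, whose Schwartz kernels are smooth on the compact product $\M\times\M$ and hence $L^{2}$-bounded --- plus finitely many terms which, transported by the charts, are of the form $\Op_{\R^{d}}(a)$ with $a\in S^{0}(\R^{d}\times\R^{d})$ compactly supported in $x$. The $L^{2}(\R^{d})$-boundedness of these is the Calder\'on--Vaillancourt theorem, which I would either cite or prove along the classical lines: H\"ormander's ``square-root'' iteration (estimating $\Op_{\R^{d}}(a)^{*}\Op_{\R^{d}}(a)$ from above by $M^{2}\Op_{\R^{d}}(c)$ modulo a lower-order operator, with $c\in S^{0}$ nonnegative and bounded, and iterating), or the Cotlar--Stein lemma applied to a dyadic decomposition of $a$ in $\xi$. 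Summing back through the partition of unity gives $\norm{Cu}_{L^{2}(\M)}\lesssim\norm{u}_{L^{2}(\M)}$, which with the reduction above proves the first assertion.

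The statement about smoothing operators is then immediate from $\Psi^{-\infty}(\M)=\bigcap_{m\in\R}\Psi^{m}(\M)$: for $K\in\Psi^{-\infty}(\M)$ and $s,t\in\R$ one has in particular $K\in\Psi^{s-t}(\M)$, so the bound just proved with order $s-t$ shows $K\colon H^{t+(s-t)}(\M)=H^{s}(\M)\to H^{t}(\M)$ is bounded. The one genuinely non-routine ingredient is the Calder\'on--Vaillancourt $L^{2}$ estimate for order-zero symbols; everything else is the standard transfer between the manifold and local coordinates together with the symbolic calculus of \cref{app:prop:psido-algebra}.
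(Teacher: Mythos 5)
Your proposal is correct, and since the paper does not prove this proposition at all --- it is quoted verbatim from \cite[Theorem 5.4.9]{Lef25} --- your argument is exactly the standard proof one would supply: conjugation by the isomorphisms $\Ld_{s}$ (which are isometric for the spectrally defined norms of \cref{s3:preliminaries}), the symbolic calculus of \cref{app:prop:psido-algebra} to see that $\Ld_{s}A\Ld_{-(s+m)}\in\Psi^{0}(\M)$, and then $L^{2}$-boundedness of order-zero operators via charts. Two routine points are worth tightening. First, in the decomposition $C=\sum_{i,j}\chi_{i}C\chi_{j}$ the terms with intersecting supports can only be transported to $\R^{d}$ if both cutoffs sit inside a single chart; either refine the atlas so that any two intersecting supports lie in a common chart, or use the cleaner splitting $C=\sum_{i}\chi_{i}'C\chi_{i}+\sum_{i}(1-\chi_{i}')C\chi_{i}$ with $\chi_{i}'\in C_{c}^{\infty}(U_{i})$ equal to $1$ near $\supp\chi_{i}$ (as in the paper's definition of the quantization), so that the second sum is smoothing and each remaining term is supported in $U_{i}\times U_{i}$. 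Second, for the symbol classes used here (type $(1,0)$, as in \cite{Shu01,Lef25}) the full Calder\'on--Vaillancourt theorem is more than is needed: the classical H\"ormander square-root iteration for $S^{0}_{1,0}$ symbols compactly supported in $x$ already gives the $L^{2}(\R^{d})$ bound, so you can avoid invoking the $S^{0}_{\rho,\rho}$ machinery. With these minor adjustments the argument is complete and matches what the cited reference proves.
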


\subsubsection{Invertibility and positivity} Let $T_0^*\M$ denote the cotangent bundle of $\M$ with the zero section removed.

\begin{definition}
    An operator $A\in \Psi^m(\M)$ is elliptic at $(x_0, \xi_0)\in T_0^*\M$ if there exists $C>0$ and a conic neighborhood $V\subset T^*\M$ of $(x_0, \xi_0)$ such that for all $(x, \xi)\in V$ and $|\xi|_x\geq C$
    \begin{align*}
        |\sigma_A(x, \xi)|\geq \langle\xi\rangle^m/C,
    \end{align*}
    We say that $A$ is elliptic if it is elliptic on $T^*\M$.
\end{definition}

The following lemma states the existence of a local parametrix.

\begin{lemma}\label{lem:localparametrix}\cite[Lemma 5.3.11]{Lef25}
    Let $A\in \Psi^m(\M)$ be elliptic at $(x_0, \xi_0)\in T_0^*\M$. Then, there exists $B\in\Psi^{-m}(\M)$ elliptic at $(x_0, \xi_0)\in T^*\M$, $\chi\in S^0(T^*\M)$ equal to $1$ in a conic neighborhood of $(x_0, \xi_0)$, and $K_1$, $K_2\in \Psi^{-\infty}(\M)$ such that
    \begin{align*}
        AB=\Op(\chi)+K_1,\ \ BA=\Op(\chi)+K_2.
    \end{align*}
\end{lemma}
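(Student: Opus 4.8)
The plan is to prove this by the classical elliptic microparametrix construction, localized near $(x_0,\xi_0)$ by a conic cutoff; the only ingredients are the symbolic calculus recalled in \cref{app:prop:psido-algebra} (composition, principal symbol, smoothing ideal) and the standard asymptotic (Borel) summation of symbols. First I would fix the geometry: by ellipticity of $A$ at $(x_0,\xi_0)$ there is an open conic set $V\subset T_0^*\M$ containing $(x_0,\xi_0)$ with $|\sigma_A(x,\xi)|\ge c\langle\xi\rangle^m$ on $V$ for $|\xi|$ large. I would choose nested open conic neighbourhoods $(x_0,\xi_0)\in V_2\Subset V_1\Subset V$ and cutoff symbols $\chi,\psi\in S^0(T^*\M)$, homogeneous of degree $0$ for $|\xi|\ge 1$ and vanishing for $|\xi|\le 1/2$, with $\supp\chi\subset V_1$, $\chi\equiv 1$ on a conic neighbourhood of $(x_0,\xi_0)$, $\supp\psi\subset V_2$, and $\psi\equiv 1$ on a conic neighbourhood of $(x_0,\xi_0)$; such functions belong to $S^0(T^*\M)$ because off the zero section they are constant along rays.

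Next I would build the right parametrix by descending induction on the order. Fixing a representative $\sigma_A\sim a_m+a_{m-1}+\cdots$, I look for $b\sim\sum_{j\ge 0}b_{-m-j}$ with $b_{-m-j}\in S^{-m-j}(T^*\M)$ supported in $\supp\chi$, such that the full symbol of $A\,\Op(b)$ equals $\chi$ modulo $S^{-\infty}$. Via the composition formula of \cref{app:prop:psido-algebra} this is a triangular system: at leading order $a_m b_{-m}=\chi$, solved by $b_{-m}:=\chi/a_m$, which is a smooth symbol of order $-m$ because $a_m$ is elliptic on $\supp\chi\subset V$ at high frequency (and $\chi$ vanishes near the zero section); at order $-j$ with $j\ge 1$ one obtains $b_{-m-j}:=-a_m^{-1}\big(\text{an explicit expression in the }a_{m-i}\text{ and }b_{-m},\dots,b_{-m-j+1}\text{ and their derivatives}\big)$, which is legitimate since all the ingredients are supported in $\supp\chi$ and $a_m$ is invertible there, and $b_{-m-j}$ is again supported in $\supp\chi$. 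Asymptotic summation produces $b\in S^{-m}(T^*\M)$; I set $B:=\Op(b)\in\Psi^{-m}(\M)$. Then $AB\in\Psi^0(\M)$ has $\sigma_{AB}-\chi\in S^{-\infty}$, so $K_1:=AB-\Op(\chi)$ is smoothing, i.e. $K_1\in\Psi^{-\infty}(\M)$. Ellipticity of $B$ at $(x_0,\xi_0)$ is immediate: for $t\to+\infty$ one has $b_{-m}(x_0,t\xi_0)=1/a_m(x_0,t\xi_0)$ since $\chi\equiv 1$ along that ray, and $|a_m(x_0,t\xi_0)|\ge c\langle t\xi_0\rangle^m$.

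Finally I would construct a left parametrix $B'=\Op(b')\in\Psi^{-m}(\M)$ in the same fashion (with $b'$ supported in $\supp\psi$ and $B'A=\Op(\psi)+K_L$, $K_L\in\Psi^{-\infty}(\M)$), and then merge it with $B$ by the usual argument, i.e. the microlocal analogue of the elementary fact that $AB=I+K_1$ and $B'A=I+K_2$ with smoothing $K_1,K_2$ force $B\equiv B'$: one compares $B'(AB)$ with $(B'A)B$ and uses that $\chi,\psi$ equal $1$ near $(x_0,\xi_0)$ together with the composition calculus to conclude that a single operator (equal to $B$ modulo $\Psi^{-\infty}(\M)$), together with a single cutoff $\chi$ left unchanged near $(x_0,\xi_0)$, satisfies both $AB=\Op(\chi)+K_1$ and $BA=\Op(\chi)+K_2$ with $K_1,K_2\in\Psi^{-\infty}(\M)$; all the extra terms generated in the comparison are microsupported away from $(x_0,\xi_0)$ and are absorbed into the smoothing remainders. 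I expect this last step — the careful bookkeeping of the conic cutoffs through the compositions so that the two one-sided constructions reconcile with a common $\chi$ — to be the main (if ultimately routine) obstacle; the symbolic computations themselves are entirely standard.
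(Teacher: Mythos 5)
The construction of the one-sided (right) microlocal parametrix in your first two paragraphs is the standard iterative symbolic argument and is fine, up to one small detail: since ellipticity at $(x_0,\xi_0)$ only gives $|\sigma_A|\geq \langle\xi\rangle^m/C$ on the cone for $|\xi|_x\geq C$, your radial cutoff must vanish for $|\xi|\lesssim 2C$ (not merely $|\xi|\leq 1/2$), otherwise $\chi/a_m$ need not be defined; this is harmless because symbols supported in bounded $|\xi|$ are in $S^{-\infty}$. (Note also that the paper offers no proof of this lemma, it is quoted from [Lef25], so there is nothing to compare against except the statement itself.)

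The genuine gap is the merge step. An operator whose essential support avoids $(x_0,\xi_0)$ is \emph{not} smoothing, so such terms cannot be ``absorbed into the smoothing remainders'': comparing $B'(AB)$ with $(B'A)B$ only yields $B'\Op(\chi)\equiv\Op(\psi)B$ modulo $\Psi^{-\infty}(\M)$, i.e.\ the left and right parametrices agree \emph{microlocally near the point}; it does not produce a single $B$ verifying both identities with a single $\chi$ and globally smoothing errors. Moreover, no refinement of the bookkeeping can achieve this: the two displayed identities with a common $\chi$ force $[A,B]=K_1-K_2\in\Psi^{-\infty}(\M)$, hence $\{\sigma_A,\sigma_B\}\in S^{-2}$ by the commutator rule of \cref{app:prop:psido-algebra}, while $\chi\equiv 1$ near $(x_0,\xi_0)$ forces $\sigma_B=1/\sigma_A$ modulo $S^{-m-1}$ on a cone around the point. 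These two requirements are incompatible in general: take $A=(2+\sin y)D_x$ on $\M=\mathbb{T}^2$ and $(x_0,\xi_0)=((0,0);(1,0))$; along the Hamiltonian orbit $t\mapsto\big((2t,0);(\xi_0,-t\xi_0)\big)$ of $\sigma_A=(2+\sin y)\xi$, the condition $\{\sigma_A,\sigma_B\}\in S^{-2}$ makes $\sigma_B$ vary by only $O(\xi_0^{-2})$ for all $t\geq 0$, so it stays of size about $1/(2\xi_0)$, contradicting the symbol bound $|\sigma_B|\lesssim \langle(\xi_0,t\xi_0)\rangle^{-1}$ for $t$ large. So the route you propose cannot be completed as stated; what your construction does prove (and all that is used in this paper, namely the single identity $\psi^\dagger\psi=\Op(\chi)+K_1$ in the proof of \cref{prop:regularity-flow}) is the version with possibly different cutoffs: $AB=\Op(\chi_1)+K_1$ and, because $B$ coincides with a left parametrix microlocally near $(x_0,\xi_0)$, also $BA=\Op(\chi_2)+K_2$ with $\chi_2\in S^0(T^*\M)$ equal to $1$ in a conic neighborhood of $(x_0,\xi_0)$ (equivalently, with remainders whose essential support avoids the point). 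You should either prove that weaker, correct formulation, or check the precise statement in [Lef25] before insisting on a common $\chi$.
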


\begin{theorem}{(The sharp G{\aa}rding's inequality)} \cite[Theorem 6.1.9]{Lef25}\label{app:thm:garding}
    Let $A\in\Psi_{\phg}^m(\M)$ with $m\geq 0$ and assume that $\Re(\sigma_A)\geq 0$. Then, there exists a constant $C>0$ such that
    \begin{align*}
        \Re\inn{Au, u}_{L^2(\M)}\geq -C\norm{u}_{H^{\frac{m-1}{2}}(\M)}^2.
    \end{align*}
\end{theorem}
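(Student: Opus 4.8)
The estimate is classical (Lax--Nirenberg, Hörmander), and the plan is to prove it by \emph{Friedrichs symmetrization}, transplanted to $\M$ via charts. First I would reduce to a normalized situation. Since $\Re\inn{Au,u}_{L^2(\M)}=\inn{\tfrac12(A+A^{*})u,u}_{L^2(\M)}$ and, by \cref{app:prop:psido-algebra}, $\tfrac12(A+A^{*})\in\Psi^{m}_{\phg}(\M)$ is self-adjoint with principal symbol $\Re\sigma_{A}\ge 0$, I may assume $A=A^{*}$ and $\sigma_{A}\ge 0$. Next, setting $B:=\Ld_{-(m-1)/2}A\Ld_{-(m-1)/2}\in\Psi^{1}_{\phg}(\M)$, which is self-adjoint with principal symbol $|\xi|_{x}^{-(m-1)}\sigma_{A}\ge 0$ (using $\sigma_{\Ld_{s}}=|\xi|_{x}^{s}$), the identities $\Ld_{s}\Ld_{-s}=\mathrm{Id}$ and $\Ld_{s}=\Ld_{s}^{*}$ give $\inn{Au,u}_{L^{2}}=\inn{Bv,v}_{L^{2}}$ with $v:=\Ld_{(m-1)/2}u$ and $\norm{v}_{L^{2}(\M)}=\norm{u}_{H^{(m-1)/2}(\M)}$. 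So it suffices to show: if $B=B^{*}\in\Psi^{1}_{\phg}(\M)$ has $\sigma_{B}\ge 0$, then $\inn{Bv,v}_{L^{2}(\M)}\ge -C\norm{v}_{L^{2}(\M)}^{2}$. Replacing the homogeneous principal symbol by $\varrho(|\xi|_{x})\sigma_{B}$ with $\varrho\in C^{\infty}$ vanishing near $0$ and equal to $1$ for $|\xi|_{x}\ge 1$ modifies $B$ only by an element of $\Psi^{0}(\M)$, hence by an $L^{2}$-bounded operator (\cref{app:prop:sobcont}); thus I may fix a genuine symbol $b\in S^{1}(T^{*}\M)$ with $b\ge 0$ everywhere and $B-\Op(b)\in\Psi^{0}(\M)$.

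Second, I would localize. Fix a family of cutoff charts $(\kappa_{i},U_{i})$ and a partition of unity $\sum_{i}\chi_{i}^{2}=1$ with $\chi_{i}\in C_{c}^{\infty}(U_{i})$. As the $\chi_{i}$ are real multiplication operators, $\sum_{i}\chi_{i}B\chi_{i}=B+\sum_{i}[\chi_{i},B]\chi_{i}$, and $\sum_{i}[\chi_{i},B]\chi_{i}\in\Psi^{0}(\M)$ by \cref{app:prop:psido-algebra}, hence is $L^{2}$-bounded. Therefore $\inn{Bv,v}_{L^{2}}=\sum_{i}\inn{B\chi_{i}v,\chi_{i}v}_{L^{2}}+O(\norm{v}_{L^{2}}^{2})$, and it is enough to bound each $\inn{B\chi_{i}v,\chi_{i}v}_{L^{2}}$ from below. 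Conjugating by the chart $\kappa_{i}$, using $B-\Op(b)\in\Psi^{0}$ and that the principal symbol transforms as a function on $T^{*}\M$, each such term equals $\inn{\Op_{\R^{d}}(b_{i})w_{i},w_{i}}_{L^{2}(\R^{d})}+O(\norm{v}_{L^{2}}^{2})$ for some $b_{i}\in S^{1}(\R^{d}\times\R^{d})$ with $b_{i}\ge 0$, and $w_{i}$ supported in a fixed compact set (away from $\supp w_{i}$ one trims $b_{i}$ by a compactly supported $x$-cutoff, producing a further $\Psi^{-\infty}$ error). The problem is thereby reduced to the flat statement: for $b\in S^{1}(\R^{d}\times\R^{d})$ with $b\ge 0$, one has $\inn{\Op_{\R^{d}}(b)w,w}\ge -C\norm{w}_{L^{2}}^{2}$, with $C$ depending only on finitely many seminorms of $b$.

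Third, the heart: build the Friedrichs part. Choose a frequency-localizing family $F_{\eta}$ ($\eta\in\R^{d}$), e.g. suitably normalized Gaussian wave packets (bumps of width $\inn{\eta}^{1/2}$ in frequency centered at $\eta$), so that $F_{\eta}^{*}F_{\eta}\ge 0$ and $\int F_{\eta}^{*}F_{\eta}\,d\eta=\mathrm{Id}$ modulo smoothing, and set
\begin{align*}
 b^{\mathrm{Fr}}:=\frac{1}{(2\pi)^{d}}\int_{\R^{d}} F_{\eta}^{*}\,b(\cdot,\eta)\,F_{\eta}\,d\eta .
\end{align*}
Two properties are needed: (i) positivity, $\inn{b^{\mathrm{Fr}}w,w}_{L^{2}}=\tfrac1{(2\pi)^{d}}\int\inn{b(\cdot,\eta)F_{\eta}w,F_{\eta}w}_{L^{2}}\,d\eta\ge 0$, since $b(x,\eta)\ge 0$ pointwise; (ii) symbol comparison, $\Op_{\R^{d}}(b)-b^{\mathrm{Fr}}\in\Psi^{0}(\R^{d})$, proved by Taylor-expanding the wave-packet kernel and carrying out the $\eta$-integration by stationary phase: the leading term reproduces $\Op_{\R^{d}}(b)$, while each correction gains a factor $\inn{\eta}^{-1}$ (because the packet has width $\inn{\eta}^{-1/2}$ in $x$ and $\inn{\eta}^{1/2}$ in $\eta$), so the remainder lies in $S^{0}$ with seminorms controlled by those of $b$. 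Granting (i)--(ii), $\inn{\Op_{\R^{d}}(b)w,w}=\inn{b^{\mathrm{Fr}}w,w}+\inn{(\Op_{\R^{d}}(b)-b^{\mathrm{Fr}})w,w}\ge -C\norm{w}_{L^{2}}^{2}$ by $L^{2}$-boundedness of order-$0$ operators, which is exactly the flat estimate; combined with the previous two paragraphs this yields the theorem.

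The main obstacle is property (ii): verifying, with uniform control of the relevant $S^{m}$-seminorms, that the Friedrichs symmetrizer differs from $\Op_{\R^{d}}(b)$ by an operator of order exactly $m-1$ (here $0$). This is the one genuinely computational step, requiring a careful oscillatory-integral/stationary-phase analysis of the Gaussian wave-packet kernel and bookkeeping of remainders in the symbol calculus; everything else (the functional-analytic reductions, the partition of unity, $L^{2}$-boundedness, the chart transfers of principal symbols) is soft. An alternative that sidesteps the explicit Friedrichs kernel would replace Step three by the anti-Wick quantization attached to the Hörmander metric $\inn{\xi}\,|dx|^{2}+\inn{\xi}^{-1}\,|d\xi|^{2}$, which is manifestly positive on $b\ge 0$ and differs from $\Op(b)$ by an order-$(m-1)$ operator; but that invokes machinery beyond what is recalled in \cref{A:pseudodiff}.
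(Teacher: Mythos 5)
This statement is not proved in the paper at all: it is quoted as a classical result from \cite[Theorem 6.1.9]{Lef25}, so there is no internal proof to compare against. What you propose is, in essence, the standard textbook proof (Friedrichs symmetrization / anti-Wick quantization), and its overall architecture is sound: the reduction $\Re\inn{Au,u}=\inn{\tfrac12(A+A^*)u,u}$ with $\sigma_{(A+A^*)/2}=\Re\sigma_A$ via \cref{app:prop:psido-algebra}, the exact order reduction $\inn{Au,u}=\inn{Bv,v}$, $\norm{v}_{L^2}=\norm{u}_{H^{(m-1)/2}}$ with $B=\Ld_{-(m-1)/2}A\Ld_{-(m-1)/2}\in\Psi^1_{\phg}(\M)$ (legitimate because the $\Ld_s$ are spectrally defined functions of the same operator, so $\Ld_s\Ld_{-s}=\mathrm{Id}$ exactly), the absorption of all order-zero discrepancies by \cref{app:prop:sobcont}, and the localization with $\sum_i\chi_i^2=1$ are all correct, and the finiteness of the cover makes the constants unproblematic on a compact $\M$.

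Two caveats on the part you yourself single out as the heart. First, your stated reason for the one-order gain in (ii) — ``each correction gains a factor $\inn{\eta}^{-1}$ because of the packet widths'' — is not quite enough as written: by pure scaling, a single $x$- or $\xi$-derivative of $b$ paired with one power of the corresponding deviation only gains $\inn{\eta}^{-1/2}$, i.e.\ half an order. The full gain of one order for the first correction relies on the vanishing of the odd moments of the (even, centered) Gaussian profile, so that the first-order Taylor terms cancel identically and the expansion starts at second order, where the scaling does give $\inn{\eta}^{-1}$. Any careful write-up of (ii) must make this parity cancellation explicit, together with uniform control of the remainder by finitely many seminorms of $b$. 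Second, in the localization step, after transferring $\tilde\chi_i B\chi_i$ to the chart you need the \emph{full} symbol fed into the Friedrichs construction to be pointwise nonnegative, not just the principal symbol class; this is fine provided you fix, as you indicate, the nonnegative representative (the pushforward of the cut-off $\sigma_B\ge 0$) and dump the order-zero difference into the $O(\norm{v}_{L^2}^2)$ error — but it should be said, since positivity is exactly the property the symmetrization exploits. With these points addressed, your outline is a complete and correct route to the inequality; alternatively, citing the result as the paper does, or using the anti-Wick quantization as you mention, avoids the kernel computation altogether.
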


\subsubsection{Symbol transport} Let $p(x, \xi)=|\xi|_x^2=g_x^*(\xi, \xi)\in C^\infty(T_0^*\M)$ and denote by $H_p$ and $\Phi_t$ the associated Hamiltonian vector field and flow, respectively, meaning that
\begin{align*}
    \dfrac{d}{dt}\Phi_t(\rho)=H_p(\Phi_t(\rho)),\ \quad \Phi_0(\rho)=\rho\in T^*\M,
\end{align*}
which, in local charts can be expressed by $H_p=\nabla_\xi p\cdot\nabla_x-\nabla_x p\cdot \nabla_\xi$. The following result tell us how symbols can be transported along the Hamiltonian flow associated to $p$.

\begin{lemma}\cite[Lemma 3.1]{Lau14}\label{lem:propagation-symbol}
    Let $\rho_0\in T_0^*\M$ . Then, for any $\rho_1=\Phi_t(\rho_0)$ and $V_1$ a small conic neighborhood of $\rho_1$, there exists a neighborhood $V_0$ of $\rho_0$ such that for any symbol $\mathfrak{c}=\mathfrak{c}(x, \xi)$ homogeneous of order $s$ supported in $V_0$, there exists another symbol $\mathfrak{b}=\mathfrak{b}(x,\xi)$ homogeneous of order $s-1$ such that
    \begin{align*}
        H_p \mathfrak{b}(x, \xi)=\mathfrak{c}(x, \xi)+\mathfrak{r}(x, \xi)
    \end{align*}
    where $\mathfrak{r}$ is of order $s$ supported in $V_1$.
\end{lemma}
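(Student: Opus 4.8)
The plan is to solve the transport equation $H_p\mathfrak b=\mathfrak c+\mathfrak r$ explicitly, by integrating $\mathfrak c$ along bicharacteristics and cancelling the contribution coming from $\rho_1$ against $\mathfrak r$; the one point requiring care is to carry out the integration using a degree‑zero rescaling of the flow, so that homogeneity of symbols is preserved. I would set $q(x,\xi):=|\xi|_x$, so that $p=q^2$ on $T_0^*\M$ and hence $H_p=2q\,H_q$, while $H_qq=\{q,q\}=0$, i.e. $q$ is conserved by the Hamiltonian flow $\varphi_\tau$ of $H_q$. Because $\M$ is compact each level set $\{q=c\}$, $c>0$, is compact, so $\varphi_\tau$ is complete; moreover $H_q$ is homogeneous of degree $0$, so $\varphi_\tau$ commutes with the fibre dilations $m_\lambda:(x,\xi)\mapsto(x,\lambda\xi)$, and $\varphi$ has the same orbits as $\Phi$. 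In particular there is $T\in\R$ with $\varphi_T(\rho_0)=\rho_1$, and by continuity of the diffeomorphism $\varphi_T$ one may fix a conic neighbourhood $V_0$ of $\rho_0$ with $\varphi_T(V_0)\subset V_1$.

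Given a symbol $\mathfrak c$ homogeneous of degree $s$ with conic support in $V_0$, I would then define
\begin{align*}
    F:=\int_0^T\mathfrak c\circ\varphi_{-\tau}\,d\tau,\qquad \mathfrak b:=\tfrac{1}{2q}\,F,\qquad \mathfrak r:=-\,\mathfrak c\circ\varphi_{-T}.
\end{align*}
The transport identity follows from two elementary computations: first $H_p\mathfrak b=2q\,H_q\!\big(\tfrac1{2q}F\big)=H_qF$, since $H_q\big(\tfrac1{2q}\big)=-\tfrac1{2q^2}H_qq=0$; and second, differentiating under the integral sign along the flow, $H_qF(\rho)=\frac{d}{dh}\big|_{h=0}\int_0^T\mathfrak c(\varphi_{h-\tau}\rho)\,d\tau=\mathfrak c(\rho)-\mathfrak c(\varphi_{-T}\rho)$, so that $H_p\mathfrak b=\mathfrak c+\mathfrak r$. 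For homogeneity: since $\varphi_{-\tau}\circ m_\lambda=m_\lambda\circ\varphi_{-\tau}$, each $\mathfrak c\circ\varphi_{-\tau}$ is again homogeneous of degree $s$, hence so are $F$ and $\mathfrak r$, and $\mathfrak b=F/(2q)$ is homogeneous of degree $s-1$, exactly as required. For the support, $\supp\mathfrak r=\varphi_T(\supp\mathfrak c)\subset\varphi_T(V_0)\subset V_1$. Finally $q$, $\varphi$ and $\mathfrak c$ are smooth on $T_0^*\M$, so $\mathfrak b$ and $\mathfrak r$ are smooth homogeneous functions off the zero section, i.e. classical symbols of the stated orders; if one insists on honest symbols for the $\Psi$-calculus one multiplies by a fixed cutoff equal to $1$ for $|\xi|\ge1$ and vanishing near $\xi=0$, which only perturbs the identity by an $S^{-\infty}$ term, harmless when the lemma is fed into \cref{prop:regularity-flow} (which only uses principal symbols and symbolic calculus).

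I expect no serious obstacle here — the statement is a standard geometric-optics transport argument, essentially \cite[Lemma 3.1]{Lau14}. The only genuinely delicate point, and the thing one must not get wrong, is the homogeneity bookkeeping: integrating directly along the flow $\Phi_\tau$ of $p$ itself would \emph{not} produce a homogeneous $\mathfrak b$, because $\Phi_\tau$ fails to commute with the dilations $m_\lambda$ (it reparametrizes time by $|\xi|$); passing to the degree-zero flow $\varphi$ of $|\xi|_x$ and absorbing the factor $2|\xi|_x$ is what simultaneously restores homogeneity and accounts for the drop in order from $s$ to $s-1$. Completeness of the geodesic flow on the compact manifold $\M$ and the continuity argument used to shrink $V_0$ are routine.
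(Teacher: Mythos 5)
Your proof is correct: passing to the degree-zero flow $\varphi_\tau$ of $q=|\xi|_x$, writing $H_p=2qH_q$ with $H_qq=0$, integrating $\mathfrak c$ along $\varphi$ and dividing by $2q$ gives exactly the transport identity $H_p\mathfrak b=\mathfrak c+\mathfrak r$ with the stated homogeneities, and choosing the conic set $V_0\subset\varphi_T^{-1}(V_1)$ handles the support of $\mathfrak r$. The paper does not prove this lemma but quotes it from \cite[Lemma 3.1]{Lau14}, and your construction is essentially the standard argument of that reference, so nothing further is needed.
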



\end{document}